\theoremstyle{plain}
    \newtheorem{thm}{Theorem}[section]
    \renewcommand{\thethm}
    {\arabic{section}.\arabic{thm}}
    \newtheorem{claim}[thm]{Claim}
     \newtheorem{conjecture}[thm]{Conjecture}
    \newtheorem{corollary}[thm]{Corollary}
    \newtheorem{lemma}[thm]{Lemma}
    \newtheorem{proposition}[thm]{Proposition}
    \newtheorem{theorem}[thm]{Theorem}
\theoremstyle{definition}
    \newtheorem{definition}[thm]{Definition}
    \newtheorem*{notation*}{Notation and Terminology}
    \newtheorem{remark}[thm]{Remark}
    \newtheorem{convention}[thm]{Convention}
\theoremstyle{remark}
\newenvironment{proof1}[1][Proof]{\par\noindent{\itshape #1. }}{\hfill$\square$\par}
\newcommand{\PP}{\mathbb{P}}
\newcommand{\Q}{\mathbb{Q}}
\newcommand{\alb}{\operatorname{alb}}
\newcommand{\id}{\operatorname{id}}
\newcommand{\mult}{\operatorname{mult}}
\newcommand{\Nlc}{\operatorname{Nlc}}
\newcommand{\Nklt}{\operatorname{Nklt}}
\newcommand{\Sing}{\operatorname{Sing}}
\newcommand{\Supp}{\operatorname{Supp}}
\newcommand{\Codim}{\operatorname{codim}}
\newcommand{\N}{\operatorname{N}}
\newcommand{\Pic}{\operatorname{Pic}}
\title{Log Calabi-Yau structure of algebraic varieties admitting a polarized endomorphism}
\author{Wentao Chang}
\address{School of Mathematical Sciences, Fudan University, 
%220 Handan Road, Yangpu District, Shanghai, 200433, 
People's Republic of China}
\email{wtchang21@m.fudan.edu.cn}
\author{De-Qi Zhang}
\address{Department of Mathematics, National University of Singapore, 
%10 Lower Kent Ridge Road, Singapore 119076, 
Republic of Singapore}
\email{matzdq@nus.edu.sg}
\begin{document}

\begin{abstract}
Let $X$ be a normal projective variety admitting a polarized endomorphism $f$, i.e., $f^*H\sim qH$ for some ample divisor $H$ and integer $q>1$. Then Broustet and Gongyo proposed the conjecture that $X$ is of Calabi-Yau type (CY for short), i.e., $(X,\Delta)$ is lc for some effective $\Q$-divisor $\Delta$ and $K_X+\Delta\sim_{\Q}0$. We prove the conjecture when $X$ is a Gorenstein terminal 3-fold, extending 
the result of Sheng Meng for smooth threefolds.
We then study the singularity type and CY property for $(X,\Delta+\frac{R_{\Delta}}{q-1})$ when $(X,\Delta)$ is an $f$-pair, i.e., $K_X+\Delta=f^*(K_X+\Delta)+R_\Delta$ with $\Delta, R_{\Delta}$ being effective. 
In particular, we show:
(1) $K_X + \frac{R_f}{q-1}$ is $\Q$-Cartier and numerically trivial when $X$ is a $\Q$-factorial (or of klt type) $3$-fold; (2) $(X, \frac{R_{f}}{q-1})$ is log Calabi-Yau when $X$ is a surface with the Picard number $\rho(X)>1$ or $f^{-s}(P)=P$ for some prime divisor $P$ and $s>0$.
%In particular, when $X$ is $\Q$-factorial (or of klt type) and $\dim X \le 3$, we show that $f^*K_X-qK_X$, or equivalently $K_X + \frac{R_f}{q-1}$ is $\Q$-Cartier and numerically trivial.
\end{abstract}

\subjclass[2020]{
08A35, %Automorphisms, endomorphisms
14J17, %Singularities
32H50, %Iteration problems
%14E22: Ramification problems
%14J32: Calabi-Yau manifolds, mirror symmetry
14E30 %Minimal model program
}
\maketitle

\tableofcontents

\section{Introduction}
We work over an algebraically closed field $k$ of characteristic \(0\). 

%By a variety (resp. curve or surface), we mean a projective variety (resp. curve or surface).
Let \(f \colon X \to X\) be a surjective endomorphism of a normal projective variety with $\deg f>1$. Then $f$ is a finite morphism since every ample divisor on $X$ has an $f^*$-preimage. In the curve case, by the Hurwitz formula, \(X\) is either a rational curve or an elliptic curve. This is equivalent to saying that the anticanonical divisor \(-K_{X}\) is effective. In higher dimensions, the situation is more complicated. We focus on endomorphisms of the following type:
$f$ is {\it $q$-polarized} (or simply {\it polarized} ), i.e., $f^*H\sim qH$ for some ample Cartier divisor $H$ and integer $q>1$. In this case, $\deg f = q^{\dim X}$ by the projection formula.
A curve endomorphism $f$ is polarized if and only if deg $f>1$.

Let $f$ be a polarized endomorphism.  Meng and Zhang \cite[Theorem 1.5] {MZ22} showed that \(-K_{X}\) is effective when \(X\) is \(\mathbb{Q}\)-Gorenstein, which generalizes the curve case. 
%In general, $-K_X$ is weakly numerically equivalent to a effective $\Q$-Weil divisor Meng \cite[Theorem 1.5] {M20}. 
On the other hand, $X$ has mild singularity if it admits a polarized endomorphism. Indeed, 
Broustet and H\"oring \cite[Corollary 1.3]{BH14} proved that $X$ is log canonical when $X$ is  $\Q$-Gorenstein. 

Broustet and Gongyo proposed the following conjecture \cite[Conjecture 1.2]{BG17}.

\begin{conjecture}\label{conj:LCY}
Let $X$ be a normal projective variety admitting a polarized endomorphism. Then $X$ is of Calabi-Yau type.
\end{conjecture}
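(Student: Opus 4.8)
\begin{proof1}[Proof strategy]
By the ramification formula $K_X=f^*K_X+R_f$ with $R_f\ge 0$, together with the effectivity of $-K_X$, say $-K_X\sim_{\Q}D$ with $D\ge 0$ (Meng--Zhang, \cite{MZ22}), and the log canonicity of $(X,0)$ (Broustet--H\"oring, \cite{BH14}), the only gap between what is known and what is wanted is that the \emph{a priori} anticanonical divisor $D$ need not form a log canonical pair with $X$. So the real task is to produce a \emph{dynamically adapted} effective anticanonical $\Q$-divisor. The plan is to run an $f$-equivariant MMP, reduce to finitely many building blocks, put an explicit log Calabi-Yau boundary on each, and descend.

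\emph{Step 1 (equivariant MMP).} After replacing $f$ by a positive iterate $f^n$ (still polarized), the equivariant contraction/flip theory for polarized endomorphisms produces a sequence of divisorial contractions and flips $X=X_0\dashrightarrow\cdots\dashrightarrow X_m=X'$ of a $K_X$-MMP, each step $f$-equivariant, ending in one of two cases: (i) $K_{X'}$ is nef; or (ii) there is a Mori fiber space $\pi\colon X'\to Y$ with $\dim Y<\dim X$, over which $f$ descends to a polarized endomorphism $g\colon Y\to Y$, the general fiber $F$ being $\Q$-Gorenstein with $-K_F$ ample and carrying a polarized endomorphism. Each $X_i$ is $\Q$-Gorenstein with a polarized endomorphism, so $-K_{X_i}$ stays effective by \cite{MZ22}; in the threefold setting the Gorenstein terminal hypothesis is mild enough to be tracked through the MMP.

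\emph{Step 2 (building blocks).} In case (i), nef $K_{X'}$ plus effective $-K_{X'}$ force $K_{X'}\equiv 0$, hence $K_{X'}\sim_{\Q}0$ by abundance (known in dimension $\le 3$), so $(X',0)$ is a klt log Calabi-Yau pair. In case (ii) I induct on dimension: $Y$ is of Calabi-Yau type by the inductive hypothesis applied to $(Y,g)$, and the Fano fibers $F$, carrying polarized endomorphisms, should be toric -- or at least of Calabi-Yau type -- by the low-dimensional structure theory; a canonical bundle formula together with adjunction then glues a toric boundary along the fibers to the pulled-back log Calabi-Yau boundary of $(Y,g)$, yielding an effective $\Delta'$ with $(X',\Delta')$ log canonical and $K_{X'}+\Delta'\sim_{\Q}0$.

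\emph{Step 3 (descent) and the main obstacle.} It remains to transport $(X',\Delta')$ back along the MMP, pushing and pulling $\Delta'$ through each divisorial contraction and flip and checking that log canonicity is preserved, using $R_f$-bookkeeping and the terminal hypothesis to control discrepancies over the contracted loci. Two genuine obstacles arise. First, the MMP steps are $K_X$-negative rather than $(K_X+\Delta)$-negative, so log canonicity of the boundary does not descend for free; one must choose $\Delta'$ so that its strict transforms do not create non-log-canonical centres over the exceptional loci, which is the delicate part of the bookkeeping. Second, and more seriously, the assertion in Step 2 that a Fano (more generally, rationally connected) variety with a polarized endomorphism is of Calabi-Yau type is the new geometric input and is essentially the heart of the whole conjecture; in dimension $\le 3$ it is accessible through classification, but in general it is open, and the argument above is conditional on the minimal model and abundance conjectures.
\end{proof1}
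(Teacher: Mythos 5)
The statement you are asked to prove is Conjecture~\ref{conj:LCY}, which is \emph{not} proved in the paper: the paper establishes it only in special cases, the new one being Theorem~\ref{thm: lcy for Gorenstein 3-fold} for Gorenstein terminal 3-folds. Your write-up correctly recognizes this and closes with an honest list of the two main obstructions; as a ``proof'' it is therefore only a conditional strategy sketch, and you already say so. The more useful comparison is between your sketch and the paper's actual route in the cases it does handle, and there the two differ substantially.

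Your plan is to run an $f$-equivariant $K_X$-MMP to a Mori fiber space, build a log Calabi--Yau boundary from fiber and base via the canonical bundle formula, and descend through the MMP. The paper's proof of Theorem~\ref{thm: lcy for Gorenstein 3-fold} does \emph{not} run a full $K_X$-MMP. Instead, after a small $\Q$-factorization (Theorem~\ref{thm:Q-fac-1}), it invokes Yoshikawa's structure theorem (Proposition~\ref{prop:etale cover of f-pair}): up to quasi-\'etale cover, the Albanese morphism is a Fano-type fibration over an abelian variety $Y$, and $f$ lifts/descends equivariantly after iteration. Since quasi-\'etale covers and small birational maps preserve Calabi--Yau type (Lemma~\ref{lemma:quasi etale or birational lcy}), this avoids the descent problem you flag in your Step~3; the argument then branches on $\dim Y\in\{0,1,2,3\}$, with the $\dim Y=1$ case handled by showing (Lemma~\ref{fiber P^2}) that a dense set of $\PP^2$ fibers forces all fibers to be $\PP^2$, so $X$ is smooth and Meng's result \cite{M23} applies. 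Your MMP-descent strategy is closer in spirit to what the paper does for a \emph{different} statement, Conjecture~\ref{conj Delta}, in Section~\ref{sec:klt to picard 1}: there the boundary $\Delta+\tfrac{R_\Delta}{q-1}$ is specially adapted to the dynamics, with the crepant-pullback identity of Proposition~\ref{prop:birational-ramification-formula}(2), and it is exactly this identity that makes the divisorial descent work. An arbitrary log Calabi--Yau boundary $\Delta'$ on the end product of a $K_X$-MMP will not in general pull back to an \emph{effective} lc boundary on $X$, because the discrepancy of the contracted divisor with respect to $(X',\Delta')$ can be less than $-1$ even though the contraction is $K_X$-negative. So your Step~3 ``bookkeeping'' worry is a real gap, not a technicality, and the paper's Theorem~\ref{thm: lcy for Gorenstein 3-fold} proof sidesteps it precisely by using the quasi-\'etale Albanese decomposition rather than birational contractions.

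Two smaller points. Your Step~2 assertion that the Fano fibers ``should be toric'' is stronger than what is known or needed: the paper's treatment of the Picard-number-one building blocks in dimensions one and two goes through classification (rational/elliptic curves, del Pezzo classification, non-normal del Pezzo via Reid), not a toricity statement. And your reduction to rationally connected varieties of Calabi--Yau type is genuinely the heart of the matter; Yoshikawa's result \cite{Y21} handles the \emph{smooth} rationally connected case, but the singular case remains open, which is why Conjecture~\ref{conj:LCY} remains a conjecture.
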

Recall that $X$ is of {\it Calabi-Yau type} if there exists an effective $\Q$-divisor $\Delta$ such that $(X,\Delta)$ is {\it log Calabi-Yau}, i.e., $K_X+\Delta\sim_{\Q}0$ and the pair $(X,\Delta)$ is log canonical.

%Conjecture \ref{conj:LCY} obviously implies the above results. ??
Conjecture \ref{conj:LCY} was confirmed by Broustet and Gongyo \cite[Theorem 1.3]{BG17} for surfaces, by Meng \cite[Theorem 1.2]{M23} for smooth threefolds, and by Yoshikawa \cite[Corollary]{Y21} for rationally connected smooth projective varieties. 

In this paper, we generalize \cite[Theorem 1.2]{M23} as follows.

\begin{thm}\label{thm: lcy for Gorenstein 3-fold}Let $X$ be a Gorenstein terminal 3-fold, admitting a $q$-polarized endomorphism $f$, with $q>1$. Then $X$ is of log Calabi-Yau type.
\end{thm}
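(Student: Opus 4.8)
The plan is to use the $f$-equivariant minimal model program to reduce to a Mori fibre space over a base of dimension at most $2$, where Conjecture~\ref{conj:LCY} is already available, and then to propagate the resulting log Calabi-Yau boundary back up the MMP; the genuinely new point, compared with the smooth case of \cite{M23}, is the behaviour of the divisorial contractions. First, two reductions. A Gorenstein terminal threefold has only isolated compound Du Val singularities (Reid) and hence admits a small $\Q$-factorialization $\pi\colon\hat X\to X$ with $\hat X$ $\Q$-factorial terminal and $K_{\hat X}=\pi^*K_X$; since $f$ permutes the finitely many such $\hat X$, a positive power of $f$ lifts to a polarized endomorphism of $\hat X$. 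As $\pi$ is small and crepant, any log Calabi-Yau boundary on $\hat X$ pushes forward to one on $X$ ($\pi_*$ preserves $\sim_{\Q}$-triviality and the two pairs are crepant), so we may assume $X$ is $\Q$-factorial. By \cite[Theorem~1.5]{MZ22}, $-K_X$ is effective; if $K_X$ is also pseudo-effective then $K_X\equiv 0$, hence $K_X\sim_{\Q}0$ by abundance for threefolds and $(X,0)$ is the desired pair. Thus we may also assume $X$ is uniruled.

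Next I would run the $f$-equivariant $K_X$-MMP (after replacing $f$ by a further power), obtaining a sequence $X=X_0\ratmap\cdots\ratmap X_r=X'$ of divisorial contractions and flips of $\Q$-factorial terminal threefolds, each equivariant for the descended polarized endomorphism $f_i$ of $X_i$, terminating — as $X$ is uniruled — in a Fano contraction $\psi\colon X'\to Z$ with $\dim Z\in\{0,1,2\}$, along which $f$ descends to a polarized endomorphism $f_Z$ of $Z$. If $\dim Z=0$, then $X'$ is a $\Q$-factorial terminal Fano threefold carrying a polarized endomorphism, in particular rationally connected, hence of log Calabi-Yau type by the argument of \cite{Y21} (or by the toric structure forced on such threefolds). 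If $\dim Z\ge 1$, then $Z$ is a smooth curve — necessarily $\PP^1$ or an elliptic curve by the Hurwitz formula, hence of log Calabi-Yau type — or a normal surface with a polarized endomorphism, of log Calabi-Yau type by \cite[Theorem~1.3]{BG17}; the general fibres of $\psi$ are del Pezzo surfaces or $\PP^1$'s, which (after iterating $f$) carry polarized endomorphisms and are again of log Calabi-Yau type. A canonical bundle formula for $\psi$ then yields $K_{X'}+\Delta_{X'}\sim_{\Q}\psi^*(K_Z+B_Z+M_Z)$ for a suitable effective $\Delta_{X'}$ restricting to a log Calabi-Yau boundary on a general fibre; since $f$ forces the moduli part $M_Z$ to be $\sim_{\Q}$-trivial (the fibration being isotrivial after a base change) and $(Z,B_Z)$ is of log Calabi-Yau type, the pair $(X',\Delta_{X'})$ is log Calabi-Yau.

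It then remains to lift: if $X_{i+1}$ is of log Calabi-Yau type, so is $X_i$. For a flip $X_i\ratmap X_{i+1}$, on a common resolution the two pullbacks of $K+\Delta$ differ by a divisor exceptional over both models which is $\sim_{\Q}$-trivial, hence $0$ by the negativity lemma, so the pairs are crepant. For a divisorial contraction $\phi\colon X_i\to X_{i+1}$ with irreducible exceptional divisor $E$, the key observation is that $E$ is totally invariant for $f_i$, so $f_i^*E=e_E\cdot E$; since $f_i^*$ acts as multiplication by $q$ on $N^1(X_i)$ we get $e_E=q$, whence $R_{f_i}\ge (q-1)E$ and $E$ appears with coefficient exactly $1$ in $\tfrac{1}{q-1}R_{f_i}$. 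Modelling the boundary $\Delta_{i+1}$ on $\tfrac{1}{q-1}R_{f_{i+1}}$ near the $f_{i+1}$-invariant centre $\phi(E)$ makes its multiplicity there large enough that $a(E;X_{i+1},\Delta_{i+1})\le 0$, so that $\Delta_i:=\phi_*^{-1}\Delta_{i+1}-a(E;X_{i+1},\Delta_{i+1})\,E$ is effective, crepant to $\Delta_{i+1}$, and log Calabi-Yau. Descending to $X_0=X$ completes the argument.

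The hard part will be precisely this last step in the singular setting. Unlike a smooth threefold, a Gorenstein terminal threefold forces the equivariant MMP through $\Q$-factorial terminal threefolds with cDV points, and introduces extremal contractions (divisorial contractions to curves or to points, and singular Mori fibre spaces) that do not occur in \cite{M23}; keeping the boundary $\tfrac{1}{q-1}R_f$ log canonical and compatible through all of these — equivalently, pinning down the ramification of $f$ along the totally invariant exceptional divisors and their centres — is where the real work lies. A secondary difficulty is controlling the moduli part of the canonical bundle formula in the Mori-fibre-space step, which is exactly the place where the polarized endomorphism has to be used in an essential way to force isotriviality of the fibration after a base change.
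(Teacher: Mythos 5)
Your overall plan --- run the $f$-equivariant $K_X$-MMP, settle the Mori fibre space, and lift the log Calabi--Yau boundary back up through the contractions --- is a legitimate and natural strategy (it is, roughly, what the paper does in Section~5 to reduce the \emph{klt} case of Conjecture~\ref{conj Delta} to Picard number one), but it is not the route the paper takes for this theorem, and as written it has two genuine gaps, both of which the paper's argument sidesteps by a structural device you do not use.

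The paper's proof first invokes Yoshikawa's Proposition~\ref{prop:etale cover of f-pair}: after a quasi-\'etale cover and a small $\Q$-factorialization, one may assume $X$ itself admits an $f$-equivariant \emph{Fano type fibration $\pi:X\to Y$ with $Y$ an abelian variety}. That is the whole game. The base being abelian means $K_Y\sim_\Q 0$, so in the canonical bundle formula $0\sim_\Q K_Y+B+M$ one immediately reads off $B=0$ and $M\sim_\Q 0$ from $B\ge 0$ and $M$ pseudo-effective (Theorem~\ref{thm 1}); the moduli part vanishes for free. There is no lifting back through an MMP at all: quasi-\'etale covers and small $\Q$-factorializations preserve Calabi--Yau type (Lemma~\ref{lemma:quasi etale or birational lcy}), and the case split is on $\dim Y\in\{0,1,2,3\}$, with $\dim Y=1$ needing the extra Lemma~\ref{fiber P^2} to rule in the $\PP^2$-bundle case, at which point $X$ is actually smooth and one falls back to \cite[Theorem 1.2]{M23}.

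Your proposal replaces this by a generic Mori fibre space $X'\to Z$ followed by a lift. The first gap is precisely the sentence ``since $f$ forces the moduli part $M_Z$ to be $\sim_\Q$-trivial (the fibration being isotrivial after a base change)'': this is asserted, not proved, and if $Z$ is $\PP^1$ or a log del~Pezzo surface of Picard number one there is no reason for the pseudo-effective $M_Z$ to vanish --- you lose the $K_Z\sim_\Q 0$ identity that drives the paper's computation. (Even on fibres, ``of Calabi--Yau type'' is not enough; the canonical bundle formula needs a \emph{specific} boundary on $X'$ restricting to an lc $\Q$-trivial boundary on general fibres, and you never say how to produce one; the paper does this with $T_f$ and the $\Q$-movability of $-(K_X+T_f)$ from Proposition~\ref{prop:T_f}(3), again exploiting the abelian base.) The second gap is the ascent through divisorial contractions and flips. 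Your intuition that the exceptional divisor $E$ appears with coefficient $1$ in $\tfrac{1}{q-1}R_{f_i}$ is correct (it is $f^{-1}$-periodic, so by Theorem~\ref{prop:Rf leq q-1} its ramification is $q-1$ after iteration), and this is indeed the right ingredient --- it is the content of Proposition~\ref{prop:birational-ramification-formula}(2) --- but that proposition lifts the crepant pair $(X_{i+1},\Delta_{i+1}+\tfrac{R_{\Delta_{i+1}}}{q-1})$, not an arbitrary log Calabi--Yau boundary, and carrying log canonicity across a contraction of a singular surface to a curve or to a cDV point is exactly where you write ``this is where the real work lies'' and then stop. So the sketch identifies the right objects but does not close either gap; the paper's proof avoids both by working over the abelian Albanese image rather than over a base produced by the MMP.
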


For a finite surjective morphism $f: X \to X$ of a normal variety $X$, we can define 
$f^*D$ for any Weil integral divisor $D$ by restricting $f$ and $D$ to the regular locus of $X$, pulling back, and then taking the Zariski closure.

We have the ramification divisor formula
$$K_X = f^*K_X + R_f$$
with $R_f \ge 0$ the {\it ramification divisor.}

There is a related conjecture proposed by Gongyo \cite[Conjecture 1.3]{M23}.

\begin{conjecture}\label{conj Gongyo}
     Let $f: X \to X$ be a $q$-polarized endomorphism of a smooth projective variety. Then, after iteration of $f$, the pair $(X,\frac{R_f}{q-1})$ is log canonical.
 \end{conjecture}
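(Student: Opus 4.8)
The plan is as follows. Write $d=\dim X$ and, after replacing $f$ by an iterate, set $\Theta_n:=\frac{R_{f^n}}{q^n-1}=\frac{1}{q^n-1}\sum_{i=0}^{n-1}(f^i)^*R_f$; the goal is to show that $(X,\Theta_n)$ is log canonical for $n\gg0$. I would reduce everything to a single valuative inequality. For a divisorial valuation $v$ of $k(X)$, lifting $f^n$ to a common log resolution realises $v=\operatorname{ord}_E$ and determines its forward image $v_n:=\operatorname{ord}_{\widetilde{f^{\,n}}(E)}$ together with a ramification index $e_{f^n,v}$. Applying the transformation rule for log discrepancies under finite covers to $f^n$ and the pair $(X,0)$ — whose crepant $f^n$-pullback is $(X,-R_{f^n})$ — gives $\operatorname{ord}_v(R_{f^n})=e_{f^n,v}\,a^*_{v_n}-a^*_v$, where $a^*_w:=a_w(X,0)+1$ is the log discrepancy (so $a^*_w=1$ when $w=\operatorname{ord}_P$ for a prime divisor $P\subset X$). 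Substituting into $a_v(X,\Theta_n)=a_v(X,0)-\frac{1}{q^n-1}\operatorname{ord}_v(R_{f^n})$ produces the identity
$$ a_v(X,\Theta_n)+1=\frac{q^n\,a^*_v-e_{f^n,v}\,a^*_{v_n}}{q^n-1}, $$
so that $(X,\Theta_n)$ is log canonical if and only if $e_{f^n,v}\,a^*_{v_n}\le q^n\,a^*_v$ for every divisorial valuation $v$ over $X$. It remains to establish this inequality for $n\gg0$, which I would do by organising the valuations according to the dynamics of $v\mapsto v_1$.

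The ``good'' valuations are handled as follows. If $v$ is $f^p$-periodic, taking $n$ a multiple of $p$ gives $v_n=v$ and the inequality becomes $\lambda\le q^p$ where $\lambda:=e_{f^p,v}$. For $v=\operatorname{ord}_P$ with $P\subset X$ a prime divisor this is forced: $f^p(P)=P$ and $(f^p)^*H\sim q^pH$ give $(f^p|_P)^*(H|_P)=q^p(H|_P)$, hence $\deg(f^p|_P)=q^{p(d-1)}$, and the identity $\sum_{P'}e_{f^p,P'}\deg(f^p|_{P'})=q^{pd}$ over the components $P'$ of $(f^p)^{-1}(P)$ yields $\lambda\le q^{pd}/q^{p(d-1)}=q^p$, with equality exactly when $(f^p)^*P=q^pP$. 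A short preimage count then shows that around a totally $f^p$-invariant prime divisor the set-theoretic preimage has a unique component, so no non-periodic divisor has forward orbit landing on such a divisor. For a transient valuation $v$, whose orbit reaches an $f^p$-periodic $v'$ after $j$ steps, one has $e_{f^n,v}=e_{f^j,v}\cdot\lambda^{(n-j)/p}$ with $\lambda=e_{f^p,v'}<q^p$ (given $v'$ not totally invariant); since ramification of $f$ occurs only along the finitely many components of $R_f$, a non-periodic orbit meets that set at most a bounded number of times, which bounds $e_{f^j,v}$ uniformly in $j$, and combined with $\lambda<q^p$ this forces $e_{f^n,v}a^*_{v_n}\le q^na^*_v$ for all such $v$ once $n$ is large. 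Thus for prime divisors on $X$, and for valuations whose orbits only ever meet non-totally-invariant periodic centres, the inequality holds after a single uniform iteration.

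The hard part, as I see it, is the bound $e_{f^p,v}\le q^p$ for an $f^p$-periodic divisorial valuation $v$ that is \emph{exceptional} over $X$, together with the control of transient valuations flowing into such a centre — here there is no ambient polarized self-map to exploit. The proposed attack is local: the centre $Z=c_X(v)$ is $f^p$-invariant and $f^p|_Z$ is $q^p$-polarized, so $\deg(f^p|_Z)=q^{p\dim Z}$, and the task is to show that the transverse ramification of $f^p$ along $Z$, which absorbs the remaining factor $q^{p\operatorname{codim}Z}$ of $\deg f^p$, cannot be concentrated into a single divisor over $X$ — precisely, that on a tower of blow-ups resolving $v$ the valuation $v$ still appears with $e_{f^p,v}\le q^p$. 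I would carry this out by studying the induced self-map on an exceptional divisor dominating $Z$, which still multiplies the (nef) pulled-back polarization $\mu^*H$ by $q^p$, and iterating the inequality $\sum e\cdot\deg\le q^{p\cdot\mathrm{top}}$ up the tower; establishing this transverse bound — and the corresponding preimage-count for valuations flowing into exceptional totally invariant centres, for which the structure theory of polarized endomorphisms should suffice — is what I expect to be genuinely delicate. Granting it, the valuative inequality holds for all $v$, so $(X,\Theta_n)$ is log canonical for $n\gg0$ and the conjecture follows. In dimension $\le 2$ the difficulty is bypassed by the surface results quoted in the introduction, and in dimension $3$ one would reduce, via an $f$-equivariant $\mathbb{Q}$-factorialisation and terminalisation, to the Gorenstein terminal case where the present methods — as in Theorem~\ref{thm: lcy for Gorenstein 3-fold} — apply.
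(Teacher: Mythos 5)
You are attempting to prove what the paper states as Conjecture \ref{conj Gongyo}; the paper gives no proof of it and explicitly treats it as open, recording only (i) the codimension-one part --- Theorem \ref{prop:Rf leq q-1} and Theorem \ref{Delta leq 1}, i.e.\ after iteration all coefficients of $\frac{R_f}{q-1}$ are $\le 1$, with equality exactly on $f^{-1}$-periodic prime divisors --- and (ii) the surface case under the extra hypotheses $\rho(X)\ge 2$ or $T_f\ne 0$ (Theorem \ref{thm: conj delta surface}), on top of Meng's earlier surface results. So there is no ``paper proof'' to compare against, and your proposal must stand alone. Its framework is sound: the identity $a_v(X,\Theta_n)+1=\frac{q^n a^*_v-e_{f^n,v}a^*_{v_n}}{q^n-1}$ follows correctly from the log-discrepancy transformation rule for finite covers, and the criterion $e_{f^n,v}a^*_{v_n}\le q^n a^*_v$ for all divisorial $v$ is indeed equivalent to the conjecture. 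Your analysis of $v=\operatorname{ord}_P$ for prime divisors $P\subset X$ essentially reproves \cite[Theorem 3.1]{M23}, which is what Theorem \ref{Delta leq 1}(2) gives for $\Delta=0$.

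The gap is the one you name yourself, and it is not a technical loose end but the entire open content of the conjecture: bounding $e_{f^n,v}a^*_{v_n}$ by $q^n a^*_v$ for valuations $v$ \emph{exceptional} over $X$. No argument is known that controls transverse ramification of $f$ along an invariant centre of codimension $\ge 2$; this is precisely why the literature (and this paper) only obtains log canonicity for pairs already satisfying a global ramification identity (Theorem \ref{thm:(X,Delta) lc}, following \cite{BH14}), and why the surface case requires either an equivariant MMP ($\rho\ge 2$) or inversion of adjunction from a totally invariant divisor (Theorem \ref{thm:polarized-inversion-of-adjunction}) rather than a direct valuative bound. ``Granting it'' is granting the conjecture. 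Two further points: your trichotomy ``periodic / transient into periodic'' does not exhaust the divisorial valuations, since for $\Codim c_X(v)\ge 2$ the forward orbit of the centre need not be eventually periodic and $a^*_{v_n}$ is unbounded in general, so even the reduction to periodic valuations is incomplete; and the closing claim that dimension three reduces to Theorem \ref{thm: lcy for Gorenstein 3-fold} conflates the existence of \emph{some} lc boundary $\Delta$ with $K_X+\Delta\sim_{\Q}0$ with log canonicity of the \emph{specific} pair $(X,\frac{R_f}{q-1})$ --- the former does not imply the latter, and the conjecture is open even for $X\cong\PP^2$ with $T_f=0$.
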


Meng \cite[Theorems 4.1-4.2]{M23} proved Conjecture \ref{conj Gongyo}
for smooth surfaces except possibly when $X\cong \PP^2$ and $T_f=0$. Here $T_f$ is 
%{\it the total invariant divisor set}, i.e., 
the sum of all the $f^{-1}$-periodic prime divisors. 

We are going to introduce a more general conjecture. First, recall \cite[Definition 3.1]{Y21}.

\begin{definition}\label{def:f-pair}
    Let $f: X \to X$ be a finite surjective morphism of a normal projective variety, and $\Delta$ an effective Weil $\Q$-divisor. The pair $(X,\Delta)$ is called an {\it $f$-pair} if 
    $$R_{\Delta}:=\Delta+R_f-f^*\Delta\geq 0$$ (without assuming $K_X+\Delta$ to be $\Q$-Cartier here), or equivalently (all pullbacks being for Weil divisors):
$(*) \,\, K_X + \Delta = f^*(K_X + \Delta) + R_{\Delta}$. 
\end{definition}
An $f$-pair is also an $f^s$-pair for any $s \ge 1$ by iterating the $(*)$.
Since the ramification divisor $R_f \ge 0$, the pair $(X, 0)$ is always an $f$-pair.
More generally, if $D$ is {\it totally invariant} for some reduced divisor $D$, i.e., $f^{-1}(D) = D$, then the log ramification divisor formula as in \cite[Theorem 11.5]{I82} shows that $K_X + D = f^*(K_X + D) + R_D$
with $R_D \ge 0$, so the pair $(X, D)$ is an $f$-pair.

\begin{conjecture} \label{conj Delta}
     Let $f: X \to X$ be a $q$-polarized endomorphism on a normal projective variety with $\Delta$ an effective $\Q$-divisor such that $(X,\Delta)$ is an $f$-pair. Then, after iteration of $f$, the following hold.
     \begin{enumerate}
         \item $K_X+\Delta+\frac{R_{\Delta}}{q-1}$ is $\Q$-Cartier and numerically trivial.
         \item $(X,\Delta + \frac{R_{\Delta}}{q-1})$ is log canonical.
     \end{enumerate} 
 \end{conjecture}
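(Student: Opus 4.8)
The plan is to reduce, after replacing $f$ by a suitable iterate, to the case where $X$ is $\Q$-factorial, where (1) is a one-line computation using that $f^{*}$ acts as multiplication by $q$ on $N^{1}(X)_{\R}$, and then to deduce (2) by a stability argument for the non-lc locus in the style of \cite{BH14,BG17}.

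\emph{Step 1 (equivariant reduction).} Replacing $f$ by an iterate, every $f^{-1}$-periodic prime divisor becomes $f^{-1}$-totally invariant, so all prime divisors that can occur in any $(f^{i})^{*}R_{\Delta}$ lie in a fixed $f$-totally-invariant reduced divisor $\Sigma$; after one further iteration one also arranges that $(X,\Delta+\frac{R_{\Delta}}{q-1})$ is again an $f$-pair, with $R$-part supported on $\Sigma$. If $X$ is $\Q$-factorial there is nothing to do. If $X$ is of klt type, take an equivariant small $\Q$-factorialization $\pi\colon X'\to X$ to which a further iterate of $f$ lifts (using that MMP-type operations descend along polarized endomorphisms after iteration, as in \cite{M23}); since $\pi$ is small and crepant, $\Delta':=\pi^{*}\Delta$ makes $(X',\Delta')$ the corresponding $f$-pair with $\pi^{*}R_{\Delta}=R_{\Delta'}$ and $\pi^{*}\bigl(K_X+\Delta+\frac{R_{\Delta}}{q-1}\bigr)=K_{X'}+\Delta'+\frac{R_{\Delta'}}{q-1}$, so both conclusions on $X'$ descend to $X$. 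Thus we may assume $X$ is $\Q$-factorial.

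\emph{Step 2 (proof of (1) when $X$ is $\Q$-factorial).} Then $K_X+\Delta$, and hence $R_\Delta=(K_X+\Delta)-f^{*}(K_X+\Delta)$ and $N:=K_X+\Delta+\frac{R_\Delta}{q-1}$, are $\Q$-Cartier. Since $f$ is $q$-polarized, $f^{*}$ acts on $N^{1}(X)_{\R}$ as multiplication by $q$, so $f^{*}(K_X+\Delta)\equiv q(K_X+\Delta)$; combining this with the $f$-pair identity $f^{*}(K_X+\Delta)=(K_X+\Delta)-R_\Delta$ gives $R_\Delta\equiv-(q-1)(K_X+\Delta)$, hence $N\equiv(K_X+\Delta)-(K_X+\Delta)=0$. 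This proves (1).

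\emph{Step 3 (proof of (2)).} Write $B:=\Delta+\frac{R_\Delta}{q-1}$; by Step 2, $K_X+B$ is $\Q$-Cartier and numerically trivial, and by Step 1 $(X,B)$ is an $f$-pair with $R_B$ supported on $\Sigma$. From $f^{*}K_X=K_X-R_f$ and $R_B\ge 0$ one gets $f^{*}(K_X+B)=K_X+B''$ with $0\le B''=B-R_B\le B$, so $f^{-1}(\Nlc(X,B))\subseteq\Nlc(X,B'')\subseteq\Nlc(X,B)$; iterating once more, $f^{-1}$ fixes $W:=\Nlc(X,B)$ and $f$ permutes its lc and non-lc centres. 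If $(X,B)$ were not log canonical, extract a minimal non-lc centre $Z\subseteq W$ by the usual perturbation and tie-breaking, and (after a further iterate, so $f(Z)=Z$) restrict: $f|_Z$ is a polarized endomorphism of $Z$ since $(f^{*}H)|_Z\sim q(H|_Z)$. Applying adjunction along $Z$ one is reduced to a polarized endomorphism of $Z$ whose associated different is forced to be "too positive", contradicting that a variety carrying a polarized endomorphism is log canonical \cite{BH14}; the rationally connected and low-dimensional instances run exactly as in \cite{BG17,M23}. Hence $(X,B)$ is log canonical.

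\emph{Main obstacle.} The genuine difficulty is Step 1 when $X$ is not $\Q$-Gorenstein: by \cite{BH14} such an $X$ is only known to be log canonical, not of klt type, so it need not admit any small $\Q$-factorialization, and passing to a $\Q$-factorial dlt model introduces exceptional divisors that destroy the crepant bookkeeping used above. Proving directly that $K_X+\Delta+\frac{R_\Delta}{q-1}$ is $\Q$-Cartier — the natural attempt being to realize the "canonical-metric" limit $\lim_{s\to\infty}\frac{1}{q^{s}-1}\sum_{i=0}^{s-1}(f^{i})^{*}R_\Delta$ as an honest effective $\Q$-divisor $\Theta$ (finiteness of its support being exactly the total invariance from Step 1) and then descending $K_X+\Delta+\Theta$ from a common $\Q$-factorial model — is where I expect genuinely new input to be required. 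Once $\Q$-Cartierness is granted, the numerical-triviality and log-canonicity statements are comparatively soft, by Steps 2 and 3.
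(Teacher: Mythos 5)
This statement is a \emph{conjecture} in the paper (Conjecture \ref{conj Delta}); the paper does not prove it in general, only special cases (Theorems \ref{thm:Delta+R/q-1 leq 1}, \ref{thm:(X,Delta) lc}, \ref{thm:polarized-inversion-of-adjunction}, \ref{thm:klt to Picard 1}, Corollary \ref{cor:numerically trivial induction}, Theorem \ref{thm: conj delta surface}). Your argument, if correct, would settle it outright, and indeed it breaks down exactly at the known hard points.

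The fatal error is in Step 2: a $q$-polarized endomorphism satisfies $f^*H\sim qH$ for \emph{one} ample $H$; it is not true, even after iteration, that $f^*$ acts as multiplication by $q$ on all of $N^1(X)_{\R}$ (only that all eigenvalues of $f^*|_{N^1(X)}$ have modulus $q$; non-real eigenvalues occur already for abelian surfaces with complex multiplication, and for surfaces the scalar action is a special theorem of Zhang for uniruled $X$, quoted in the paper's proof of Theorem \ref{thm: conj delta surface}). The assertion $f^*(K_X+\Delta)\equiv q(K_X+\Delta)$ is \emph{equivalent} to part (1) of the conjecture (cf.\ Remark \ref{rem:equiv conj}(2)), so your ``one-line computation'' assumes what is to be proved; the paper obtains it only in dimension $\le 3$ via an equivariant-MMP induction (Corollary \ref{cor:numerically trivial induction}), reducing to Mori fibrations over a base of dimension $\le 1$ where the scalar action does hold.

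Steps 1 and 3 also contain false claims. The supports of $(f^i)^*R_\Delta$ are $f^{-i}(\Supp R_\Delta)$, which grow with $i$ and are not contained in any fixed totally invariant divisor $\Sigma$ (only the $f^{-1}$-periodic components stabilize; the non-periodic coefficients merely tend to $0$, as in Theorem \ref{Delta leq 1}(2)). Moreover $B=\Delta+\frac{R_\Delta}{q-1}$ is in general \emph{not} an $f$-pair: one computes $R_B=\frac{qR_\Delta-f^*R_\Delta}{q-1}$, which need not be effective (take a prime divisor $Q$ with $\mu_Q R_\Delta=0$ but $\mu_{f(Q)}R_\Delta>0$). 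Hence the periodicity of $\Nlc(X,B)$ and the Broustet--H\"oring degree argument do not apply to $(X,B)$; the paper's Theorem \ref{thm:(X,Delta) lc} only treats $(X,\Delta)$ itself, and its partial progress on (2) requires the inversion-of-adjunction machinery of Section \ref{sec:ramification-subadj} together with Birkar's connectedness of non-klt loci. Your ``Main obstacle'' correctly identifies the $\Q$-Cartier issue in the non-$\Q$-Gorenstein case, but numerical triviality and log canonicity are not ``comparatively soft'' afterwards --- they are the remaining open content of the conjecture.
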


 \begin{remark}\label{rem:equiv conj}
 $ $
 \begin{enumerate}
%     \item 
%      Conjecture \ref{conj Delta} holds for curve pairs; see Proof of Theorem ??
      \item
   Conjecture \ref{conj Delta} implies Gongyo 
     Conjecture \ref{conj Gongyo} by taking $\Delta=0$.
   
   \item 
   %Conjecture \ref{conj Delta} (1) is equivalent to 
  As in Proposition \ref{prop:mori-fiber-lcy} $(*)$, $f^*(K_X+\Delta) - q(K_X+\Delta) \sim_{\Q} (q-1)(K_X+\Delta+\frac{R_{\Delta}}{q-1})$. Conjecture \ref{conj Delta} (1) implies $f^*K_X - qK_X \equiv 0$ (applied to the $f$-pair $(X, 0)$).

   \item 
   Conjecture \ref{conj Delta} is equivalent to $(X,\Delta + \frac{R_{\Delta}}{q-1})$ being log Calabi-Yau by the abundance \cite[Theorem 1.2]{G13}. In particular,  
   $f^*(K_X+\Delta)\sim_{\Q}q(K_X+\Delta)$ from (2) above.
   \end{enumerate}
 \end{remark}

As a first step toward Conjecture \ref{conj Delta}, we prove the following two results. 

\begin{theorem}[Part of Theorem \ref{Delta leq 1}]\label{thm:Delta+R/q-1 leq 1}
Let $f: X \to X$ be a $q$-polarized endomorphism of a normal projective variety and $\Delta$ an effective $\Q$-divisor such that $(X,\Delta)$ is an $f$-pair. Then, after iteration of $f$, $\Delta + \frac{R_{\Delta}}{q-1} \leq 1$, i.e., all the coefficients of the divisor are $\le 1$. In particular, Conjecture \ref{conj Delta} holds for curve pairs. 
\end{theorem}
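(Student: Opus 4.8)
The plan is to work divisor-by-divisor on the prime components, so it suffices to bound the coefficient of an arbitrary prime divisor $P$ in $\Delta + \frac{R_\Delta}{q-1}$ after replacing $f$ by an iterate. Write $a := \mathrm{coeff}_P(\Delta)$ and $r := \mathrm{coeff}_P(R_\Delta)$, so the target is $a + \frac{r}{q-1} \le 1$, i.e. $(q-1)a + r \le q-1$. Because $(X,\Delta)$ is log canonical (by Broustet--H\"oring, since the $f$-pair relation forces $K_X+\Delta$ to be, up to the numerical issues, controlled; more precisely one uses that $(X,\Delta)$ with $\Delta$ effective and $K_X+\Delta$ pulled back by $f$ is lc as in the cited results), we already have $a \le 1$. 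So the delicate case is when $a$ is close to $1$ and we must show $r$ is correspondingly small, eventually forced to $0$ or balanced, after iteration.

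The key computation is the behaviour of coefficients under $f$. For a prime divisor $P$, decompose $f^{-1}(P) = \sum_i e_i Q_i$ where $e_i$ is the ramification index of $f$ along $Q_i$; then $\mathrm{coeff}_{Q_i}(f^*D) = e_i\,\mathrm{coeff}_{f(Q_i)}(D)$ and $\mathrm{coeff}_{Q_i}(R_f) = e_i - 1$. First I would run the standard dynamical dichotomy: either $P$ is $f^{-1}$-periodic, or the forward orbit $P, f(P), f^2(P),\dots$ eventually leaves the (finite) set of components of $\Delta + R_f$ and lands in divisors not appearing there. In the latter (non-periodic) case, after enough iterations $P$ is not a component of $f^*\Delta$ nor, tracing backwards, does it contribute, and a telescoping of the relation $R_\Delta = \Delta + R_f - f^*\Delta$ along the orbit kills the bad coefficient; one shows $\mathrm{coeff}_P(\Delta + \tfrac{R_\Delta}{q-1})$ becomes $\le 1$ (in fact one gets good control, often $=0$ in $\Delta$ and the $R_\Delta$ part absorbed). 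In the periodic case, replace $f$ by the iterate for which $f^{-1}(P)=P$ with a single ramification index $e$; then the $(*)$-relation restricted to the coefficient of $P$ gives $a + (e-1) - e\,a = r$, i.e. $r = (e-1)(1-a)$, hence $a + \tfrac{r}{q-1} = a + \tfrac{(e-1)(1-a)}{q-1}$. Since $e \le q$ (the ramification index along a totally invariant prime divisor of a $q$-polarized map is bounded by $q$ — this is where polarization enters, via $f^*H\sim qH$ and intersecting with a curve meeting $P$), we get $a + \tfrac{(e-1)(1-a)}{q-1} \le a + (1-a) = 1$, with equality only if $e=q$. Either way the coefficient is $\le 1$.

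The main obstacle I expect is twofold: first, making the non-periodic orbit telescoping rigorous — one must be careful that the $e_i$'s occurring along a backward orbit can grow, and that summing the geometric-type series $\sum R_\Delta$-contributions divided by $(q-1)$ genuinely converges/bounds correctly rather than blowing up; the clean way is probably to use that $(X,\Delta)$ is already lc so $a\le 1$ and then only track how $R_\Delta$ accumulates under $f^*$, showing its $P$-coefficient stabilizes. Second, and more essential, is establishing the bound $e \le q$ on ramification indices along $f^{-1}$-periodic prime divisors: this should follow from comparing $f^*H \sim qH$ with the pullback of $H$ restricted near $P$, but care is needed when $X$ is singular along $P$ and $H$ is only Cartier (the stated hypothesis), so one works with the restriction to a general complete intersection curve $C$ with $C\cdot P > 0$ and uses $\deg(f|_{f^{-1}C}) $ together with the local ramification; alternatively invoke the known structure of totally invariant divisors for polarized endomorphisms. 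Once $a \le 1$ and $e \le q$ are in hand, the inequality $a + \frac{r}{q-1} \le 1$ is immediate in the periodic case and a short accounting in the non-periodic case. Finally, for curve pairs: a prime divisor on a curve is a point, every point is preperiodic-or-wandering, iterating makes all relevant points totally invariant, and the displayed inequality gives $\Delta + \frac{R_\Delta}{q-1} \le 1$; combined with $-K_X$ effective (so $\deg(K_X+\Delta+\tfrac{R_\Delta}{q-1})$ is forced to be $0$) one gets that $(X,\Delta+\tfrac{R_\Delta}{q-1})$ is log Calabi-Yau, establishing Conjecture \ref{conj Delta} in dimension one.
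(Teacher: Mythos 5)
Your overall strategy coincides with the paper's: split according to whether the prime divisor $P$ is $f^{-1}$-periodic or not, and in the periodic case use the ramification‐index bound $e \le q$ coming from polarization. The periodic case is handled essentially the same way, except the paper uses the \emph{equality} $e = q$ (from \cite[Theorem 3.1]{M23}, cited here as Theorem~\ref{prop:Rf leq q-1}: $\mu_P R_f = q-1$ iff $P$ is $f^{-1}$-periodic), which gives $\mu_P(\Delta + \frac{R_\Delta}{q-1}) = 1$ exactly, with no need to invoke $\mu_P\Delta \le 1$ first. Your fallback to ``$(X,\Delta)$ is lc, hence $a \le 1$'' is also not available in the stated generality, since $K_X+\Delta$ is not assumed $\Q$-Cartier; the paper instead establishes $\Delta \le 1$ through the purely combinatorial part~(1) of Theorem~\ref{Delta leq 1}.

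The real gap is in the non-periodic case, which you yourself flag (``one must be careful that the $e_i$'s along a backward orbit can grow, and that the series genuinely converges rather than blowing up''). Your telescoping sketch does not close; in particular, the claim that ``after enough iterations $P$ is not a component of $f^*\Delta$'' is unjustified, since the forward orbit of a non-$f^{-1}$-periodic $P$ can remain forever inside the finite support of $\Delta$. What you are missing is the quantitative input: by Theorem~\ref{prop:Rf leq q-1}, a \emph{non}-$f^{-1}$-periodic prime divisor has ramification index $\le q-1$ (strict), and if $P$ is not $f^{-1}$-periodic then neither is any $f^k(P)$. Hence after replacing $f$ by $f^s$, the compound ramification index satisfies $r_{P,s} = \prod_{k=0}^{s-1} r_{f^k(P)} \le (q-1)^s$, while the normalizing factor is $q^s - 1$. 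From $\mu_P R_{\Delta,s} < r_{P,s}$ (as you compute), one gets
\[
\mu_P\frac{R_{\Delta,s}}{q^s - 1} \;\le\; \frac{(q-1)^s}{q^s - 1} \;\longrightarrow\; 0,
\]
uniformly over all non-periodic $P$. Combined with $\mu_P \Delta \le \alpha < 1$ for a fixed $\alpha$ (the maximal coefficient of $\Delta$ strictly below $1$), a single iterate $f^s$ makes $\mu_P(\Delta + \frac{R_{\Delta,s}}{q^s-1}) < 1$ for all non-periodic $P$ simultaneously. This exponential decay bound is the step your argument lacks; without it the ``telescoping'' does not converge and the proof does not close.
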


 \begin{theorem}\label{thm:(X,Delta) lc}
Let $f: X \to X$ be an int-amplified endomorphism (cf.~Convention \ref{notation}(5))
of a normal projective variety, and $\Delta$ an effective $\Q$-divisor such that $(X,\Delta)$ is an $f$-pair and $K_X+\Delta$ is $\Q$-Cartier. Then $(X,\Delta)$ is log canonical.
\end{theorem}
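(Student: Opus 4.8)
The plan is to show that an int-amplified endomorphism forces the pair $(X,\Delta)$ to be log canonical, by using the $f$-pair relation $(*)$ to compare log discrepancies over a fixed exceptional divisor and then ``amplifying'' any potential violation of log canonicity until it becomes impossible. Since $f$ is int-amplified, by \cite{Me20} (or the structural results for such endomorphisms) $f^*L - L$ is ample for some, hence every, ample $L$ after scaling, and the eigenvalues of $f^*$ on $\mathrm{N}^1(X)$ all have absolute value $>1$; in particular $\deg f > 1$ and $f$ is finite. The key observation is that since $(X,\Delta)$ is an $f$-pair with $K_X+\Delta$ $\Q$-Cartier, the relation $K_X+\Delta = f^*(K_X+\Delta)+R_\Delta$ with $R_\Delta\ge 0$ can be iterated: $K_X+\Delta = (f^n)^*(K_X+\Delta) + R_{\Delta,n}$ with $R_{\Delta,n} = \sum_{i=0}^{n-1}(f^i)^*R_\Delta \ge 0$.

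\smallskip

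First I would reduce to bounding the coefficients of $\Delta$ themselves: by Theorem~\ref{thm:Delta+R/q-1 leq 1} (valid since an int-amplified endomorphism is in particular the relevant type after iteration, or by the analogous elementary argument for $R_\Delta$), we already know $\Delta\le 1$ after iteration, so $\Delta$ has standard-coefficient-range support and log canonicity can only fail at an exceptional valuation. So let $E$ be a prime divisor over $X$ with center $Z=c_X(E)$, realized on some log resolution or normalized blowup $\pi\colon Y\to X$, and suppose for contradiction $a(E;X,\Delta) < -1$, i.e., the log discrepancy $A(E;X,\Delta)\le 0$ with strict inequality, or even just $A\le 0$. The next step is the lifting: because $f$ is finite and surjective, for any prime divisor $E$ over $X$ there is a prime divisor $E'$ over $X$ (over a component of $f^{-1}(Z)$) dominated by $E$ under the induced rational map, and the ramification formula together with $(*)$ gives the transformation rule for log discrepancies
\[
A(E;X,\Delta) = r\cdot A(E';X,\Delta) - \big(\text{coefficient of }E'\text{ in the pullback of }R_\Delta\big),
\]
where $r$ is the ramification index of $f$ along $E'\mapsto E$; the crucial point is $r\ge 1$ and the subtracted term is $\ge 0$, so $A(E';X,\Delta) \ge A(E;X,\Delta)$ and in fact, iterating, one produces divisors $E_n$ over $X$ with $A(E_n;X,\Delta)$ forming a bounded-below (by some fixed constant) but also controlled sequence. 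The standard trick here: if $A(E;X,\Delta)<0$ then, because the centers $c_X(E_n)$ lie in the totally-periodic-free locus and get ``spread out'' by the int-amplified dynamics while the discrepancy can only weakly increase, one obtains infinitely many distinct exceptional divisors all with log discrepancy $\le 0$; but by ACC / Noetherian arguments (or simply because the non-lc locus is closed and $f^{-1}$ preserves it), this contradicts a finiteness statement — concretely, a $\Q$-Cartier pair has only finitely many divisors of log discrepancy $\le 0$ up to the behavior of a fixed resolution, or one invokes that the non-lc ideal sheaf would have to pull back to itself under $f^*$ in a way incompatible with $f^*H\equiv qH$-type scaling.

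\smallskip

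A cleaner route, which I would try first, uses Shokurov's/Kawamata's characterization via the non-lc locus $\Nlc(X,\Delta)$: the relation $(*)$ shows $f^{-1}(\Nlc(X,\Delta)) \subseteq \Nlc(X,\Delta)$ as a set (pulling back a non-lc pair along a finite map keeps it non-lc, and $R_\Delta\ge 0$ only helps), so $\Nlc(X,\Delta)$ is an $f^{-1}$-invariant closed subset. Then invoke \cite[Theorem 1.3]{BH14} in the relative/log form: Broustet--H\"oring's argument shows $(X,0)$, hence the existence of a totally invariant lc structure, is incompatible with a nonempty non-klt-nonlc locus being totally invariant under a polarized (or int-amplified, after the generalization of their result) endomorphism — more precisely, one runs their argument on the locus $\Nlc(X,\Delta)$, which would have to admit an int-amplified self-map yet carries a ``more-than-log-canonical'' singularity, violating the general fact (Broustet--H\"oring, or Wahl-type inequalities) that totally invariant loci of int-amplified maps have at worst log canonical singularities for the natural boundary.

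\smallskip

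I expect the main obstacle to be making the lifting of log discrepancies along $f$ precise for \emph{Weil} (not $\Q$-Cartier) pullbacks as used in Definition~\ref{def:f-pair}: the identity $(*)$ holds as Weil divisors, but to talk about discrepancies of exceptional $E$ one needs $K_X+\Delta$ to be $\Q$-Cartier (which is assumed) and one must check the ramification formula for $f$ interacts correctly with a log resolution of $(X,\Delta)$, i.e., choosing compatible resolutions $Y\to X$ upstairs and downstairs so that $f$ lifts to $g\colon Y\to Y$ (possibly after further blowup and passing to an iterate, using that $f$ is int-amplified so equivariant resolution is available in the relevant sense). The technical heart is the inequality $A(E';X,\Delta)\ge A(E;X,\Delta)$ with the ramification index and the $R_\Delta$-correction bookkeeping done correctly; once that monotonicity is in hand together with the fact that int-amplified dynamics cannot have a bounded forward-invariant collection of exceptional divisors of non-positive log discrepancy without a genuine lc structure, the contradiction closes. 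If the equivariant-resolution step is troublesome, the fallback is the $\Nlc$-invariance argument above combined with induction on dimension applied to the normalization of $\Nlc(X,\Delta)$, on which $f$ restricts to an int-amplified endomorphism.
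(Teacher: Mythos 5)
Your "cleaner route" (second paragraph) is pointed in the right direction — the paper's proof indeed starts by noting that the irreducible components of $\Nlc(X,\Delta)$ are $f^{-1}$-periodic and then runs a Broustet--H\"oring style argument on a totally invariant non-lc component $Z$ — but you do not articulate the actual mechanism of contradiction, and the mechanism you do gesture at is circular. You say that having a totally invariant non-lc locus "violat[es] the general fact that totally invariant loci of int-amplified maps have at worst log canonical singularities," but that is essentially the statement we are trying to prove. The real engine in the paper is a \emph{degree comparison}: once one knows $f^{-1}(Z)=Z$, one shows (Proposition \ref{Nlc R_f}, via a log resolution argument) that $Z \not\subseteq \Supp R_\Delta$; then, using the lc model lifting (Lemma \ref{lc model}) as in the proof of \cite[Theorem 1.4]{BH14}, one deduces $\deg(f|_Z) = \deg f$. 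That last equality is then shown to be impossible by the purely dynamical fact \cite[Lemma 3.11]{M20} that a proper totally $f^{-1}$-invariant subvariety of an int-amplified endomorphism satisfies $\deg(f|_Z) < \deg f$. None of these ingredients — the $Z \not\subseteq \Supp R_\Delta$ claim, the lc model lifting, the $\deg(f|_Z)=\deg f$ computation, or the Meng degree inequality that supplies the contradiction — appear in your sketch. Your fallback (induction on dimension applied to the normalization of $\Nlc$) is also not what the paper does and would require additional work to set up, since the restricted pair structure on the non-lc locus is not readily controlled.

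Your first route (lifting log discrepancies and producing infinitely many divisors of non-positive log discrepancy) does not close either. With the $f$-pair relation rewritten as $K_X + (\Delta - R_\Delta) = f^*(K_X+\Delta)$, the finite-map discrepancy formula gives $A(E';X,\Delta - R_\Delta) = r\,A(E;X,\Delta)$ for $E'$ over the source dominating $E$ over the target, hence $A(E';X,\Delta) \ge \tfrac{1}{r} A(E;X,\Delta)$: the log discrepancy \emph{increases} (becomes less negative) as you pull back, which runs against the infinite descent you want. And even setting aside the direction, "infinitely many divisors of log discrepancy $\le 0$" is not a contradiction — any non-lc pair already has infinitely many such. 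The Noetherian/ACC appeal is doing no work there. So there is a genuine gap in both routes: the proposal does not locate the actual contradiction.
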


\begin{remark}
    Broustet-H\"oring \cite[Theorem 1.4]{BH14} proved Theorem \ref{thm:(X,Delta) lc}, assuming $f$ is polarized and $\Delta$ is $f^{-1}$-invariant and reduced. Our proof follows their idea. After finishing the proof of Theorem \ref{thm:(X,Delta) lc}, we noticed that they remarked (though with no proof given) ``Our proof (of \cite[Theorem 1.4]{BH14})  actually works more generally for log pairs $(X,\Delta)$ such that $K_X + \Delta$ is $\Q$-Cartier and a logarithmic ramification formula holds (for polarized $f$).''

%claimed the general case (with no restriction on $\Delta$) is true, though omit the proof.
\end{remark}

\begin{corollary}\label{cor:quasi-'etale lcy}
In the setting of Theorem \ref{thm:(X,Delta) lc}, if $R_{\Delta}=0 $, then $(X,\Delta)$ is log Calabi-Yau.
\end{corollary}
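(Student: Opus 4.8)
The plan is to deduce this directly from Theorem \ref{thm:(X,Delta) lc} together with the hypothesis $R_\Delta = 0$. First I would observe that the $f$-pair identity $(*)$ in Definition \ref{def:f-pair}, namely $K_X + \Delta = f^*(K_X+\Delta) + R_\Delta$, reduces under $R_\Delta = 0$ to $K_X + \Delta = f^*(K_X+\Delta)$ as Weil divisors. Since $K_X + \Delta$ is assumed $\Q$-Cartier, this is a genuine linear-equivalence-type relation: more precisely, for $m$ sufficiently divisible, $m(K_X+\Delta)$ is Cartier and $f^*(m(K_X+\Delta)) = m(K_X+\Delta)$ as divisor classes. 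Taking degrees against a curve, or intersecting with an ample class $H$ (using $f^* H \equiv qH$ up to the int-amplified refinement), forces $(K_X+\Delta)\cdot C = (f^*(K_X+\Delta))\cdot C$; but by the projection formula $(f^*(K_X+\Delta))\cdot C = \deg(f)\cdot(K_X+\Delta)\cdot (f_* C / \deg f)$... — the cleaner route is to note that an int-amplified (in particular a polarized) endomorphism eventually contracts the numerical class of any divisor toward $0$ unless that class is already numerically trivial. Concretely, the pullback $f^*$ acts on $\N^1(X)_\R$ with all eigenvalues of modulus $>1$ (this is the defining feature of int-amplified maps, Convention \ref{notation}(5)); the relation $f^*(K_X+\Delta) = K_X+\Delta$ in $\N^1(X)_\R$ then shows $K_X+\Delta$ is an eigenvector with eigenvalue $1$, which is impossible unless $K_X+\Delta \equiv 0$.

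Once $K_X + \Delta \equiv 0$ (numerically trivial) is established, I would invoke log abundance: by Theorem \ref{thm:(X,Delta) lc} the pair $(X,\Delta)$ is log canonical, and a numerically trivial log canonical pair with $K_X+\Delta$ $\Q$-Cartier satisfies $K_X+\Delta \sim_\Q 0$ by the abundance result cited in the excerpt (Remark \ref{rem:equiv conj}(3) points to \cite{G13} for exactly this implication in the lc setting; see also \cite[Theorem 1.2]{G13}). This gives both conditions in the definition of log Calabi-Yau: $(X,\Delta)$ lc and $K_X+\Delta \sim_\Q 0$.

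I would structure the write-up as: (i) cite Theorem \ref{thm:(X,Delta) lc} for the lc statement; (ii) use $R_\Delta = 0$ and the eigenvalue property of $f^*$ on $\N^1(X)_\R$ to conclude $K_X+\Delta \equiv 0$; (iii) apply abundance for lc pairs to upgrade $\equiv 0$ to $\sim_\Q 0$. The only subtlety — and the step I would be most careful about — is step (ii): one must make sure the relation $K_X+\Delta = f^*(K_X+\Delta)$ of Weil divisors actually descends to an \emph{equality of numerical classes} of the $\Q$-Cartier divisor $K_X+\Delta$, not merely a $\Q$-linear equivalence with some correction supported on $\Exc$ or on non-$\Q$-Cartier loci; since $R_\Delta$ is literally $0$ (not just $\Q$-linearly trivial), there is no correction term, so this goes through cleanly, but it is worth stating explicitly that the pullback of the $\Q$-Cartier divisor $K_X+\Delta$ and the Weil-divisor pullback agree here, which holds because both restrict to the same thing on the regular locus whose complement has codimension $\ge 2$.
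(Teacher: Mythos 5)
Your proposal matches the paper's proof essentially step for step: $R_\Delta=0$ gives $f^*(K_X+\Delta)=K_X+\Delta$, the spectral property of $f^*$ on $\N^1(X)$ forces $K_X+\Delta\equiv 0$, and Gongyo's abundance upgrades this to $K_X+\Delta\sim_\Q 0$, while log canonicity is already Theorem \ref{thm:(X,Delta) lc}. One small inaccuracy: the statement that every eigenvalue of $f^*$ on $\N^1(X)_\R$ has modulus $>1$ is not the \emph{definition} of int-amplified (Convention \ref{notation}(5) defines it by $f^*H-H$ ample) but a theorem of Meng--Zhang, which the paper cites as \cite[Theorem 1.1]{MZ20}.
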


%From the point of view of adjunction, 
We have two methods of induction for Conjecture \ref{conj Delta}. The first is by inversion of adjunction.

\begin{theorem}\label{thm:polarized-inversion-of-adjunction}
    Let $f: X \to X$ be a $q$-polarized endomorphism of a normal projective variety and $\Delta$ an effective $\Q$-divisor such that $K_X+\Delta$ is $\Q$-Cartier and $(X,\Delta)$ is an $f$-pair. Let $Z$ be a log canonical center of $(X,\Delta)$ satisfying $f^{-1}(Z)=Z$. Let $(K_X+\Delta)|_{\widetilde{Z}}\sim_{\Q}K_{\widetilde{Z}}+\Delta_{\widetilde{Z}}+M_{\widetilde{Z}}$ be the subadjunction formula as in Definition \ref{def:subadjunction}, where $\widetilde{Z}$ is the normalization of $Z$. Then we have:
    \begin{enumerate}
        \item
$(\widetilde{Z},\Delta_{\widetilde{Z}})$ is an $f|_{\widetilde{Z}}$-pair (and $f|_{\widetilde{Z}}$ is $q$-polarized), with $R_{\Delta_{\widetilde{Z}}}=R_{\Delta}|_{\widetilde{Z}}$.
\item
$Z$ is also an lc center of $(X,\Delta+\frac{R_{\Delta}}{q-1})$. Suppose $M_{\widetilde{Z}}$ is $\Q$-Cartier. Then $M_{\widetilde{Z}}\sim_{\Q} 0$. Further, $(K_{\widetilde{Z}},\Delta_{\widetilde{Z}}+\frac{R_{\Delta}|_{\widetilde{Z}}}{q-1})$ is log canonical if and only if $(X,\Delta+\frac{R_{\Delta}}{q-1})$ is lc near $Z$.
    \end{enumerate}
\end{theorem}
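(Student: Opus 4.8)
\emph{Induced dynamics.} The plan is to push the $f$-pair relation $(*)$ from $X$ down to $\widetilde Z$ through the subadjunction formula, and then to run a polarization/eigenvalue argument on $\widetilde Z$. Since $f^{-1}(Z)=Z$, the map $f$ restricts to a finite surjective $f|_Z\colon Z\to Z$, which lifts through the normalization $\nu\colon\widetilde Z\to Z\hookrightarrow X$ (by its universal property) to a finite surjective $f|_{\widetilde Z}\colon\widetilde Z\to\widetilde Z$ with $\nu\circ f|_{\widetilde Z}=f\circ\nu$. Pulling back the polarization, $(f|_{\widetilde Z})^*\nu^*(H|_Z)\sim q\,\nu^*(H|_Z)$ with $\nu^*(H|_Z)$ ample, so $f|_{\widetilde Z}$ is $q$-polarized, hence int-amplified. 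The statement is then a question about how $\Delta_{\widetilde Z}$ (the boundary/different part) and $M_{\widetilde Z}$ (the moduli part) of the subadjunction of $(X,\Delta)$ along $Z$ (Definition \ref{def:subadjunction}) behave under $f$.

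\emph{Part (1).} The inputs are: (i) $R_\Delta=(K_X+\Delta)-f^*(K_X+\Delta)$ is $\Q$-Cartier (a difference of $\Q$-Cartier divisors, even though $K_X$ and $R_f$ need not be), and $(*)$ reads $f^*(K_X+\Delta)=K_X+(\Delta-R_\Delta)$; (ii) subadjunction is compatible with finite crepant morphisms — the different is additive under $\Q$-Cartier shifts (Koll\'ar) and the moduli b-divisor is functorial under base change (Ambro; Fujino--Gongyo). Applying (ii) to $f$ itself, the subadjunction of $(X,\Delta-R_\Delta)$ along $Z$ has boundary part $\Delta_{\widetilde Z}-R_\Delta|_{\widetilde Z}$ and moduli part equal both to $(f|_{\widetilde Z})^*M_{\widetilde Z}$ (base change by $f$) and to $M_{\widetilde Z}$ (shifting the boundary by the vertical divisor $R_\Delta$ leaves the moduli part alone — here one needs $Z\not\subseteq\Supp R_\Delta$, which comes from a local ramification computation along $Z$ using that $\Delta$ has coefficients $\le 1$, cf.\ Theorem \ref{thm:Delta+R/q-1 leq 1}). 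Hence $(f|_{\widetilde Z})^*M_{\widetilde Z}\sim_{\Q}M_{\widetilde Z}$, and equating the two descriptions of $(K_X+\Delta-R_\Delta)|_{\widetilde Z}=(f|_{\widetilde Z})^*(K_X+\Delta)|_{\widetilde Z}$ and cancelling $M_{\widetilde Z}$ and $K_{\widetilde Z}$ gives
\[
\Delta_{\widetilde Z}+R_{f|_{\widetilde Z}}-(f|_{\widetilde Z})^*\Delta_{\widetilde Z}=R_\Delta|_{\widetilde Z}\ \ge\ 0,
\]
i.e.\ $R_{\Delta_{\widetilde Z}}=R_\Delta|_{\widetilde Z}$ and $(\widetilde Z,\Delta_{\widetilde Z})$ is an $f|_{\widetilde Z}$-pair; the $q$-polarization of $f|_{\widetilde Z}$ was noted above.

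\emph{Part (2).} Suppose $M_{\widetilde Z}$ is $\Q$-Cartier. By Part (1) its class in $\Pic(\widetilde Z)_{\Q}$ is fixed by $(f|_{\widetilde Z})^*$; since $f|_{\widetilde Z}$ is int-amplified, no eigenvalue of $(f|_{\widetilde Z})^*$ on $\NS(\widetilde Z)_{\R}$ has modulus $\le 1$, forcing $M_{\widetilde Z}\equiv 0$, and on $\Pic^0(\widetilde Z)$ the eigenvalues of $(f|_{\widetilde Z})^*$ have modulus $\sqrt q>1$, so a numerically trivial fixed class is $\sim_{\Q}0$. Thus $M_{\widetilde Z}\sim_{\Q}0$ and $(K_X+\Delta)|_{\widetilde Z}\sim_{\Q}K_{\widetilde Z}+\Delta_{\widetilde Z}$. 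That $Z$ is still an lc center of $(X,\Delta+\frac{R_\Delta}{q-1})$ follows again from $Z\not\subseteq\Supp R_\Delta$: an lc place $E$ of $(X,\Delta)$ over $Z$ has multiplicity $0$ in the pullback of $R_\Delta$, so $a(E;X,\Delta+\frac{R_\Delta}{q-1})=-1$. For the equivalence, by different-additivity the subadjunction of $(X,\Delta+\frac{R_\Delta}{q-1})$ along $Z$ has boundary part $\Delta_{\widetilde Z}+\frac{R_\Delta|_{\widetilde Z}}{q-1}$ and moduli part $M_{\widetilde Z}\sim_{\Q}0$; now pass to a dlt modification of $(X,\Delta)$ on which $Z$ is dominated by a divisorial lc stratum $S$ with a contraction $g\colon S\to\widetilde Z$. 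By the classical divisorial inversion of adjunction (Kawakita), log canonicity of $(X,\Delta+\frac{R_\Delta}{q-1})$ near $Z$ is equivalent to that of the adjunction pair on $S$; and since $M_{\widetilde Z}\sim_{\Q}0$ makes $g$ an lc-trivial fibration with trivial moduli part, the latter is equivalent to log canonicity of $(\widetilde Z,\Delta_{\widetilde Z}+\frac{R_\Delta|_{\widetilde Z}}{q-1})$ by the discriminant part of the canonical bundle formula for $g$.

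\emph{Main obstacle.} The delicate points are the moduli-part bookkeeping — proving $Z\not\subseteq\Supp R_\Delta$ (so that $R_\Delta$, and likewise $\frac{R_\Delta}{q-1}$, are vertical over $\widetilde Z$ and do not disturb the moduli part) and the functoriality of the moduli b-divisor under the ramified self-map $f$ — together with the higher-codimension inversion of adjunction used for the converse in Part (2). The hypothesis that $M_{\widetilde Z}$ is $\Q$-Cartier is precisely what makes both the eigenvalue argument and the inversion step available.
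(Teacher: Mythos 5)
Your overall strategy matches the paper's: restrict the $f$-pair relation to $\widetilde Z$ through subadjunction, show $(f|_{\widetilde Z})^*M_{\widetilde Z}\sim_{\Q}M_{\widetilde Z}$, kill $M_{\widetilde Z}$ by the eigenvalue/polarization argument, then transfer log canonicity via inversion of adjunction. But there are two places where your sketch skips over genuine content, and one place where you take a different route.

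First, your justification for $Z\nsubseteq\Supp R_{\Delta}$ is off. You attribute it to ``a local ramification computation along $Z$ using that $\Delta$ has coefficients $\le 1$, cf.\ Theorem~\ref{thm:Delta+R/q-1 leq 1}.'' Coefficient bounds on $\Delta$ alone do not give this. The correct input is that $Z$ is an lc \emph{center} of $(X,\Delta)$: since $K_X+\Delta-R_\Delta=f^*(K_X+\Delta)$, \cite[Proposition~5.20]{KM98} forces $(X,\Delta-R_\Delta)$ to have an lc place over $f^{-1}(Z)=Z$. But if $Z\subseteq\Supp R_\Delta$, then $R_\Delta$ being effective and $\Q$-Cartier raises the discrepancy of \emph{every} divisor with center $Z$ strictly above $-1$ for $(X,\Delta-R_\Delta)$, a contradiction. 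This fact is used repeatedly (well-definedness of $R_\Delta|_{\widetilde Z}$, $Z$ being an lc center of the new pair), so it needs the right argument, not the one you cited.

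Second, the functoriality of the moduli b-divisor under the ramified self-map $f$ — which you invoke as ``base change by $f$'' — is not an off-the-shelf statement. What one actually has is Ambro's compatibility for finite base change of lc-trivial fibrations applied to the fibration $T\to\widetilde Z$ (with $T$ an lc place of $(X,\Delta)$ on a suitable resolution), together with the observation that $R_\Delta$ pulls back to something vertical over $\widetilde Z$ only after passing to that fibration; this is what the paper establishes via Lemma~\ref{codim1 adj}, Theorem~\ref{fiber sp adj} and Theorem~\ref{codim>1 adj}. Your phrasing ``shifting the boundary by the vertical divisor $R_\Delta$'' conflates a divisor on $X$ with a divisor vertical over $\widetilde Z$; that identification is precisely the content that needs proof. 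The conclusion $R_{\Delta_{\widetilde Z}}=R_\Delta|_{\widetilde Z}$ and $(f|_{\widetilde Z})^*M_{\widetilde Z}\sim_{\Q}M_{\widetilde Z}$ is correct, but not because subadjunction is ``compatible with finite crepant morphisms'' in a black-box sense.

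Third, for the converse in Part~(2) you pass to a dlt modification with a divisorial stratum $S$ dominating $\widetilde Z$ and appeal to Kawakita plus a canonical bundle formula for $S\to\widetilde Z$. This is a legitimate alternative to the paper's direct citation of Fujino--Hashizume's higher-codimensional inversion of adjunction \cite{FH23} (Definition~\ref{def:subadjunction}(3)), and is essentially a proof sketch of that theorem. It is more elementary in its ingredients but requires care to close: you must check that lc-ness of $(X,\Delta+\tfrac{R_\Delta}{q-1})$ near $Z$ transfers to the dlt model near $S$, and that the moduli part of $S\to\widetilde Z$ remains $M_{\widetilde Z}$. The paper avoids these checks by citing \cite{FH23} and using Lemma~\ref{lemma:moduli part M=0}, plus the single observation that adding the pullback of $\tfrac{R_\Delta|_{\widetilde Z}}{q-1}$ to the boundary does not change the moduli part. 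Your route can be made to work but is longer; the real gap in your write-up is the $Z\nsubseteq\Supp R_\Delta$ justification and the under-argued moduli-part functoriality.
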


\begin{remark}
    Recently, Bakker, Filipazzi, Mauri and Tsimerman \cite[Theorem 1.5]{BFMT25} announced a proof of the b-semiampleness conjecture of Prokhorov--Shokurov, which allows us to remove the condition ``$M_{\widetilde{Z}}$ is $\Q$-Cartier" in Theorem \ref{thm:polarized-inversion-of-adjunction}. Indeed, if $M_{\widetilde{Z}}$ is the push forward of a semiample divisor (thus effective), then $M_{\widetilde{Z}}\sim_{\Q} 0$ since $M_{\widetilde{Z}}\equiv 0$ (weakly numerically) as in Theorem \ref{thm:subadj-ramification}.
\end{remark}

%For klt pair, we provide an induction method to reduce Conjecture \ref{conj Delta} to Picard number $\rho(X)=1$ case. 

%Theorem \ref{thm:klt to Picard 1} below 
The second method is to run log MMP and reduce Conjecture \ref{conj Delta} (for klt pairs) to the Picard number one case. 

 \begin{theorem}\label{thm:klt to Picard 1}
    Suppose that Conjecture \ref{conj Delta} holds for all klt pairs $(X, \Delta)$ with Picard number $\rho(X) = 1$. Then Conjecture \ref{conj Delta} holds for all klt pairs
    $(X,\Delta)$.
\end{theorem}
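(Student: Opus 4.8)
The plan is to establish, for a klt $f$-pair, that the divisor $K_X+\Delta^{+}$ with $\Delta^{+}:=\Delta+\tfrac{R_{\Delta}}{q-1}$ is automatically $\Q$-Cartier and numerically trivial, and then to reduce the remaining log canonicity statement to the two outcomes of an $f$-equivariant minimal model program, the hard end of which is the Mori fiber space case.

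\textbf{Preliminary reductions.} First I would note that since $(X,\Delta)$ is klt, $K_X+\Delta$ is $\Q$-Cartier, hence so is $R_{\Delta}=(K_X+\Delta)-f^{*}(K_X+\Delta)$, and hence so is $K_X+\Delta^{+}$; moreover, since $f$ is $q$-polarized, $f^{*}=q\cdot\id$ on $N^{1}(X)_{\R}$, so $[R_{\Delta}]=(1-q)[K_X+\Delta]$ and $[K_X+\Delta^{+}]=0$. Thus part~(1) of Conjecture~\ref{conj Delta} holds with no iteration, and by abundance \cite[Theorem 1.2]{G13} it remains to prove that $(X,\Delta^{+})$ is log canonical. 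After passing to a $\Q$-factorialization — which, replacing $f$ by an iterate, carries a $q$-polarized endomorphism and is crepant for $(\cdot,\Delta^{+})$ — we may assume $X$ is $\Q$-factorial; and we may assume $R_{\Delta}\neq 0$, since otherwise $(X,\Delta)=(X,\Delta^{+})$ and we are done by Corollary~\ref{cor:quasi-'etale lcy}.

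\textbf{Equivariant MMP and crepant ascent.} Next, because $-(K_X+\Delta)\equiv\tfrac{R_{\Delta}}{q-1}$ is effective and nonzero, $K_X+\Delta$ is not pseudo-effective, so after a further iteration of $f$ I would run an $f$-equivariant $(K_X+\Delta)$-MMP with scaling; by \cite{BCHM} and the descent of polarized endomorphisms along divisorial contractions and flips this terminates with an $f$-equivariant Mori fiber space $\psi\colon X_{m}\to Y$, $\dim Y<\dim X$. (It cannot end at a minimal model: $R_{\Delta}$ meets every $(K_X+\Delta)$-negative extremal ray positively, as $K_X+\Delta^{+}\equiv 0$, so no component of $R_{\Delta}$ is ever contracted, $R_{\Delta_{m}}\neq 0$, and $K_{X_m}+\Delta_m$ is never nef.) Each intermediate $(X_{i},\Delta_{i})$ is again a klt $f_{i}$-pair with $f_{i}$ $q$-polarized and $\Delta_{i}^{+}$ the strict transform of $\Delta^{+}$, so $K_{X_{i}}+\Delta_{i}^{+}\equiv 0$ by the first step. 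The crucial observation is that each step $g_{i}\colon X_{i-1}\ratmap X_{i}$, being $(K_{X_{i-1}}+\Delta_{i-1})$-negative and hence $(K_{X_{i-1}}+\Delta_{i-1}^{+})$-trivial, is \emph{crepant} for $(\cdot,\Delta^{+})$: for a divisorial contraction $K_{X_{i-1}}+\Delta_{i-1}^{+}$ is the pullback of $K_{X_{i}}+\Delta_{i}^{+}$, and for a flip both sides are pullbacks of one $\Q$-divisor on the flipping base. Hence $(X_{i-1},\Delta_{i-1}^{+})$ is log canonical if and only if $(X_{i},\Delta_{i}^{+})$ is, and it suffices to treat $(X_{m},\Delta_{m}^{+})$.

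\textbf{The Mori fiber space case, and the main obstacle.} Finally, if $\dim Y=0$ then $\rho(X_{m})=1$ and $(X_{m},\Delta_{m}^{+})$ is log canonical by the hypothesis of the theorem. The case $\dim Y\geq 1$ is the Mori fiber space case, Proposition~\ref{prop:mori-fiber-lcy}, and I expect this to be the main obstacle: one would like to descend the dimension, either by restricting $(X_{m},\Delta_{m}^{+})$ to an $f_{m}$-periodic fiber — using Zariski density of periodic points of the induced $q$-polarized $f_{Y}\colon Y\to Y$ — and transferring log canonicity up and down by inversion of adjunction (Theorem~\ref{thm:polarized-inversion-of-adjunction}), or by descending to $Y$ via the canonical bundle formula for $\psi$. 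Each route must break a genuine circularity — the canonical bundle formula presupposes the very log canonicity one is after, and a fiber over a periodic point need not be general enough for clean adjunction — and surmounting this is precisely the content of Proposition~\ref{prop:mori-fiber-lcy}. Granting it, the crepant ascent above gives that $(X,\Delta^{+})$ is log canonical, which together with the preliminary reductions completes the proof.
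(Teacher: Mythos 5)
Your opening claim that a $q$-polarized endomorphism acts as $q\cdot\id$ on $N^{1}(X)_{\R}$ is not true, and this is a serious gap. Polarization means $f^{*}H\sim qH$ for \emph{one} ample $H$, and while every eigenvalue of $f^{*}|_{N^{1}(X)}$ has modulus $q$, they need not all equal $q$: for $f(x,y)=(y^{2},x^{2})$ on $\PP^{1}\times\PP^{1}$, $f^{*}$ swaps the two rulings with eigenvalues $\pm 2$. More to the point, the assertion $f^{*}(K_{X}+\Delta)\equiv q(K_{X}+\Delta)$, equivalently the numerical triviality of $K_{X}+\Delta^{+}$, \emph{is} part~(1) of Conjecture~\ref{conj Delta}, which the theorem (together with Corollary~\ref{cor:numerically trivial induction}) is designed to establish --- assuming it at the outset is circular. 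In the paper this numerical triviality only comes out at the terminal Mori fiber space, via the canonical bundle formula and the computation $(K_{X}+\Delta^{+})\cdot H^{n-1}=0$ combined with $K_{X}+\Delta^{+}\sim_{\pi,\Q}0$ and Lemma~\ref{lemma:B.H^(n-1)=0}, and is then lifted back along the MMP by Proposition~\ref{prop:birational-ramification-formula}(2); it is a conclusion, not a hypothesis.

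The second gap is that your argument has no induction on $\dim X$, and without it the Mori fiber space case does not close. You correctly identify Proposition~\ref{prop:mori-fiber-lcy} as the crux and ``grant'' it, but that proposition presupposes that Conjecture~\ref{conj Delta} already holds for $(Y,\Delta_{Y})$ and for $(F,\Delta_{X_k}|_{F})$ on a general periodic fiber --- both of dimension $<n$ and generally of Picard number $>1$, so your standing $\rho=1$ hypothesis does not apply to them. The paper's proof is an induction on $n=\dim X$ (base case $n=1$ via Theorem~\ref{thm:Delta+R/q-1 leq 1}); the $\rho=1$ hypothesis is invoked only at the terminal step $\dim Y=0$, while the case $\dim Y>0$ uses the lower-dimensional inductive hypothesis to feed Proposition~\ref{prop:mori-fiber-lcy}. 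Your ``crepant ascent'' observation along the MMP is correct and matches Proposition~\ref{prop:birational-ramification-formula} and Lemma~\ref{lemma:quasi etale or birational lcy}(3), but the two missing pieces --- numerical triviality must be derived rather than assumed, and the outer dimension induction must be set up explicitly --- are precisely what the paper's proof supplies.
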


As a consequence of the arguments in the proof, we will also prove the following result.

\begin{corollary}\label{cor:numerically trivial induction}
%(See Remark \ref{rem:equiv conj})
\mbox{}
\begin{enumerate}
\item 
Suppose that Conjecture \ref{conj Delta} holds for all klt pairs $(X, \Delta)$ with $\dim X\leq{n-2}$. Then Conjecture \ref{conj Delta} (1) holds for all klt pairs $(X, \Delta)$ with $\dim X\leq n$. 
%\item
%Suppose $\dim X\leq 3$. Then Conjecture \ref{conj Delta} (1) holds for all klt pairs $(X, \Delta)$. Namely, $f^*(K_X+\Delta) -q(K_X+\Delta) $ is $\Q$-Cartier and numerically trivial.
    %by Theorem \ref{thm:Delta+R/q-1 leq 1}.
\item
Conjecture \ref{conj Delta}  (1) holds for all $\Q$-factorial (or of klt type) variety with $\dim X\leq 3$.
%Suppose $f: X \to X$ is a $q$-polarized endomorphism of a $\Q$-factorial (or of klt type) normal projective variety, and $\dim X \le 3$. If $(X, \Delta)$ is an $f$-pair then, after iteration of $f$, $f^*(K_X+\Delta) -q(K_X+\Delta)$, or equivalently $K_X+\Delta+\frac{R_{\Delta}}{q-1}$, is $\Q$-Cartier and numerically trivial. 
%In particular, $f^*K_X-qK_X$ (or equivalently, $K_X + \frac{R_f}{q-1})$) is $\Q$-Cartier and numerically trivial.
\end{enumerate}
\end{corollary}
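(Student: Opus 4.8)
We shall extract both parts from the proof of Theorem~\ref{thm:klt to Picard 1}, which runs an $f$-equivariant minimal model program and reduces Conjecture~\ref{conj Delta} for klt pairs to the Picard number one case; the point is that verifying only part~(1) uses the conjecture in lower dimensions only up to $\dim\le n-2$. The starting point is a reformulation: when $K_X+\Delta$ is $\Q$-Cartier (automatic for a klt pair, for $X$ $\Q$-factorial, or, when $X$ is only of klt type, after a small $\Q$-factorialization), the identity $(*)$ reads $R_\Delta=(K_X+\Delta)-f^*(K_X+\Delta)$, so $R_\Delta$ and $D:=K_X+\Delta+\frac{R_\Delta}{q-1}$ are $\Q$-Cartier and $(q-1)D=q(K_X+\Delta)-f^*(K_X+\Delta)$. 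Thus Conjecture~\ref{conj Delta}(1) is equivalent to the assertion that, after iterating $f$, the class $[K_X+\Delta]\in N^1(X)_\R$ is an eigenvector of $f^*$ with eigenvalue $q$. This eigenvector property is what we propagate.

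For part~(1) we induct on $m=\dim X$; the case $m\le n-2$ is the hypothesis, so assume $n-1\le m\le n$. Replacing $X$ by an $f$-equivariant small $\Q$-factorialization and iterating $f$, we may take $X$ to be $\Q$-factorial klt and run the $f$-equivariant $K_X$-MMP as in the proof of Theorem~\ref{thm:klt to Picard 1}. The $q$-eigenvector property of $[K_X+\Delta]$ should survive every step in both directions: for a divisorial contraction $\psi\colon Y\to Y'$ the contracted prime divisor $E$ may be assumed totally invariant, so $f^*E=qE$ (intersect with an ample $H$ and use $f_*H^{\dim Y-1}=qH^{\dim Y-1}$), while $K_Y+\Delta_Y=\psi^*(K_{Y'}+\Delta_{Y'})+cE$ for some rational $c$ and $f^*\psi^*=\psi^*f^*$, whence $[K_Y+\Delta_Y]$ is a $q$-eigenvector iff $[K_{Y'}+\Delta_{Y'}]$ is; for a flip the natural isomorphism $N^1(Y)\cong N^1(Y^+)$ intertwines $f^*$ and carries $[K_Y+\Delta_Y]$ to $[K_{Y^+}+\Delta_{Y^+}]$. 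It therefore suffices to establish the eigenvector property on the output of the MMP.

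If the output is a minimal model, $K_{X'}$ is nef while $-K_{X'}$ is effective \cite[Theorem 1.5]{MZ22}, so $K_{X'}\equiv 0$; then $R_f\equiv K_{X'}-f^*K_{X'}\equiv 0$ forces $R_f=0$, hence $R_\Delta=\Delta-f^*\Delta\ge 0$, and intersecting with $H^{\dim X'-1}$ forces $\Delta=0$, giving $D\equiv K_{X'}\equiv 0$. If the output is a Mori fiber space $g\colon X'\to Z$ with $Z$ a point, then $\rho(X')=1$, $f^*$ is multiplication by $q$ on $N^1(X')_\R\cong\R$, and every class is a $q$-eigenvector. If $\dim Z>0$, the same computation in the rank-one group $N^1(X'/Z)$ shows $D$ is numerically trivial over $Z$, so $D\equiv g^*D_Z$ for some $D_Z\in N^1(Z)_\Q$; by Proposition~\ref{prop:mori-fiber-lcy}, together with Theorem~\ref{thm:subadj-ramification} for the numerical triviality of the moduli part, $(Z,\Delta_Z)$ is a klt $f_Z$-pair with $\dim Z<m$ and $D_Z\equiv K_Z+\Delta_Z+\frac{R_{\Delta_Z}}{q-1}$, which is numerically trivial by the hypothesis if $\dim Z\le n-2$, and by the already-treated case of part~(1) in dimension $n-1$ if $\dim Z=n-1$ (that case uses the hypothesis only in dimension $\le n-3$). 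In all cases $D\equiv 0$ on $X'$, which gives part~(1).

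Part~(2) is the case $n=3$: its hypothesis is Conjecture~\ref{conj Delta} in dimensions $\le 1$, which is Theorem~\ref{thm:Delta+R/q-1 leq 1}, so Conjecture~\ref{conj Delta}(1) holds for klt pairs of dimension $\le 3$; for an arbitrary $f$-pair on a $\Q$-factorial (or, after a small $\Q$-factorialization along which the numerically trivial $D$ descends, of klt type) threefold the same run applies, since $(X,\Delta)$ is then log canonical by Theorem~\ref{thm:(X,Delta) lc} and the three-dimensional log canonical MMP is available. I expect the main difficulties to be the $f$-equivariance of each MMP step (requiring iteration of $f$), the check that the $q$-eigenvector property survives flips, where $N^1$ is only identified birationally, and, in the Mori fiber case over a positive-dimensional base, the numerical triviality of the moduli part of the induced base pair --- precisely what the paper's subadjunction results provide.
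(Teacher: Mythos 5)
Your argument is essentially the paper's own proof: reduce to the $\Q$-factorial case, run an $f$-equivariant $(K_X+\Delta)$-MMP, and transport numerical triviality of $K_X+\Delta+\frac{R_\Delta}{q-1}$ (your $q$-eigenvector formulation) backward through the MMP steps via Proposition~\ref{prop:birational-ramification-formula}, with the Mori fiber base handled by the same induction in dimension $<n$ and the fiber handled by the $\dim\le n-2$ hypothesis. Two small slips: in your minimal model case you wrote $K_{X'}$ where you mean $K_{X'}+\Delta_{X'}$ (though in fact the paper disposes of pseudo-effective $K_X+\Delta$ at the outset via Proposition~\ref{prop:T_f}(4), so that branch of the MMP never arises), and the numerical triviality of the moduli part over a positive-dimensional base comes from the $H^{n-1}$-intersection argument in the proof of Proposition~\ref{prop:mori-fiber-lcy}, not from Theorem~\ref{thm:subadj-ramification}, which concerns lc centers rather than Mori fiber bases.
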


Applying Corollary \ref{cor:numerically trivial induction} to the $f$-pair $(X, 0)$, we obtain (cf. Remark \ref{rem:equiv conj}):
\begin{theorem}
Let $f: X \to X$ be a $q$-polarized endomorphism of a $\Q$-factorial (or of klt type) normal projective variety, and $\dim X \le 3$.
Then, after iteration of $f$, $f^*K_X-qK_X$, or equivalently $K_X + \frac{R_f}{q-1}$, is $\Q$-Cartier and numerically trivial.
\end{theorem}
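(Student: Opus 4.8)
The plan is to deduce this directly from Corollary \ref{cor:numerically trivial induction}(2) by specializing Conjecture \ref{conj Delta}(1) to the $f$-pair $(X,0)$. First I would record that $(X,0)$ is indeed an $f$-pair, with $R_0=\Delta+R_f-f^*\Delta=R_f\ge 0$, so that the divisor $\Delta+\frac{R_\Delta}{q-1}$ appearing in Conjecture \ref{conj Delta} is here just $\frac{R_f}{q-1}$. Next, using the ramification formula $K_X=f^*K_X+R_f$ as an equality of Weil divisors, I would note the identity of $\Q$-Weil divisors
\[
f^*K_X - qK_X \;=\; -(q-1)K_X - R_f \;=\; -(q-1)\Bigl(K_X + \tfrac{R_f}{q-1}\Bigr),
\]
so that, since $q-1>0$, the divisor $f^*K_X-qK_X$ is $\Q$-Cartier (resp. numerically trivial) if and only if $K_X+\frac{R_f}{q-1}$ is; this is also precisely Remark \ref{rem:equiv conj}(2) applied to $(X,0)$. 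Hence the theorem is exactly the assertion that Conjecture \ref{conj Delta}(1) holds for the $f$-pair $(X,0)$ whenever $X$ is $\Q$-factorial or of klt type with $\dim X\le 3$.

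That assertion is literally the content of Corollary \ref{cor:numerically trivial induction}(2), so at the level of the final statement nothing further is required. The substance lives in the corollary, whose proof I would organize as follows: the cases $\dim X\le 1$ and $\dim X=2$ rest on the curve statement of Theorem \ref{thm:Delta+R/q-1 leq 1} together with the surface results, while for $\dim X=3$ one invokes Corollary \ref{cor:numerically trivial induction}(1) with $n=3$, upgrading the known validity of Conjecture \ref{conj Delta} for klt pairs of dimension $\le 1$ to the validity of part (1) for klt pairs of dimension $\le 3$. The two mechanisms feeding that step are running a $(K_X+\Delta)$-MMP to reduce to the Picard number one case (Theorem \ref{thm:klt to Picard 1}), and restricting to $f^{-1}$-invariant lc centers via the polarized inversion of adjunction of Theorem \ref{thm:polarized-inversion-of-adjunction}, which drops the dimension and lets the low-dimensional input apply.

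The only place care is needed in \emph{assembling} the final statement is the passage from ``klt pairs'' to ``$X$ is $\Q$-factorial or of klt type'': when $X$ is merely of klt type the pair $(X,0)$ need not itself be klt, so one cannot feed $(X,0)$ into a statement phrased for klt pairs verbatim. I would handle this by fixing an auxiliary boundary $B$ with $(X,B)$ klt, passing to a $\Q$-factorialization if necessary, and transporting the numerical triviality of $f^*K_X-qK_X$ between $X$ and the model; since the conclusion is a numerical and $\Q$-Cartier statement about $f^*K_X-qK_X$, it is insensitive to small birational modifications and to the choice of $f$-pair structure. One should also check compatibility with iteration: because $f^*K_X\equiv qK_X$ (with $f^*K_X-qK_X$ $\Q$-Cartier) propagates to $(f^s)^*K_X\equiv q^sK_X$, and $R_{f^s}=\sum_{i=0}^{s-1}(f^i)^*R_f$, the conclusion $K_X+\frac{R_f}{q-1}\equiv 0$ is equivalent to $K_X+\frac{R_{f^s}}{q^s-1}\equiv 0$, so ``after iteration of $f$'' is harmless and the statement is stable under replacing $f$ by a power.
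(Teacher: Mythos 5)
Your proposal is correct and matches the paper's argument exactly: the theorem is obtained by applying Corollary~\ref{cor:numerically trivial induction}(2) to the $f$-pair $(X,0)$, together with the identity $f^*K_X-qK_X=-(q-1)(K_X+\tfrac{R_f}{q-1})$ from Remark~\ref{rem:equiv conj}(2) to pass between the two formulations. One small inaccuracy in your side commentary: the proof of Corollary~\ref{cor:numerically trivial induction}(1)--(2) in the paper does \emph{not} invoke the polarized inversion of adjunction (Theorem~\ref{thm:polarized-inversion-of-adjunction}); the dimension drop there comes from the Mori fiber space arising in the $f$-equivariant $(K_X+\Delta)$-MMP, applying the induction hypothesis to a general $f$-periodic fiber and to the base, not from restriction to $f^{-1}$-invariant lc centers. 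This does not affect the validity of your derivation of the theorem itself, which, as you note, is just the corollary specialized to $(X,0)$.
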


Applying the methods above, we generalize \cite[Theorems 4.1-4.2]{M23} to the singular case.

\begin{theorem}\label{thm: conj delta surface}
Conjecture \ref{conj Delta} holds for surface pairs satisfying either $(1)$ or $(2)$ below.
    \begin{enumerate}
        \item The Picard number $\rho(X)\geq 2$.

        \item 
        The sum $T_f$ of all the $f^{-1}$-periodic prime divisors is nonzero, i.e., 
        $f^{-s}(P)=P$ for some prime divisor $P$ and $s>0$.
    \end{enumerate}
\end{theorem}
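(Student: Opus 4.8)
The plan is to prove both parts of Conjecture \ref{conj Delta} for $(X,\Delta)$; writing $D:=\Delta+\frac{R_{\Delta}}{q-1}$, by Remark \ref{rem:equiv conj}(3) this amounts to showing $(X,D)$ is log Calabi-Yau. Replacing $f$ by an iterate is harmless --- the $f$-pair property, the Picard number $\rho(X)$, and the hypothesis in case (2) are all preserved --- so by Theorem \ref{thm:Delta+R/q-1 leq 1} I may assume every coefficient of $D$ is $\le1$, and in case (2) I may assume $f^{-1}(P)=P$ for the given prime divisor $P$. Once $K_X+D$ is $\Q$-Cartier, the bound $D\le1$ makes $(X,D)$ log canonical in codimension one (a normal surface being regular there), so the non-lc locus of $(X,D)$ is a finite set of closed points.

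For part (1), if $X$ is $\Q$-factorial or of klt type then Corollary \ref{cor:numerically trivial induction}(2) already gives that $K_X+D$ is $\Q$-Cartier and numerically trivial. For the remaining normal surfaces I would use the classification of surfaces carrying a polarized endomorphism: in case (1), a surface with $\rho(X)\ge2$ admits, after iteration, an $f$-equivariant fibration $\pi\colon X\to C$ onto a smooth curve with $f|_C$ again polarized, so $C\cong\PP^1$ or $C$ is elliptic; in case (2), $X$ is either covered by case (1) or belongs to a short explicit list (projective cones over elliptic curves and their finite quotients, certain $\PP^1$-bundles over elliptic curves, quotients of abelian surfaces), on which $K_X+D$ is readily seen to be $\Q$-Cartier and $K_X+D\equiv0$ --- immediate, for instance, in the cone case, where $D=-K_X$. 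Alternatively one passes to the minimal resolution, on which $f$ lifts and which is smooth hence $\Q$-factorial, runs the argument of Corollary \ref{cor:numerically trivial induction} there, and descends.

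Part (2), the log canonicity of $(X,D)$, is where the two induction methods of the paper enter. In case (2) I would argue by inversion of adjunction: after a further iteration every $f^{-1}$-periodic prime divisor is totally invariant; a minimal non-lc place of $(X,D)$ lies over a point $x$, and at the relevant log canonical threshold an lc center $Z$ appears through $x$ which, using the totally invariant $P$, can be arranged so that $f^{-1}(Z)=Z$; applying Theorem \ref{thm:polarized-inversion-of-adjunction} to the normalization $\widetilde{Z}$ then turns the question into Conjecture \ref{conj Delta} for an $f|_{\widetilde{Z}}$-pair on a curve or a point, which holds by Theorem \ref{thm:Delta+R/q-1 leq 1}; this gives log canonicity of $(X,D)$ along $P$, and a finite bookkeeping of the remaining totally invariant points --- all lying, in case (2), on $P$ or treatable by Theorem \ref{thm:(X,Delta) lc} applied to a suitable $\Q$-Cartier $f$-pair --- shows the non-lc locus is empty. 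In case (1) I would use the fibration $\pi\colon X\to C$ instead: its fibers are Cartier ($C$ being a smooth curve), on a general fiber $F$ the restriction $(F,D|_F)$ is a curve $f$-pair, log canonical by Theorem \ref{thm:Delta+R/q-1 leq 1}, so $(X,D)$ is log canonical along the general fibers; the special (possibly multiple or non-reduced) fibers are handled by combining $f$-equivariance of $\pi$, the numerical triviality $K_X+D\equiv0$ from part (1), inversion of adjunction along a totally invariant fiber (Theorem \ref{thm:polarized-inversion-of-adjunction}), and Theorem \ref{thm:(X,Delta) lc}, to propagate log canonicity to all of $X$.

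With $K_X+D$ being $\Q$-Cartier, $K_X+D\equiv0$, and $(X,D)$ log canonical all in hand, log abundance for surfaces \cite{G13} upgrades $K_X+D\equiv0$ to $K_X+D\sim_{\Q}0$, so $(X,D)$ is log Calabi-Yau and Conjecture \ref{conj Delta} holds for $(X,\Delta)$. The step I expect to be the main obstacle is part (2): in case (1), controlling how $\Delta$, $R_f$ and the lc centers behave across the multiple and non-reduced fibers of $\pi$; and, in both cases, ruling out an isolated non-lc point lying on no $f$-invariant curve --- precisely the phenomenon behind the hypothesis ``$\rho(X)\ge2$ or $T_f\ne0$'', which genuinely fails for a general polarized self-map of $\PP^2$.
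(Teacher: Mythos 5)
There is a genuine gap, and the route you propose is substantially different from the paper's in ways that matter.

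\textbf{Missing ingredient 1: $\Q$-Cartierness from Wahl's theorem.} The paper's first move is Theorem~\ref{thm:(X,Delta) lc surface} (the surface-specific upgrade of Wahl's result): for \emph{any} surface $f$-pair $(X,\Delta)$ with non-isomorphic $f$, $K_X+\Delta$ is automatically $\Q$-Cartier and $(X,\Delta)$ is lc, with no a~priori hypothesis. This is what kills the entire problem you flag under ``for the remaining normal surfaces.'' You instead propose either a classification of surfaces with polarized self-maps or a descent from the minimal resolution; neither is in the paper, the first is a much heavier route, and the second is problematic since the lift of $f$ to the minimal resolution need not be polarized (pulling back an ample divisor over a non-small blowup gives only nef and big). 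With Theorem~\ref{thm:(X,Delta) lc surface} in hand, numerical triviality of $K_X+D$ also comes cleanly: if $K_X+\Delta$ is pseudo-effective, Proposition~\ref{prop:T_f}(4) gives $K_X+\Delta\sim_{\Q}0$ hence $R_\Delta=0$; otherwise \cite[Theorem 2.7]{Z10} gives $f^*=q\,\mathrm{id}$ on $\N^1(X)$ after iteration, so $K_X+D\equiv 0$ directly.

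\textbf{Missing ingredient 2: the connectedness theorem.} You correctly identify that the crux of part (2) is ruling out isolated non-lc points away from $T_f$, but your ``finite bookkeeping'' has no mechanism. Theorem~\ref{thm:(X,Delta) lc} does not help: it controls $(X,\Delta)$, not $(X,\Delta+\frac{R_\Delta}{q-1})$, and a totally invariant point carrying too much of $R_\Delta$ is exactly what you cannot exclude by iterating $f$ alone. The paper uses Birkar's connectedness theorem \cite[Theorem 1.2(2)]{B24}: after arranging $\mu_P\Delta=1$ for $P\subseteq T_f$ (via Lemma~\ref{lemma:disturb Delta}, which leaves $\Delta+\frac{R_\Delta}{q-1}$ unchanged --- a trick you do not mention but which is needed so that $P$ is an lc center and Theorem~\ref{thm:polarized-inversion-of-adjunction} applies), the inversion of adjunction gives lc-ness near $T_f$, and then $\Nklt(X,D)$ would be disconnected if an isolated non-lc point $Q_j$ existed off $T_f$. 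That contradiction is what empties $\Nlc$.

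\textbf{Different, and mismatched, case structure.} You route hypothesis (1) to a fibration argument and hypothesis (2) to inversion of adjunction. The paper instead runs one step of an $f$-equivariant $(K_X+\Delta)$-MMP and splits on $\dim Y$: if $\dim Y=1$ it invokes Proposition~\ref{prop:mori-fiber-lcy}; if $\dim Y\in\{0,2\}$ it reduces to the $T_f\ne 0$ case (using $\rho(X)\ge 2$ to produce an $f^{-1}$-periodic contracted curve when $\dim Y=2$, and invoking hypothesis (2) directly when $\dim Y=0$). In particular, hypothesis (1) does \emph{not} always lead to a fibration argument; it can also feed into the $T_f$ branch. Your plan for ``special fibers'' in the fibration case (multiple/non-reduced fibers, propagating lc-ness) is not what the paper does and would require separate justification; the paper avoids fiber-by-fiber bookkeeping entirely via the canonical-bundle-formula machinery in Proposition~\ref{prop:mori-fiber-lcy}.

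In short: the skeleton (iterate $f$, bound $D\le 1$, use inversion of adjunction and a fibration ingredient) is recognizable, but without Theorem~\ref{thm:(X,Delta) lc surface}, Lemma~\ref{lemma:disturb Delta}, and the connectedness theorem, the proof does not close at precisely the step you yourself identify as the main obstacle.
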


Section \ref{sec:preliminaries} is preliminary. We prove Theorem \ref{thm: lcy for Gorenstein 3-fold}, Theorems \ref{thm:Delta+R/q-1 leq 1} - \ref{thm:(X,Delta) lc} and Corollary \ref{cor:quasi-'etale lcy}, Theorem \ref{thm:polarized-inversion-of-adjunction}, Theorem \ref{thm:klt to Picard 1} and Corollary \ref{cor:numerically trivial induction}, and Theorem \ref{thm: conj delta surface} in Sections \ref{sec:fano type fibration over abelian}, \ref{sec:singularity of (X,Delta+R/q-1)}, \ref{sec:ramification-subadj}, \ref{sec:klt to picard 1} and \ref{section: surface appl} respectively.
We give Appendix \ref{appen:equi-Q-fac-model} on equivariant $\Q$-factorial models, supplementing a bit to Moraga-Y\'a\~nez-Yeong \cite[Theorem 1.2]{MYY24}.

\par \vskip 1pc \noindent
{\bf Acknowledgment.}
The first author would like to thank his advisor Professor Meng Chen for the constant encouragement and support, the National University of Singapore for the hospitality during his visit, and China Scholarship Council (No.202406100005) for the support.
%This work was completed during a visit of the first author at National University of Singapore. 
%He would like to thank Professor De-Qi Zhang for hospitality and helpful discussions and suggestions. The first author is supported by China Scholarship Council (No.202406100005). 
The second author is supported by the ARF A-8002487-00-00 of NUS.

\section{Preliminaries}\label{sec:preliminaries}
We use the following notation and terminology throughout the paper.
\begin{convention}\label{notation}
     Let $X$ be a projective variety of dimension $n \ge 1$.
    \begin{enumerate}
    \item The symbols \(\sim\) (resp.\ \(\sim_{\mathbb{Q}}\), \(\equiv\)) denote the \textit{linear equivalence} (resp.\ \(\mathbb{Q}\)\textit{-linear equivalence}, \textit{numerical equivalence}) on \(\mathbb{Q}\)- (or \(\mathbb{R}\)-) Cartier divisors. 
    Weil divisors $D_1$ and $D_2$ on normal projective $X$, 
are \textit{weakly numerically equivalent}, denoted as $D_1 \equiv_w D_2$, if $(D_1-D_2) . L_1 \dots L_{n-1} = 0$ for all Cartier divisors $L_j$.
    \item Denote by \(\operatorname{NS}(X)=\operatorname{Pic}(X)/\operatorname{Pic}^{0}(X)\) the \textit{Neron-Severi group} of \(X\). Let \(\operatorname{N}^{1}(X):=\operatorname{NS}(X)\otimes_{\mathbb{Z}}\mathbb{R}\) be the space of \(\mathbb{R}\)-Cartier divisors modulo numerical equivalence and \(\rho(X):=\dim_{\mathbb{R}}\operatorname{N}^{1}(X)\) the \textit{Picard number} of \(X\). Denote by \(\operatorname{N}_{n-1}(X)\) the space of $(n-1)$-cycles  modulo weakly numerical equivalence.

    \item Denote by $\mu_P B$ the coefficient of a prime divisor $P$ in a Weil $\Q$-divisor $B$ when $X$ is normal.
    
    \item Let \(f:X\to X\) be a surjective endomorphism. A subset $S$ of $X$ is called {\it $f$-invariant} (resp. {\it $f^{-1}$-invariant}), if $f(S)=S$ (resp. $f^{-1}(S)=S$). $S$ is {\it $f$-periodic} (resp. {\it $f^{-1}$-periodic}) if $f^k(S)=S$ (resp. $f^{-k}(S)=S$) for some positive integer $k$. 
    
    \item \label{pol-int} $f:X\to X$ is {\it $q$-polarized} (or simply {\it polarized}) if there is an ample Cartier divisor $H$ such that $f^*H\sim qH$ for some integer $q > 1$. 
    $f$ is {\it int-amplified} if there is an ample Cartier divisor $H$ such that $f^*H-H$ is ample.

    \item A projective mophism $f:X\to Y$ between normal varieties is a {\it fibration} if $f_*\mathcal{O}_X=\mathcal{O}_Y$.
    
    \item A finite morphism $f$ between varieties is {\it quasi-\'etale} if $f$ is \'etale in codimension 1.

    \item $X$ is $\Q$-{\it abelian} if $X$ is normal and there is a quasi-\'etale surjective morphism $A \to X$ from an abelian variety $A$.

\end{enumerate}
\end{convention}

\subsection{Pair and singularity}
We will use standard notation and results of minimal model program (MMP) in \cite{KM98}.
We say $(X,\Delta)$ is a {\it sub-pair} if $X$ is a normal variety and $\Delta$ is a Weil $\Q$-divisor on $X$ such that $K_X + \Delta$ is a $\Q$-Cartier divisor. We say $(X,\Delta)$ is a {\it pair} if $\Delta$ is further effective.
Let $\pi:\widetilde{X}\to X$ be a proper birational morphism from a normal variety $\widetilde{X}$. Then we can uniquely write
$K_{\widetilde{X}}+\widetilde{\Delta}=\pi^*(K_X+\Delta)$,
such that $\pi_*\widetilde{\Delta}=\Delta$. 

Let $E$ be a prime divisor on $\widetilde{X}$, the {\it discrepancy} of $E$ with respect to $(X,\Delta)$ is defined by $a(E,X,\Delta)=-\mu_E\widetilde{\Delta}$. A pair (resp. sub-pair) $(X,\Delta)$ is {\it log canonical} , or {\it lc} (resp. {\it sub-lc}) for short, if $a(E,X,\Delta)\geq-1$ for every prime divisor $E$ over $X$. A pair (resp. sub-pair) $(X,\Delta)$ is {\it Kawamata log terminal} or {\it klt} for short (resp. {\it sub-klt}) if $a(E,X,\Delta)> -1$ for every prime divisor $E$ over $X$. A normal variety $X$ is {\it canonical} (resp. {\it terminal}) if $(X,0)$ is a pair and $a(E,X,0)\geq 0$ (resp.$>0$), and is of {\it klt type} if $(X,\Delta)$ is klt for some effective divisor $\Delta$ on $X$. 

 A {\it non-lc place} (resp. {\it non-klt place}) of $(X,\Delta)$ is a prime divisor $E$
 over $X$ such that $a(E,X,\Delta)<-1$ (resp. $a(E,X,\Delta)\leq -1$). A {\it non-lc center} (resp. {\it non-klt center}) is the image on $X$ of a non-lc (resp.  non-klt) place. Let $\Nlc(X,\Delta)$ (resp. $\Nklt(X,\Delta)$) be the {\it non-lc} (resp. {\it non-klt}) {\it locus} of $(X,\Delta)$, which is the union of its non-lc (resp. non-klt) centers. By \cite[Corollary 2.31]{KM98}, $\Nlc(X,\Delta)$ and $\Nklt(X,\Delta)$ are closed subsets of $X$.

A pair $(X,\Delta)$ is {\it log Calabi-Yau} if $K_X+\Delta\sim_{\Q} 0$ and $(X,\Delta)$ is log canonical. $X$ is of {\it Calabi-Yau type} if $(X,\Delta)$ is log Calabi-Yau for an effective Weil $\Q$-divisor $\Delta$. $X$ is of {\it Fano type} if $(X,\Delta)$ is klt and $-(K_X+\Delta)$ is ample for an effective Weil $\Q$-divisor. 
\begin{lemma}\label{lemma:quasi etale or birational lcy}
    Let $\pi:X\dashrightarrow Y$ be a quasi-\'etale morphism or birational map between normal projective varieties. 
    \begin{enumerate}

    \item Suppose $\pi$ is quasi-\'etale. Then $X$ is of Calabi-Yau type if and only if so is $Y$.

    \item Suppose $\pi$ is small birational. Then $X$ is of Fano Type if and only if so is $Y$. 

    \item Suppose $\pi$ is small birational and $\Delta$ is an effective Weil $\Q$-divisor on $X$. Then $(X,\Delta)$ is log Calabi-Yau if and only if so is $(Y,\pi_*\Delta)$. 

    \item Suppose $\pi$ is a birational morphism and $Y$ has only rational (or klt type) singularity. Then $\pi^*:\Pic^0(X)\to \Pic^0(Y)$ is an isomorphism. In particular, 
   if $D$ is $\Q$-Cartier and numerically trivial, so is $\pi_*D$.

    \end{enumerate}
    \end{lemma}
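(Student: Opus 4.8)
The plan is to treat the four parts separately, using two workhorses: a finite quasi-\'etale morphism has zero ramification divisor, so $K_X=\pi^{*}K_Y$ and log discrepancies transform as in \cite[Proposition 5.20]{KM98}; and a small birational map is an isomorphism in codimension one, under which Weil divisors, their prime coefficients, principal divisors, and --- once $K+\Delta\sim_{\Q}0$ --- log discrepancies are all preserved. Part (4) is independent and goes through the Leray spectral sequence.

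For (1), the implication ``$Y$ of CY type $\Rightarrow X$ of CY type'' is immediate: given a log Calabi-Yau $(Y,\Delta_Y)$, I would set $\Delta_X:=\pi^{*}\Delta_Y$ (pullback of Weil $\Q$-divisors); then $K_X+\Delta_X=\pi^{*}(K_Y+\Delta_Y)\sim_{\Q}0$ because $R_\pi=0$, and $(X,\Delta_X)$ is lc by \cite[Proposition 5.20]{KM98}. For the converse I would pass to a Galois closure $\nu\colon\widetilde X\to Y$ of $\pi$ (still quasi-\'etale, with group $G$), pull a log Calabi-Yau boundary $\Delta_X$ back to $\widetilde X$, and symmetrize: $\overline\Delta:=\frac1{|G|}\sum_{g\in G}g^{*}\bigl((\widetilde X\to X)^{*}\Delta_X\bigr)$ is a $G$-invariant effective $\Q$-divisor with $(\widetilde X,\overline\Delta)$ still log Calabi-Yau (lc by convexity of lc pairs, and $K_{\widetilde X}+\overline\Delta\sim_{\Q}0$ since each summand is). Being $G$-invariant and $\nu$ being Galois quasi-\'etale, $\overline\Delta$ descends to an effective $\Q$-divisor $\Delta_Y$ on $Y$ with $\nu^{*}(K_Y+\Delta_Y)=K_{\widetilde X}+\overline\Delta\sim_{\Q}0$; as $\nu$ is finite surjective, this forces $K_Y+\Delta_Y\sim_{\Q}0$, and $(Y,\Delta_Y)$ is lc by the descent half of \cite[Proposition 5.20]{KM98}.

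For (3), I would write $K_Y+\pi_{*}\Delta=\pi_{*}(K_X+\Delta)$; choosing $m$ and $g\in k(X)^{*}$ with $m(K_X+\Delta)=\divv_X(g)$ and pushing forward (an isomorphism in codimension one carries principal divisors to principal divisors) gives $m(K_Y+\pi_{*}\Delta)=\divv_Y(g)$, so $K_Y+\pi_{*}\Delta\sim_{\Q}0$ is $\Q$-Cartier. On a common log resolution $p\colon W\to X$, $q\colon W\to Y$, write $K_W+E_X=p^{*}(K_X+\Delta)$ and $K_W+E_Y=q^{*}(K_Y+\pi_{*}\Delta)$; both right-hand sides are $\Q$-linearly trivial, so $E_X\sim_{\Q}E_Y$, while $E_X-E_Y$ is supported on the common exceptional locus $\Exc(p)=\Exc(q)$ (here $\pi$ induces a coefficient-preserving bijection of prime divisors compatible with strict transforms to $W$), hence $E_X-E_Y=0$ because a $\Q$-linearly trivial, $p$-exceptional $\Q$-divisor pushes to $0$ on the projective variety $X$. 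Thus all log discrepancies of $(X,\Delta)$ and $(Y,\pi_{*}\Delta)$ coincide, so lc and klt are preserved in both directions, giving (3). For (2), I would recall that $X$ is of Fano type iff there is a \emph{big} effective $\Q$-divisor $\Delta$ with $(X,\Delta)$ klt and $K_X+\Delta\sim_{\Q}0$ (one direction by adjoining a general ample $\Q$-divisor to a Fano-type boundary, the other by Kodaira's lemma and a small perturbation); since a small birational map preserves bigness ($h^{0}(X,mD)=h^{0}(Y,m\pi_{*}D)$ for all $m$) and, by the discrepancy comparison just shown, the klt log Calabi-Yau property, assertion (2) follows.

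For (4), $\pi$ is a proper birational morphism; $X,Y$ normal gives $\pi_{*}\OO_X=\OO_Y$, and $Y$ of klt type has rational singularities, so for a resolution $\mu\colon W\to X$ the Leray spectral sequence of $W\xrightarrow{\ \mu\ }X\xrightarrow{\ \pi\ }Y$ together with $R^{i}(\pi\mu)_{*}\OO_W=0$ for $i>0$ yields $R^{1}\pi_{*}\OO_X=0$, hence $H^{1}(Y,\OO_Y)\cong H^{1}(X,\OO_X)$ via $\pi^{*}$; as these are the tangent spaces at the origin, $\pi^{*}$ is a homomorphism of abelian varieties $\Pic^{0}(Y)\to\Pic^{0}(X)$ that is an isomorphism on tangent spaces and has trivial kernel ($\pi^{*}L\cong\OO_X$ implies $L\cong\pi_{*}\pi^{*}L=\OO_Y$ by the projection formula), hence an isomorphism. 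Consequently, if $D$ is $\Q$-Cartier and numerically trivial on $X$, then a suitable multiple is a numerically trivial line bundle, which lies in $\Pic^{0}(X)=\pi^{*}\Pic^{0}(Y)$; combined with $\pi_{*}\pi^{*}=\operatorname{id}$ on $\Pic(Y)$ and $\pi_{*}\divv_X(g)=\divv_Y(g)$, this shows $\pi_{*}D$ is $\Q$-Cartier and numerically trivial. The step I expect to be most delicate is the converse in (1): arranging that the symmetrized divisor genuinely descends to an \emph{effective} $\Q$-divisor on $Y$ and tracking log canonicity through the quasi-\'etale tower in both directions; once the conventions for pulling back Weil divisors along quasi-\'etale maps are fixed, the remaining parts are essentially formal.
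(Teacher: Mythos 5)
Your proposal is correct, and it is broadly the same strategy as the paper's — namely, reduce everything to how $K+\Delta$ and log discrepancies behave under quasi-\'etale pullback (parts (1),(2)), a common resolution for the small birational case (part (3)), and $R^1\pi_*\mathcal{O}_X=0$ for part (4) — but you supply direct arguments where the paper simply cites \cite[Lemma 2.3]{M23}, \cite[Proposition 2.4]{Y21} and \cite[Proposition 2.3]{R83}. The one genuinely different technical step is in (3): the paper concludes $p^{*}(K_X+\Delta)=q^{*}(K_Y+\pi_{*}\Delta)$ by the negativity lemma applied to $q$, whereas you observe that $E_X-E_Y$ is a $p$-exceptional $\mathbb{Q}$-linearly trivial divisor, so a defining rational function pushes forward to a function on $X$ with trivial divisor, hence a constant, forcing $E_X=E_Y$. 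Both are valid; the paper's route via negativity is the more standard reflex, while yours avoids negativity by exploiting that the triviality is global, not merely relative. Two small points worth tightening: in the converse direction of (1), descending $\mathbb{Q}$-Cartierness of $K_Y+\Delta_Y$ along the Galois quasi-\'etale cover $\nu$ uses the norm of the rational function trivializing $m(K_{\widetilde{X}}+\overline{\Delta})$ (a $G$-invariant trivialization), which you allude to but do not spell out; and in (3) the terse phrase ``pushes to $0$ on the projective variety $X$'' should really record the intermediate step that $\divv_X(g)=0$ forces $g$ to be a nonzero constant because $X$ is normal and projective.
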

\begin{proof}
     (1) is from \cite[Lemma 2.3]{M23}. (2) is from \cite[Proposition 2.4]{Y21}. For (3), we may assume $(K_X+\Delta)$ is log Calabi-Yau. Taking a common resolution $W$ of $X,Y$ with $p:W\to X$ and $q:W\to Y$, it is enough to note that $(K_Y+\pi_*\Delta)=q_*p^*(K_X+\Delta)\sim_{\Q}0$, and $p^*(K_X+\Delta)=q^*(K_Y+\pi_*\Delta)$ from the negativity lemma \cite[Lemma 3.39]{KM98} (applied to $q$). (4) is from \cite[Proposition 2.3]{R83}.
     %Reid's proof of his result (see e.g. \cite[Lemma 5.1]{MZ18}).
\end{proof}

The following well-known lemma will be used.
\begin{lemma}\label{lemma:B.H^(n-1)=0}
    Let $B$ and $H$ be $\Q$-Cartier divisors on a normal projective variety $X$ of dimension $n$. Suppose $B$ is pseudo-effective, $H$ is ample and $BH^{n-1}=0$. Then $B\equiv 0$ .

\end{lemma}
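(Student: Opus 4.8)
The plan is to induct on $n$ and reduce to the surface case, where the Hodge index theorem applies, and then descend. For $n=1$ the hypothesis reads $\deg B=0$, and a degree-$0$ divisor on a curve is numerically trivial. For $n=2$ I would pass to a resolution $\sigma\colon\widetilde S\to S$ and take the Zariski decomposition $\sigma^{*}B=P+N$, with $P$ nef and $N\ge 0$ of negative-definite support; the identity $0=\sigma^{*}B\cdot\sigma^{*}H=P\cdot\sigma^{*}H+N\cdot\sigma^{*}H$, a sum of two non-negative terms, forces $N$ to be $\sigma$-exceptional and $P$ nef with $P\cdot\sigma^{*}H=0$, hence $P^{2}=0$ by Hodge index and then $P\equiv 0$; pushing forward gives $B\equiv 0$.

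For $n\ge 3$, fix $m$ so that $mH$ is very ample and let $\Gamma$ be a very general member of $|mH|$. By Seidenberg's theorem $\Gamma$ is again normal and projective of dimension $n-1$, $H|_{\Gamma}$ is ample, and $(B|_{\Gamma})\cdot(H|_{\Gamma})^{n-2}=m\cdot B\cdot H^{n-1}=0$. The point that needs care is that $B|_{\Gamma}$ stays pseudo-effective: for each $k\ge 1$ the big divisor $B+\tfrac{1}{k}H$ is $\Q$-linearly equivalent to an effective divisor $E_{k}$, and a $\Gamma$ chosen outside the (countably many) components of all the $E_{k}$ satisfies $(B+\tfrac{1}{k}H)|_{\Gamma}\sim_{\Q}E_{k}|_{\Gamma}\ge 0$ for every $k$, so $B|_{\Gamma}$ is pseudo-effective. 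By the induction hypothesis, $B|_{\Gamma}\equiv 0$ on $\Gamma$.

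It remains to deduce $B\equiv 0$ on $X$ from $B|_{\Gamma}\equiv 0$ for very general $\Gamma\in|mH|$. For this I would use a Grothendieck--Lefschetz-type input, namely that the restriction map $\N^{1}(X)\to\N^{1}(\Gamma)$ is injective once $\dim\Gamma\ge 2$ (reducing to the smooth case by a resolution $\mu\colon X'\to X$, on which $\mu^{*}B$ is still pseudo-effective and $B\equiv 0$ is equivalent to $\mu^{*}B\equiv 0$ by the projection formula along curves). An alternative that bypasses both this descent and the pseudo-effectivity-under-restriction issue is a perturbation argument: for any ample $L$, the polynomial $t\mapsto B\cdot(H+tL)^{n-1}$ is $\ge 0$ for $|t|$ small---since $B$ is pseudo-effective and each $(H+tL)^{n-1}$ is the class of a movable complete-intersection curve---and vanishes at $t=0$, so its derivative there vanishes, giving $B\cdot H^{n-2}\cdot L=0$; replacing the powers of $H$ one at a time, one obtains $B\cdot L_{1}\cdots L_{n-1}=0$ for all ample, hence all Cartier, $L_{i}$, and one concludes because complete-intersection curve classes span $\N_{1}(X)_{\R}$. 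The main obstacle is exactly this descent step---equivalently the spanning statement $\N_{1}(X)_{\R}=\langle\text{complete intersections}\rangle$, or the Lefschetz injectivity---together with the bookkeeping required to run everything over a possibly singular normal $X$.
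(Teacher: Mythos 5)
Your second, ``perturbation'' argument is essentially the paper's proof. The paper replaces each copy of $H$ in $B\cdot H^{n-1}$ by $\epsilon H_i+(H-\epsilon H_i)$ with both summands ample, expands by multilinearity, and observes that the resulting nonnegative terms $B\cdot(\text{ample})^{n-1}$ must each vanish since they sum to zero; differentiating $t\mapsto B\cdot(H+tL)^{n-1}$ at $t=0$ is an equivalent way of replacing the $H$'s one at a time. Both routes arrive at $B\cdot L_1\cdots L_{n-1}=0$ for all ample, hence all Cartier, $L_i$, i.e.\ $B\equiv_w 0$. The step you flag as the ``main obstacle''---that a $\Q$-Cartier divisor which is weakly numerically trivial is numerically trivial---is exactly what the paper invokes as \cite[Lemma~2.3]{MZ18}; with that reference in hand your argument is complete, so this is a citation gap rather than a mathematical one.

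Your first, hyperplane-section route does have genuine difficulties over a singular normal $X$. The Grothendieck--Lefschetz injectivity of $\N^1(X)\to\N^1(\Gamma)$ is a theorem about smooth varieties with a smooth ample section; passing to a resolution $\mu\colon X'\to X$ does not obviously rescue it, because $\mu^*H$ is only nef and big, so both the inductive hypothesis (which requires an ample class) and the Lefschetz input fail to apply as stated, and a general member of $|m\mu^*H|=\mu^*|mH|$ is the strict transform of a hyperplane section plus fixed exceptional components rather than a smooth ample divisor. The perturbation route sidesteps all of this, which is why the paper takes it.
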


\begin{proof}
    For ample divisors $H_1$,...,$H_{n-1}$, we can choose some $\epsilon>0$ sufficiently small, such that $H-\epsilon H_i$ is ample for $1\leq i\leq n-1$. Then $B$ being pseudo-effective implies
$0\leq B(\epsilon H_1)\dots(\epsilon H_{n-1})\leq BH^{n-1}=0$,
    which implies $BH_1\dots H_{n-1}=0$.

    For any Cartier divisor $D_i$, write $D_i\sim H_i'-H_i'' $ for some ample divisors $H_i'$ and $H_i''$. Then
    %\[
   $BD_1\dots D_{n-1}=B(H_1'-H_1'')\dots (H_{n-1}'-H_{n-1}'')=0$.
    %\]
    Hence $B$ is numerically trivial by \cite[Lemma 2.3]{MZ18}.
     %$B\equiv 0$.
\end{proof}

\subsection{Generalized pair, canonical bundle formula and adjunction}
First, we recall the definition regarding b-divisors.
    Let $X$ be a normal variety. A {\it b-$\Q$-Cartier b-divisor} over $X$ is the choice of a projective birational morphism \(Y\to X\) from a normal variety and a \(\mathbb{Q}\)-Cartier $\Q$-divisor \(M\) on \(Y\) up to the following equivalence: another projective birational morphism \(Y^{\prime}\to X\) from a normal variety and a \(\mathbb{Q}\)-Cartier divisor \(M^{\prime}\) defines the same b-\(\mathbb{Q}\)-Cartier b-divisor if there is a common resolution \(W\to Y\) and \(W\to Y^{\prime}\) on which the pullbacks of \(M\) and \(M^{\prime}\) coincide.

\begin{definition} [{cf. \cite[Definition 1.4]{BZ16}}]\label{def:generalized-pair}
    A {\it generalized} pair consists of
\begin{enumerate}
    \item a normal variety \(X\),
    \item a \(\mathbb{Q}\)-divisor \(B\geq0\) on \(X\), and
    \item a b-\(\mathbb{Q}\)-Cartier b-divisor over \(X\) represented by some projective birational morphism \(X' \xrightarrow{\phi} X\) and \(\mathbb{Q}\)-Cartier \(\mathbb{Q}\)-divisor \(M'\) on \(X'\),
\end{enumerate}
such that \(M'\) is nef and \(K_X + B + M\) is \(\mathbb{Q}\)-Cartier, where \(M := \phi_* M'\). Here \(M'\) is called the {\it nef part} of the pair.
\end{definition}
Now we define generalized singularities. Let $\pi:X_1\to X$ be a proper birational morphism from a normal variety $X_1$. Then we can uniquely write
\[
K_{X_1} + B_1 + M_1 = \pi^*(K_{X} + B + M)
\]
such that $\pi_*B_1=B$. For a prime divisor \(D\) on \(X'\), the {\it generalized discrepancy} \(a(D, X, B + M)\) is defined to be \(-\mu_D B_1\). A generalized pair \((X, B + M)\) is {\it generalized lc} (resp. {\it generalized klt})  if for each prime divisor \(D\) over $X$, the generalized discrepancy \(a(D, X, B + M)\) is \(\geq -1\) (resp. \(> -1\)) . Note that \((X, B + M)\) is generalized lc (resp. generalized klt) implies $(X,B)$ is lc (resp. klt) if $M$
is $\Q$-Cartier.
\begin{lemma}\label{lemma:moduli part M=0}
     Let $(X,B+M)$ be a generalized pair such that $M$ is $\Q$-Cartier and $M\equiv 0$. Then $(X,B+M)$ is generalized lc (resp. klt) if and only if $(X,B)$ is lc (resp. klt).
\end{lemma}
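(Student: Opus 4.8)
The statement to prove is Lemma \ref{lemma:moduli part M=0}: if $(X,B+M)$ is a generalized pair with $M$ $\Q$-Cartier and $M\equiv 0$, then $(X,B+M)$ is generalized lc (resp. klt) iff $(X,B)$ is lc (resp. klt).

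\textbf{Plan.} The key point is that when $M\equiv 0$ (numerically trivial) \emph{and} $M$ is $\Q$-Cartier, pulling back along a birational morphism $\pi\colon X_1\to X$ gives $\pi^*M = M_1 + (\text{exceptional stuff})$, but in fact the nef part behaves rigidly. More precisely, the nef part $M'$ on the model $X'$ is nef, and its pushforward $M=\phi_*M'$ is numerically trivial and $\Q$-Cartier. I would first reduce to comparing, on a sufficiently high model $Y$ dominating both $X'$ and $X_1$, the pullback of $M$ as a $\Q$-Cartier divisor on $X$ with the trace $M_Y$ of the b-divisor. The negativity lemma \cite[Lemma 3.39]{KM98} is the main engine: since $M'$ is nef on $X'$ and $M\equiv 0$ on $X$, for any resolution $g\colon Y\to X'$ the difference $g^*M' - (\text{pullback to }Y\text{ of }M\text{ as }\Q\text{-Cartier divisor on }X)$ is $g'$-exceptional (for $g'\colon Y\to X$) and $g^*M'$ is $g'$-nef while the other term is $g'$-numerically trivial; hence by negativity this difference is effective, and then also anti-effective by a symmetric/degree argument, so it vanishes. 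This shows the trace of the b-divisor $\mathbf M$ on any model equals the pullback of the $\Q$-Cartier divisor $M$.

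\textbf{Steps, in order.} (1) Fix a proper birational $\pi\colon X_1\to X$ and a prime divisor $D$ on $X_1$; we want $a(D,X,B+M) = a(D,X,B)$, i.e. $\mu_D B_1 = \mu_D B_1^{\mathrm{ord}}$ where $K_{X_1}+B_1+M_1 = \pi^*(K_X+B+M)$ and $K_{X_1}+B_1^{\mathrm{ord}} = \pi^*(K_X+B+M)$ as $\Q$-Cartier divisors (using that $K_X+B+M$ is $\Q$-Cartier by hypothesis) — wait, more carefully: $K_{X_1}+B_1^{\mathrm{ord}} := \pi^*(K_X+B+M) - \pi^*M + M_1$? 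Let me restate: define $B_1'$ by $K_{X_1}+B_1' = \pi^*(K_X+B)$ using that $K_X+B = (K_X+B+M) - M$ is $\Q$-Cartier (difference of $\Q$-Cartier divisors). (2) Then $B_1 - B_1' = \pi^*M - M_1$, where $M_1$ is the trace of $\mathbf M$ on $X_1$ and $\pi^*M$ is the genuine pullback of the $\Q$-Cartier divisor $M$. (3) Apply the negativity-type argument above to conclude $\pi^*M = M_1$ (as divisors on $X_1$, or after further blow-up, on the common model — then push down). Concretely, take $W\to X'$ and $W\to X_1$ a common resolution with $h\colon W\to X$; then $h^*M$ and the total transform of $M'$ agree on $W$ because their difference is $h$-exceptional, $h$-nef on one side (since $M'$ nef relative to $h\circ(\text{proj})$... actually $M'$ nef absolutely, hence nef over $X$) and numerically $h$-trivial (since $M\equiv 0$), forcing it to be zero by negativity applied in both directions. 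Pushing forward to $X_1$ gives $M_1 = \pi^*M$. (4) Conclude $B_1 = B_1'$, hence discrepancies coincide for every $D$, hence the lc (resp. klt) conditions are equivalent.

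\textbf{Main obstacle.} The only subtlety is the careful bookkeeping of the negativity lemma: one must check that the difference $h^*M - \overline{M'}$ on the common resolution $W$ is exceptional over $X$ (clear, since both push forward to $M$ on $X$) and then run negativity in \emph{both} directions. In one direction: $-\overline{M'}$ is $h$-anti-nef and $h^*M$ is $h$-trivial, so $h^*M - \overline{M'} \le 0$ fails unless... let me be careful — negativity says if $N$ is $h$-nef and $N$ is $h$-exceptional then $N \le 0$; here take $N = h^*M - \overline{M'}$, then $-N = \overline{M'} - h^*M$ is $h$-nef (as $\overline{M'}$ is nef hence $h$-nef, and $h^*M$ is $h$-numerically trivial) and $h$-exceptional, so $-N\le 0$, i.e. $h^*M \ge \overline{M'}$; the reverse inequality needs the numerical triviality of $M$ to get $\overline{M'}\cdot(\text{curves over }X) = h^*M\cdot(\text{same}) = 0$, forcing equality of the effective difference. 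This is routine but is where all the hypotheses ($M$ $\Q$-Cartier, $M\equiv 0$, $M'$ nef) get used, so I would spell it out. With that, the rest is immediate from the definitions of generalized discrepancy versus ordinary discrepancy.
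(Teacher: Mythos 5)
Your proposal follows the same route as the paper: one shows that the trace $M_1$ of the nef b-divisor equals the $\Q$-Cartier pullback $\pi^*M$, since their difference is $\pi$-exceptional and (using $M\equiv 0$ and that $M'$ is nef) this difference vanishes, whence $K_{X_1}+B_1=\pi^*(K_X+B)$ and the generalized and ordinary log discrepancies coincide. The paper argues directly on a model $X_1$ where $M_1$ is itself nef rather than passing to a common resolution with $X'$, a purely cosmetic difference. Your ``reverse inequality'' step --- getting from $N:=h^*M-\overline{M'}\ge 0$ to $N=0$ --- is slightly understated: it is not a second application of the negativity lemma (an effective, $h$-exceptional, $h$-anti-nef divisor need not vanish), but rather the observation that $N$ is also globally numerically anti-nef (being $\equiv -\overline{M'}$), so $N\cdot H^{n-1}$ is both $\ge 0$ and $\le 0$ for an ample $H$, forcing the effective divisor $N$ to vanish; the paper's one-line attribution of ``$E=0$'' to the negativity lemma is equally terse at this point.
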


\begin{proof}
    For any prime divisor $D$ over $X$, take an birational model $\pi:X_1\to X$ such that $D$ is a divisor on $X_1$ and the moduli part $M_1$ is nef on $X_1$. Write $K_{X_1} + B_1 + M_1 = \pi^*(K_{X} + B + M)$.
    Note that $M_1=\pi^*M+E\equiv E$ for some $\pi$-exceptional divisor, we obtain $E= 0$ by the negativity lemma \cite[Lemma 3.39]{KM98}. Hence $M_1=\pi^*M$ and $K_{X_1} + B_1= \pi^*(K_{X} + B)$, which implies
    $a(D,X,B+M)=a(D,X,B)$. %The conclusion then follows.
\end{proof}

\begin{definition}[canonical bundle formula]\label{def:canonical bundle formula}
    Let $(X,B)$ be a sub-pair, and $f:X\to Z$ a fibration to a normal variety $Z$. Suppose that $(X,B)$ is sub-lc over the generic point of $f$ and $K_X+B\sim_{f,\Q}0$. 

    For each prime divisor \(D\) on \(Z\), let \(t_{D}\) be the lc threshold of \(f^{*}D\) with respect to \((X,B)\) over the generic point of \(D\), i.e., \(t_{D}\) is the largest number such that \((X,B+t_{D}f^{*}D)\) is sub-lc over the generic point of \(D\).  Let \(b_{D}=1-t_{D}\), and define \(B_{Z}=\sum b_{D}D\) where the sum runs over all the prime divisors on \(Z\). Note that $B_Z$ is effective if $B$ is effective.

    By assumption, \(K_X+B\sim_{\mathbb{Q}}f^*L_Z\) for some \(\mathbb{Q}\)-Cartier \(\mathbb{Q}\)-divisor \(L_Z\) on \(Z\). Letting \(M_Z=L_Z-(K_Z+B_Z)\) we get
\[
K_X+B\sim_{\mathbb{Q}}f^*(K_Z+B_Z+M_Z).
\]
We call \(B_Z\) the \textit{discriminant part} and \(M_Z\) the \textit{moduli part} of adjunction. Obviously \(B_Z\) is uniquely determined, but \(M_Z\) is determined only up to \(\mathbb{Q}\)-linear equivalence because it depends on the choice of \(L_Z\).
\end{definition}

\begin{theorem}[{\cite[Section 3.4]{B19}}]\label{thm:M b-nef}
    Suppose $\Delta$ is effective. Then $(Z,B_Z+M_Z)$ carries a structure of generalized pair, and the pair $(X,\Delta)$ is lc if and only if $(Z,B_Z+M_Z)$ is generalized lc.
\end{theorem}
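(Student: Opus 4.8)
The plan is to build the moduli b-divisor on $Z$ and then extract the comparison of singularities from its b-nefness, following the Ambro--Kawamata canonical bundle formula (I write $B$ for the effective boundary, as in Definition~\ref{def:canonical bundle formula}). \textbf{Step 1: reduction to a good model.} Taking a log resolution of $(X,\Supp B)$ together with a resolution of the base, one may assume $X$ is smooth, $\Supp B\cup\Exc$ is simple normal crossing, and $f$ is a prepared lc-trivial fibration. For each projective birational morphism $\nu\colon Z'\to Z$ I would choose a compatible birational $\mu\colon X'\to X$ carrying a morphism $f'\colon X'\to Z'$, set $K_{X'}+B'=\mu^{*}(K_X+B)$ (so $B'$ may now have negative coefficients), and form the discriminant $B_{Z'}$ and a moduli divisor $M_{Z'}$ on $Z'$ as in Definition~\ref{def:canonical bundle formula}. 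The family $\{B_{Z'}\}$ patches into the discriminant b-divisor, and since $B\ge 0$ each $B_{Z'}$ is effective; this is the only role of the effectivity hypothesis.

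\textbf{Step 2: b-nefness of the moduli part.} The heart of the matter is to produce a model $Z'$ on which $M_{Z'}$ is nef and such that $M_{Z''}=g^{*}M_{Z'}$ for every further $g\colon Z''\to Z'$; equivalently, that $M_Z$ is a b-$\Q$-Cartier b-nef b-divisor, so that $(Z,B_Z+M)$ with $M:=\nu_*M_{Z'}$ verifies the axioms of Definition~\ref{def:generalized-pair} and is a generalized pair. When the generic fiber $(X_\eta,B_\eta)$ is klt this is Ambro's theorem, proved through variations of Hodge structure: after a finite base change the moduli part is represented by the first Chern class of a Hodge bundle, which is nef by the Fujita--Kawamata--Koll\'ar semipositivity theorem. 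The general lc-trivial case is reduced to the klt-trivial one following Fujino--Gongyo, via a crepant modification separating the non-klt locus of the generic fiber.

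\textbf{Step 3: the lc equivalence.} Because $M_Z$ is b-nef and b-Cartier, on a high enough model the moduli part contributes nothing to generalized discrepancies: for a prime divisor $P$ over $Z$, which after a flattening and resolution of $f$ we may assume to be a divisor on a model $Z'$ dominated by (the image of) a divisor on the corresponding $X'$, one gets $a(P,Z,B_Z+M)=-\mu_P B_{Z'}=t_P-1$, where $t_P$ is the lc threshold of $(f')^{*}P$ over the generic point of $P$. Hence $(Z,B_Z+M)$ is generalized lc if and only if $t_P\ge 0$ for all such $P$, i.e.\ $(X',B')$ is sub-lc over the generic point of every prime divisor on every birational model of $Z$. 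Combined with the standing hypothesis that $(X,B)$ is sub-lc over the generic point of $f$, which handles the places of $X$ dominating $Z$, a place-by-place bookkeeping with discrepancies (using crepancy $K_{X'}+B'=\mu^{*}(K_X+B)$) shows this is equivalent to $(X,B)$ being sub-lc, hence --- as $B\ge 0$ --- to $(X,B)$ being lc.

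I expect the main obstacle to be the b-nefness asserted in Step 2: for lc-trivial (not merely klt-trivial) fibrations this is a genuinely Hodge-theoretic statement and is the one ingredient that does not reduce to formal manipulation of discrepancies. Once it is in hand, the generalized pair structure and the singularity comparison follow by the standard arguments indicated above.
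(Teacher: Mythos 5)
The paper does not prove this statement: it is stated as a citation to Birkar \cite[Section 3.4]{B19}, where it follows from the Ambro--Kawamata canonical bundle formula together with the Fujino--Gongyo reduction of lc-trivial to klt-trivial fibrations. Your sketch reproduces exactly that route --- b-nefness of the moduli b-divisor via Hodge theory in the klt-trivial case, reduction of the lc-trivial case, and the crepant discrepancy bookkeeping on compatible higher models of $X$ and $Z$ --- and correctly isolates b-nefness of the moduli part as the one genuinely non-formal ingredient; the rest, as you say, is place-by-place comparison of discrepancies using $K_{X'}+B'=\mu^*(K_X+B)$ and the fact that any non-lc place of $(X,B)$ with center not dominating $Z$ lies, after flattening and resolving, over a prime divisor on some model of $Z$, so that its discrepancy is governed by the corresponding $t_D$. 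Two small remarks: the ``$\Delta$'' in the statement should read ``$B$'' (the notation of Definition~\ref{def:canonical bundle formula}), as you implicitly correct; and, as you observe, effectivity of $B$ is used only to force $t_D\le 1$ and hence $B_{Z}\ge 0$, which is what makes $(Z,B_Z+M_Z)$ a genuine generalized pair and turns the final ``sub-lc'' equivalence into the stated ``lc'' equivalence.
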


\begin{definition}[log canonical center and subadjunction]\label{def:subadjunction}
Let $(X,\Delta)$ be a sub-pair. We say a closed subvariety
 $Z\subseteq X$ is a {\it log canonical center} (or {\it lc center} for short) if 
 \[a(Z,X,\Delta)=\inf\{a(E,X,\Delta) \, | \, \text{center}_XE=Z,E\text{ is a prime divisor over }X \}=-1.\]
 By \cite[Corollary 2.31]{KM98}, the infimum can be obtained by some prime divisor $E$ over $X$, then such a divisor is called a {\it log canonical place} (or {\it lc place} for short) of $(X,\Delta)$. 
 %Denote by $\text{LC}(X,\Delta)$ the union of lc centers of $(X,\Delta)$. Then $\text{LC}(X,\Delta)=\overline{\Nklt(X,\Delta)-\Nlc(X,\Delta)}$.
 %Note that the pair $(X,\Delta)$ is log canonical in a neighborhood of the generic point of 
 %$Z$. 

 Now we apply subadjunction as in \cite{FH22} and \cite{FH23}.
 Suppose $\Delta$ is effective.
 Let $T$ be a prime divisor over $X$ such that $a(T,X,\Delta)=-1$. 
 Let $Z$ be the center of $T$ on $X$ , and $\widetilde{Z}\to Z$ the normalization. Take a log resolution $\pi:X_1\to X $ of $(X,\Delta)$ such that $T$ is a divisor on $X_1$, which induces a morphism $\pi_T:T\to \widetilde{Z}$.
 Let $\Delta_T=(\Delta_{X_1}-T)|_T$, where $K_{X_1}+\Delta_{X_1}=\pi^*(K_X+\Delta)$. Since $(T,\Delta_T)$ is lc over the generic point of $\widetilde{Z}$, and $K_T+\Delta_T\sim_{\pi_T,\Q}0$, we have the canonical bundle formula 
 \[K_T+\Delta_T\sim_{\Q}\pi_T^*(K_{\widetilde{Z}}+\Delta_{\widetilde{Z}}+M_{\widetilde{Z}}),\]
which gives the subadjunction formula via $Z$:
\[(K_X+\Delta)|_{\widetilde{Z}}\sim_{\Q} K_{\widetilde{Z}}+\Delta_{\widetilde{Z}}+M_{\widetilde{Z}}.\]
\begin{enumerate}
    \item \cite[theorem 1.2]{FH22}, $\Delta_{\widetilde{Z}}$ is independent of the choice of log resolution and prime divisor $T$. And $(\widetilde{Z},\Delta_{\widetilde{Z}}+M_{\widetilde{Z}})$  is a generalized pair as defined in Definition \ref{def:generalized-pair}.

    \item \cite[theorem 1.1]{FH23}, the pair $(X,\Delta)$ is lc near $Z$ if and only if $(\widetilde{Z},\Delta_{\widetilde{Z}}+M_{\widetilde{Z}})$ is generalized lc.

    \item When $\Codim_X Z=1$, we have $M_{\widetilde{Z}}=0$ and  $(K_X+\Delta)|_{\widetilde{Z}}= K_{\widetilde{Z}}+\Delta_{\widetilde{Z}}$ is the classical subadjunction formula (cf.~\cite[Chapter 16]{K92},\cite{K07}). In this case, we can also define the subadjunction formula  for sub-pairs.
    
\end{enumerate}
\end{definition}

\subsection{Endomorphisms}
In this subsection, {\it $X$ is a normal projective variety, and $f: X \to X$ is a surjective endomorphism.}
%which is finite by considering $f^*$ action on $N^1_{\R}(X)$.
We list some results on polarized endomorphisms.

\begin{proposition} [{\cite[Theorem 3.11, Corollary 3.12]{MZ18}}]\label{prop:polarized basic property}
    \mbox{}
    \begin{enumerate}
    \item Let $Z$ be a closed subvariety on $X$ such that $f(Z)=Z$. If $f$ is $q$-polarized,
 then so is the restriction $f|_Z$.
    \item Let \(\pi\colon X \to Y\) be a surjective morphism between normal projective varieties, and \(g\colon Y \to Y\) a surjective endomorphism such that \(\pi \circ f = g \circ \pi\). If \(f\) is $q$-polarized, then so is \(g\).

    \item Let \(\pi\colon X \dashrightarrow Y\) be a dominant rational map between normal projective varieties of the same dimension, and \(g\colon Y \to Y\) a surjective endomorphism such that \(\pi \circ f = g \circ \pi\). Then \(f\) is $q$-polarized if and only if so is \(g\).
\end{enumerate}
\end{proposition}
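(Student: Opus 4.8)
The plan is to deduce all three assertions from the behaviour of $f^{*}$ on the N\'eron--Severi space, treating the structure theory of polarized endomorphisms from \cite{MZ18} as a black box. For item (1), I would argue directly: a $q$-polarized endomorphism is finite, so $f(Z)=Z$ makes $f|_{Z}\colon Z\to Z$ a finite surjective endomorphism; writing $\iota\colon Z\hookrightarrow X$ for the inclusion, the identity $f\circ\iota=\iota\circ f|_{Z}$ gives $(f|_{Z})^{*}\circ\iota^{*}=\iota^{*}\circ f^{*}$ on Picard groups, and applying this to an ample Cartier divisor $H$ with $f^{*}H\sim qH$ yields $(f|_{Z})^{*}(H|_{Z})\sim q(H|_{Z})$, with $H|_{Z}$ an ample Cartier divisor on $Z$ and $q>1$. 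Hence $f|_{Z}$ is $q$-polarized.

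For item (2), from $\pi\circ f=g\circ\pi$ I get $f^{*}\circ\pi^{*}=\pi^{*}\circ g^{*}$ as maps $\N^{1}(Y)_{\R}\to\N^{1}(X)_{\R}$, and $\pi^{*}$ is injective since $\pi$ is surjective (restrict to a general complete-intersection $X'\subseteq X$ with $X'\to Y$ generically finite and use $(\pi|_{X'})_{*}(\pi|_{X'})^{*}=\deg(\pi|_{X'})$). Thus $V:=\pi^{*}\N^{1}(Y)_{\R}$ is $f^{*}$-invariant and $f^{*}|_{V}$ is conjugate, via $\pi^{*}$, to $g^{*}$ on $\N^{1}(Y)_{\R}$. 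Now I invoke \cite{MZ18}: since $f$ is $q$-polarized, $f^{*}|_{\N^{1}(X)_{\R}}$ is diagonalizable with every eigenvalue of modulus $q$; these properties pass to the invariant subspace $V$, hence to $g^{*}|_{\N^{1}(Y)_{\R}}$. Since $g^{*}$ also maps the salient cone $\Nef(Y)$ into itself, a Perron--Frobenius / Ces\`aro-average argument (average an ample class under $q^{-1}g^{*}$) produces a nonzero $B\in\Nef(Y)$ with $g^{*}B=qB$, which diagonalizability together with the equal-modulus property allow one to take in the interior of $\Nef(Y)$, i.e.\ ample. Finally $g^{*}B\equiv qB$ with $B$ ample and $q>1$ should be promoted to a linear equivalence $g^{*}H_{Y}\sim qH_{Y}$ for an ample Cartier $H_{Y}$, using divisibility of $\Pic^{0}(Y)$ and $q-1\ne 0$; then $g$ is $q$-polarized.

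For item (3), since $\dim X=\dim Y$ the map $\pi$ has a dominant rational inverse $\pi^{-1}\colon Y\dashrightarrow X$ with $\pi^{-1}\circ g=f\circ\pi^{-1}$, so by symmetry it suffices to prove ``$f$ $q$-polarized $\Rightarrow$ $g$ $q$-polarized''. Let $\Gamma\subseteq X\times Y$ be the closure of the graph of $\pi$, let $W\to\Gamma$ be its normalization, and let $p\colon W\to X$ (birational) and $h\colon W\to Y$ (surjective) be the projections. The relation $\pi\circ f=g\circ\pi$ shows $f\times g$ preserves $\Gamma$, hence induces a finite surjective endomorphism $F\colon W\to W$ with $p\circ F=f\circ p$ and $h\circ F=g\circ h$. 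Because $p$ is birational and $f$ is $q$-polarized, $F$ is $q$-polarized — this is the equivariant birational invariance of polarization from \cite{MZ18} ($p^{*}\N^{1}(X)_{\R}$ is an $F^{*}$-invariant subspace of $\N^{1}(W)_{\R}$ and one checks $F^{*}$ is again diagonalizable with all eigenvalues of modulus $q$). Applying item (2) to $h\circ F=g\circ h$ then gives that $g$ is $q$-polarized.

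The main obstacle is item (2), and within it the last two steps: arranging the nef eigenclass $B$ of $g^{*}$ to be \emph{ample} rather than merely nef, and promoting the numerical identity $g^{*}B\equiv qB$ to a \emph{linear} equivalence of Cartier divisors. One cannot shortcut this by claiming $f^{*}=q\cdot\id$ on $\N^{1}$: this already fails for $A=E\times E$ with $f(x,y)=(2y,2x)$, where $f$ is $4$-polarized but $f^{*}$ has an eigenvalue $-4$. In item (3) the same two points reappear when verifying that the lift $F$ remains polarized across the birational morphism $p$.
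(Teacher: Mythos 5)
The paper gives no proof of its own here; it simply cites \cite[Theorem 3.11, Corollary 3.12]{MZ18}, so the only question is whether your outline is a correct reconstruction.

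Item (1) is fine and complete. Since $f$ is finite and $f(Z)=Z$, the restriction $f|_Z$ is a finite surjective endomorphism, and $(f|_Z)^*(H|_Z)=(f^*H)|_Z\sim q(H|_Z)$ with $H|_Z$ ample Cartier on $Z$.

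For items (2) and (3) the overall strategy---push the numerical structure of $f^*$ to $g^*$ through the injection $\pi^*$, then build an ample Cartier eigendivisor for $g^*$, and in (3) reduce to (2) via the normalized graph---is exactly the route taken in \cite{MZ18}. However, your two flagged steps are genuine gaps, and your suggested fix for the second one is wrong. Concretely: (a) \emph{nef to ample}. The Ces\`aro average of $q^{-k}(g^*)^kA$ gives the projection $A_q$ of an ample class onto the $q$-eigenspace, and this is nef as a limit of ample classes, but there is no reason a priori for it to be interior. What saves you is that $q^{-1}g^*$, being diagonalizable with all eigenvalues of modulus $1$, lies in a compact abelian subgroup $G\subseteq\GL(\N^1(Y)_\R)$, so one can average $A$ over the Haar measure of $G$: the resulting class is a $q$-eigenvector, and it is ample because $\id\in G$, $\mu$ has full support, and every $g\cdot A$ is nef while those with $g$ near $\id$ are ample. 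Saying ``diagonalizability together with the equal-modulus property allow one to take $B$ in the interior'' without this (or an equivalent) argument is a real hole. (b) \emph{numerical to linear}. Divisibility of $\Pic^0(Y)$ by $q-1$ does not produce an ample Cartier $H_Y$ with $g^*H_Y\sim qH_Y$. What you actually need is that the homomorphism $L\mapsto g^*L-qL$ on $\Pic^0(Y)$ is \emph{surjective}, so you can absorb the $\Pic^0$-error $g^*H-qH$ by replacing $H$ with $H+L$. Surjectivity holds because $g^*-q\cdot\id$ is an isogeny on $\Pic^0(Y)$, which in turn rests on the fact that the eigenvalues of $g^*$ on $H^1$ have modulus $\sqrt q\ne q$; this is itself a non-trivial input from the structure theory of polarized endomorphisms (via the equivariant Albanese and the abelian-variety case) and cannot be replaced by mere divisibility. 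Item (3) additionally leans on ``birational invariance of polarization'' for the lift $F$ to the normalized graph, which is yet another appeal to the very machinery of \cite{MZ18}; that is legitimate as a citation, but it means your argument does not become independent of the reference. In short, your outline is faithful in spirit to the cited proof, but both points you flag as ``the main obstacle'' really are obstacles, and the proposed repair of (b) does not work as stated.
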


Let $T_f$ be the sum of all the $f^{-1}$-periodic prime divisors,  and $T\leq T_f$ a reduced divisor. 
\begin{proposition}\label{prop:T_f}
    Suppose $f$ is polarized. Then:
    \begin{enumerate}
    \item $T_f$ has only finite many nonzero irreducible components.

    \item If $K_X + T$ is $\Q$-Cartier, then $(X, T)$ is log canonical.

    \item If $K_X+T_f$ is $\Q$-Cartier, then $-(K_X+T_f)$ is $\Q$-movable, i.e., for any prime divisor $P$ on $X$, $-(K_X+T_f)\sim_{\Q} D$ for an effective divisor $D$ such that $P\nsubseteq\Supp D$.
    
    \item Suppose $(X,\Delta)$ is an $f$-pair (see Definition \ref{def:f-pair}) and $K_X+\Delta$ is $\Q$-Cartier. Then $-(K_X+\Delta)\sim_{\Q} E$ for some effective $\Q$-divisor $E$. In particular,
    if $K_X +\Delta$ is pseudo-effective then $K_X+\Delta\sim_{\Q} 0$.
    \end{enumerate}
\end{proposition}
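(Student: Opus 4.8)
The plan is to treat all four parts with one common device: replace $f$ by a suitable iterate $f^{s}$, which is $q^{s}$-polarized and has the same $f^{-1}$-periodic prime divisors, so that the relevant reduced divisors become totally invariant; then feed the logarithmic ramification formula \cite[Theorem 11.5]{I82} into the polarization $f^{*}H\sim qH$, using the elementary identity $f^{*}W\cdot H^{n-1}=q\,(W\cdot H^{n-1})$ valid for any Weil divisor $W$ (equivalently $f_{*}(H^{n-1})=qH^{n-1}$, which follows from $f^{*}H\sim qH$ and the projection formula). For \textbf{(1)}, pick any finitely many distinct $f^{-1}$-periodic prime divisors $P_{1},\dots,P_{m}$ and replace $f$ by an iterate so that $D:=\sum_{i}P_{i}$ is reduced and totally invariant. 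Then $K_{X}+D=f^{*}(K_{X}+D)+R_{D}$ with $R_{D}\geq0$ by \cite[Theorem 11.5]{I82}; intersecting with $H^{n-1}$ gives $0\leq R_{D}\cdot H^{n-1}=(1-q)(K_{X}+D)\cdot H^{n-1}$, hence $m\leq\sum_{i}(P_{i}\cdot H^{n-1})=D\cdot H^{n-1}\leq-K_{X}\cdot H^{n-1}$, since each $P_{i}\cdot H^{n-1}$ is a positive integer ($H$ being an ample Cartier divisor). As $-K_{X}\cdot H^{n-1}$ is a fixed number, $T_{f}$ has only finitely many components.

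For \textbf{(2)}, the reduced divisor $T\leq T_{f}$ has finitely many components, so some iterate $f^{s}$ makes $T$ reduced and totally invariant; then $K_{X}+T=(f^{s})^{*}(K_{X}+T)+R_{T}$ with $R_{T}\geq0$ by \cite[Theorem 11.5]{I82}, i.e.\ $(X,T)$ is an $f^{s}$-pair and $K_{X}+T$ is $\Q$-Cartier. Since $f^{s}$ is polarized and $T$ is reduced and totally invariant, Broustet--H\"oring \cite[Theorem 1.4]{BH14} (equivalently Theorem \ref{thm:(X,Delta) lc}, applicable since polarized endomorphisms are int-amplified) gives that $(X,T)$ is log canonical -- a conclusion about $(X,T)$ alone. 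For \textbf{(4)}, set $L=K_{X}+\Delta$; intersecting $L=f^{*}L+R_{\Delta}$ with $H^{n-1}$ and using $f^{*}L\cdot H^{n-1}=q\,L\cdot H^{n-1}$ gives $L\cdot H^{n-1}=-\tfrac{1}{q-1}R_{\Delta}\cdot H^{n-1}\leq0$. The main claim is $-(K_{X}+\Delta)\sim_{\Q}E$ for some effective $\Q$-divisor $E$; granting it, the ``in particular'' follows immediately, for if $L$ is also pseudo-effective then $-L\sim_{\Q}E\geq0$ is pseudo-effective, forcing $E\equiv0$ and hence $E=0$, so $L\sim_{\Q}0$. (Independently, the ``in particular'' can be seen from $L\cdot H^{n-1}\leq0$: pseudo-effectivity of $L$ forces $L\cdot H^{n-1}=0$, whence $R_{\Delta}=0$ and $L\equiv0$ by Lemma \ref{lemma:B.H^(n-1)=0}; then $f^{*}L=L$ and $L\sim_{\Q}0$ since $f^{*}$ has no eigenvalue $1$ on $\Pic^{0}(X)$, $f$ being int-amplified.)

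Proving the main claim of \textbf{(4)}, $-(K_{X}+\Delta)\sim_{\Q}E$ with $E\geq0$, extends \cite[Theorem 1.5]{MZ22} (the case $\Delta=0$) to $f$-pairs, and I expect it to be the principal obstacle. One plausible route is the uniruledness dichotomy: if $X$ is not uniruled, then $X$ is $\Q$-abelian by the structure theory of polarized endomorphisms, so $R_{f}=0$, and $R_{\Delta}=\Delta-f^{*}\Delta\geq0$ together with $f^{*}\Delta\cdot H^{n-1}=q\,\Delta\cdot H^{n-1}$ forces $\Delta=0$ and $K_{X}\sim_{\Q}0$; if $X$ is uniruled, one passes to the MRC fibration, descends $f$ to a polarized endomorphism of the non-uniruled (hence $\Q$-abelian) base, and combines the Fano-type theorem for the rationally connected fibers \cite{Y21} with a canonical bundle formula to propagate effectivity from a general fiber.

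For \textbf{(3)}, by (1) and iteration we may assume $T_{f}$ reduced and totally invariant, so $(X,T_{f})$ is an $f$-pair with $K_{X}+T_{f}$ being $\Q$-Cartier, and (4) gives $-(K_{X}+T_{f})\sim_{\Q}E\geq0$. Rearranging the ramification formula, $f^{*}\bigl(-(K_{X}+T_{f})\bigr)=-(K_{X}+T_{f})+R_{T_{f}}$ with $\Supp R_{T_{f}}$ disjoint from $\Supp T_{f}$ and containing no $f^{-1}$-periodic component; iterating, $(f^{s})^{*}\bigl(-(K_{X}+T_{f})\bigr)=-(K_{X}+T_{f})+R_{T_{f}}^{(s)}$ with $R_{T_{f}}^{(s)}$ increasing in $s$. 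Given any prime divisor $P$, the plan is to pull an effective representative of $-(K_{X}+T_{f})$ back by a large iterate $f^{s}$ and subtract $R_{T_{f}}^{(s)}$: since the divisorial stable base locus is a fixed finite union of prime divisors while a non-$f^{-1}$-periodic $P$ has infinite orbit, that divisorial stable base locus must be $f^{-1}$-periodic, hence $\leq T_{f}$; a further argument then removes the remaining components lying in $T_{f}$. Making this last removal precise -- showing that the part of the stable base locus contained in $T_{f}$ is in fact empty -- together with the effectivity step of (4), are the two delicate points of the proposition.
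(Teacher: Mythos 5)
Your parts (1) and (2) are correct.  For (1) you give a clean, self-contained degree bound that the paper does not spell out: after iterating so that any given finite collection $P_1,\dots,P_m$ of $f^{-1}$-periodic prime divisors becomes totally invariant, the logarithmic ramification formula $K_X+D=f^*(K_X+D)+R_D$ with $R_D\ge 0$ and the identity $f^*W\cdot H^{n-1}=q\,W\cdot H^{n-1}$ force $D\cdot H^{n-1}\le -K_X\cdot H^{n-1}$, and since each $P_i\cdot H^{n-1}$ is a positive integer this bounds $m$.  That is a genuine elementary proof where the paper simply cites \cite[Corollary 3.8]{MZ20}.  For (2) you and the paper both fall back on \cite[Theorem 1.4]{BH14}; you also offer Theorem~\ref{thm:(X,Delta) lc} as an alternative, which is fine so long as BH14 remains the primary reference, since the internal route Theorem~\ref{thm:(X,Delta) lc}~$\to$~Theorem~\ref{Nlc deg}~$\to$~Theorem~\ref{Delta leq 1}(2) already invokes Proposition~\ref{prop:T_f}(1).

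The gaps are in (3) and (4), and you acknowledge them yourself.  The paper disposes of both by citation: (3) to \cite[Theorem 2.6, arXiv ver]{M23} and (4) to the proof of \cite[Theorem 1.5]{MZ22} ``with $-K_X$ replaced by $-(K_X+\Delta)$.''  Your proposal for (4) is not that adaptation: you sketch a uniruledness dichotomy, invoking $\Q$-abelianity of the non-uniruled case and the MRC fibration plus a canonical bundle formula in the uniruled case, but you never execute the uniruled branch.  In particular, making the MRC fibration $f$-equivariant for singular $X$, handling the base change that a general fibre needs to be an $(f|_F)$-pair, and actually ``propagating effectivity'' from a log Calabi--Yau fibre to a global $\Q$-effective $-(K_X+\Delta)$ are all left open.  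The MZ22 argument is more direct (iterating $(f^s)^*(-(K_X+\Delta))=-(K_X+\Delta)+R_{\Delta,s}$ and comparing with the anticanonical structure), and you would need to carry out the boundary-version of that argument rather than gesture at a different, and possibly harder, route.  For (3), your stable-base-locus plan depends on (4), and on top of that you explicitly flag that the final step — showing the divisorial stable base locus of $-(K_X+T_f)$ contains no component of $T_f$ — is unproven.  Since $R_{T_f}$ is supported away from $T_f$ (every $f^{-1}$-periodic $P$ has $\mu_P R_{T_f}=0$), pulling back an effective representative by $f^s$ and subtracting $R_{T_f,s}$ does not by itself produce an effective divisor missing a prescribed $P\subset T_f$; that is exactly the content of \cite[Theorem 2.6]{M23}, and it requires its own argument.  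So as written, (3) and (4) are plausible plans but not proofs.
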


\begin{proof}
    (1) follows from \cite[Corollary 3.8]{MZ20}. (2) follows from \cite[Theorem 1.4]{BH14}. (3) follows from \cite[Theorem 2.6, arXiv ver]{M23}. (4) is from the proof of \cite[Theorem 1.5]{MZ22}, with $-K_X$  replaced by $-(K_X+\Delta)$. 
 \end{proof}

The following result implies the codimension one part of Conjecture \ref{conj Gongyo}. 

\begin{theorem}[{\cite[Theorem 3.1]{M23}}]\label{prop:Rf leq q-1}
    Suppose $f$ is \(q\)-polarized. Then, after iteration of $f$, 
    %the coefficient \(\mu_{P}\) of each prime divisor \(P\) in the ramification divisor $R_f$ has 
    \(\mu_{P}R_f \leq q - 1\) holds, and the equality holds if and only if \(P\) is \(f^{-1}\)-periodic.
\end{theorem}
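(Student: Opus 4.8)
The plan is to recast the coefficient $\mu_P R_{f^s}$ of a prime divisor $P$ as a ramification index and then play the ``fundamental identity'' for the finite map $f^s$ off against the finiteness of $\Supp R_f$. Since we work in characteristic zero, ramification is tame, so for any finite surjective $g\colon X\to X$ and prime divisor $P$ one has $\mu_P R_g=e_g(P)-1$, where $e_g(P):=\mu_P\bigl(g^*(g(P))\bigr)$ is the ramification index of $g$ along $P$. Thus $\mu_P R_{f^s}=e_{f^s}(P)-1$, and it suffices to find one $s$ for which $e_{f^s}(P)\le q^s$ for every prime divisor $P$, with equality precisely when $P$ is $f^{-1}$-periodic.

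\textbf{Step 1 (fundamental identity).} From $g_*g^*=\deg g$ and $g_*[P']=\deg(g|_{P'})\,[g(P')]$ one obtains, for every prime divisor $Q$,
\[
\sum_{P'\,:\,g(P')=Q} e_g(P')\,\deg\bigl(g|_{P'}\colon P'\to Q\bigr)=\deg g.
\]
Take $g=f^s$, which is $q^s$-polarized of degree $q^{sn}$. If $Q$ is $f^s$-invariant, then $f^s|_Q$ (equivalently its lift to the normalization of $Q$) is $q^s$-polarized by Proposition~\ref{prop:polarized basic property}(1), hence $\deg(f^s|_Q)=q^{s(n-1)}$; isolating the term $P'=Q$ in the identity gives $e_{f^s}(Q)\le q^s$, with equality if and only if $Q$ is the unique $f^s$-preimage of itself, i.e.\ $f^{-s}(Q)=Q$. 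In particular $e_f(Q)\le q$ for every $f$-invariant prime divisor $Q$, and strictly unless $Q$ is totally invariant.

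\textbf{Step 2 (orbit analysis and dichotomy).} Write $\Supp R_f=\{Q_1,\dots,Q_N\}$ and $C:=1+\max_j\mu_{Q_j}R_f$. A non-periodic $Q_j$ occurs at most once along any forward $f$-orbit (a second occurrence would make it $f$-periodic), so only finitely many $Q_j$ are $f$-periodic; after replacing $f$ by an iterate we may assume each of these is $f$-invariant, and then a short check shows every $f^{-1}$-periodic prime divisor is totally invariant. Now fix a prime divisor $P$. If its orbit never meets an $f$-invariant $Q_j$, then $e_{f^s}(P)\le C^N$ for all $s$. Otherwise let $i$ be the first time $f^i(P)$ equals an $f$-invariant $Q_j$; the orbit then stays at $Q_j$, and meets at most $N$ components of $R_f$ before time $i$, so $e_{f^s}(P)\le C^N\,e_f(Q_j)^{\,s-i}$. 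We finish by cases. If $P$ itself is $f$-invariant, Step 1 gives $e_{f^s}(P)\le q^s$, with equality iff $P$ is totally invariant (equivalently, after our iteration, $f^{-1}$-periodic). If $P$ is not $f$-invariant but its orbit reaches an $f$-invariant $Q_j$ at time $i\ge1$, then by minimality of $i$ the divisor $f^{i-1}(P)\ne Q_j$ is a second $f$-preimage of $Q_j$, so $e_f(Q_j)\le q-1$ by Step 1, and
\[
e_{f^s}(P)\le C^N(q-1)^{\,s-i}<q^s
\]
as soon as $s$ is large: for $s-i$ large the factor $(q/(q-1))^{\,s-i}$ dominates $C^N$, and for bounded $s-i$ the quantity $e_{f^s}(P)$ is bounded outright; the remaining case $e_{f^s}(P)\le C^N$ is $<q^s$ for $s$ large as well. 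Choosing $s$ large in terms of $C$ and $N$ only, we conclude $\mu_P R_{f^s}=e_{f^s}(P)-1\le q^s-1$ for all prime divisors $P$, with equality exactly for the totally invariant, i.e.\ $f^{-1}$-periodic, divisors.

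\textbf{Main difficulty.} The pivotal observation is that a forward orbit can never \emph{enter} a totally invariant ramification component --- entering it would exhibit an extra preimage --- which forces $e_f(Q_j)\le q-1$ in exactly the situation where the orbit is pre-periodic into a ramification cycle; this single unit of slack is what lets the bounded pre-periodic contribution $C^N$ be absorbed by the geometric decay of $(q-1)^{\,s-i}$ relative to $q^{s-i}$. Without it, a pre-periodic divisor feeding into a highly (even totally) ramified cycle would a priori have $e_{f^s}(P)$ of size roughly $q^{s-i}$ times a constant, which could exceed $q^s$. A secondary point requiring care is that the one iterate $s$ must serve all infinitely many prime divisors at once; this is harmless because $C$, $N$, and the periods entering the reduction depend only on $f$.
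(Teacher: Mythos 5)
The paper does not prove this result itself; it is stated as a direct citation of \cite[Theorem 3.1]{M23}. So there is no ``paper's own proof'' to compare against. That said, your argument --- based on the fundamental identity $\sum_{P'\mapsto Q}e_g(P')\deg(g|_{P'})=\deg g$, the restriction of polarization to invariant divisors (Proposition~\ref{prop:polarized basic property}), and an orbit-through-$\Supp R_f$ analysis --- is correct and is essentially the standard approach for such statements (and is also the method used in the cited source and its antecedent in \cite{NZ10}).

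Two points deserve to be made more explicit. First, your ``short check'' that every $f^{-1}$-periodic prime divisor becomes totally invariant after the first iteration is doing more work than the phrase suggests: one needs the periods of all $f^{-1}$-periodic divisors to be bounded, so that a single iterate $m$ works. This follows because any $f^{-1}$-periodic $P$ of period $k$ satisfies $e_{f^k}(P)=q^k>1$, hence $\mu_P R_{f^k}>0$, so some $f^j(P)\in\Supp R_f$; since $f^{-1}$-periodicity propagates along the orbit with the same period, that orbit member is one of the finitely many $f$-periodic $Q_j$, and the lcm of their periods gives $m$. Second, in the key inequality $e_{f^s}(P)\le C^N(q-1)^{s-i}<q^s$ one should note that $i\ge 1$, so $(q-1)^{s-i}\le(q-1)^{s-1}$ (also covering $q=2$, where $q-1=1$), and that the threshold for $s$ depends only on $C,N,q$, not on $P$ or $i$ --- your remarks point in this direction but the cleanest way is to bound uniformly by $C^N(q-1)^{s-1}$. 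Neither of these is a genuine gap, only places where the write-up is thinner than the underlying argument.
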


Polarized or int-amplified endomorphism has many periodic points.

\begin{proposition}[{\cite[Propsition 5.1]{F03}}]\label{prop:zd of f-periodic pt}
    Suppose $f: X \to X$ is int-amplified. Then the $f$-periodic points are Zariski dense in $X$.
\end{proposition}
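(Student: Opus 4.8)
The plan is to reduce the statement to positive-characteristic dynamics, following Fakhruddin's argument (which already gives the polarized case) together with its extension to int-amplified endomorphisms; the MMP machinery above plays no role.

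\textbf{Spreading out.} First I would choose a finitely generated $\mathbb{Z}$-subalgebra $R\subset k$ over which $X$, $f$, and an ample Cartier divisor $H$ with $f^*H-H$ ample are all defined: a flat projective $\mathcal{X}\to\operatorname{Spec}R$, a finite $R$-endomorphism $\mathcal{F}\colon\mathcal{X}\to\mathcal{X}$, and a relatively ample $\mathcal{H}$ with $\mathcal{F}^*\mathcal{H}-\mathcal{H}$ relatively ample, base-changing along $R\hookrightarrow k$ to $(X,f,H)$. After shrinking $\operatorname{Spec}R$, each closed fibre $X_{\mathfrak{m}}$ (over $\overline{\F_p}=\overline{\kappa(\mathfrak{m})}$) is a normal projective variety, $f_{\mathfrak{m}}$ is finite, surjective and int-amplified, and---this is where int-amplified is used---for every large $m$ the fixed-point scheme $\operatorname{Fix}(f^m):=(\mathcal{F}^m,\id)^{-1}(\Delta_{\mathcal{X}})$ is finite over $R$ (an int-amplified map fixes no positive-dimensional subvariety pointwise, and the ``all eigenvalues of modulus $>1$'' condition makes the Lefschetz number $\sum(-1)^i\operatorname{tr}(f^{m*}\mid H^i)$ grow like $(\deg f)^m$, hence eventually positive) with fibrewise length locally constant; so $\operatorname{Fix}(f^m)$ is non-empty, of the same length, in the generic and in every special fibre.

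\textbf{Positive characteristic.} Fix $\mathfrak{m}$, put $q_0=\#\kappa(\mathfrak{m})$, and let $\phi\colon X_{\mathfrak{m}}\to X_{\mathfrak{m}}$ be the $q_0$-power Frobenius, which commutes with $f_{\mathfrak{m}}$ because $f_{\mathfrak{m}}$ is defined over $\kappa(\mathfrak{m})$. The key observation is that any $\overline{\F_p}$-point $x$ with $f_{\mathfrak{m}}(x)=\phi^{j}(x)$ for some $j\ge 1$ is automatically $f_{\mathfrak{m}}$-periodic: commutativity gives $f_{\mathfrak{m}}^{r}(x)=\phi^{jr}(x)$ for all $r$, and $x$ is defined over some $\F_{q_0^a}$, whence $f_{\mathfrak{m}}^{a}(x)=\phi^{ja}(x)=x$. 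The locus $Y_j=\{x:f_{\mathfrak{m}}(x)=\phi^{j}(x)\}$ is the preimage of the diagonal under $(f_{\mathfrak{m}},\phi^{j})\colon X_{\mathfrak{m}}\to X_{\mathfrak{m}}\times X_{\mathfrak{m}}$; since $(\phi^{j})^*$ scales divisor classes by $q_0^{j}$, an intersection-number/Lefschetz computation shows $Y_j\neq\emptyset$ and $0$-dimensional, of length growing with $j$, while a Lang--Weil (Hrushovski-type) equidistribution argument shows that $\bigcup_j Y_j$---hence $\Per(f_{\mathfrak{m}})$---is Zariski dense in $X_{\mathfrak{m}}$, with a dense subset of these periodic points being \emph{reduced} points of the corresponding $\operatorname{Fix}(f^m_{\mathfrak{m}})$ (the Frobenius twist keeps $1$ out of the spectrum of the differential of $f^m_{\mathfrak{m}}$ there). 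Thus, outside any prescribed proper closed subset of $X_{\mathfrak{m}}$ one can choose a periodic point at which $\operatorname{Fix}(f^m_{\mathfrak{m}})$ is étale over $\overline{\F_p}$.

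\textbf{Lifting, and the main obstacle.} If $\Per(f)$ were contained in a proper closed $Z\subsetneq X$, I would spread $Z$ out to $\mathcal{Z}\subsetneq\mathcal{X}$; for infinitely many $\mathfrak{m}$ the previous step provides, for a suitable $m$, a point of $\operatorname{Fix}(f^m_{\mathfrak{m}})\setminus\mathcal{Z}_{\mathfrak{m}}$ at which $\operatorname{Fix}(f^m)\to\operatorname{Spec}R$ is étale, hence whose connected component is finite étale over a neighbourhood of $\mathfrak{m}$, so carries a point over $\operatorname{Frac}R$ and thus a $k$-point $x\in\operatorname{Fix}(f^m)\setminus Z$---an $f$-periodic point off $Z$, a contradiction. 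I expect the main difficulty to be the positive-characteristic step: one needs the special-fibre fixed loci not merely to be non-empty but to have \emph{reduced} points that are Zariski dense, so that the count transfers to characteristic zero, and one must push the cohomological-positivity input through for an arbitrary int-amplified $f$ rather than only for a polarized endomorphism of $\PP^N$, where the Lefschetz number is explicitly $1+q^m+\cdots+q^{mN}$. This is precisely what is carried out in \cite{F03} and its int-amplified refinement.
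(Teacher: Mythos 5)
The paper offers no proof of this proposition---it simply cites Fakhruddin \cite[Proposition 5.1]{F03}---and your outline is a correct high-level reconstruction of Fakhruddin's argument (spread out over a finitely generated $\Z$-algebra, reduce mod $\mathfrak m$, use that $f_{\mathfrak m}$ commutes with the $q_0$-power Frobenius $\phi$ so that solutions of $f_{\mathfrak m}(x)=\phi^j(x)$ are automatically periodic, then lift a suitable solution back to characteristic zero). So you are on the intended route. Two cautionary notes. First, \cite[Proposition 5.1]{F03} is stated for \emph{polarized} $f$, not int-amplified; the upgrade rests on the fact (\cite[Theorem 1.1]{MZ20}) that for int-amplified $f$ every eigenvalue of $f^*$ on $\N^1(X)$ has modulus $>1$, which is exactly the input needed for the Lang--Weil-style counting that forces $\bigcup_j Y_j$ to meet any dense open. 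You flag this gap but do not close it. Second, your appeal to positivity of the Lefschetz number $\sum(-1)^i\operatorname{tr}(f^{m*}\mid H^i)$ is shaky: for a finite, non-\'etale $f$ that alternating sum does not straightforwardly count $\operatorname{Fix}(f^m)$, and Fakhruddin instead computes the intersection number $\Gamma_{f}\cdot\Gamma_{\phi^j}$ in $X\times X$ (positive because $\Gamma_{\phi^j}$ is a Frobenius-twisted correspondence), which sidesteps any fixed-point-formula hypotheses and also directly produces the $0$-dimensionality you need for the lifting step. With those two substitutions your sketch matches the cited argument.
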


Meng and Zhang \cite{MZ18} proved that MMP is $f$-equivariant for a polarized endomorphism.

\begin{theorem}\label{thm:polarized-emmp}
    Suppose $X$ is $\Q$-factorial and $f$ is polarized. Then any finite sequence of log MMP starting from $X$ is $f$-equivariant after iteration of $f$.
\end{theorem}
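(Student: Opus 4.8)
The plan is to reduce to a single step of the MMP, show that any one step becomes $f$-equivariant after replacing $f$ by an iterate, and note that the outcome again carries a polarized endomorphism, so that the argument can be propagated along the finitely many steps. Concretely, write the given sequence as $X=X_{0}\dashrightarrow X_{1}\dashrightarrow\cdots\dashrightarrow X_{r}$, with each $X_{i}\dashrightarrow X_{i+1}$ a divisorial contraction or a flip of a $(K_{X_{i}}+\Delta_{i})$-negative extremal ray and each $X_{i}$ again $\Q$-factorial; it then suffices to prove the following and to apply it to each of the $r$ steps, passing to a further iterate at each stage: if $X$ is $\Q$-factorial with a $q$-polarized endomorphism $f$ and $\psi\colon X\dashrightarrow X'$ is one such step, then after replacing $f$ by an iterate $f^{k}$ there is a surjective endomorphism $f'\colon X'\to X'$ with $f'\circ\psi=\psi\circ f^{k}$. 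Such an $f'$ is then automatically $q$-polarized by Proposition~\ref{prop:polarized basic property}: part~(2) if $\psi$ is a morphism, and part~(3) if $\psi$ is a flip, $\psi$ being in that case birational between varieties of the same dimension.

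\emph{The contracted ray is eventually $f$-invariant.} Let $R=\mathbb{R}_{\ge 0}[\ell]\subset\NE(X)$ be the extremal ray contracted by $\psi$, spanned by a rational curve $\ell$ with $0<-(K_{X}+\Delta)\cdot\ell\le 2\dim X$. Since $f$ is finite and surjective, $f_{*}$ is a linear automorphism of $\N_{1}(X)$ (with inverse $(\deg f)^{-1}f^{*}$) that carries $\NE(X)$ onto itself, hence permutes its extremal rays. The crucial point is that the forward orbit $\{f_{*}^{\,j}R\}_{j\ge 0}$ is finite; this is where polarizedness is used, forcing $f^{*}$ — and hence $f_{*}$ — to have all eigenvalues of the common modulus $q$, so that $f_{*}$ cannot push $R$ off to infinity, while the extremal rays that can arise in a log MMP on a variety admitting a polarized endomorphism are locally discrete (away from the $K_{X}$-trivial part). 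Hence, after replacing $f$ by an iterate, $f_{*}R=R$.

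\emph{Descent through the contraction and conclusion.} Let $\phi\colon X\to Y$ be the contraction of $R$ (so $Y=X'$ when $\psi$ is divisorial, while $\phi$ is the flipping contraction otherwise). Because $f$ is finite and $f_{*}R=R$, a curve $C\subset X$ is contracted by $\phi\circ f$ exactly when $f_{*}[C]\in R$, i.e.\ exactly when $[C]\in R$; thus $\phi\circ f$ and $\phi$ contract the same curves, so $\phi\circ f$ is constant on the (connected) fibres of $\phi$, and the rigidity lemma produces a finite surjective $g\colon Y\to Y$ with $g\circ\phi=\phi\circ f$, which is $q$-polarized by Proposition~\ref{prop:polarized basic property}(2). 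If $\psi$ is divisorial we take $f':=g$. If $\psi$ is a flip with flip $\phi^{+}\colon X^{+}\to Y$, then after one further iterate $f$ permutes the finitely many components of $\Exc(\phi)$, so $g$ preserves the flipping centre $W=\phi(\Exc(\phi))$; one then promotes $g$ to a morphism $f^{+}\colon X^{+}\to X^{+}$ with $\phi^{+}\circ f^{+}=g\circ\phi^{+}$ (for instance because $g$ preserves the graded $\mathcal{O}_{Y}$-algebra whose relative $\Proj$ is $X^{+}$, or by extending the lift over the codimension-$\ge 2$ flipping locus using that $X$ and $X^{+}$ agree in codimension one and $X^{+}$ is normal), and $f^{+}$ is $q$-polarized by Proposition~\ref{prop:polarized basic property}(3). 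In every case $X'$ is again $\Q$-factorial, so the reduction of the first paragraph applies inductively and the theorem follows.

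\emph{Main obstacle.} The heart of the argument is the finiteness of the $f_{*}$-orbit of the contracted ray in the second step, i.e.\ the assertion that polarizedness prevents the ray from drifting; this rests on the structure theory of polarized (equivalently int-amplified) endomorphisms — the eigenvalue normalization of $f^{*}$ on $\N^{1}(X)$ together with the local discreteness of the extremal rays relevant to a log MMP. The remaining, more routine, point is promoting the descended endomorphism $g$ of $Y$ to an honest endomorphism of the flip $X^{+}$.
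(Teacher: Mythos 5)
The paper's own ``proof'' of this theorem is a bare citation to \cite[Lemmas 6.4, 6.5 and Remark 6.3]{MZ18}, so you are in effect reconstructing Meng--Zhang's argument. Your global strategy — reduce to one step of the MMP, show the contracted extremal ray $R$ becomes $f_*$-fixed after iteration, descend $f$ through the contraction by the rigidity lemma, and handle the flip by lifting the descended endomorphism — is indeed the strategy of \cite{MZ18}, and your treatment of the descent/flip steps is acceptable modulo routine details.

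However, the step you yourself flag as the heart of the matter has a real gap. You claim that the forward orbit $\{f_*^{j}R\}_{j\ge 0}$ is finite because (a) all eigenvalues of $f^*$ on $\N^1(X)$ (hence of the transpose $f_*$ on $\N_1(X)$) have modulus $q$, so the normalized classes $q^{-j}f_*^{j}[\ell]$ have bounded $H$-degree, and (b) the extremal rays ``relevant to a log MMP'' are locally discrete away from the $(K_X+\Delta)$-trivial locus. This does not yield finiteness. The Cone Theorem gives discreteness of the $(K_X+\Delta)$-negative extremal rays only inside the \emph{open} half-space $\{(K_X+\Delta)\cdot(-)<0\}$; those rays may accumulate on the boundary hyperplane $(K_X+\Delta)^{\perp}$, and nothing in your argument shows that $f_*^{j}R$ remains $(K_X+\Delta)$-negative, let alone bounded away from that hyperplane. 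Indeed, even with bounded $H$-degree, the normalized classes could well have $(K_X+\Delta)$-degree tending to $0$. Nor does the eigenvalue normalization help by itself: a linear automorphism of a salient cone with all eigenvalues of modulus $q$ and a fixed interior ray can still act with infinite, dense orbit on the set of extremal rays (think of $q$ times an irrational rotation on a round cone), so some additional, genuinely geometric input is needed to control the orbit. Supplying that control — for instance producing a fixed divisor class with $f^{*}$-eigenray structure that keeps the orbit uniformly inside a compact slice of the negative region, which is roughly what \cite[Lemma 6.4]{MZ18} does — is precisely the nontrivial content that your sketch replaces with a slogan. As written, the proof does not close.
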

\begin{proof}
    It follows from \cite[Lemma 6.4, 6.5 and Remark 6.3]{MZ18}.
\end{proof}

%\begin{theorem}\label{thm:polarized-emmp}
%    Let \(f: X \to X\) be a polarized endomorphism of a \(\mathbb{Q}\)-factorial klt projective variety \(X\). Then, replacing \(f\) by a positive power, there exist a \(Q\)-abelian variety \(Y\), a morphism \(X \to Y\), and an \(f\)-equivariant relative MMP over \(Y\)
%    \[
%    X = X_1 \dashrightarrow \cdots \dashrightarrow X_i \dashrightarrow \cdots \dashrightarrow X_r = Y
%    \]
%    (i.e., \(f = f_1\) descends to \(f_i\) on each \(X_i\)), with every \(X_i \dashrightarrow X_{i+1}\) a divisorial contraction, a flip or a Fano contraction, of a \(K_{X_i}\)-negative extremal ray, such that we have:
%    \begin{enumerate}
%        \item If \(K_X\) is pseudo-effective, then \(X = Y\) and it is \(Q\)-abelian.
        %(see Proposition 1.6 or Lemma 4.6 for the lifting of \(f\)).
%        \item If \(K_X\) is not pseudo-effective, then for each \(i\), \(X_i \to Y\) is equi-dimensional holomorphic with every fibre (irreducible) rationally connected and \(f_i\) is polarized by some ample Cartier divisor \(H_i\). The \(X_{r-1} \to X_r = Y\) is a Fano contraction.
%        \item \(f^*|_{\mathrm{N}^1(X)}\) is a scalar multiplication: \(f^*|_{\mathrm{N}^1(X)} = q \cdot \mathrm{id}\), if and only if so is \(f_r^*|_{\mathrm{N}^1(Y)}\).
%    \end{enumerate}
%\end{theorem}

%
%
%
%
\section{Fano type fibration over an abelian variety; Proof of Theorem \ref{thm: lcy for Gorenstein 3-fold}}
\label{sec:fano type fibration over abelian}

In this section, we treat equivariant Fano type fibration over an abelian variety, and prove Theorem \ref{thm: lcy for Gorenstein 3-fold}.
First, we recall Yoshikawa \cite[Theorem 6.6]{Y21}.

\begin{proposition}\label{prop:etale cover of f-pair}
Let $X$ be a $\Q$-factorial klt projective variety admitting an int-amplified endomorphism $f$. Then there exists a quasi-\'etale finite cover $\mu:\widetilde{X} \to X$ such that the Albanese morphism $\pi := \alb_{\widetilde{X}}: \widetilde{X}\to Y$ is a fibration and $\widetilde{X}$
is of Fano type over $Y$.
%Then the following morphisms and pairs
%\begin{center}
%\begin{tikzcd}
%(X,\Delta)
%&(\widetilde{X},\widetilde{\Delta})
%\arrow[l,"\mu"']
%\arrow[r,"\pi"]
%&Y,
%\end{tikzcd}
%\end{center}
%exist and satisfy

%$\bullet \quad$ $\mu$ is a quasi-\'etale cover of $(X,\Delta)$, i.e., %$K_{\widetilde{X}}+\widetilde{\Delta}=\mu^*(K_X+\Delta)$, 

%$\bullet \quad$  $\pi$ is a fiber space, i.e., $\pi_*\OO_{\widetilde{X}}=\OO_Y$,

%$\bullet \quad$ $Y$ is an abelian variety,

%$\bullet \quad$ $\widetilde{X}$ is a normal projective variety, and 

%$\bullet \quad$ $(\widetilde{X},\widetilde{\Delta})$ is of Fano type over Y.

%In particular, $\pi$ is the Albanese morphism. 
Moreover, if $\phi$ is a surjective endomorphism of $X$, then we obtain the following diagram

\begin{center}
\begin{tikzcd}
X\arrow[d,"\phi^m"]
&\widetilde{X}\arrow[l,"\mu"']\arrow[d,"\widetilde{\phi}"]\arrow[r,"\pi"]
&Y\arrow[d,"\phi_Y"]\\
X
&\widetilde{X}\arrow[l,"\mu"']\arrow[r,"\pi"]
&Y
\end{tikzcd}   
\end{center}

for some $m$, where $\widetilde{\phi}$ and $\phi_Y$ are surjective endomorphisms.
\end{proposition}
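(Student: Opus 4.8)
The statement is \cite[Theorem 6.6]{Y21}; the plan is to combine the structure theory of int-amplified endomorphisms with the functoriality of the Albanese map and of the maximal rationally connected (MRC) fibration, and I sketch the argument below.

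First I would produce the cover. Let $X \dashrightarrow Z$ be the MRC fibration; after iterating $f$ it descends to an int-amplified endomorphism of $Z$, and since $Z$ is not uniruled, the structure theory for non-uniruled varieties carrying an int-amplified endomorphism (Meng--Zhang, building on Nakayama--Zhang and Fujimoto--Nakayama) shows that $Z$ is $\Q$-abelian. Hence $Z$ is covered quasi-\'etale by an abelian variety; pulling this cover back to $X$ and normalizing produces a quasi-\'etale finite cover $\mu\colon \widetilde X \to X$ on which the MRC fibration is realized by a genuine fibration $\widetilde X \to A$ onto an abelian variety $A$ with rationally connected general fibre. Because rationally connected varieties have trivial Albanese and $A$ is not uniruled, the Albanese morphism $\pi := \alb_{\widetilde X}$ coincides with $\widetilde X \to A$, so $\pi$ is a fibration onto $Y := A = \Alb(\widetilde X)$ with rationally connected general fibre. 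Note that $\Q$-factoriality, klt-ness, and the existence of an int-amplified endomorphism $\widetilde f$ of $\widetilde X$ all pass through the quasi-\'etale cover (for klt-ness, via the ramification formula with zero ramification; for $\Q$-factoriality, after passing to the Galois closure).

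Next I would show $\widetilde X$ is of Fano type over $Y$. Running a $K_{\widetilde X}$-MMP over $Y$, which is $\widetilde f^{\,m}$-equivariant after iteration by the equivariance of the MMP for int-amplified endomorphisms (compare Theorem \ref{thm:polarized-emmp}), and noting that $K_{\widetilde X}$ is not pseudo-effective over $Y$ because a general fibre of $\pi$ is rationally connected, the MMP terminates with a relative Mori fibre space $\widetilde X' \to T$ over $Y$. Tracking the generic (rationally connected) fibre of $\pi$, which is not contracted and whose image in $Y$ is not a point, and using that $Y$ contains no rational curve, one inducts on $\dim X$ (the base $T$ again carrying an int-amplified endomorphism) to conclude that $\widetilde X'$ is of Fano type over $Y$; since $\widetilde X \dashrightarrow \widetilde X'$ is a birational contraction along a $K_{\widetilde X}$-MMP over $Y$, Fano type over $Y$ descends back to $\widetilde X$.

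Finally, for the ``moreover'' part, the cover $\mu$ corresponds to a finite-index subgroup of the fundamental group of the smooth locus of $X$ (equivalently, of the \'etale fundamental group in codimension one); a surjective endomorphism $\phi$ of $X$ acts on this group and permutes the finitely many subgroups of that index, so $\phi^{m}$ preserves the class of $\mu$ for some $m > 0$ and hence lifts to a surjective endomorphism $\widetilde\phi\colon \widetilde X \to \widetilde X$ with $\mu\circ\widetilde\phi = \phi^{m}\circ\mu$; the universal property of the Albanese then yields a surjective endomorphism $\phi_Y$ of $Y$ with $\pi\circ\widetilde\phi = \phi_Y\circ\pi$, which is the asserted diagram. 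The main obstacle is Step~2: identifying the Albanese of $\widetilde X$ with its MRC fibration (so that the general fibre is rationally connected) and then extracting relative Fano type from a terminating equivariant MMP over $Y$; the descent of a general endomorphism in Step~3 is the other point where replacing $f$ (or $\phi$) by an iterate is essential.
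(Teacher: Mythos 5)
The paper does not prove this proposition; it is quoted verbatim as \cite[Theorem 6.6]{Y21}, so there is no in‑paper proof to compare against. Your sketch correctly identifies that reference and reconstructs the broad outline of Yoshikawa's argument: descend the (special) MRC fibration to a $\Q$-abelian base, pass to a quasi-\'etale cover so the base becomes an abelian variety, identify that fibration with the Albanese morphism because the general fibre is rationally connected (hence has trivial Albanese) and an abelian variety contains no rational curves, and then lift a surjective endomorphism after iteration using finiteness of bounded-index open subgroups of the (topologically finitely generated) \'etale fundamental group of the regular locus.

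The one place where your argument is noticeably looser than what must actually be proved is the claim that $\widetilde{X}$ is of Fano type over $Y$. Running a $K_{\widetilde{X}}$-MMP over $Y$ and landing on a relative Mori fibre space $\widetilde{X}' \to T \to Y$ does not by itself give ``Fano type over $Y$'': that $T$ is of Fano type over $Y$ and the generic fibre of $\widetilde{X}' \to T$ is Fano is weaker than the existence of a single boundary $\Delta$ with $(\widetilde{X}', \Delta)$ klt and $-(K_{\widetilde{X}'} + \Delta)$ ample over $Y$, and Fano type over $Y$ does not automatically pull back along a birational contraction unless one also controls the discrepancies. Yoshikawa closes this gap using the int-amplified endomorphism in an essential way (e.g., that the rationally connected piece in the building-block decomposition is itself of Fano type, via the equivariant MMP and the Picard-number-one analysis), not merely an induction on dimension over the MFS base. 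If you want your sketch to stand on its own, this is the step that needs to be replaced by the precise statement from \cite{Y21} (or Meng's building-block theorem) rather than a generic induction.
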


The next theorem is from the ``$\dim Y=2$" case in the proof of \cite[Theorem 1.2]{M23}.

\begin{theorem}\label{thm 1}
Let $X$ be a $\Q$-factorial normal projective variety admitting a polarized endomorphism $f$ and a fibration $\pi:X\rightarrow Y$ of relative dimension $1$. Suppose $Y$ is $\Q$-abelian. 
%and $g$ is an endomorphism on $Y$ such that $\pi\circ f=g\circ\pi$. 
Then $X$ is of log Calabi-Yau type.
\end{theorem}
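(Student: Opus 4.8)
The plan is to reduce to the case where $Y$ is an abelian variety by pulling back along a quasi-\'etale cover, and then to analyze the resulting genus-one (elliptic or rational with two/one marked fibers) fibration over an abelian variety using the canonical bundle formula. First I would invoke Lemma \ref{lemma:quasi etale or birational lcy}(1): being of Calabi-Yau type is stable under quasi-\'etale covers, and likewise a polarized endomorphism lifts (after iteration) to such a cover by Proposition \ref{prop:polarized basic property} and Proposition \ref{prop:etale cover of f-pair}-type arguments; so, replacing $Y$ by an abelian variety $A$ and $X$ by the normalization of the fiber product (which carries a lifted polarized endomorphism), we may assume $\pi: X \to A$ is a fibration of relative dimension $1$ onto an abelian variety, equivariant for a polarized $f$ and $f_A$. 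Since $X$ is $\Q$-factorial and admits a polarized endomorphism, $X$ has klt type singularities (indeed $\Q$-Gorenstein lc by Broustet--H\"oring, and one can arrange klt after a small perturbation, or use that $X$ is of Fano type over $A$ by Proposition \ref{prop:etale cover of f-pair}).

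Next I would set up the canonical bundle formula for $K_X \sim_{\Q} \pi^*(K_A + B_A + M_A) + (\text{relative part})$. The key point is that $K_A = 0$, so after running a relative MMP over $A$ (which is $f$-equivariant after iteration by Theorem \ref{thm:polarized-emmp}) we may assume the relative Mori structure is as simple as possible: either $\pi$ is an elliptic fibration, or $-K_X$ is relatively ample (a conic/$\PP^1$-bundle-like situation) so that $X$ is of Fano type over $A$ and we can choose an effective relative boundary. In either case, choosing an effective horizontal/vertical $\Q$-divisor $\Delta_0$ with $K_X + \Delta_0 \sim_{\Q} \pi^*(K_A + B_A + M_A)$ and $(X,\Delta_0)$ lc over the generic point of $A$, the discriminant $B_A \ge 0$ and the moduli part $M_A$ is nef; one then needs $M_A \equiv 0$. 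This should follow from the existence of the polarized endomorphism: $f_A$ is polarized on the abelian variety $A$, so $f_A^* H \sim qH$; and $M_A$ (or rather its b-divisor) is $f_A$-equivariant up to the expected scaling, forcing $M_A^{\dim A} = 0$ hence $M_A \equiv 0$ by nefness, so $M_A \sim_{\Q} 0$ on the abelian variety (numerically trivial nef divisors on abelian varieties are $\Q$-linearly trivial, or use that $M_A$ is a b-semiample moduli divisor). Similarly $B_A \equiv 0$: since $f_A$ is polarized with $f_A^{-1}$ totally ramified structure controlled by the $f$-pair formula, and the only $f_A^{-1}$-periodic divisors on an abelian variety that could appear are forced to have coefficient $0$ after iteration (an abelian variety has no polarized-endomorphism-invariant subvarieties forcing positive discriminant — more precisely the pullback relation $f_A^* B_A = q B_A$ for an effective divisor $B_A$ on $A$ with $f_A$ polarized forces $B_A = 0$ after iteration, as in Theorem \ref{prop:Rf leq q-1} and Proposition \ref{prop:T_f}). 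Hence $K_X + \Delta_0 \sim_{\Q} \pi^*(K_A) = 0$ with $(X,\Delta_0)$ lc, i.e., $X$ is of log Calabi-Yau type.

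The main obstacle I anticipate is establishing $M_A \equiv 0$ cleanly: one must show the moduli b-divisor of the equivariant fibration is numerically trivial, which requires tracking how $f$ acts on it (the moduli part pulls back under $f_A$ to itself up to $\Q$-linear equivalence, not just numerically, because the generic fiber is preserved as a family), and then invoking either the polarization of $f_A$ or the structure of nef divisors on abelian varieties. A secondary subtlety is the elliptic fibration case: the generic fiber is an elliptic curve, its $j$-invariant gives a map $A \to \overline{M_{1,1}} = \PP^1$ which must be constant since $A$ is abelian (no nonconstant maps from an abelian variety to $\PP^1$), so the fibration is isotrivial and $M_A$ is torsion — this handles $M_A \sim_{\Q} 0$ directly. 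I would also need to check the relative MMP genuinely lands in one of the two clean cases and that $\Q$-factoriality/klt-type is preserved throughout, but these are standard given Theorem \ref{thm:polarized-emmp} and Lemma \ref{lemma:quasi etale or birational lcy}.
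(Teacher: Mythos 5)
Your overall shape is right --- use the canonical bundle formula for $\pi$ and exploit $K_Y \sim_{\Q} 0$ to force both the discriminant and moduli parts to vanish --- but there is a genuine gap at the heart of the argument, plus some unnecessary detours.

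The central missing step is the construction of the candidate boundary on $X$. You write ``choosing an effective horizontal/vertical $\Q$-divisor $\Delta_0$ with $K_X+\Delta_0 \sim_{\Q}\pi^*(K_A+B_A+M_A)$ and $(X,\Delta_0)$ lc over the generic point,'' but you never produce it. Being of Fano type over the base only gives a boundary with $K_X+\Delta_0$ relatively $\Q$-trivial, not $\Q$-trivial on $X$, and if you just \emph{want} $K_X+\Delta_0\sim_{\Q}0$ with $(X,\Delta_0)$ lc you are assuming the conclusion. The paper's argument hinges precisely here: take $T_f$ (the finite sum of $f^{-1}$-periodic prime divisors) and use Proposition~\ref{prop:T_f}(3), which says $-(K_X+T_f)$ is $\Q$-\emph{movable}. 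Movability lets you choose $D\sim_{\Q}-(K_X+T_f)$ that is effective, has coefficients $\le 1$, and shares no component with $T_f$; then $T_f+D$ is a genuine candidate with $K_X+T_f+D\sim_{\Q}0$ and $(F,(T_f+D)|_F)$ lc on a general fiber $F$, so Theorem~\ref{thm:M b-nef} applies. Without the movability input the construction does not go through, and nothing in your proposal replaces it.

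Two secondary points. First, you assume the fibration $\pi$ is $f$-equivariant, which is not part of the hypotheses of the theorem (the paper's proof never uses equivariance of $\pi$; it only uses the endomorphism on $X$ to produce $T_f$ and invoke Proposition~\ref{prop:T_f}). Your quasi-\'etale base change to an abelian variety and the relative MMP are likewise unnecessary and introduce unverified side conditions (that the normalized fiber product stays $\Q$-factorial, that $f$ lifts, etc.). Second, your argument that $M_A\equiv 0$ via tracking $f_A$-equivariance of the moduli b-divisor or the $j$-invariant is a needlessly circuitous route: once $K_X+T_f+D\sim_{\Q}0$ and $K_Y\sim_{\Q}0$, the canonical bundle formula gives $B+M\sim_{\Q}0$ with $B\ge 0$ effective and $M$ pseudo-effective, so $B=0$ and $M\sim_{\Q}0$ immediately; then $(Y,M)$ is generalized lc by Lemma~\ref{lemma:moduli part M=0} since $Y$ is klt ($\Q$-abelian), and Theorem~\ref{thm:M b-nef} finishes.
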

\begin{proof}
Applying Proposition \ref{prop:T_f}, let $T_f$ be the finite sum of $f^{-1}$ periodic prime divisors on $X$, then $-(K_X+T_f)$ is $\Q$-movable. Thus we can find an effective $\Q$-divisor $D\sim_{\Q}-(K_X+T_f)$ which has no common components with $T_f$ and has coefficients not greater than 1. We claim that the pair $(X,T_f+D)$ is lc (thus log Calabi-Yau).

Since $X$ is smooth in codimension 1 and $\text{dim}(X/Y)=1$, general fiber of $X\rightarrow Y$ is smooth. Since the coefficients of the divisor $T_f+D$ are not greater than 1, the pair $(F,(T_f+D)|_F)$ is lc on a general fiber $F$. Then we have canonical bundle formula 
\[
0 \sim_{\Q} K_X+T_f+D\sim_{\Q}\pi^*(K_Y+B+M),
\]
where $(Y,B+M)$ is a generalized pair by Theorem \ref{thm:M b-nef}. Thus, $K_Y+B+M\sim_{\Q}0$. Since $K_Y\sim_{\Q} 0$, $B \ge 0$ and $M$ is pseudo-effective, we must have $B=0$, and $M\sim_{\Q}0$. 
Since $Y$ is $\Q$-abelian, it is 
%$\Q$-factorial 
klt and hence lc
(see \cite[Proposition 5.20]{KM98}).
By Lemma \ref{lemma:moduli part M=0}, $(Y,B+M)=(Y,M)$ is generalized lc. Hence $(X,T_f+D)$ is lc by Theorem \ref{thm:M b-nef}. This proves the claim and also the theorem.
%Since $g: Y \to Y$ is also polarized (see Proposition \ref{prop:polarized basic property}), it follows from Proposition \ref{prop:T_f} (2) $Y$ is lc, which, by Lemma \ref{lemma:moduli part M=0}, is equivalent to $(Y,B+M)=(Y,M)$ being generalized lc. Hence $(X,T_f+D)$ is lc by Theorem \ref{thm:M b-nef}. This proves the claim and also the theorem.
\end{proof}

\begin{lemma}\label{fiber P^2}
Let $X$ be a Gorenstein terminal 3-fold, $\pi:X\rightarrow Y$ a fibration to an elliptic curve $Y$, and $f:X\rightarrow X$ and $g:Y\rightarrow Y$ q-polarized endomorphisms satisfying $\pi\circ f=g\circ \pi$. Suppose $Y_1=\{y\in Y\, | \, X_y\cong \PP^2\}$ is Zariski dense in $Y$, where $X_y$ is the fiber of $\pi$ over $y$. Then $Y_1=Y$.
\end{lemma}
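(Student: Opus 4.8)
The plan is to prove $Y_1=Y$ by showing that $F:=Y\setminus Y_1$ is empty, using two facts: $F$ is a \emph{finite} set, and $g$ is \emph{étale}; the $g$-stability of $F$ then forces $F=\emptyset$ by a cardinality count.

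\emph{Preliminary reductions.} Since $X$ is integral and $Y$ is a smooth curve, $\pi$ is flat. As $X$ is Gorenstein, $K_X$ is Cartier and $K_{X_y}=(K_X+X_y)|_{X_y}=K_X|_{X_y}$ for every fibre (the normal bundle of a fibre is trivial); hence $\chi(\mathcal{O}_{X_y})$ and $(K_{X_y})^2=K_X^2\cdot X_y$ are independent of $y$, and evaluating on a $\PP^2$-fibre gives $\chi(\mathcal{O}_{X_y})=1$ and $(K_{X_y})^2=9$ for all $y$. Because $\PP^2$ is rigid ($H^1(\PP^2,T_{\PP^2})=0$), the locus $Y_1$ is open in $Y$; being Zariski dense, it is then cofinite, so $F$ is finite. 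Finally, $Y$ being an elliptic curve we have $K_Y\sim 0$, so the ramification formula $K_Y=g^*K_Y+R_g$ gives $R_g\sim 0$, hence $R_g=0$ and $g$ is étale; in particular $|g^{-1}(S)|=q\cdot|S|$ for every finite $S\subseteq Y$.

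\emph{The key step: $g(Y_1)\subseteq Y_1$, i.e.\ $f$ carries $\PP^2$-fibres to $\PP^2$-fibres.} Fix $y$ with $X_y\cong\PP^2$. From $\deg f=q^3$ and $\deg g=q$, the morphism $f$ restricts to a finite surjective morphism $h\colon X_y\to X_{g(y)}$ of degree $q^2$. One first checks that $X_{g(y)}$ is reduced and irreducible (it is connected as a fibre of $\pi$; it is reduced, having no multiple fibres; and the identity $f^*X_{g(y)}=\pi^*g^*(g(y))=\sum_{y'\in g^{-1}(g(y))}X_{y'}$ together with surjectivity of $h$ out of the irreducible $\PP^2$ determines its support). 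Then $X_{g(y)}$ is a Gorenstein surface (a Cartier divisor in the Gorenstein $X$) with $\chi(\mathcal{O})=1$ and $K^2=9$, finitely dominated by $\PP^2$; pullback along $h$ is injective on the Néron--Severi group, so $\rho(X_{g(y)})=1$, and $h^*(-K_{X_{g(y)}})=-K_{\PP^2}+R_h$ is a positive multiple of the hyperplane class on $\PP^2$, so $-K_{X_{g(y)}}$ is ample. Hence $X_{g(y)}$ is a Gorenstein del Pezzo surface of degree $9$; its minimal resolution is a weak del Pezzo of degree $9$, necessarily $\PP^2$ (degree $9$ is maximal, so nothing can be contracted), whence $X_{g(y)}\cong\PP^2$.

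\emph{Conclusion and main difficulty.} From $g(Y_1)\subseteq Y_1$ we obtain $Y_1\subseteq g^{-1}(Y_1)$, hence $g^{-1}(F)=Y\setminus g^{-1}(Y_1)\subseteq Y\setminus Y_1=F$; together with $|g^{-1}(F)|=q\cdot|F|$ this gives $q\cdot|F|\le|F|$, so $|F|=0$ since $q>1$, i.e.\ $Y_1=Y$. \textbf{The main obstacle} is the singularity analysis of the possibly degenerate fibre $X_{g(y)}$ in the key step: a priori it is only known to be a connected Gorenstein surface, and one must argue that it is reduced, irreducible, and has at worst canonical singularities before the classification of degree-$9$ Gorenstein del Pezzo surfaces applies. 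This is expected to follow from $X$ being terminal Gorenstein (fibres of such fibrations being mild, e.g.\ by inversion of adjunction); alternatively one may first pass to an $f$-equivariant $\Q$-factorial modification (cf.\ Appendix \ref{appen:equi-Q-fac-model}) and run an $f$-equivariant relative MMP over $Y$ (Theorem \ref{thm:polarized-emmp}) to reduce to a Mori fibre space with genuine del Pezzo fibres, the density of $\PP^2$-fibres being preserved by such birational modifications.
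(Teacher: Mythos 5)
Your overall strategy---proving $g(Y_1)\subseteq Y_1$ and then using the \'etale degree count $|g^{-1}(F)|=q\,|F|$ to force $F:=Y\setminus Y_1$ to be empty---is genuinely different from the paper's proof-by-contradiction via $g$-orbits, and would be a clean route if the key step were complete. But the gap you flag in the key step is real, and the remedies you suggest do not close it; moreover, it is exactly at that point that the paper's argument does something you do not.

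From the finite morphism $h\colon \PP^2\cong X_y\to X_{g(y)}$ you can extract $\rho(X_{g(y)})=1$, $K^2=9$, $-K$ ample and Gorenstein, but neither reducedness, nor normality, nor at-worst-canonical singularities follow merely from being dominated by $\PP^2$. In the normal case, ``minimal resolution is a weak del Pezzo of degree $9$, hence $\PP^2$'' is only valid when the singularities are rational double points; for a Gorenstein surface with, say, a simple elliptic point, the minimal resolution has $K^2<9$ and $-K$ not nef, so the conclusion fails. In the non-normal case the claim is false as stated and one must control the conductor $C$ on the normalization. The paper sidesteps both issues by first engineering a \emph{self-endomorphism} of the bad fiber $F$ of degree $>1$ (roundtripping $F\to F_2\cong\PP^2\cong F_1\to F$, or moving $y$ along its $g$-orbit to a periodic point). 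That self-endomorphism gives $K_F=h^*K_F+E$ with $E\geq 0$, which forces $(F,0)$ (normal case) or $(\widetilde{F},C)$ (non-normal case) to be lc, in particular $C\leq 1$; these are precisely the hypotheses that \cite[Lemma 8]{MiZ88} (normal) and Reid's classification \cite[Theorem 1.1]{R94} (non-normal) require. Your $X_{g(y)}$ only carries a domination \emph{from} $\PP^2$, not a self-endomorphism, so neither estimate is available; and an $f$-equivariant relative MMP does not remove the problem, since a terminal Mori fibre space over a curve may still have non-normal or non-canonical closed fibres. (Reducedness of all fibres, which you invoke in passing, also requires an argument---this is precisely the paper's Step~1.)
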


\begin{proof}
%proof of X_y reduced
By \cite[Lemma 5.2]{MZ18}, $X_y$ is irreducible for each $y\in Y$.

\textbf{Step 1.} Each fiber $X_y$ is reduced. 
Since terminal threefold $X$ has $\Sing X$ being a finite set \cite[Corollary 5.18]{KM98}, general fibres of $\pi$ are smooth (hence reduced). 
Take large $s$ such that 
$\deg g^s = q^s >\#\{y|X_y \text{ singular}\}$. Then some fibre $X_{y'}$ with $y' \in g^{-s}(y)$ is smooth so reduced. Thus $X_y$ is reduced too, noting that $g$ is \'etale and
\[(*) \hskip 1pc f^{s*}X_y=f^{s*}\pi^*y=\pi^*g^{s*}y.\]
In particular, $K_{X_y}^2=9$ for any $y \in Y$.
The $(*)$ also shows the branch locus $B_f$ and hence the ramification divisor $R_f$ contain no any fibre $X_y$.

\textbf{Step 2.}
A general fiber $X_y\cong\PP^2$.
Let
\[U=\{y\in Y \, | \, X_y \text{ is smooth}, -K_{X_y}=-K_X|_{X_y} \text{ is ample}\}.\]
Then $U$ intersects the dense set
$Y_1$, and hence a non-empty open subset of $Y$.
%so $Y_1\bigcap U\neq \varnothing$. Choose a point $y_0\in Y_1\bigcap U$. 
For any point $y\in U$, $X_y$ is a smooth Del Pezzo surface with $K_{X_y}^2 = 9$.
%=K_{X_{y_0}}^2=9$, 
By the classification, 
%of smooth Del Pezzo surfaces, 
$X_y\cong \PP^2$ as claimed.

\textbf{Step 3.}
Each fiber $X_y\cong \PP^2$.
Suppose the contrary that there exists some $y\in Y$ such that $X_y\ncong\PP^2$.  
Since a general fiber of $\pi$ is isomorphism to $\PP^2$ as showed, $f^{-s}(X_y)$ must contain a smooth fiber $F_1\cong \PP^2$ for large $s$. Then:

\begin{enumerate}
\item 
either there is some $t>0$ such that $F_2 := X_{g^t(y)}\cong \PP^2$ exists, or
\item 
for any $t\geq 0$, $X_{g^t(y)}\ncong \PP^2$.   
\end{enumerate}

For Case (1), we obtain $\varphi=f^t|_{X_y}:X_{y}\rightarrow F_2\cong\PP^2$, and $\psi=f^s|_{F_1}:F_1\cong \PP^2\rightarrow X_y$. Restricting the ramification divisor formula of $X$ to fibers, we get 
\[K_{X_y}=\varphi^*K_{F_2}+R_{f^t}|_{X_y}, \,\,\,
K_{F_1}=\psi^*K_{X_y}+R_{f^s}|_{F_1}.\]
Identify $F_1\cong F_2$, we get $h=\psi\circ \varphi:X_y\rightarrow X_y$, and 
\begin{align*}
K_{X_y}={}&\varphi^*(\psi^*K_{X_y}+R_{f^s}|_{F_1})+R_{f^t}|_{X_y}\\
       ={}&h^*K_{X_y}+\varphi^*(R_{f^s}|_{F_1})+R_{f^t}|_{X_y}
\end{align*}

For Case (2), $G_y :=\{g^r(y) \, | \, r\geq 0\}$ is a finite set. Hence some $y_1\in G_y$, is $g$-periodic %(say $g$-fixed after iteration of $f$) 
and $X_{y_1}\ncong \PP^2$. Suppose $g^t(y_1)=y_1$ for some $t>0$.  Then we have
\[K_{X_{y_1}}=h^*K_{X_{y_1}}+R_{f^t}|_{X_{y_1}},\]
where $h=f^t|_{X_{y_1}}$.
For both Cases (1) and (2), we find some fiber $F\ncong \PP^2$, with a non-isomorphic endomorphism $h$ on $F$ of $\deg$ $q^{2r}$ for some positive integer $r$, and 
\[K_F=h^*K_F+E\]
for some effective divisor $E$.

If $F$ is normal, then $F$ is a Gorenstein, Del Pezzo surface, with Picard number $\rho(F)=1$, since $F$ is dominanted by some fiber $F_1\cong\PP^2$. Also, $K_{F}^2=9$. Then $F\cong \PP^2$ by \cite[Lemma 8]{MiZ88}, contradicting our assumption.

Suppose $F$ is non-normal with $\nu:\widetilde{F}\rightarrow F$ the normalization. Then $\nu^*K_F=K_{\widetilde{F}}+C$, with $C$ the conductor (see \cite[\S 0.2]{R94}).
Since $\widetilde{F}$ is dominated by some $F_1 \cong \PP^2$, $\rho (\widetilde{F})=1$. Now $h$ lifts to $\widetilde{h}$ on $\widetilde{F}$, and $K_{\widetilde{F}}+C=\widetilde{h}^*(K_{\widetilde{F}}+C)+\nu^*E$. Then (all coefficients of) $C\leq 1$; see (\cite[Lemma 5.3]{NZ10}, arXiv ver) or Theorem \ref{Delta leq 1} late on. So $\rho(\widetilde{F})=1$, $(K_{\widetilde{F}}+C)^2=9$, and $C\leq 1$, which cannot happen by Reid's classification of non-normal Del Pezzo surface \cite[Theorem 1.1]{R94}. This finishes \textbf{Step 3} and also 
the proof of the theorem.
%Hence $\pi:X\rightarrow Y$ cannot have a fiber not isomorphic to $\PP^2$. This proves the theorem.
\end{proof}

\begin{proof1}[Proof of Theorem \ref{thm: lcy for Gorenstein 3-fold}]
Taking a small $\Q$-factorization as in Theorem \ref{thm:Q-fac-1} and by Lemma \ref{lemma:quasi etale or birational lcy}, we may assume that $X$ is already $\Q$-factorial (and Gorenstein terminal). Applying Proposition \ref{prop:etale cover of f-pair}, with $\phi=f$, there exists a quasi-\'etale finite cover $\mu:\widetilde{X} \to X$ from a normal projective variety and a Fano type fibration $\widetilde{X}\to Y$, where $Y$ is an Abelian variety. 
Further, $f$ lifts to $\widetilde{f}:\widetilde{X}\rightarrow\widetilde{X}$
and descends to $g:Y\rightarrow Y$ after iteration of $f$. 
%the following morphisms and pairs 
%\begin{center}
%\begin{tikzcd}
%(X,0)
%&(\widetilde{X},\widetilde{\Delta})
%\arrow[l,"\mu"']
%\arrow[r,"\pi"]
%&Y,
%\end{tikzcd}
%\end{center}
%exist  and satisfy 

%$\bullet \quad$ $\mu:(\widetilde{X},\widetilde{\Delta})\rightarrow (X,0)$ is a quasi-\'etale cover, i.e., $K_{\widetilde{X}}+\widetilde{\Delta}=\mu^*K_X$, 

%$\bullet \quad$  $\pi$ is a fiber space,

%$\bullet \quad$ $Y$ is an abelian variety,

%$\bullet \quad$ $(\widetilde{X},\widetilde{\Delta})$ is of Fano type over $Y$.

Applying Proposition   \ref{prop:polarized basic property}, 
$\widetilde{f}$ and $g$ are also $q$-polarized.
Since $\mu:\widetilde{X}\rightarrow X$ is quasi-\'etale, 
 $\widetilde{X}$ is Gorenstein, terminal singularity by \cite[Proposition 5.20]{KM98}.
By Theorem \ref{thm:Q-fac-1}, taking small $\Q$-factorization $\widetilde{X}'\to \widetilde{X}$ and further iteration of $\widetilde{f}$, $\widetilde{f}$ lifts to $\widetilde{f}'$ on $\widetilde{X}'$, and $\widetilde{X}'$ is Gorenstein terminal, of Fano type over $Y$ (see Lemma \ref{lemma:quasi etale or birational lcy}).  
Also by Lemma \ref{lemma:quasi etale or birational lcy}, $X$ is of Calabi-Yau type if and only if so is $\widetilde{X}'$. Thus we may replace $X,f$ with $\widetilde{X}',\widetilde{f}'$ to assume $X$ admits an $f$-equivariant Fano type fibration $\pi:X\rightarrow Y$, with $g$ on $Y$.
And from the construction of $Y$ in \cite[Theorem 6.5]{Y21}, $f^*K_X\equiv qK_X$ if $g^*=q\,\text{id}$ on $\N^1(Y)$.

Case 1: $\text{dim}Y=3$. Then $X=Y$ is abelian, and $(X,0)$ is log Calabi-Yau. 

Case 2: $\dim Y=0$. Then $(X,\Delta)$ is of Fano type (thus Calabi-Yau type).

Case 3: $\dim Y=2$. Then $\dim (X/Y)=1$, and the theorem follows from Theorem \ref{thm 1}.

Case 4: $\dim Y=1$. Then $g^*=q\id$ on $\N^1(Y)$, so $f^*K_X\equiv qK_X$. In particular, $K_X+\frac{R_f}{q-1}\equiv 0$.
Since $X$ is terminal, the general fiber of $\pi$ is smooth. By \cite[Propositions 3.4 and 4.1]{M23}, if a general fiber of  $\pi$ is not isomorphic to $\mathbb{P}^2$, the pair $(X, \frac{R_f}{q-1})$ is log Calabi-Yau after iteration of $f$. So we may assume $Y_1=\{y\in Y| X_y\cong \mathbb{P}^2\}$ is dense in $Y$. %where $X_y$ is the fiber of $\pi$ over $y$, 
%noting that the projective space is invariant under smooth projective deformation.

Then it follows from Lemma \ref{fiber P^2} that each fiber of $\pi$ is isomorphic to $\PP^2$. In particular, each fiber of $\pi$ is smooth. Then $\pi$ is smooth since $\pi$ is flat, hence $X$ is smooth; see \cite[Chapter III, Proposition 9.7, Theorem 10.2, Proposition 10.1]{H77}. Thus $X$ is of Calabi-Yau type by \cite[Theorem 1.2]{M23}.   
\end{proof1}

\section{Singularity of $f$-pairs;
%$(X,\Delta+\frac{R_{\Delta}}{q-1})$;
Proofs of Theorems \ref{thm:Delta+R/q-1 leq 1} - \ref{thm:(X,Delta) lc}, Corollary \ref{cor:quasi-'etale lcy}}\label{sec:singularity of (X,Delta+R/q-1)}
We begin with the following which includes Theorem \ref {thm:Delta+R/q-1 leq 1}.
\begin{theorem}\label{Delta leq 1}
Let $f: X \to X$ be a surjective endomorphism of a normal projective variety $X$, and $\Delta$ an effective Weil $\Q$-divisor such that $(X,\Delta)$ is an $f$-pair (see Definition \ref{def:f-pair}). Let $P$ be a prime divisor on $X$. Then:
\begin{enumerate}
\item[(1)] Either $\mu_P \Delta \leq 1$, or $P$ is $f^{-1}$-periodic with $f^{s*}P = P$ for some $s > 0$. Moreover, if $\mu_P \Delta \geq 1$, then $P$ is $f^{-1}$-periodic.

\item[(2)] Suppose $f$ is $q$-polarized. Then, after iteration of $f$, we have:
\[ {\text{\rm (all the coefficients of)}} \,\, \Delta + \frac{R_{\Delta}}{q-1} \leq 1 \]
Further, the equality $\mu_P(\Delta + \frac{R_{\Delta}}{q-1}) = 1$ holds  if and only if $P$ is $f^{-1}$-periodic.
In particular, $\Delta\leq 1$.
\end{enumerate}
    
\end{theorem}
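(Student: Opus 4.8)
The plan is to analyze the coefficient $\mu_P$ of a prime divisor $P$ in the defining relation $(*)$ of an $f$-pair, namely $K_X + \Delta = f^*(K_X + \Delta) + R_\Delta$, or more usefully the divisorial identity $R_\Delta = \Delta + R_f - f^*\Delta \ge 0$. First I would fix a prime divisor $P$ and trace what happens to the components of $f^{-1}(P)$ under iteration. For (1), write $f^*P = \sum_{i} e_i Q_i$ where the $Q_i$ are the prime divisors mapping onto $P$ and $e_i \ge 1$ are the ramification indices. Comparing the coefficient of each $Q_i$ on both sides of $\Delta + R_f \ge f^*\Delta$ gives $\mu_{Q_i}\Delta + \mu_{Q_i}R_f \ge e_i\,\mu_P\Delta$. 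If $\mu_P\Delta > 1$ for some $P$, then I would set up a combinatorial/averaging argument on the finite ``backward orbit'' structure: iterating, $f^{-s}(P)$ for all $s$ visits only finitely many prime divisors with coefficient $> 1$ in $\Delta$ (this finiteness uses boundedness of such coefficients coming from effectivity and the ample pullback bound, or one can argue directly), so some such divisor is $f^{-1}$-periodic; then chasing the inequality around the periodic cycle, using $\sum e_i \cdot (\text{stuff}) \le q \cdot(\text{something})$ forces equality $e_i = 1$ and $\mu_{Q_i}\Delta = \mu_P\Delta$ along the cycle, proving $f^{-1}$-periodicity of $P$ itself. The ``moreover'' with $\mu_P\Delta \ge 1$ is the borderline refinement of the same argument.

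For part (2), the key input is Theorem \ref{prop:Rf leq q-1} (i.e.\ \cite[Theorem 3.1]{M23}): after iteration, $\mu_P R_f \le q-1$ for every $P$, with equality exactly when $P$ is $f^{-1}$-periodic. I would combine this with the identity $R_\Delta = \Delta + R_f - f^*\Delta$. The cleanest route: iterating $(*)$ gives $K_X + \Delta = f^{m*}(K_X+\Delta) + R_{\Delta}^{(m)}$ where $R_\Delta^{(m)} = \sum_{j=0}^{m-1} f^{j*}R_\Delta \ge 0$; dividing the relation appropriately, one checks $\Delta + \frac{R_\Delta}{q-1}$ is the ``right'' divisor because $f^*\big(\Delta + \frac{R_\Delta}{q-1}\big) = \Delta + \frac{R_\Delta}{q-1} + \big(R_f - \frac{q-1}{?}\cdots\big)$ — more precisely, the $f$-pair identity for $\Delta$ rearranges to show $D := \Delta + \frac{R_\Delta}{q-1}$ satisfies $f^*D - qD = $ (something expressible via $R_f$ and $\Delta$). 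I would then look at coefficients: for a prime $P$ with $f^*P = \sum e_i Q_i$, combine $\mu_{Q_i}R_f \le q-1$ (equality iff $Q_i$ is $f^{-1}$-periodic) with the coefficient version of the $f$-pair relation to derive a recursion $\mu_{Q_i}D \le \frac{1}{e_i}\big(\mu_P D \cdot \text{(factor)} + \text{(bounded error)}\big)$; iterating this contraction (possible after passing to an iterate so that all the ramification bounds hold simultaneously) drives $\mu_{Q_i}D \le 1$, with the fixed point of the recursion $= 1$ attained precisely along $f^{-1}$-periodic divisors. The conclusion $\Delta \le 1$ is then immediate since $R_\Delta \ge 0$.

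I expect the main obstacle to be part (2) — specifically, making the iteration argument uniform: one needs a \emph{single} iterate $f^s$ after which both Theorem \ref{prop:Rf leq q-1} holds for all relevant $P$ \emph{and} the coefficient recursion has been iterated enough times to push all coefficients below $1$, while keeping track that the error terms (coming from the $\Delta$-part and from the geometric sum $\sum f^{j*}R_\Delta$) stay controlled and do not accumulate. The finiteness statement in Theorem \ref{prop:T_f}(1) (only finitely many $f^{-1}$-periodic prime divisors) and the fact that $f$-pairs are $f^s$-pairs for all $s$ should be what makes this work. A secondary subtlety is that $K_X + \Delta$ is \emph{not} assumed $\Q$-Cartier here, so the whole argument must stay at the level of Weil-divisor coefficients and the identity $R_\Delta = \Delta + R_f - f^*\Delta$, never invoking discrepancies or pullback of $\Q$-Cartier divisors; I would be careful that every manipulation is a genuine equality/inequality of coefficients of prime divisors.
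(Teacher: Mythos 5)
Your proposal takes essentially the same route as the paper: both work at the level of Weil-divisor coefficients in $R_\Delta = \Delta + R_f - f^*\Delta \ge 0$, show for (1) that $f^{-1}$ preserves the set $S=\{P:\mu_P\Delta\ge 1\}$ and chase the inequality around a periodic cycle to force ramification index $1$, and for (2) use \cite[Theorem 3.1]{M23} to split into the $f^{-1}$-periodic case (where the coefficient is exactly $1$) and the non-periodic case (where iterating makes $\mu_P\frac{R_{\Delta,s}}{q^s-1}\to 0$ while $\mu_P\Delta$ is bounded away from $1$). One small simplification you missed: the finiteness of the relevant backward orbit in (1) needs no ample-pullback bound -- $S$ is finite simply because $\Delta$ has only finitely many irreducible components, and then $f^{-1}(\bigcup_{P\in S}P)\subseteq\bigcup_{P\in S}P$ forces equality.
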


\begin{proof}
(1) For any prime divisor $P$ on $X$, let $r_P$ denote the ramification index of $f$ at $P$. Define the exceptional set:
\[ S = \{ P \mid P \text{ is a prime divisor on } X,\ \mu_P \Delta \geq 1 \}. \]
For any $P$ in $S$ and prime divisor $Q$ with $f(Q) = P$, the non-negativity of $R_{\Delta} = \Delta + R_f - f^*\Delta$ implies
\[ (*) \hskip 1pc 0 \le \mu_Q R_{\Delta} = \mu_Q \Delta + r_Q - 1 - r_Q \mu_P \Delta. \]
This forces $\mu_Q \Delta \geq r_Q (\mu_P \Delta -1)+1\geq 1$, so $Q \in S$. Thus the finite closed set $T = \bigcup_{P \in S} P$ satisfies $f^{-1}(T) \subseteq T$, which implies $T = f^{-1}(T)$, so there is a positive integer $s$ such that $f^{-s}(P) = P$ for all $P \in S$. Set $R_{\Delta,s} := \sum_{k=0}^{s-1} (f^*)^k R_{\Delta}$, which satisfies
\[ K_X + \Delta = (f^s)^*(K_X + \Delta ) + R_{\Delta,s}. \]

For any $P \in S$ with $f^{s*}P = r_{P,s}P$, as in $(*)$ above, we compute its coefficient in $R_{\Delta,s}$:
\[ 0 \le \mu_P R_{\Delta,s} = (1 -r_{P,s})(\mu_P \Delta - 1). \]
Since $r_{P,s}\geq 1$ and $\mu_P \Delta\geq 1$, this implies $\mu_P \Delta = 1$ or $r_{P,s} = 1$. Hence $\mu_P \Delta \leq 1$, unless $P\in S$ and $f^{s*}P=P$. Also, if $\mu_P\Delta \geq 1$,  $P\in S$ and $P$ is $f^{-1}$-periodic.

(2)
We replace $f$ by its iteration and apply Theorem \ref{prop:Rf leq q-1} and Proposition \ref{prop:T_f}.
We may assume
$\frac{R_f}{q-1} \leq 1$;  
$\mu_P\frac{R_f}{q-1} = 1$ holds if and only if $P$ is $f^{-1}$-periodic, and indeed each of finitely many such  $P$ is $f^{-1}$-invariant.

\textit{Case 1: $P$ is $f^{-1}$-periodic,} or $f^{-1}$-invariant after iteration. Then
\[
f^*P = qP \implies \mu_PR_{\Delta} = (q-1)(1 - \mu_P \Delta ).
\]
Consequently:
\[
\mu_P\left(\Delta + \frac{R_{\Delta}}{q-1}\right) = \mu_P \Delta + \frac{1}{q-1}(q-1)(1 - \mu_P \Delta ) = 1.
\]

\textit{Case 2: $P$ is not $f^{-1}$-periodic.} By (1), $\mu_P \Delta < 1$. For any $k > 0$, $f^k(P)$ remains not $f^{-1}$-periodic, hence $r_{f^k(P)} \leq q-1$. Compute coefficient of $P$ in $\frac{R_{\Delta}}{q-1}$:
\begin{align*}
\mu_P\frac{R_{\Delta}}{q-1}
&= \frac{1}{q-1}\left(\mu_P \Delta + r_P - 1 - r_P\mu_{f(P)}\Delta \right) \\
&= \frac{1}{q-1}\left[(\mu_P \Delta - 1) + r_P(1 - \mu_{f(P)}\Delta )\right]\\
&< \frac{r_P}{q-1}.
\end{align*}
%Define 
%\[
%\alpha := \max\{\mu_T \Delta \mid T \text{ prime, non-}f^{-1}\text{-periodic}\} < 1, \quad 
%\beta := \min\{\mu_T \Delta \mid T \text{ prime}\} .
%\]
%This gives:
%\[
%\mu_P \Delta - 1 \leq \alpha - 1 < 0 \quad \text{and} \quad 1 - \mu_{f(P)}\Delta \leq 1 - \beta.
%\]
Replace $f$ by $f^s$, $R_{\Delta}$ by $R_{\Delta,s} = \sum_{k=0}^{s-1}(f^*)^kR_{\Delta}$, $q$ by $q^s$, and $r_P$ by:
\[
r_{P,s} = \prod_{k=0}^{s-1} r_{f^k(P)} \leq (q-1)^s,
\]
Then:
\[
\mu_P\frac{R_{\Delta,s}}{q^s-1}\leq \frac{r_{P,s}}{q^s-1}\leq \frac{(q-1)^s}{q^s-1}\to 0, \text{ when } s\to \infty.
\]
%\[
%\mu_P\frac{R_{\Delta,s}}{q^s-1}\leq \frac{(\alpha - 1) + (q-1)^s(1 - \beta)}{q^s - 1}\to 0, \text{ when } s\to \infty.
%\]

The inequality implies $\mu_P\frac{R_{\Delta,s}}{q^s-1}\to 0$ uniformly for all such $P$. Set $\alpha=\max\{\mu_T \Delta \mid T \text{ prime, }\mu_T\Delta<1\}$. Then, for all such $P$, $\mu_P\Delta\leq \alpha<1$. Thus, for some sufficiently large $s$, we have $\mu_P(\Delta + \frac{R_{\Delta,s}}{q^s-1}) <1$ for all such $P$.
\end{proof}

\begin{remark}
\begin{enumerate}
\item[(1)] Suppose $f^*P = P$. Replacing $\Delta$ by $\Delta' = \Delta + kP$ for some $k\in\Q$ such that $\Delta'$ is effective, $(X,\Delta')$ remains an $f$-pair with $R_{\Delta'}=R_{\Delta}$.
\item[(2)] From the proof of Theorem \ref{Delta leq 1}, $\mu_P\frac{R_{\Delta,s}}{q^s-1}\to 0$ uniformly for all non $f^{-1}$-periodic prime divisors.
\end{enumerate}
\end{remark}

\begin{lemma}[cf.~{\cite[Lemma 2.10]{BH14}}]\label{Nlc} 
Let $f: X \to X$ be a surjective endomorphism of a normal projective variety, and $\Delta$ an effective Weil $\Q$-divisor such that $K_X+\Delta$ is $\Q$-Cartier and $(X,\Delta)$ is an $f$-pair. Then each component of $\Nlc(X, \Delta)$ (resp. $\Nklt(X,\Delta)$) is $f^{-1}$-periodic.
%$\text{LC}(X,\Delta)$
\end{lemma}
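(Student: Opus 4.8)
The plan is to exploit the functoriality of the non-lc (resp. non-klt) locus under $f$. The key observation is that the defining relation $K_X+\Delta = f^*(K_X+\Delta) + R_\Delta$ with $R_\Delta \ge 0$ says that, pulling back, the pair $(X,\Delta)$ is ``at least as singular as'' its $f$-pullback: if we write $K_X + f^*\Delta = f^*(K_X+\Delta) + (\text{something related to } R_f)$, then since $R_\Delta = \Delta + R_f - f^*\Delta \ge 0$ we get $\Delta \le f^*\Delta - R_f + \Delta$... more usefully, one shows directly that $\Nlc(X,\Delta) \subseteq f^{-1}(\Nlc(X,\Delta))$, and similarly for $\Nklt$. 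Concretely, I would argue: let $W$ be an irreducible component of $\Nlc(X,\Delta)$ and let $E$ be a non-lc place with center $W$. Using that $f$ is finite and the ramification formula, one pulls back a log resolution computing $a(E,X,\Delta)$ to produce a divisor $E'$ over $X$ with center $f(W)$ (or with $f^{-1}$ of things) and discrepancy $\le a(E,X,\Delta) + (\text{contribution of } R_\Delta) \le a(E,X,\Delta) < -1$; the effectivity of $R_\Delta$ is exactly what makes the discrepancy not improve. This gives $f(W) \subseteq \Nlc(X,\Delta)$, i.e. $\Nlc(X,\Delta)$ is $f$-invariant as a set (so $f$ permutes its finitely many components); hence each component is $f$-periodic. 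Then, because $f$ is finite and surjective, $f$-periodicity of a component together with $\Nlc(X,\Delta) \subseteq f^{-1}(\Nlc(X,\Delta))$ (which follows since $f$ is finite: $f(W)\subseteq \Nlc$ forces $W \subseteq f^{-1}(\Nlc)$) upgrades to $f^{-1}$-periodicity of each component.

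More precisely, here is the order of steps I would carry out. First, establish the inclusion $\Nlc(X,\Delta) \subseteq f^{-1}\big(\Nlc(X,\Delta)\big)$: take a point $x \notin f^{-1}(\Nlc(X,\Delta))$, so $(X,\Delta)$ is lc near $f(x)$; I want to conclude $(X,\Delta)$ is lc near $x$. For this, work on a neighborhood $V$ of $f(x)$ over which $(X,\Delta)$ is lc, let $U = f^{-1}(V)$, and compare discrepancies over $U$: for any divisor $E$ over $U$ with center meeting $x$, build a divisor $E_0$ over $V$ such that $a(E,X,\Delta|_U) = r_E\, a(E_0, X, \Delta|_V) + (\text{a nonnegative number coming from } R_\Delta \ge 0)$, where $r_E \ge 1$ is a ramification index; since $a(E_0,\cdot) \ge -1$ this gives $a(E,\cdot) \ge -1$ — wait, that direction needs care with signs, so the cleaner route is the Hurwitz-type comparison in \cite[Lemma 2.10]{BH14} which already handles this for the totally-invariant reduced case, adapted using only the $f$-pair relation $(*)$. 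Second, dualize: $\Nlc(X,\Delta) \subseteq f^{-1}(\Nlc(X,\Delta))$ plus $f$ finite surjective gives, after applying $f$, that $f(\Nlc(X,\Delta)) \subseteq \Nlc(X,\Delta)$, hence $\Nlc(X,\Delta)$ is $f$-invariant; since it has finitely many irreducible components, $f$ permutes them and each is $f$-periodic, say $f^k$ maps the component $W$ to itself. Third, from $W \subseteq f^{-1}(\Nlc)$ for every component and $f^k(W) = W$, deduce $f^{-k}(W) = W$: $f^{-k}(W)$ is contained in $\Nlc(X,\Delta)$ (iterate the first inclusion), it surjects onto $f^k$-image... actually $W = f^k(f^{-k}(W))$ and $f^{-k}(W)$ is a union of components of $\Nlc$ that $f^k$ maps onto $W$; a counting/injectivity-of-the-permutation argument on the finite set of components then pins down $f^{-k}(W) = W$. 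The same three steps run verbatim with $\Nklt$ in place of $\Nlc$.

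The main obstacle I expect is Step 1, the inclusion $\Nlc(X,\Delta) \subseteq f^{-1}(\Nlc(X,\Delta))$, specifically getting the discrepancy comparison under the mere $f$-pair hypothesis rather than the totally-invariant reduced-boundary hypothesis of \cite{BH14}. One must check that the logarithmic ramification formula still applies: resolving $(X,\Delta)$ and tracking how $R_\Delta \ge 0$ enters, one needs that the discrepancy of a divisor over $X$ computed downstairs, pulled back, only gets worse by a nonnegative amount supported on $R_\Delta$ and the ramification locus — this is where one invokes $K_X + \Delta = f^*(K_X+\Delta)+R_\Delta$ and finiteness of $f$, essentially the same computation as \cite[Lemma 2.10]{BH14} but one should verify no positivity of $\Delta$'s coefficients beyond effectivity is used. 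A secondary subtlety is the bookkeeping in Step 3 to pass from $f$-periodic to $f^{-1}$-periodic, but since $\Nlc$ and $\Nklt$ each have only finitely many components (so $f$ induces a permutation of a finite set, which is invertible) this is routine once the set-theoretic invariance is in hand. I would also note the convenient shortcut: if $K_X+\Delta$ is $\Q$-Cartier then by Proposition \ref{prop:T_f}(4) we have $-(K_X+\Delta) \sim_\Q E \ge 0$, which is not needed for this lemma but confirms the setup is consistent.
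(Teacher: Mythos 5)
There is a genuine gap, and it lies precisely at the spot you flagged as ``needs care with signs.'' The inclusion you assert in Step 1 is reversed. What actually follows from the $f$-pair relation is $f^{-1}(\Nlc(X,\Delta)) \subseteq \Nlc(X,\Delta)$, not $\Nlc(X,\Delta) \subseteq f^{-1}(\Nlc(X,\Delta))$. Concretely: write $\Delta' := f^*\Delta - R_f = \Delta - R_\Delta$, so that $K_X + \Delta' = f^*(K_X+\Delta)$. By \cite[Proposition 5.20]{KM98} (finite pullback preserves lc/klt exactly, via the formula $a(E',X,\Delta')+1 = r(a(E,X,\Delta)+1)$), the sub-pair $(X,\Delta')$ is sub-lc near $x$ if and only if $(X,\Delta)$ is lc near $f(x)$. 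Since $R_\Delta \geq 0$, we have $\Delta' \leq \Delta$, hence $(X,\Delta)$ lc near $x$ \emph{implies} $(X,\Delta')$ sub-lc near $x$ \emph{implies} $(X,\Delta)$ lc near $f(x)$. Taking contrapositives gives $f^{-1}(\Nlc) \subseteq \Nlc$. Your intended direction --- lc near $f(x)$ implies lc near $x$ --- would require $R_\Delta \leq 0$, i.e., the opposite effectivity, and the deferral to \cite[Lemma~2.10]{BH14} does not rescue it because that argument (adapted to $f$-pairs) also produces $f^{-1}(\Nlc)\subseteq\Nlc$. Your formula $a(E)= r\,a(E_0) + (\text{nonneg from } R_\Delta)$ is incorrect: the actual transformation is $a(E)+1 = r(a(E_0)+1) - \mu_E(\text{pullback of }R_\Delta)$, and the $R_\Delta$-term enters with a \emph{minus} sign, which makes discrepancies only get worse upon pulling back, not better.

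Because Step 1 is reversed, Steps 2 and 3 collapse as written. With the correct inclusion the argument is both shorter and cleaner, and does not need the $f$-periodic-to-$f^{-1}$-periodic upgrade you outline: iterate to obtain a decreasing chain of closed sets $\Nlc \supseteq f^{-1}(\Nlc) \supseteq f^{-2}(\Nlc)\supseteq\cdots$, which stabilizes by Noetherianity, say $f^{-n}(\Nlc)=f^{-n-1}(\Nlc)$; applying $f^n$ and using surjectivity ($f^n(f^{-n}(S))=S$) yields $\Nlc = f^{-1}(\Nlc)$. Since $X$ is normal and $f$ finite surjective, going-down ensures every component of $f^{-1}(Z)$ surjects onto $Z$ with the same dimension, so $f^{-1}$ permutes the finitely many irreducible components of $\Nlc$, and each is $f^{-1}$-periodic. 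The identical reasoning with ``klt'' in place of ``lc'' handles $\Nklt(X,\Delta)$. One small further note: your closing aside invoking Proposition~\ref{prop:T_f}(4) is out of scope here, since that proposition requires $f$ polarized while the present lemma assumes only a surjective endomorphism.
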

\begin{proof}
The argument is exactly the same as that in \cite[Lemma 2.10]{BH14}.
\end{proof}

\begin{definition}[log canonical model]
Let \((X, \Delta)\) be a pair. A {\it log-canonical model} (or {\it lc model} for short) over the pair \((X, \Delta)\) is a proper birational morphism 
$\mu : Y \to X$
such that if we set
\[
\Delta_Y := \mu^{-1}_*(\Delta) + E^{\mathrm{lc}}_\mu,
\]
where \( E_{\mu}^{lc} \) is the sum of all the \(\mu\)-exceptional prime divisors taken with coefficient one, the pair \((Y, \Delta_Y)\) is log-canonical and \( K_Y + \Delta_Y \) is \(\mu\)-ample.
\end{definition}

(1) If there exists a log-canonical model over the pair \((X, \Delta)\), it is unique up to isomorphism \cite[Proposition 2.1]{OX12}.

(2) Suppose now that \(0\leq \Delta\leq 1\) and \(K_X + \Delta\) is \(\mathbb{Q}\)-Cartier. Then there exists a log-canonical model over \((X, \Delta)\) \cite[Theorem 1.1]{OX12}. Moreover the \(\mu\)-exceptional locus has pure codimension one \cite[Lemma 2.2]{OX12}. If we write
\[
  K_Y + \Delta_Y = \mu^*(K_X + \Delta) + \Delta_Y^{>1},\]
then \(\Delta_Y^{>1}\) is antieffective and \(\Supp\, \Delta_Y^{>1} = \text{Exc} (\mu)\), the exceptional locus of $\mu$ \cite[Lemma 2.4]{OX12}. Note that the divisor \(\Delta_Y^{>1}\) is \(\mathbb{Q}\)-Cartier,
since \(K_X + \Delta\) and \(K_Y + \Delta_Y\) are \(\mathbb{Q}\)-Cartier.

\begin{lemma}[cf.~{\cite[Lemma 2.11]{BH14}}]\label{lc model}
Let $f: X_1 \to X_2$ be a finite morphism of normal varieties with effective Weil $\mathbb{Q}$-divisors $\Delta_1,\Delta_2$ satisfying:
\[ K_{X_1} + \Delta_1 = f^*(K_{X_2} + \Delta_2). \]
If $\mu_2:(Y_2, \Delta_{Y_2})\to (X_2, \Delta_2)$ is an lc model over $(X_2, \Delta_2)$, then there is an lc model $(Y_1, \Delta_{Y_1})$ over $(X_1, \Delta_1)$, and \( f \) lifts to a finite morphism \( g : Y_1 \to Y_2 \), such that  $\mu_2 \circ g = f \circ \mu_1$, and
\[
K_{Y_1} + \Delta_{Y_1} = g^*(K_{Y_2} + \Delta_{Y_2}) .
\]
\end{lemma}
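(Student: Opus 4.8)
The plan is to realise $Y_1$ as a normalised finite cover of $Y_2$ and then transport the lc-model structure of $(Y_2,\Delta_{Y_2})$ to $X_1$ via the pullback identity $K_{Y_1}+\Delta_{Y_1}=g^*(K_{Y_2}+\Delta_{Y_2})$.

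\textbf{Construction of $Y_1$, $g$, $\mu_1$.} Since $\mu_2$ is birational, $k(Y_2)=k(X_2)$, so $f$ induces a finite field extension $k(X_2)\hookrightarrow k(X_1)$. Let $Y_1$ be the normalization of $Y_2$ in $k(X_1)$; then $g\colon Y_1\to Y_2$ is finite and surjective, $Y_1$ is normal, and $k(Y_1)=k(X_1)$. The composite $Y_1\to Y_2\to X_2$ is finite, and $X_1$ is the normalization of $X_2$ in $k(X_1)$, so the universal property of normalization gives a morphism $\mu_1\colon Y_1\to X_1$; it is birational, and it is proper because $g$ is finite, $\mu_2$ is proper, and $f$ is separated. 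By construction $\mu_2\circ g=f\circ\mu_1$. (Equivalently, take $Y_1$ to be the normalization of the unique component of $X_1\times_{X_2}Y_2$ dominating $X_1$.)

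\textbf{Identifying $\Delta_{Y_1}$.} As $K_{Y_2}+\Delta_{Y_2}$ is $\Q$-Cartier and $g$ is finite, $g^*(K_{Y_2}+\Delta_{Y_2})$ is $\Q$-Cartier; define $\Delta_{Y_1}$ by $K_{Y_1}+\Delta_{Y_1}=g^*(K_{Y_2}+\Delta_{Y_2})$ with $g_*\Delta_{Y_1}$ the appropriate multiple of $\Delta_{Y_2}$. Writing $K_{Y_2}+\Delta_{Y_2}=\mu_2^*(K_{X_2}+\Delta_2)+\Delta_{Y_2}^{>1}$ as in the definition of lc model, and using $\mu_2\circ g=f\circ\mu_1$ together with $K_{X_1}+\Delta_1=f^*(K_{X_2}+\Delta_2)$, one gets
\[
K_{Y_1}+\Delta_{Y_1}=\mu_1^*(K_{X_1}+\Delta_1)+g^*\Delta_{Y_2}^{>1},
\]
with $g^*\Delta_{Y_2}^{>1}$ antieffective. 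Because $f$ is finite it contracts nothing and preserves codimension of images, so a prime divisor $D$ of $Y_1$ is $\mu_1$-exceptional if and only if $g(D)$ is a $\mu_2$-exceptional prime divisor; hence $\Supp g^*\Delta_{Y_2}^{>1}=g^{-1}(\Exc \mu_2)=\Exc \mu_1$, pure of codimension one. Now compare coefficients using the finite-pullback identity $\mu_D\Delta_{Y_1}=r_D(\mu_{g(D)}\Delta_{Y_2}-1)+1$, $r_D$ the ramification index of $g$ along $D$. For $\mu_1$-exceptional $D$, the lc-model normalization gives $\mu_{g(D)}\Delta_{Y_2}=1$, whence $\mu_D\Delta_{Y_1}=1$. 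For non-exceptional $D$ with strict transform $D'$ on $X_1$ (and $g(D)$ the strict transform on $Y_2$ of the image $P'$ on $X_2$), the maps $\mu_1,\mu_2$ are local isomorphisms at the generic points of $D,g(D)$, so $r_D=r_{D'}$ (the ramification index of $f$ along $D'$) and $\mu_{g(D)}\Delta_{Y_2}=\mu_{P'}\Delta_2$; combined with $\mu_{D'}\Delta_1=r_{D'}(\mu_{P'}\Delta_2-1)+1$ this yields $\mu_D\Delta_{Y_1}=\mu_{D'}\Delta_1$. Therefore $\Delta_{Y_1}=(\mu_1)^{-1}_*\Delta_1+E^{\mathrm{lc}}_{\mu_1}$; in particular $\Delta_{Y_1}$ is effective with coefficients $\le 1$, and plugging back in gives $K_{Y_1}+\Delta_{Y_1}=g^*(K_{Y_2}+\Delta_{Y_2})$ as required.

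\textbf{lc-ness, relative ampleness, conclusion.} Since $g$ is finite and $K_{Y_1}+\Delta_{Y_1}=g^*(K_{Y_2}+\Delta_{Y_2})$ with $(Y_2,\Delta_{Y_2})$ lc, the pair $(Y_1,\Delta_{Y_1})$ is lc by \cite[Proposition 5.20]{KM98}. Finally, $K_{Y_2}+\Delta_{Y_2}$ is $\mu_2$-ample; pulling back along the finite morphism $g$ preserves relative ampleness, so $g^*(K_{Y_2}+\Delta_{Y_2})$ is ample over $X_2$ via $f\circ\mu_1=\mu_2\circ g$, and since $f$ is finite hence affine this is equivalent to being ample over $X_1$ via $\mu_1$. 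Thus $(Y_1,\Delta_{Y_1})$ is an lc model over $(X_1,\Delta_1)$ with $\mu_2\circ g=f\circ\mu_1$ and $K_{Y_1}+\Delta_{Y_1}=g^*(K_{Y_2}+\Delta_{Y_2})$. I expect the only real work to be the coefficient bookkeeping in the middle step — namely the identifications $\Exc\mu_1=g^{-1}(\Exc\mu_2)$ and $r_D=r_{D'}$ along strict transforms — while the lc-ness and relative-ampleness statements are soft.
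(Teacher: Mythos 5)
Your proof is correct and follows essentially the same route as the paper: you construct $Y_1$ as the normalization of (the dominating component of) $X_1\times_{X_2}Y_2$, you use the ramification formula to compare coefficients of strict transforms and exceptional divisors, you note that $\Exc\mu_1 = g^{-1}(\Exc\mu_2)$ because $f$ is finite, and you invoke \cite[Proposition 5.20]{KM98} for lc-ness and finite pullback for relative ampleness. The only organizational difference is the direction of the coefficient computation: the paper defines $\Delta_{Y_1}:=\mu_{1*}^{-1}\Delta_1+E_{\mu_1}$ and derives $K_{Y_1}+\Delta_{Y_1}=g^*(K_{Y_2}+\Delta_{Y_2})$, whereas you define $\Delta_{Y_1}$ by the pullback identity and then verify it equals $\mu_{1*}^{-1}\Delta_1+E_{\mu_1}$; these are the same bookkeeping read in opposite directions, with the paper's $R_g^v=g^*E_{\mu_2}-E_{\mu_1}$ playing the role of your coefficient-one computation on $\mu_1$-exceptional divisors.
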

\begin{proof}
Let $Y_1$ be the normalization of $X_1\times _{X_2}Y_2$. Denote by $\mu_1: Y_1\to X_1$ and $g: Y_1\to Y_2$ the projections. Let $E_{\mu_i}$ be the sum of $\mu_i$-exceptional prime divisors. By the definition of lc model, $\Delta_{Y_2}=\mu_{2*}^{-1}\Delta_2+E_{\mu_2}$.
Let $\Delta_{Y_1}=\mu_{1*}^{-1}\Delta_1+E_{\mu_1}$. 
From the ramification divisor formula, 
$f^*\Delta_2-\Delta_1=R_f$.
We derive, by taking strict transforms on $Y_1$ and $Y_2$:
\[
g^*\mu_{2*}^{-1}\Delta_2 - \mu_1^{-1}\Delta_1 = R_g^h,
\]
where we decompose $R_g = R_g^h + R_g^v$ into $\mu_1$-non-exceptional and $\mu_1$-exceptional components.
Combining this with $K_{Y_1} = g^*K_{Y_2} + R_g$,
we obtain
\[
K_{Y_1}+ \mu_1^{-1}\Delta_1= g^*(K_{Y_2} + \mu_{2*}^{-1}\Delta_2)+R_g^v.
\]
Since 
%$g^{-1}E_{\mu_2} = E_{\mu_1}$,
$R_g^v=g^*E_{\mu_2}-E_{\mu_1}$,
\[
K_{Y_1} + \Delta_{Y_1} = g^*(K_{Y_2} + \Delta_{Y_2}).
\]

%[K_{Y_1}+\mu_{1*}^{-1}\Delta_1+E_{\mu_1}=g^*(K_{Y_2}+\mu_{2*}^{-1}\Delta_2+E_{\mu_2})+G,\]
%\[K_{Y_1}+\Delta_{Y_1} =g^*(K_{Y_2}+\Delta_{Y_2})+G\]
%where $G$ is $\mu_1$-exceptional, having no common component with %$E_{\mu_1}$. Thus $G=0$. 
%Let $\Delta_{Y_1}=\mu_{1*}^{-1}\Delta_1+E_{\mu_1}$, 
Then $(Y_j, \Delta_{Y_j})$ is lc and $K_{Y_j}+\Delta_{Y_j}$ is $\mu_{j}$-ample, for $j =2 $ and hence for $j =1$ (cf. \cite[Proposition 5.20]{KM98}). So $(Y_1,\Delta_{Y_1})$ is an lc model over $(X_1,\Delta_1)$.
\end{proof}

\begin{proposition}[cf.~{\cite[Proposition 3.1]{BH14}}]\label{Nlc R_f}
   Let $f: X \to X$ be a surjective endomorphism on a normal projective variety $X$ and $\Delta \le 1$ an effective Weil $\Q$-divisor on $X$ such that $K_X+\Delta$ is $\Q$-Cartier and $(X,\Delta)$ is an $f$-pair. 
   Let $Z$ be an irreducible component of $\Nlc(X, \Delta)$ (which is $f^{-1}$-periodic by Lemma~\ref{Nlc}). Suppose that $f^{-1}(Z) = Z$. Then 
    \[
        Z \nsubseteq \Supp R_{\Delta}.
    \]
\end{proposition}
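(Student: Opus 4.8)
The plan is to follow the strategy of \cite[Proposition 3.1]{BH14}, which argues by contradiction using the log canonical model. Suppose $Z \subseteq \Supp R_{\Delta}$. Since $\Delta \le 1$ and $K_X + \Delta$ is $\Q$-Cartier, there is an lc model $\mu : Y \to X$ over $(X,\Delta)$; write $K_Y + \Delta_Y = \mu^*(K_X+\Delta) + \Delta_Y^{>1}$ with $\Delta_Y^{>1} \le 0$, $\Q$-Cartier, and $\Supp \Delta_Y^{>1} = \Exc(\mu)$. By Lemma~\ref{lc model} applied to $K_X + \Delta = f^*(K_X+\Delta) + R_\Delta$ --- more precisely, after first passing to the finite morphism obtained by viewing $f$ as a morphism from $X$ (with divisor $\Delta$) to $X$ (with divisor $f^*\Delta = \Delta + R_f$), or by directly adapting the construction --- the morphism $f$ lifts to a finite $g : Y \to Y$ with $\mu \circ g = f \circ \mu$ and a corresponding ramification formula $K_Y + \Delta_Y = g^*(K_Y+\Delta_Y) + R_{\Delta_Y}$ for the induced $g$-pair $(Y,\Delta_Y)$. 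The key point is then to track how $\Delta_Y^{>1}$ behaves: pulling back the identity $K_Y + \Delta_Y = \mu^*(K_X+\Delta)+\Delta_Y^{>1}$ by $g$ and comparing, one gets a relation of the form $g^*\Delta_Y^{>1} - \Delta_Y^{>1} = R_{\Delta_Y} - \mu^*R_\Delta$ (up to checking which components are $\mu$-exceptional), so that $-\Delta_Y^{>1}$ is an effective $\Q$-Cartier divisor supported exactly on $\Exc(\mu)$ whose "$g$-pair defect" is governed by $R_\Delta$.

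Next I would apply Theorem~\ref{Delta leq 1}(1), or rather the contrapositive philosophy behind it, to the $g$-pair $(Y, \Delta_Y - \Delta_Y^{>1})$ or to the divisor $-\Delta_Y^{>1}$ directly: since $-\Delta_Y^{>1}$ has a component $E$ over $Z$ with positive coefficient, and since $Z$ (hence every $\mu$-exceptional divisor over $Z$, after refining) is $f^{-1}$-periodic by Lemma~\ref{Nlc} together with the hypothesis $f^{-1}(Z) = Z$, iterating $g$ forces such a component $E$ to satisfy $g^{*}E = (\deg\text{-type factor})\, E$, i.e. $E$ is totally invariant in the appropriate sense. The contradiction should come from the ampleness of $K_Y + \Delta_Y$ over $X$ combined with the numerical consequences of $Z \subseteq \Supp R_\Delta$: intersecting $-\Delta_Y^{>1}$ (which is $\mu$-anti-ample, being essentially $-(K_Y+\Delta_Y) + \mu^*(K_X+\Delta)$ up to the fixed $\mu$-ample part) with curves in a fiber over a general point of $Z$, and using that the ramification formula pushes the coefficient of $E$ in the "defect" in a direction incompatible with $E$ being totally invariant unless $Z \not\subseteq \Supp R_\Delta$. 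Concretely, if $P$ is a component of $R_\Delta$ over $Z$ with $\mu_P R_\Delta > 0$, its preimages contribute strictly positively, and combined with the sign of $\Delta_Y^{>1}$ one runs the same index-counting as in the proof of Theorem~\ref{Delta leq 1}(1): the coefficient of the relevant exceptional $E$ in $\Delta_Y^{>1}$ would have to be simultaneously $<0$ (since $E \subseteq \Exc(\mu)$) and forced $\ge 0$ by the ramification inequality, a contradiction.

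The main obstacle I anticipate is the bookkeeping of the lc-model comparison under $g$: Lemma~\ref{lc model} is stated for a finite morphism $f : X_1 \to X_2$ with $K_{X_1}+\Delta_1 = f^*(K_{X_2}+\Delta_2)$ exactly (ramification formula with no leftover $R$), whereas here we have the genuine $f$-pair relation with $R_\Delta \ge 0$ possibly nonzero. So the first technical step is to set things up correctly --- either by absorbing $R_f$ into the target divisor (take $\Delta_2 = \Delta + R_f = f^*\Delta + R_\Delta$... but that is not $\le 1$ in general) or, as in \cite{BH14}, by working with the lc model directly and checking that $g := $ (normalization of $X \times_X Y$ via $f$ and $\mu$) still lands in the lc model over $(X,\Delta)$ because $(X,\Delta)$ and its pullback data determine the same lc model by uniqueness. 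Getting this identification clean, and then correctly decomposing $R_g = R_g^h + R_g^v$ into $\mu$-horizontal and $\mu$-exceptional parts to read off the coefficient of $E$ in $\Delta_Y^{>1}$, is where the real care is needed; once that is in place, the periodicity input from Lemma~\ref{Nlc} and the index inequality from the proof of Theorem~\ref{Delta leq 1}(1) close the argument. Since the excerpt says ``The argument is exactly the same as that in \cite[Proposition 3.1]{BH14}'' is the template, I expect the write-up to largely consist of verifying that each step of that argument survives the replacement of the reduced $f^{-1}$-invariant $\Delta$ by a general $f$-pair $\Delta \le 1$.
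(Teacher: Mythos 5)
Your proposal follows the log-canonical-model strategy of \cite[Proposition 3.1]{BH14}, but the paper deliberately departs from that route: as the authors note in Remark \ref{rmk: proof of num lc}, they work with a plain log resolution of $(X,\Delta)$ and never invoke the lc model or Lemma \ref{lc model} at this stage (those are used only later, in Theorem \ref{Nlc deg}). The paper's argument is shorter and avoids precisely the bookkeeping difficulty you flag. Fix a log resolution $\mu\colon Y\to X$, let $E_i$ be the $\mu$-exceptional divisors with $\mu(E_i)=Z$ and discrepancy $a(E_i,X,\Delta)<-1$, and suppose $Z\subseteq \Supp R_\Delta$. Write $m$ for the Cartier index of the $\Q$-Cartier divisor $R_\Delta$. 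Since $f^{-1}(Z)=Z$ and $Z\subseteq\Supp R_\Delta$, for every $j\ge 0$ the divisor $(f^j)^*R_\Delta$ is $\Q$-Cartier with index $m$ and its support contains $Z$, hence $\mult_{E_i}\mu^*(f^j)^*R_\Delta\ge 1/m$. Iterating $f$ and accumulating $R_{\Delta,\ell}=\sum_{j=0}^{\ell-1}(f^j)^*R_\Delta$, the coefficient of each $E_i$ in $\mu^*R_{\Delta,\ell}$ grows at least like $\ell/m$, so for $\ell\gg 0$ the sub-pair $(X,\Delta-R_{\Delta,\ell})$ is sub-lc over the generic point of $Z$. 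But $K_X+\Delta-R_{\Delta,\ell}=(f^\ell)^*(K_X+\Delta)$ and $f^{-\ell}(Z)=Z$, so \cite[Proposition 5.20]{KM98} (invariance of discrepancies under a finite surjective morphism with $Z$ totally invariant) forces $(X,\Delta)$ itself to be sub-lc near $Z$, contradicting that $Z$ is a non-lc center. No lc model, no lift of $f$, and no anti-ampleness of $\Delta_Y^{>1}$ are needed.

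As written, your sketch has a real gap exactly where you anticipated one. Lemma \ref{lc model} requires the exact crepant relation $K_{X_1}+\Delta_1=f^*(K_{X_2}+\Delta_2)$ with \emph{effective} $\Delta_1,\Delta_2$; for a genuine $f$-pair with $R_\Delta\ne 0$ the only way to get that equality is to take $\Delta_1=\Delta-R_\Delta$, which is not effective, and your proposed workaround (absorbing $R_f$ into the target boundary) indeed breaks $\Delta_2\le 1$, as you observe. You never resolve this, and your closing contradiction (``coefficient of $E$ in $\Delta_Y^{>1}$ simultaneously $<0$ and forced $\ge 0$'') is not a correct reading of what the ramification formula gives on the lc model — the sign of $\Delta_Y^{>1}$ is fixed by construction, and the ramification data only controls how those negative coefficients move under pullback, not their sign. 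If you want to stay on the BH14 path you would need to reproduce their actual inductive coefficient estimate on the lc model; but the cleaner move, and the one the paper takes, is to drop the lc model entirely and use the accumulation-of-ramification argument above.
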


\begin{proof}
\textbf{Step 0.}
    Since $\Delta \le 1$, $\Codim_X Z \geq 2$.
    Let $\mu: Y \to (X, \Delta)$ be a log resolution of $(X, \Delta)$.  Write
\[
     K_Y +\mu_*^{-1}\Delta+ \sum a_iE_i+\sum
     b_jE_j'= \mu^*(K_X + \Delta), 
    \]
       where $E_i$ runs over $\mu$-exceptional prime divisors satisfying $Z\subseteq\mu(E_i)$ and $a_i>1$, while each $E_j'$ is a $\mu$-exceptional prime divisor such that $Z\nsubseteq \mu(E_j')$ or $b_j\leq 1$. Then each $E_i$ is a non-lc place of $(X,\Delta)$, and $\mu(E_i)=Z$ since $Z$ is an irreducible component of $\Nlc(X)$.
    
    \textbf{Step 1.} Suppose $Z \subseteq \Supp R_\Delta$ on the contrary. Since $K_X+\Delta$ is $\Q$-Cartier, $R_\Delta$ is $\mathbb{Q}$-Cartier with Cartier index $m \ge 1$. For $\ell > 0$,
    \[
        K_X + \Delta = (f^{\ell})^*(K_X + \Delta) + R_{\Delta,\ell}, \quad \text{where} \quad R_{\Delta,\ell} = \sum_{j=0}^{\ell-1} (f^j)^* R_{\Delta}.
    \]
    As $Z \subseteq \Supp R_{\Delta}$ and $f^{-1}(Z) = Z$, we have
    \[
        \mult_{E_i} \mu^*(f^j)^* R_{\Delta} \geq \frac{1}{m} 
    \]
    for any $j\geq 0$.
    Thus for all positive integers $i$ and sufficiently large $\ell$,
    \[
         a_i-\mult_{E_i} \mu^* R_{\Delta, \ell}\leq 1 .
    \]
    So we may assume $a_i-\mult_{E_i}\mu^*R_{\Delta}\leq 1$ for all $i$, after iteration of $f$. 
    
    \textbf{Step 2.}
     Set $c_i=\mult_{E_i} \mu^* R_{\Delta}$, $d_j=\mult_{E_j'} \mu^* R_{\Delta}$. We can write
    \[
     K_Y +\mu_*^{-1}\Delta- \mu_*^{-1}R_{\Delta}+ \sum(a_i-c_i)E_i+\sum(b_j-d_j)E_j'= \mu^*(K_X + \Delta-R_{\Delta}), 
    \]
   By Step 2, $a_i-c_i\leq 1$. 
   Also, if $Z\subseteq \mu(E_j')$, then $b_j\leq 1$ by the assumption, so $b_j-d_j\leq 1$. Thus $(X,\Delta-R_{\Delta})$ is sub-lc over generic point of $Z$, which leads to a contradiction by \cite[Proposition 5.20]{KM98}, since $K_X + \Delta-R_{\Delta}=f^*(K_X + \Delta)$, $f^{-1}(Z)=Z$ and $Z$ is a non-lc center of $(X,\Delta)$. This proves the proposition.
\end{proof}

\begin{remark}\label{rmk: proof of num lc}
    (1) Proposition \ref{Nlc R_f} generalizes the result of \cite[Proposition 3.1]{BH14} and simplifies the proof there. Also,  we do not assume $\mu:Y\to X$ is the log canonical model over $X$ as in the proof of \cite[Proposition 3.1]{BH14}, but only a log resolution of $(X,\Delta)$.
    
    (2) When $X$ is  a surface, not assuming $K_X+\Delta$ to be $\Q$-Cartier, replace $\Nlc(X,\Delta)$ by $\Nlc_{num}(X,\Delta)$: the numerically non-lc locus of $(X,\Delta)$ (see \cite[section 20]{CMZ20}), which is a finite set. Suppose $Z\subseteq \Supp R_{\Delta}$. Then a similar proof as in Proposition \ref{Nlc R_f} shows that $K_X+\Delta-R_{\Delta}$ is numerically sub-lc over $Z$, after iteration of $f$, which leads to a contradiction as in Step 2 of the proof.
\end{remark}

\begin{theorem}[cf.~{\cite[Theorem 1.4]{BH14}}]\label{Nlc deg}
    In the setting of Proposition \ref{Nlc R_f}, $\deg(f|_Z) = \deg f$.
\end{theorem}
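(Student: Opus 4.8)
The plan is to adapt Broustet--H\"oring's proof of \cite[Theorem~1.4]{BH14}: pass to the log canonical model, lift $f$ crepantly there by Lemma~\ref{lc model}, and extract a ramification index along a non-lc place over $Z$. Since Lemma~\ref{lc model} needs an \emph{exact} crepant relation, the key is to work near the generic point $\eta_Z$ of $Z$, where $R_\Delta$ vanishes. First, as $\deg(f^{\ell}|_Z)=\deg(f|_Z)^{\ell}$ and $\deg(f^{\ell})=(\deg f)^{\ell}$, we may replace $f$ by any iterate. Because $f^{-1}(Z)=Z$ and $f|_Z$ is finite dominant, $\eta_Z$ is the only point of $X$ over $\eta_Z$, and $f^k(\eta_Z)=\eta_Z$; combined with $\eta_Z\notin\Supp R_\Delta$ from Proposition~\ref{Nlc R_f}, for each iterate $f^{\ell}$ used we have $\eta_Z\notin\Supp R_{\Delta,\ell}$, so one may choose an affine open $U\ni\eta_Z$ with $R_{\Delta,\ell}|_{(f^{\ell})^{-1}(U)}=0$ (take $U=X\setminus(\Supp R_{\Delta,\ell}\cup f^{\ell}(\Supp R_{\Delta,\ell}))$, using $(f^\ell)^{-1}(\eta_Z)=\{\eta_Z\}$), and shrink $U$ so that $Z\cap U$ is the only non-lc center of $(X,\Delta)$ meeting $U$. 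Then $K_{(f^\ell)^{-1}(U)}+\Delta|_{(f^\ell)^{-1}(U)}=(f^\ell)^*(K_U+\Delta|_U)$. Let $\mu_2\colon Y_2\to U$ be the log canonical model over $(U,\Delta|_U)$, which exists since $0\le\Delta\le 1$ and $K_X+\Delta$ is $\Q$-Cartier \cite[Theorem~1.1]{OX12}; all its exceptional divisors are non-lc places with center $Z\cap U$, and there is at least one of them (otherwise $\mu_2$ would be an isomorphism over $\eta_Z$, forcing $(X,\Delta)$ to be lc there). By Lemma~\ref{lc model}, $f^\ell$ lifts to a finite $g\colon Y_1\to Y_2$, where $Y_1$ is the normalization of $(f^\ell)^{-1}(U)\times_U Y_2$ and is the log canonical model over $((f^\ell)^{-1}(U),\Delta|_{(f^\ell)^{-1}(U)})$, with $\mu_2 g=f^\ell\mu_1$ and $K_{Y_1}+\Delta_{Y_1}=g^*(K_{Y_2}+\Delta_{Y_2})$.

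Next I would show the ramification index of $g$ along a suitable exceptional divisor equals $1$. The map $g$ carries each exceptional prime divisor of $\mu_1$ onto an exceptional prime divisor of $\mu_2$ (all of these lie over $Z$, and a dimension count rules out non-exceptional components of $Y_1$ mapping into an exceptional divisor of $Y_2$), and the resulting self-map of the finite set of these divisors is surjective, hence a permutation. Choosing $\ell$ to be a multiple of the order of this permutation, we may assume it is the identity, so that $g^{-1}(E)=E'$ as divisors, where $E$ is an exceptional divisor of $\mu_2$ with center $Z\cap U$ and $E'$ the corresponding prime divisor on $Y_1$. Set $1+\epsilon:=-a(E,X,\Delta)>1$. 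Writing $K_{Y_j}+\Delta_{Y_j}^{\mathrm{lc}}=\mu_j^*(K_X+\Delta)$ with $\Delta_{Y_j}^{\mathrm{lc}}=\mu_{j*}^{-1}\Delta+\sum_i(-a(E_i,X,\Delta))E_i$, one gets $K_{Y_1}+\Delta_{Y_1}^{\mathrm{lc}}-g^*(K_{Y_2}+\Delta_{Y_2}^{\mathrm{lc}})=\mu_1^*R_{\Delta,\ell}$, which vanishes near $E'$ since $R_{\Delta,\ell}$ vanishes on $(f^\ell)^{-1}(U)$ and $\mu_1(E')\subseteq Z\cap U$. Subtracting $K_{Y_1}=g^*K_{Y_2}+R_g$ and comparing coefficients along $E'$, with $e$ the ramification index of $g$ along $E'$ (so $g^*E=eE'$), yields $e-1=(1+\epsilon)-(1+\epsilon)e$, hence $e=1$.

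It remains to count degrees. Since $g^{-1}(E)=E'$ is a single prime divisor with ramification index $e=1$, the identity $\deg g=\sum_i e_if_i$ at the codimension-one point $\eta_E$ collapses to $\deg g=\deg(g|_{E'})$, where $g|_{E'}\colon E'\to E$ is the induced finite surjection of prime divisors. On the other hand, since $Y_1$ is the normalization of $(f^\ell)^{-1}(U)\times_U Y_2$, the morphism $g$ is the normalization of the base change of $f^\ell$ along $\mu_2$; restricting over $E$ exhibits $g|_{E'}$ as the normalization of the base change of $f^\ell|_Z$ along $\mu_2|_E\colon E\to Z\cap U$ — indeed over $E$ the fibre product $(Z\cap(f^\ell)^{-1}(U))\times_{Z\cap U}E$ coincides with $(f^\ell)^{-1}(U)\times_U E$ (any pair $(x,e)$ there has $f^\ell(x)=\mu_2(e)\in Z$, so $x\in Z$), and it is irreducible because its normalization is the prime divisor $E'$. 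As base change preserves the degree of a finite morphism, $\deg(g|_{E'})=\deg(f^\ell|_Z)$ and $\deg g=\deg(f^\ell)$, so $\deg(f^\ell|_Z)=\deg(f^\ell)$, whence $\deg(f|_Z)=\deg f$.

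The delicate points are the reduction making the permutation of the exceptional divisors over $Z$ trivial (so that $g^{-1}(E)$ is a single divisor), and, in the last step, identifying $g|_{E'}$ with a base change of $f|_Z$ while keeping the relevant fibre products irreducible: the inequality $\deg(g|_{E'})\le\deg(f|_Z)$ holds in general, and it is precisely the primeness of $E'$ on $Y_1$ that upgrades it to an equality. Everything rests on the fact that $Z$, being a whole component of $\Nlc(X,\Delta)$, carries a non-lc (not merely non-klt) place, so that $\epsilon>0$ above, together with $Z\nsubseteq\Supp R_\Delta$ from Proposition~\ref{Nlc R_f}.
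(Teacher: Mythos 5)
Your proof follows the same route as the paper's, which simply defers to Broustet--H\"oring's argument for \cite[Theorem 1.4]{BH14} with Lemma~\ref{lc model} and Proposition~\ref{Nlc R_f} substituted in: localize near $\eta_Z$ where $R_{\Delta,\ell}$ vanishes (so that Lemma~\ref{lc model} applies with an exact crepant relation), lift $f^{\ell}$ to the log canonical models, show the ramification index along the non-lc place over $Z$ is $1$, and compare degrees. The localization, the permutation trick, and the conclusion $e=1$ are correct in substance. One slip: since $R_g=g^*\Delta^{\mathrm{lc}}_{Y_2}-\Delta^{\mathrm{lc}}_{Y_1}$, the coefficient comparison should read $e-1=(1+\epsilon)e-(1+\epsilon)$, i.e.\ $(e-1)\epsilon=0$, which is exactly where $\epsilon>0$ enters; the equation as you wrote it would force $e=1$ even for a merely non-klt place, contradicting your own (correct) closing remark that a genuine non-lc place is essential.

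The one step whose justification does not hold up as written is the last identification $\deg(g|_{E'})=\deg(f^{\ell}|_Z)$. The divisor $E'$ is the preimage of $E$ in the normalization $Y_1$ of (the main component of) the fibre product; this is not the normalization of the fibre $(f^{\ell})^{-1}(U)\times_U E$, and the normalization map restricted to $E'$ need not be birational onto its image (think of the preimage of the double locus of a Whitney umbrella, which maps $2{:}1$). So ``base change preserves degree'' does not directly yield the claimed equality, nor even the inequality $\deg(g|_{E'})\le\deg(f^{\ell}|_Z)$ that you assert holds in general. The standard repair uses the commutative square $\mu_2\circ g|_{E'}=f^{\ell}\circ\mu_1|_{E'}$: multiplicativity of degrees of dominant generically finite maps gives $\deg(g|_{E'})\cdot\deg(\mu_2|_E)=\deg(\mu_1|_{E'})\cdot\deg(f^{\ell}|_Z)$, and after trivializing the permutation $E$ and $E'$ define the same divisorial valuation, so the two lc models (which agree over $U\cap(f^{\ell})^{-1}(U)$ by uniqueness) restrict to the same generic fibration over $\eta_Z$, giving $\deg(\mu_1|_{E'})=\deg(\mu_2|_E)$ and hence the desired equality. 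With this repair the argument is complete and matches the intended one.
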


\begin{proof}
    It follows from Theorem \ref{Delta leq 1} that $\Delta\leq 1$. Then the argument is exactly the same as in \cite[Theorem 1.4]{BH14}, but we apply Propositions \ref{lc model} and \ref{Nlc R_f}, instead.
\end{proof}

\begin{proof1}[Proof of Theorem \ref{thm:(X,Delta) lc}]
    Assume to the contrary that $(X, \Delta)$ is not log canonical. Let $Z$ be an irreducible component of $\Nlc(X, \Delta)$. By Lemma \ref{Nlc}, after replacing $f$ with a suitable iteration, we may assume $f^{-1}(Z) = Z$. Applying Theorem \ref{Nlc deg}, we obtain $\deg (f|_Z)=\deg f$, which contradicts \cite[Lemma 3.11]{M20}.
\end{proof1}

\begin{proof1}[Proof of Corollary \ref{cor:quasi-'etale lcy}]
By the ramification divisor formula and the assumption,
$K_X + \Delta = f^*(K_X + \Delta) + R_{\Delta} = f^*(K_X + \Delta)$. Thus $K_X+\Delta\equiv 0$ because every eigenvalue of $f^*$ on $\N^1(X)$ has the modulus $>1$ by \cite[Theorem 1.1]{MZ20}. Then $K_X+\Delta \sim_{\Q}0$ by Gongyo \cite[Theorem 1.2]{G13}.
\end{proof1}

\section{Ramification formula under subadjunction; Proof of Theorem \ref{thm:polarized-inversion-of-adjunction}}\label{sec:ramification-subadj}

Throughout this section, {\it $f:X\rightarrow Y$ (or $f: X\rightarrow X$) is a finite surjective morphism between normal projective varieties}. We study the behavior of ramification formula under subadjunction and prove Theorem \ref{thm:polarized-inversion-of-adjunction}.

The following is the main result of this section.

\begin{theorem}\label{thm:subadj-ramification}
Let $f: X \to X$ be a polarized endomorphism of a normal projective variety and $\Delta$ an effective $\Q$-divisor such that $K_X+\Delta$ is $\Q$-Cartier, and $(X,\Delta)$ is an $f$-pair. Let $Z$ be an lc center of $(X,\Delta)$ satisfying $f^{-1}(Z)=Z$, and $(K_X+\Delta)|_{\widetilde{Z}}=K_{\widetilde{Z}}+\Delta_{\widetilde{Z}}+M_{\widetilde{Z}}$ the subadjunction fomula, where $\widetilde{Z}$ is the normalization of $Z$ (see Definition \ref{def:subadjunction}). Then:
\begin{enumerate}
    \item $K_{\widetilde{Z}}+\Delta_{\widetilde{Z}}=f|_{Z}^*(K_{\widetilde{Z}}+\Delta_{\widetilde{Z}})+R_{\Delta}|_{\widetilde{Z}}$; $f|_{\widetilde{Z}}$ is $q$-polarized.

    \item $f|_{\widetilde{Z}}^*M_{\widetilde{Z}}\sim_{\Q} M_{\widetilde{Z}}$, and $M_{\widetilde{Z}}\equiv_w 0$.
\end{enumerate}
\end{theorem}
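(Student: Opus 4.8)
\textbf{Proof proposal for Theorem \ref{thm:subadj-ramification}.}

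The plan is to work on a fixed log resolution $\pi: X_1 \to X$ of $(X,\Delta)$ that is simultaneously equivariant for (an iterate of) $f$, using the fact that the pullback operations in the subadjunction construction are all functorial in the resolution, so the choice of a convenient $X_1$ is harmless. Since $Z$ is an lc center with $f^{-1}(Z)=Z$, we may choose an lc place $T$ over $X$ with center $Z$; after iterating $f$ and passing to a common resolution, $f$ lifts to $f_1: X_1 \to X_1$ with $f_1^{-1}(T)=T$ (equivariance of log resolutions for polarized endomorphisms, as used throughout the paper, together with Proposition \ref{prop:polarized basic property}(1) applied to $T$). Writing $K_{X_1}+\Delta_{X_1}=\pi^*(K_X+\Delta)$ and $\Delta_T = (\Delta_{X_1}-T)|_T$, the key input is the log ramification formula on $X_1$: since $(X,\Delta)$ is an $f$-pair we get $K_{X_1}+\Delta_{X_1} = f_1^*(K_{X_1}+\Delta_{X_1}) + R_{\Delta, X_1}$ where $R_{\Delta,X_1}$ is the strict-transform-plus-exceptional correction of $R_\Delta$; restricting this identity to $T$ and using that $f_1$ preserves $T$ gives $K_T+\Delta_T = (f_1|_T)^*(K_T+\Delta_T) + R_{\Delta,X_1}|_T$, i.e. $(T,\Delta_T)$ is an $(f_1|_T)$-pair.

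Next I would push this down along $\pi_T: T \to \widetilde{Z}$. The canonical bundle formula $K_T+\Delta_T \sim_\Q \pi_T^*(K_{\widetilde Z}+\Delta_{\widetilde Z}+M_{\widetilde Z})$ is compatible with the endomorphism: since $f_1|_T$ descends to $f|_{\widetilde Z}$ on $\widetilde Z$ (because $f^{-1}(Z)=Z$, the normalization is functorial), and the discriminant and moduli parts of the canonical bundle formula are intrinsic, one concludes that the $(f_1|_T)$-pair structure on $(T,\Delta_T)$ descends to an $f|_{\widetilde Z}$-pair structure on $(\widetilde Z,\Delta_{\widetilde Z})$. Concretely, the discriminant part $\Delta_{\widetilde Z}$ of $K_T+\Delta_T$ and the discriminant part of $(f_1|_T)^*(K_T+\Delta_T)$ differ exactly by the ramification correction coming from $f|_{\widetilde Z}$ plus the discriminant of $R_{\Delta,X_1}|_T$, which is precisely $R_\Delta|_{\widetilde Z}$ (here I use Proposition \ref{prop:polarized basic property}(1)–(2) to see $f|_{\widetilde Z}$ is $q$-polarized, and that $R_\Delta|_{\widetilde Z}$ is the correct Weil-divisor restriction; note $R_\Delta|_{\widetilde Z}$ means pull $R_\Delta$ back as a Weil divisor to $\widetilde Z$ via $\widetilde Z \to Z \hookrightarrow X$). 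This gives part (1): $K_{\widetilde Z}+\Delta_{\widetilde Z} = (f|_{\widetilde Z})^*(K_{\widetilde Z}+\Delta_{\widetilde Z}) + R_\Delta|_{\widetilde Z}$ with $R_\Delta|_{\widetilde Z}\ge 0$, so $(\widetilde Z,\Delta_{\widetilde Z})$ is an $f|_{\widetilde Z}$-pair.

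For part (2), subtracting the identity in (1) from the subadjunction formula $(K_X+\Delta)|_{\widetilde Z} = K_{\widetilde Z}+\Delta_{\widetilde Z}+M_{\widetilde Z}$ and using that $(X,\Delta)$ is an $f$-pair (so $(K_X+\Delta)|_{\widetilde Z}$ and $(f|_{\widetilde Z})^*(K_X+\Delta)|_{\widetilde Z}$ differ by $R_\Delta|_{\widetilde Z}$) forces $M_{\widetilde Z} \sim_\Q (f|_{\widetilde Z})^*M_{\widetilde Z}$ up to the ramification bookkeeping — more precisely, comparing the two expressions for $(K_X+\Delta)|_{\widetilde Z}$ pulled back by $f|_{\widetilde Z}$ yields $(f|_{\widetilde Z})^*M_{\widetilde Z} \sim_\Q M_{\widetilde Z}$. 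Finally, $M_{\widetilde Z}\equiv_w 0$: since $f|_{\widetilde Z}$ is $q$-polarized and $(f|_{\widetilde Z})^*M_{\widetilde Z}\equiv_w M_{\widetilde Z}$, intersecting with $(f|_{\widetilde Z})^*H' = qH'$ for an ample $H'$ and comparing degrees gives $M_{\widetilde Z}.H'^{\dim Z - 1}(q^{?}-1)=0$ type relation; one has to be slightly careful because $M_{\widetilde Z}$ need not be $\Q$-Cartier, so "$\equiv_w 0$" is the right (weak numerical) statement, proven by intersecting with Cartier classes and using that $f|_{\widetilde Z}$ multiplies ample Cartier classes by $q$. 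The main obstacle I anticipate is the bookkeeping of the discriminant part under pull-back: one must check that forming $f|_{\widetilde Z}^*$ of the canonical bundle formula on $\widetilde Z$ and forming the discriminant of $R_{\Delta,X_1}|_T$ on $\widetilde Z$ genuinely combine to give exactly $R_\Delta|_{\widetilde Z}$ with no loss of coefficients — this requires matching lc thresholds fiberwise and is the technical heart, whereas the numerical triviality of $M_{\widetilde Z}$ is then essentially formal.
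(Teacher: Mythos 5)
Your proposal has a genuine gap at its foundation. You assert that, after iterating $f$, one may choose a log resolution $\pi: X_1 \to X$ of $(X,\Delta)$ on which an lc place $T$ over $Z$ appears as a divisor and to which $f$ lifts as $f_1: X_1 \to X_1$ with $f_1^{-1}(T)=T$, citing ``equivariance of log resolutions for polarized endomorphisms, as used throughout the paper.'' This is not used in the paper and is false in general: a polarized endomorphism is a finite non-birational self-map, and it does not lift across an arbitrary birational modification. Lifting $f$ to $X_1$ requires $X_1$ to be isomorphic (over $X$) to the normalization of $X_1 \times_X X$ formed with respect to $\pi$ and $f$, which is an extremely rigid totally-invariant condition. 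The paper's Appendix lifts $f$ only to \emph{small $\Q$-factorial models} (Theorems \ref{thm:Q-fac-1}, \ref{thm:Q-fac-2}), and that argument depends crucially on the finiteness of such models up to isomorphism; the analogous finiteness fails badly for log resolutions, and the iterate-and-stabilize trick does not apply. Similarly, Lemma \ref{lc model} produces a lifted $f$ on lc models precisely because lc models are unique — again a property log resolutions lack. Once the equivariance claim fails, your restriction-to-$T$ step and the subsequent descent along $\pi_T$ do not get off the ground as stated.

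The paper's actual route avoids this entirely: it fixes a log resolution $Y' \to Y$ of only the \emph{target} copy of $(X,\Delta)$, where the lc place $Q$ over $W=Z$ becomes a divisor, and then takes $X'$ to be the normalization of the fiber product $X \times_Y Y'$; $X'$ maps finitely to $Y'$ and birationally to the source $X$, but is \emph{not} required to equal $Y'$ or be a log resolution. The ``matching of lc thresholds fiberwise'' that you flag as the technical heart is then handled by Ambro's theorem on behavior of the discriminant/moduli parts under finite base change (cited as \cite[Theorem 3.2]{A99} and packaged as Theorem \ref{fiber sp adj}), together with the codimension-one adjunction (Lemma \ref{codim1 adj}) applied on the resolutions before descending to $\widetilde Z$. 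Your closing argument for $M_{\widetilde Z}\equiv_w 0$ — intersecting with an ample class pulled back by $f|_{\widetilde Z}$ — is in the right spirit, but the paper obtains it cleanly by invoking that every eigenvalue of $f|_{\widetilde Z}^*$ on $\N_{n-1}(\widetilde Z)$ has modulus $>1$ (\cite[Theorem 3.3]{M20}), which is the correct statement in the Weil-divisor (weakly numerical) setting you correctly worry about. To repair your proof you would need to replace the equivariant-resolution step with the fiber-product construction and explicitly invoke Ambro's base-change theorem for the discriminant part.
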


\begin{remark}\label{rmk:Z nsubseteq R_Delta}
    The restriction $R_{\Delta}|_{\widetilde{Z}}$ is well defined since $R_{\Delta}$ is $\Q$-Cartier and since $Z\nsubseteq \Supp R_{\Delta}$ by applying \cite[Proposition 5.20]{KM98} to $K_X + \Delta - R_{\Delta} = f^*(K_X + \Delta)$.
\end{remark}

The following is clear from the subadjuction formula in Definition \ref{def:subadjunction} (4).

\begin{lemma}\label{codim1 adj}
Suppose that $K_X+P+\Delta_X=f^*(K_Y+Q+\Delta_Y)+R_{\Delta}$, where $R_{\Delta}\geq0$, $P$ and $Q$ are prime divisors on $X$ and $Y$, respectively, satisfying $f(P)=Q$, $P\nsubseteq \Supp \Delta_X$, $Q\nsubseteq\Supp \Delta_Y$, and $K_X+P+\Delta_X$ and $K_Y+Q+\Delta_Y$ are $\Q$-Cartier. Let $\widetilde{P}$ and $\widetilde{Q}$ be normalizations of $P$ and $Q$, respectively. Write $K_{\widetilde{P}}+\Delta_{\widetilde{P}}=(K_X+P+\Delta_X)|_{\widetilde{P}}$,  $K_{\widetilde{Q}}+\Delta_{\widetilde{Q}}=(K_X+Q+\Delta_X)|_{\widetilde{Q}}$ as in Definition \ref{def:subadjunction} (4).
Then $K_{\widetilde{P}}+\Delta_{\widetilde{P}}=f|_{\widetilde{P}}^*(K_{\widetilde{Q}}+\Delta_{\widetilde{Q}})+R_{\Delta}|_{\widetilde{P}}$.
\end{lemma}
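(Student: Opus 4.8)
The plan is to obtain the asserted identity simply by restricting the given formula $K_X+P+\Delta_X=f^*(K_Y+Q+\Delta_Y)+R_{\Delta}$ to the normalization $\widetilde{P}$ and identifying the three resulting terms. First I would observe that, since $K_X+P+\Delta_X$ and $f^*(K_Y+Q+\Delta_Y)$ are $\Q$-Cartier by hypothesis, $R_{\Delta}$ is $\Q$-Cartier as well; hence all three divisors admit well-defined pullbacks along $\iota_P\colon\widetilde{P}\to P\hookrightarrow X$, and the identity is preserved after applying $\iota_P^*$. Note that we do \emph{not} need $P\nsubseteq\Supp R_{\Delta}$ here: these restrictions are pullbacks of $\Q$-Cartier divisors, not adjunction-style restrictions of Weil divisors.

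Next I would record the square relating the finite morphisms. Since $f(P)=Q$, the restriction $f|_P\colon P\to Q$ is finite and surjective, and by the universal property of normalization it induces a morphism $f|_{\widetilde{P}}\colon\widetilde{P}\to\widetilde{Q}$, which is again finite and surjective (it is proper with finite fibres and dominant), satisfying $f\circ\iota_P=\iota_Q\circ f|_{\widetilde{P}}$ with $\iota_Q\colon\widetilde{Q}\to Q\hookrightarrow Y$. Functoriality of pullback of $\Q$-Cartier divisors then gives $\bigl(f^*(K_Y+Q+\Delta_Y)\bigr)|_{\widetilde{P}}=f|_{\widetilde{P}}^*\bigl((K_Y+Q+\Delta_Y)|_{\widetilde{Q}}\bigr)$.

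Finally I would invoke the classical codimension-one subadjunction of Definition~\ref{def:subadjunction}(4) on both varieties: on $X$ it reads $(K_X+P+\Delta_X)|_{\widetilde{P}}=K_{\widetilde{P}}+\Delta_{\widetilde{P}}$, which is exactly the defining equation of $\Delta_{\widetilde{P}}$, and on $Y$ it reads $(K_Y+Q+\Delta_Y)|_{\widetilde{Q}}=K_{\widetilde{Q}}+\Delta_{\widetilde{Q}}$. Substituting these two equalities together with the previous display into the restricted identity yields $K_{\widetilde{P}}+\Delta_{\widetilde{P}}=f|_{\widetilde{P}}^*(K_{\widetilde{Q}}+\Delta_{\widetilde{Q}})+R_{\Delta}|_{\widetilde{P}}$, which is the claim. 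I expect the only real care to be bookkeeping rather than substance: one must fix compatible representatives of $K_X,K_Y,K_{\widetilde{P}},K_{\widetilde{Q}}$ and of the differents so that the chain of equalities holds on the nose and not merely up to $\Q$-linear equivalence; no further geometric input is required, since this is just functoriality of adjunction under a finite morphism applied to a codimension-one center.
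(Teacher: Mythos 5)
Your proof is correct and is the argument the paper intends; the authors give no proof here, declaring the lemma ``clear from the subadjunction formula in Definition~\ref{def:subadjunction}(4),'' and your restriction-and-functoriality computation is the natural way to fill that gap. One correction to your aside: $P\nsubseteq\Supp R_{\Delta}$ is in fact needed for $R_{\Delta}|_{\widetilde{P}}$ to be an honest Weil $\Q$-divisor (as the paper intends, cf.\ Remark~\ref{rmk:Z nsubseteq R_Delta}) rather than merely a $\Q$-Cartier class, but it is automatic: with the usual choice $K_X=f^*K_Y+R_f$ and $K_Y$ avoiding $Q$, comparing the coefficient of $P$ on both sides of the hypothesized ramification formula gives $r_P=r_P+\mu_P R_{\Delta}$ (where $r_P$ is the ramification index of $f$ at $P$), so $\mu_P R_{\Delta}=0$ and then $R_{\Delta}\geq 0$ yields $P\nsubseteq\Supp R_{\Delta}$; with this in hand all three terms restrict honestly to Weil $\Q$-divisors on $\widetilde{P}$ and the chain of equalities holds on the nose.
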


\begin{corollary}\label{codim1 end}
Suppose that $K_X+P+\Delta=f^*(K_X+P+\Delta)+R_{\Delta}$, where $R_{\Delta}\geq0$, $P$ is a prime divisor on $X$ satisfying $f(P)=P$, $P\nsubseteq \Supp \Delta$, and $K_X+P+\Delta$ is $\Q$-Cartier.
Let $K_{\widetilde{P}}+\Delta_{\widetilde{P}}=(K_X+P+\Delta)|_{\widetilde{P}}$ be as in Definition \ref{def:subadjunction} (4), where $\widetilde{{P}}$ is the normalization of $P$. Then $K_{\widetilde{P}}+\Delta_{\widetilde{P}}=f|_{\widetilde{P}}^*(K_{\widetilde{P}}+\Delta_{\widetilde{P}})+R_{\Delta}|_{\widetilde{P}}$.
\end{corollary}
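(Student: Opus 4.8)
The plan is to obtain Corollary~\ref{codim1 end} as the self-map specialization of Lemma~\ref{codim1 adj}. First I would set $X_1=X_2=X$ in that lemma, take the endomorphism $f\colon X\to X$ in place of the finite morphism there, put $P=Q$ (legitimate because $f(P)=P$, so the single prime divisor $P$ simultaneously plays the roles of source and target divisor), and set $\Delta_X=\Delta_Y=\Delta$. With these choices the hypotheses of Lemma~\ref{codim1 adj} are exactly those assumed here: $R_{\Delta}\ge 0$ and $K_X+P+\Delta=f^*(K_X+P+\Delta)+R_{\Delta}$ is the given relation, $f(P)=Q$ holds since $P=Q$ is $f$-invariant, $P\nsubseteq\Supp\Delta$ is assumed, and $K_X+P+\Delta$ is $\Q$-Cartier by hypothesis. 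In particular $R_{\Delta}=(K_X+P+\Delta)-f^*(K_X+P+\Delta)$ is $\Q$-Cartier, so $R_{\Delta}|_{\widetilde P}$ is well defined.

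Next I would match up the morphisms and the differents. Since $f(P)=P$, the restriction $f|_P\colon P\to P$ is a finite surjective endomorphism, and by functoriality of normalization for finite surjective maps it lifts uniquely to $f|_{\widetilde P}\colon\widetilde P\to\widetilde P$; this is both the morphism in the statement of the corollary and the morphism $g\colon\widetilde P\to\widetilde Q=\widetilde P$ produced by Lemma~\ref{codim1 adj}. The classical codimension-one subadjunction of Definition~\ref{def:subadjunction}(4) gives $(K_X+P+\Delta)|_{\widetilde P}=K_{\widetilde P}+\Delta_{\widetilde P}$ with $\Delta_{\widetilde P}$ the different, so $K_{\widetilde P}+\Delta_{\widetilde P}$ is $\Q$-Cartier and $f|_{\widetilde P}^*(K_{\widetilde P}+\Delta_{\widetilde P})$ makes sense; and since $Q=P$ and $\Delta_Y=\Delta_X=\Delta$, the different $\Delta_{\widetilde Q}$ on the target coincides with $\Delta_{\widetilde P}$. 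Substituting $\widetilde Q=\widetilde P$ and $\Delta_{\widetilde Q}=\Delta_{\widetilde P}$ into the conclusion of Lemma~\ref{codim1 adj}, namely $K_{\widetilde P}+\Delta_{\widetilde P}=f|_{\widetilde P}^*(K_{\widetilde Q}+\Delta_{\widetilde Q})+R_{\Delta}|_{\widetilde P}$, yields precisely the asserted identity $K_{\widetilde P}+\Delta_{\widetilde P}=f|_{\widetilde P}^*(K_{\widetilde P}+\Delta_{\widetilde P})+R_{\Delta}|_{\widetilde P}$.

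There is essentially no obstacle here beyond bookkeeping: all the real content sits in Lemma~\ref{codim1 adj} and, behind it, in the codimension-one subadjunction formula of Definition~\ref{def:subadjunction}(4). The only points deserving a line of care are that $P=Q$ is a legitimate substitution into a lemma phrased for a finite morphism $X_1\to X_2$ of possibly distinct varieties, that the restriction $R_{\Delta}|_{\widetilde P}$ is meaningful (which it is, as $R_{\Delta}$ is $\Q$-Cartier), and that the lift $f|_{\widetilde P}$ of $f|_P$ to the normalization is the morphism intended in both the statement and the lemma. None of these is more than a formality, so the corollary follows at once.
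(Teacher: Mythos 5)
Your argument is correct and is essentially what the paper has in mind: Corollary~\ref{codim1 end} is exactly the self-map specialization $X_1=X_2=X$, $P=Q$, $\Delta_X=\Delta_Y=\Delta$ of Lemma~\ref{codim1 adj}, and the paper accordingly states it as an immediate corollary without separate proof. Your added checks (that $f|_{\widetilde P}$ is the lift of $f|_P$ to the normalization, that $R_\Delta$ is $\Q$-Cartier so its restriction is defined, and that the differents on source and target coincide) are the right things to note.
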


\begin{theorem}\label{fiber sp adj}
Suppose we have the commutative diagram of normal projective varieties:

\begin{center}
 \begin{tikzcd}
X\arrow[d,"\pi_X"]\arrow[r,"f"]
&Y\arrow[d,"\pi_Y"]\\
Z\arrow[r,"g"]
&W
\end{tikzcd}   
\end{center}
where $f$,  $g$ are finite surjective, and $\pi_X$, $\pi_Y$ are algebraic fibrations. 
Let $\Delta_X$ and $\Delta_Y$ be Weil $\Q$-divisors on $X$ and $Y$, respectively, such that $(X,\Delta_X)\to Z$ and $(Y,\Delta_Y)\to W$ satisfy conditions in Definition \ref{def:canonical bundle formula}, and $R_{\Delta,X}:=K_X+\Delta_X-f^*(K_Y+\Delta_Y)$ equals the pull back of some $\Q$-Cartier divisor $R_{\Delta,Z}$ via $\pi_X$. Let $K_X+\Delta_X\sim_{\Q}\pi_X^*(K_Z+\Delta_Z+M_Z)$, $K_Y+\Delta_Y\sim_{\Q}\pi_Y^*(K_W+\Delta_W+M_W)$ be the canonical bundle formula as in Definition \ref{def:canonical bundle formula}.

Then $K_Z+\Delta_Z=g^*(K_W+\Delta_W)+R_{\Delta,Z}$ and $M_Z\sim_{\Q}g^*M_W$.
\end{theorem}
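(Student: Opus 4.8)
The plan is to compare the two canonical bundle formulas fiberwise, using the compatibility $R_{\Delta,X}=\pi_X^*R_{\Delta,Z}$ to transport the ramification identity on $X$ down to $Z$. First I would pull back the relation $K_Y+\Delta_Y\sim_{\Q}\pi_Y^*(K_W+\Delta_W+M_W)$ along $f$ and combine it with $K_X+\Delta_X=f^*(K_Y+\Delta_Y)+\pi_X^*R_{\Delta,Z}$ and with $K_X+\Delta_X\sim_{\Q}\pi_X^*(K_Z+\Delta_Z+M_Z)$; since $\pi_Y\circ f=g\circ\pi_X$, this yields
\[
\pi_X^*(K_Z+\Delta_Z+M_Z)\sim_{\Q}\pi_X^*g^*(K_W+\Delta_W+M_W)+\pi_X^*R_{\Delta,Z}.
\]
Because $\pi_X$ is a fibration (so $\pi_{X*}\OO_X=\OO_Z$ and $\pi_X^*$ is injective on $\Q$-linear equivalence classes of $\Q$-Cartier divisors), this descends to $K_Z+\Delta_Z+M_Z\sim_{\Q}g^*(K_W+\Delta_W+M_W)+R_{\Delta,Z}$. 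So the only real content is separating this single $\Q$-linear equivalence into the stated \emph{equality} $K_Z+\Delta_Z=g^*(K_W+\Delta_W)+R_{\Delta,Z}$ of Weil divisors together with $M_Z\sim_{\Q}g^*M_W$.

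The second step, therefore, is to prove the discriminant parts match exactly: $K_Z+\Delta_Z=g^*(K_W+\Delta_W)+R_{\Delta,Z}$ as Weil divisors. Here I would work prime-divisor by prime-divisor on $Z$. Fix a prime divisor $D\subset W$ and a prime divisor $D'\subset Z$ with $g(D')=D$, with ramification index $e$ so that $g^*D=eD'+\cdots$. The coefficient $b_{D'}=1-t_{D'}$ of $\Delta_Z$ is computed from the lc threshold of $\pi_X^*D'$ with respect to $(X,\Delta_X)$ over the generic point of $D'$, and similarly $b_D=1-t_D$ from $(Y,\Delta_Y)$ over the generic point of $D$. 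Using the ramification formula $K_X+\Delta_X-R_{\Delta,X}=f^*(K_Y+\Delta_Y)$ restricted over these generic points, and the compatibility $f^*\pi_Y^*D=\pi_X^*g^*D=e\,\pi_X^*D'+\cdots$, one relates $t_{D'}$ and $t_D$ by the same ramification bookkeeping that appears in Lemma \ref{codim1 adj}: pulling back a sub-lc pair along the finite morphism $f$ adds exactly $R_{\Delta,X}=\pi_X^*R_{\Delta,Z}$, whose coefficient along $\pi_X^*D'$ is $\mu_{D'}R_{\Delta,Z}$. Keeping track of the factor $e$ from $g^*$ gives $1-t_{D'} = e(1-t_D) - (e-1) + \mu_{D'}R_{\Delta,Z}$, i.e.\ $\mu_{D'}(K_Z+\Delta_Z) = \mu_{D'}(g^*(K_W+\Delta_W)+R_{\Delta,Z})$, which is the desired equality of coefficients. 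Subtracting this equality from the $\Q$-linear equivalence of the previous paragraph leaves $M_Z\sim_{\Q}g^*M_W$.

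I expect the main obstacle to be this coefficient comparison for the discriminant part, specifically getting the ramification-index bookkeeping right over the generic point of $D'$: one must check that the lc threshold computation is insensitive to the choice of log resolution and of prime divisor computing it (which is where one invokes the well-definedness built into Definition \ref{def:canonical bundle formula} and the analysis behind \cite{FH22}), and that the finite base change $g$ and the finite cover $f$ interact cleanly with $\pi_X,\pi_Y$ — the fiber-product/normalization diagram of Lemma \ref{lc model} is the natural tool, but one has to be careful that $R_{\Delta,X}$ being a pullback from $Z$ is exactly what makes the vertical contributions descend. A cleaner alternative, which I would try first, is to avoid thresholds entirely: take a common log resolution of $(X,\Delta_X)$ and $(Y,\Delta_Y)$ compatible with $f$ (as in the normalization-of-fiber-product construction), so that both $\Delta_Z$ and $\Delta_W$ are literally pushforwards of the ``horizontal'' discrepancy divisors, and then the equality $K_Z+\Delta_Z=g^*(K_W+\Delta_W)+R_{\Delta,Z}$ follows by pushing forward the ramification formula on the resolution, exactly as in the proof of Lemma \ref{lc model}, with $R_{\Delta,X}=\pi_X^*R_{\Delta,Z}$ guaranteeing that the only leftover term on $Z$ is $R_{\Delta,Z}$. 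Once the discriminant equality is in hand, the moduli statement $M_Z\sim_{\Q}g^*M_W$ is immediate from the injectivity of $\pi_X^*$ on $\Q$-linear equivalence.
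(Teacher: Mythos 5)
Your proof is essentially correct, and its overall skeleton matches the paper's: both arguments hinge on two facts — the Weil-divisor equality $K_Z+\Delta_Z = g^*(K_W+\Delta_W)+R_{\Delta,Z}$ for the discriminant parts, and the single $\Q$-linear equivalence $K_Z+\Delta_Z+M_Z \sim_{\Q} g^*(K_W+\Delta_W+M_W)+R_{\Delta,Z}$ obtained by pulling back along $f$, applying $R_{\Delta,X}=\pi_X^*R_{\Delta,Z}$, and descending through $\pi_X$ — after which the moduli statement falls out by subtraction. Where you diverge is in how the discriminant equality is established. The paper first replaces $\Delta_X$ by $\Delta_X-R_{\Delta,X}$ so that $K_X+\Delta_X-R_{\Delta,X}=f^*(K_Y+\Delta_Y)$ is a crepant finite pullback, notes that this shift (being a pullback from $Z$) changes the discriminant by exactly $R_{\Delta,Z}$ and leaves the moduli unchanged, and then cites Ambro's thesis (\cite[Theorem 3.2]{A99}, relaxed from base-change to arbitrary finite surjective $\nu,\sigma$) for the fact that a crepant finite base change pulls back the discriminant. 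You instead keep $\Delta_X$ as is and verify the coefficient relation $1-t_{D'}=e(1-t_D)-(e-1)+\mu_{D'}R_{\Delta,Z}$ by hand — this is exactly the content of the cited result, so your route is a self-contained re-derivation rather than a citation. The formula you wrote is the right one, and your ``cleaner alternative'' (push forward the ramification identity on a compatible common log resolution, \`a la Lemma~\ref{lc model}) is in substance a proof of Ambro's theorem in this setting. The trade-off: your version is more explicit but the threshold bookkeeping is only sketched and would need to be carried through carefully at each prime divisor over $Z$; the paper's version outsources that work to a reference and gains brevity by first normalizing $\Delta_X$ so that $f$ becomes crepant.
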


\begin{proof}
Since $R_{\Delta,X}=\pi_X^*R_{\Delta,Z}$, $R_{\Delta,X}$ is $\Q$-Cartier and does not dominate $Z$. Thus the sub-pair $(X,\Delta_X-R_{\Delta ,X})$ is still sub-lc over generic point of $Z$. Then we have the canonical bundle formula (noting that adding a pullback to the boundary does not change the moduli part), $K_X+\Delta_X-R_{\Delta,X}\sim_{\Q}\pi_X^*(K_Z+\Delta_Z'+M_Z)$, where $\Delta_Z'=\Delta_Z-R_{\Delta,Z}$. By \cite[theorem 3.2]{A99}, $K_Z+\Delta_Z'=g^*(K_W+\Delta_W)$ (noting that the condition ``$\nu$ is the base change of $\sigma$" in \cite[theorem 3.2]{A99} can be replaced as ``$\nu,\sigma $ are finite surjective"), i.e.,
\[K_Z+\Delta_Z=g^*(K_W+\Delta_W)+R_{\Delta,Z}.\]
One the other hand, from the ramification divisor formula of $f:(X,\Delta_X)\to(Y,\Delta_Y)$,
%By the commutative diagram and calculating $K_X + \Delta$ in two different ways,
%$R_{\Delta, Z}$,
\[K_Z+\Delta_Z+M_Z\sim_{\Q}g^*(K_W+\Delta_W+M_W)+R_{\Delta,Z}.\]
Combining the above two displayed equalities, we get $M_Z\sim_{\Q}g^*M_W$.
\end{proof}

\begin{theorem}\label{codim>1 adj}
Suppose that $K_X+\Delta_X=f^*(K_Y+\Delta_Y)+R_{\Delta}$, where $R_{\Delta}\geq 0$, $\Delta_X$, $\Delta_Y$ are effective Weil $\Q$-divisors on $X$ and $Y$, respectively, such that $K_X+\Delta_X$, $K_Y+\Delta_Y$ are $\Q$-Cartier. Let $Z$, $W$ be lc centers of $(X,\Delta_X)$, $(Y,\Delta_Y)$ rspectively, such that $f(Z)=W$. Let $\widetilde{Z}$, $\widetilde{W}$ be the normalizations of $Z$, $W$. Apply the subadjunction formula $(K_X+\Delta_X)|_{\widetilde{Z}}\sim_{\Q} K_{\widetilde{Z}}+\Delta_{\widetilde{Z}}+M_{\widetilde{Z}}$, $(K_Y+\Delta_Y)|_{\widetilde{W}}\sim_{\Q} K_{\widetilde{W}}+\Delta_{\widetilde{W}}+M_{\widetilde{W}}$ as in Definition \ref{def:subadjunction}. 

Then $K_{\widetilde{Z}}+\Delta_{\widetilde{Z}}=f|_{\widetilde{Z}}^*(K_{\widetilde{W}}+\Delta_{\widetilde{W}})+R_{\Delta,{\widetilde{Z}}}$, and $M_{\widetilde{Z}}\sim_\Q f|_{\widetilde{Z}}^*M_{\widetilde{W}}$.
\end{theorem}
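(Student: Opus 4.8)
The plan is to factor the codimension-$\ge 2$ subadjunction into two steps already established in this section: one codimension-one adjunction along a common lc place (Lemma \ref{codim1 adj}), followed by the canonical bundle formula of a fibration (Theorem \ref{fiber sp adj}). So I would first produce a commutative square
\[
\begin{CD}
T @>{f_T}>> S\\
@V{p}VV @VV{q}V\\
\widetilde{Z} @>{f|_{\widetilde{Z}}}>> \widetilde{W}
\end{CD}
\]
where $\pi_Y\colon Y_1\to Y$ is a log resolution of $(Y,\Delta_Y)$ carrying a prime divisor $S$ with $a(S,Y,\Delta_Y)=-1$ and $\mathrm{center}_Y S=W$; $\pi_X\colon X_1\to X$ is a model of $X$ (dominating the normalization of $X\times_Y Y_1$) on which $f$ lifts to a finite surjection $g\colon X_1\to Y_1$ with $\pi_Y\circ g=f\circ\pi_X$; $T\subseteq g^{-1}(S)$ is a prime divisor with $g(T)=S$ and $\mathrm{center}_X T=Z$; $f_T=g|_T$; and $p\colon T\to\widetilde{Z}$, $q\colon S\to\widetilde{W}$ are the fibrations in the subadjunction construction of Definition \ref{def:subadjunction}. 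Pulling $K_X+\Delta_X=f^*(K_Y+\Delta_Y)+R_\Delta$ back along $\pi_X$ (and using $\pi_X^*f^*=g^*\pi_Y^*$) gives the crepant identity $K_{X_1}+\Delta_{X_1}=g^*(K_{Y_1}+\Delta_{Y_1})+\pi_X^*R_\Delta$, from which one reads the ramification-index relation $1+a(T,X,\Delta_X)=r_T\bigl(1+a(S,Y,\Delta_Y)\bigr)-\mult_T\pi_X^*R_\Delta$ for any prime divisor $T$ over $X$ with $g(T)=S$. Taking $T$ over $Z$, the left side is $\le 0$, and since the subadjunction of $(X,\Delta_X)$ along $Z$ presupposes $(X,\Delta_X)$ is lc near the generic point of $Z$ (hence $(Y,\Delta_Y)$ near that of $W$), it vanishes; thus such a $T$ is an lc place over $Z$ and $\mult_T\pi_X^*R_\Delta=0$, i.e.\ $Z\nsubseteq\Supp R_\Delta$, so that $R_\Delta|_{\widetilde{Z}}:=\nu_{\widetilde{Z}}^*R_\Delta$ is a well-defined $\Q$-Cartier divisor (compare Remark \ref{rmk:Z nsubseteq R_Delta}). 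By the independence of the subadjunction data from the model and from the chosen lc place (\cite[Theorem 1.2]{FH22}), the divisors $\Delta_{\widetilde{Z}},M_{\widetilde{Z}}$ (and $\Delta_{\widetilde{W}},M_{\widetilde{W}}$) computed on this square agree with the intrinsic ones.

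Next I would restrict the crepant identity to $T$. Since $T$ (resp.\ $S$) occurs with coefficient $1$ in $\Delta_{X_1}$ (resp.\ $\Delta_{Y_1}$) and $g(T)=S$, adjunction on the (log-smooth) models together with functoriality of pullback --- this is the content of Lemma \ref{codim1 adj}, applied to the relevant sub-pairs and using $\pi_X^*R_\Delta\ge 0$ --- yields
\[
K_T+\Delta_T=f_T^*(K_S+\Delta_S)+(\pi_X^*R_\Delta)|_T,
\]
where $K_T+\Delta_T=(K_{X_1}+\Delta_{X_1})|_T$ and $K_S+\Delta_S=(K_{Y_1}+\Delta_{Y_1})|_S$ are exactly the boundaries entering the subadjunction construction. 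Moreover $\pi_X|_T=\nu_{\widetilde{Z}}\circ p$ as maps to $X$, so $(\pi_X^*R_\Delta)|_T=p^*\bigl(\nu_{\widetilde{Z}}^*R_\Delta\bigr)=p^*\bigl(R_\Delta|_{\widetilde{Z}}\bigr)$ is the $p$-pullback of a $\Q$-Cartier divisor on $\widetilde{Z}$.

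Finally I would apply Theorem \ref{fiber sp adj} to the square, with boundaries $\Delta_T$ on $T$ and $\Delta_S$ on $S$: the conditions of Definition \ref{def:canonical bundle formula} for $(T,\Delta_T)\to\widetilde{Z}$ and $(S,\Delta_S)\to\widetilde{W}$ hold because they are part of the subadjunction construction, the relative ramification divisor $(\pi_X^*R_\Delta)|_T$ is the $p$-pullback of $R_\Delta|_{\widetilde{Z}}$, and the associated canonical bundle formulas are precisely $K_T+\Delta_T\sim_\Q p^*(K_{\widetilde{Z}}+\Delta_{\widetilde{Z}}+M_{\widetilde{Z}})$ and $K_S+\Delta_S\sim_\Q q^*(K_{\widetilde{W}}+\Delta_{\widetilde{W}}+M_{\widetilde{W}})$. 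Theorem \ref{fiber sp adj} then gives both $K_{\widetilde{Z}}+\Delta_{\widetilde{Z}}=f|_{\widetilde{Z}}^*(K_{\widetilde{W}}+\Delta_{\widetilde{W}})+R_\Delta|_{\widetilde{Z}}$ and $M_{\widetilde{Z}}\sim_\Q f|_{\widetilde{Z}}^*M_{\widetilde{W}}$, as desired.

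The main obstacle is the first step: one must assemble compatible models of $X$ and $Y$ with a finite lift of $f$, a matched pair of lc places $T\mapsto S$ with centers exactly $Z$ and $W$ and the induced fibrations to $\widetilde{Z}$ and $\widetilde{W}$, and then verify --- via \cite[Theorem 1.2]{FH22} --- that the boundary and moduli data read off this square are the intrinsic ones; one must also ensure $f_T\colon T\to S$ is finite surjective, e.g.\ by working with the normalization of $X\times_Y Y_1$ before further resolution, so that Theorem \ref{fiber sp adj} is applicable. Once the square is in place, the remainder is a formal combination of Lemma \ref{codim1 adj} and Theorem \ref{fiber sp adj}.
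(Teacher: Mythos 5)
Your proposal is correct and follows essentially the same route as the paper's proof: form the crepant identity $K_{X'}+\Delta_{X'}=g^*(K_{Y'}+\Delta_{Y'})+\pi_X^*R_\Delta$ on the normalization of the fiber product, use the discrepancy relation along a matched pair of lc places to see $Z\nsubseteq\Supp R_\Delta$, restrict via Lemma~\ref{codim1 adj}, and then descend through the fibrations to $\widetilde{Z}$ and $\widetilde{W}$ via Theorem~\ref{fiber sp adj}. The only caveat, which you already flag at the end, is that $X_1$ must be taken to be the normalization of $X\times_Y Y_1$ itself rather than any model dominating it, since otherwise $g\colon X_1\to Y_1$ fails to be finite; with that correction (and the appeal to \cite[Theorem~1.2]{FH22} for model-independence, which the paper leaves implicit) your argument matches the paper's.
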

\begin{proof}
Let $\pi_Y:Y'\to Y$ be a log resolution of $(Y,\Delta_Y)$ such that $(Y,\Delta_Y)$ has an lc place $Q$ on $Y'$, with $\pi_Y(Q)=W$. Let $X'$ be the normalization of $X\times_Y Y'$, and $\pi_X: X'\to X $ and $g: X'\to Y'$ the projections . Let $P$ be a prime divisor on $X'$ such that $g(P)=Q$, $\pi_X(P)=Z$. Then $P$ is an lc place of $(X,\Delta_X-R_{\Delta})$ by \cite[Proposition 5.20]{KM98}. But $Z$ is already an lc center of $(X,\Delta_X)$, which implies $Z\nsubseteq \Supp R_{\Delta}$. Thus $P\nsubseteq\Supp \pi^*R_{\Delta}$, and $P$ is an lc place of $(X,\Delta_X)$. 
Write uniquely (with $\pi_{X*}\Delta_{X'}=\Delta_X$ and $\pi_{Y*}\Delta_{Y'}=\Delta_Y$):
\[K_{X'}+P+\Delta_{X'}=\pi_X^*(K_X+\Delta_X),\quad K_{Y'}+Q+\Delta_{Y'}=\pi_Y^*(K_Y+\Delta_Y).\]
Then $K_{X'}+P+\Delta_{X'}=g^*(K_{Y'}+Q+\Delta_{Y'})+\pi_X^*R_{\Delta}$.

By Lemma \ref{codim1 adj}, if $\widetilde{P}$ and $\widetilde{Q}$ are the normalizations of $P$ and $Q$, with subadjunction formula $K_{\widetilde{P}}+\Delta_{\widetilde{P}}=(K_{X'}+P+\Delta_{X'})|_{\widetilde{P}}$ and $K_Q+\Delta_Q=(K_{Y'}+Q+\Delta_{Y'})|_{\widetilde{Q}}$, then
\[K_{\widetilde{P}}+\Delta_{\widetilde{P}}=g|_{\widetilde{P}}^*(K_{\widetilde{Q}}+\Delta_{\widetilde{Q}})+(\pi^*_XR_{\Delta})|_{\widetilde{P}}.\]

Consider fibrations $\pi_{\widetilde{P}}:(\widetilde{P},\Delta_{\widetilde{P}})\to \widetilde{Z}$,  $\pi_{Q}:(Q,\Delta_Q)\to \widetilde{W}$, and note that $(\pi_X^*R_{\Delta})|_{\widetilde{P}}=\pi_{\widetilde{P}}^*(R_{\Delta}|_{\widetilde{P}})$. Applying Theorem \ref{fiber sp adj},
\[K_{\widetilde{Z}}+\Delta_{\widetilde{Z}}=f|_{\widetilde{Z}}^*(K_{\widetilde{W}}+\Delta_{\widetilde{W}})+R_{\Delta}|_{\widetilde{Z}},\]
and $M_Z\sim_{\Q}f|_{\widetilde{Z}}^*M_{\widetilde{W}}$.
\end{proof}

\begin{proof1}[Proof of Theorem \ref{thm:subadj-ramification}]
Note that $f|_{\widetilde{Z}}$ is polarized by Proposition \ref{prop:polarized basic property}.
If $\Codim_X Z=1$, then the theorem follows from Corollary \ref{codim1 end} with $M_{\widetilde{Z}} = 0$.

If $\Codim_X Z>1$, then it follows from proposition \ref{codim>1 adj} that 
\[K_{\widetilde{Z}}+\Delta_{\widetilde{Z}}=f|_{\widetilde{Z}}^*(K_{\widetilde{Z}}+\Delta_{\widetilde{Z}})+R_{\Delta}|_{\widetilde{Z}},\] 
and $M_{\widetilde{Z}}\sim_{{\Q}}f|_{\widetilde{Z}}^*M_{\widetilde{Z}}$.
The moduli of eigenvalues of $f|_{\widetilde{Z}}^*$ on $\N_{n-1}(\widetilde{Z})$ are greater than $1$ by \cite[theorem 3.3]{M20}. Hence $M_{\widetilde{Z}}\equiv_w 0$. This proves the theorem.
\end{proof1}

\begin{proof1}[Proof of Theorem \ref{thm:polarized-inversion-of-adjunction}]
By Theorem \ref{thm:subadj-ramification}, $(1)$ follows, and also $M_{\widetilde{Z}} \equiv 0$ by assumption and 
\cite[Lemma 2.3]{MZ18}.
For (2), $Z$ is also an lc center of $(X, \Delta + \frac{R_{\Delta}}{q-1})$, since $Z\nsubseteq \Supp R_{\Delta}$ by Remark \ref{rmk:Z nsubseteq R_Delta}.
$f|_{\widetilde{Z}}$ is also $q$-polarized and $f|_{\widetilde{Z}}^*M_{\widetilde{Z}}\sim_{\Q} M_{\widetilde{Z}}$ by Theorem \ref{thm:subadj-ramification}, so
$M_{\widetilde{Z}}\sim_{\Q} 0$ 
by Proposition \cite[Proposition 3.3]{MZ22}. Then the pair $(K_{\widetilde{Z}},\Delta_{\widetilde{Z}}+\frac{R_{\Delta}|_{\widetilde{Z}}}{q-1})$ is lc if and only if $(K_{\widetilde{Z}},\Delta_{\widetilde{Z}}+\frac{R_{\Delta}|_{\widetilde{Z}}}{q-1}+M_{\widetilde{Z}})$ is generalized lc by Lemma \ref{lemma:moduli part M=0}, if and only if $(X,\Delta+\frac{R_{\Delta}}{q-1})$ is lc near $Z$ by Definition \ref{def:subadjunction} (3). Here we use the fact that the moduli part of the restriction to ${\widetilde Z}$ of the pairs $(X, \Delta)$ and 
   $(X, \Delta + \frac{R_{\Delta}}{q-1})$ are the same $M_{\widetilde Z}$ since adding a pullback (of the  restriction of $\frac{R_{\Delta}}{q-1}$ to $Z$) to the boundary does not change the moduli part.
\end{proof1}

\section{Reduction to $\rho(X) = 1$; Proofs of Theorem \ref{thm:klt to Picard 1} and Corollary \ref{cor:numerically trivial induction}} \label{sec:klt to picard 1}

In this section, we prove Theorem \ref{thm:klt to Picard 1}, i.e.,  to reduce klt case of Conjecture \ref{conj Delta} to Picard number 1 case. Our method is to use equivariant log MMP (cf.~Theorem \ref{thm:polarized-emmp}) to make induction. We begin with the following concerning %divisorial 
birational contraction.

\begin{proposition}\label{prop:birational-ramification-formula}
Let $\pi:X\to Y$ be a birational morphism between normal projective $\Q$-factorial varieties, and $f$ and $g$, $q$-polarized endomorphism on $X$ and $Y$, respectively, such that $\pi\circ f=g\circ \pi$. Let $\Delta_X$ be an effective Weil $\Q$-divisor such that $(X,\Delta_X)$ is an $f$-pair. Let $\Delta_Y=\pi_*\Delta_X$, $R_{\Delta_Y}=\pi_*R_{\Delta_X}$. Then:
\begin{enumerate}
    \item $K_Y+\Delta_Y=g^*(K_Y+\Delta_Y)+R_{\Delta_{Y}}$, so if $(X, \Delta)$ is an $f$-pair, then
    $(Y, \Delta_Y)$ is a $g$-pair.

    \item $K_X+\Delta_X+\frac{R_{\Delta_X}}{q-1}=\pi^*(K_Y+\Delta_Y+\frac{R_{\Delta_Y}}{q-1})$.

\end{enumerate}
\end{proposition}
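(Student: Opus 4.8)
The statement decomposes into two claims: (1) that $(Y,\Delta_Y)$ is a $g$-pair with $R_{\Delta_Y}=\pi_*R_{\Delta_X}$, and (2) the pullback identity for $K_X+\Delta_X+\frac{R_{\Delta_X}}{q-1}$. The key point to establish first is that $\pi^*(K_Y+\Delta_Y)=K_X+\Delta_X$ as $\Q$-Cartier divisors; since $\pi$ is birational and $\pi_*\Delta_X=\Delta_Y$, and since $(X,\Delta_X)$ is an $f$-pair, I first want to know $K_X+\Delta_X$ is $\Q$-Cartier. Here I would invoke that $X$ is $\Q$-factorial (so $K_X+\Delta_X$ is automatically $\Q$-Cartier) — this is the crucial hypothesis that makes everything work, and similarly $K_Y+\Delta_Y$ is $\Q$-Cartier since $Y$ is $\Q$-factorial. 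Because $X\to Y$ is a birational contraction of $\Q$-factorial varieties and $\pi_*(K_X+\Delta_X)=K_Y+\Delta_Y$, I claim $\pi^*(K_Y+\Delta_Y)=K_X+\Delta_X$: write $K_X+\Delta_X=\pi^*(K_Y+\Delta_Y)+E$ with $E$ $\pi$-exceptional; but $E$ is also $\pi$-exceptional and $\Q$-Cartier, and one checks $E\equiv_\pi 0$, so $E=0$ by the negativity lemma (using that $\pi$ is a morphism with connected fibers, or more directly that a $\pi$-exceptional $\Q$-Cartier divisor which is $\pi$-numerically trivial vanishes). Actually the cleanest argument: since $\pi$ is a birational \emph{morphism} and $X,Y$ are $\Q$-factorial, any prime $\pi$-exceptional divisor spans an extremal ray contracted by $\pi$, and $E$ being a difference of pullbacks forces $E=0$ — but I should be careful, as this requires $\rho(X/Y)$ small; the safe route is the negativity lemma argument on a common resolution, noting $\pi$ is an isomorphism in codimension... no, $\pi$ need not be small. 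So: take the identity $K_X+\Delta_X=\pi^*(K_Y+\Delta_Y)+E$ defining $E$, apply $\pi_*$ to get $\pi_*E=0$, hence $E$ is $\pi$-exceptional, and since it is $\Q$-Cartier it suffices to show $E$ is $\pi$-antieffective and effective simultaneously, which follows because... here I'd just cite the negativity lemma \cite[Lemma 3.39]{KM98}: $-E$ is $\pi$-nef iff ... — the honest statement is that this requires knowing $E=0$, which does \emph{not} hold in general for a birational morphism.

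Let me reconsider: the identity (2) is \emph{not} claiming $\pi^*(K_Y+\Delta_Y)=K_X+\Delta_X$ in general — indeed that's false when $\pi$ contracts a divisor not in $\Delta_X$. So the real content is more subtle, and (2) must use the $f$-equivariance crucially. The plan: from the ramification formula for $f$ on $X$, $K_X+\Delta_X=f^*(K_X+\Delta_X)+R_{\Delta_X}$, and the analogous formula one wants on $Y$. Pushing the $X$-formula forward by $\pi$ and using $\pi\circ f=g\circ\pi$ (so that $\pi_*f^*=g^*\pi_*$ for $\Q$-Cartier divisors, by the projection-type compatibility of pullback with the commuting square — valid since $\pi_*f^*D$ and $g^*\pi_*D$ agree as they agree on the locus where $\pi$ is an isomorphism and both are $\Q$-Cartier classes determined there up to exceptional divisors, combined with the fact that $g^*$ of an exceptional divisor stays exceptional), I get $K_Y+\Delta_Y=g^*(K_Y+\Delta_Y)+\pi_*R_{\Delta_X}$, which is (1) with $R_{\Delta_Y}:=\pi_*R_{\Delta_X}$. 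To be rigorous about $\pi_*f^* = g^* \pi_*$: restrict to the open $U\subseteq Y$ over which $\pi$ is an isomorphism (complement of codimension $\ge 2$ in $Y$ since... no, $\pi$-exceptional locus maps to codimension $\ge 2$, yes for birational $\pi$ between normal varieties the image of $\mathrm{Exc}(\pi)$ has codimension $\ge 2$); on $U$ the identity is clear, and since Weil divisor classes on normal $Y$ are determined in codimension one, the identity of Weil divisors $\pi_*f^*D = g^*\pi_*D$ follows, then $\Q$-Cartier-ness is inherited from $Y$ being $\Q$-factorial. This gives (1).

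For (2): now both $K_X+\Delta_X$ and $\pi^*(K_Y+\Delta_Y)$ are $\Q$-Cartier on $X$, so write $K_X+\Delta_X=\pi^*(K_Y+\Delta_Y)+F$ with $F$ $\pi$-exceptional $\Q$-Cartier (since pushing forward gives $0$). Similarly $R_{\Delta_X}=\pi^*R_{\Delta_Y}+G$ with $G$ $\pi$-exceptional; here I need $R_{\Delta_Y}=\pi_*R_{\Delta_X}$ is $\Q$-Cartier, true by $\Q$-factoriality of $Y$, and $R_{\Delta_X}$ is $\Q$-Cartier since $R_{\Delta_X}=K_X+\Delta_X-f^*(K_X+\Delta_X)$ with all terms $\Q$-Cartier on $X$. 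Then the claim (2) is equivalent to $F+\frac{G}{q-1}=0$. Subtracting the two ramification formulas: on $X$, $(K_X+\Delta_X)-f^*(K_X+\Delta_X)=R_{\Delta_X}$, i.e. $\pi^*(K_Y+\Delta_Y)+F - f^*(\pi^*(K_Y+\Delta_Y)+F) = \pi^*R_{\Delta_Y}+G$. Using $f^*\pi^* = \pi^*g^*$ and $g^*(K_Y+\Delta_Y)=(K_Y+\Delta_Y)-R_{\Delta_Y}$, the $\pi^*(\cdots)$ terms cancel to give $F - f^*F = G$. Thus $G = F - f^*F$. Now both $F$ and $G$ are $\pi$-exceptional, supported on the (finitely many) $\pi$-exceptional prime divisors; since $f$ permutes these (as $\pi\circ f = g\circ\pi$ and $g$ is finite, $f$ maps $\mathrm{Exc}(\pi)$ into itself — more precisely $f^{-1}(\mathrm{Exc}(\pi))=\mathrm{Exc}(\pi)$ after iteration, or one works with the permutation action directly), the relation $F + \frac{G}{q-1} = F + \frac{F - f^*F}{q-1}$ — I want this to be $0$, i.e. $(q-1)F + F - f^*F = qF - f^*F = 0$, i.e. $f^*F = qF$.

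\textbf{Main obstacle.} So (2) reduces to proving $f^*F = qF$ for the $\pi$-exceptional correction divisor $F = (K_X+\Delta_X) - \pi^*(K_Y+\Delta_Y)$, and this is where the polarization hypothesis $f^*H\sim qH$ must enter decisively — the $\pi$-exceptional divisors, being contracted by $\pi$, are precisely the ones on which $f^*$ acts by the scalar $q$ (up to the permutation of exceptional components), analogous to the statement that on the relative Néron--Severi space $\mathrm{N}^1(X/Y)$ a polarized endomorphism acts as $q\cdot(\text{permutation})$. Concretely: each $\pi$-exceptional prime $E_i$ satisfies $f^*E_i = \sum_j c_{ij}E_j$ with the $E_j$ exceptional, and one shows $\sum_j c_{ij}=q$ on each... actually the clean way: write $f^*(K_Y+\Delta_Y\text{-pullback part})$ and note $F$ is determined by its intersection with curves contracted by $\pi$; using $f$-equivariance of the fibration structure and $\deg(f|_{\text{fiber}})$ considerations, $f^*F = qF$. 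I would prove $f^*F = qF$ by: taking a $\pi$-exceptional curve $C$ (contracted by $\pi$), $f^*F\cdot C = F\cdot f_*C$; since $f$ is finite and $\pi$-equivariant, $f_*C$ is again $\pi$-exceptional and $g(\pi(C)) = \pi(f(C))$ is a point, and the multiplicity works out so that $F\cdot f_*C = q\,(F\cdot C)$ using that $f$ restricted to a general fiber of $\pi$... hmm, $\pi$ is birational so fibers are points generically — rather, use that $F\cdot(f^{-1}_*C)$ type relations, combined with $f^*H\sim qH$ ample, pin down $f^*F \equiv qF$ on exceptional curves, hence $f^*F - qF$ is $\pi$-numerically-trivial and $\pi$-exceptional, hence $=0$ by the negativity lemma \cite[Lemma 3.39]{KM98}. \textbf{The hard part} is making this intersection-theoretic argument clean — showing $f^*F \equiv_\pi qF$ rigorously from $f^*H\sim qH$; I expect the cleanest route is via the relative cone $\mathrm{N}^1(X/Y)$, where polarization of $f$ and the commuting square force $f^*|_{\mathrm{N}^1(X/Y)}$ to be $q$ times a permutation matrix on the exceptional divisor classes, after which $f^*F = qF$ is immediate because $F\in\mathrm{N}^1(X/Y)$ is $f^*$-related to itself via the same square and the scalar $q$ is forced. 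Once $f^*F=qF$ is in hand, $qF - f^*F = 0$ gives (2), and I'm done.
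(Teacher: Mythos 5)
Your proof of part (1) is correct and matches the paper's idea: the paper isolates the equivalent formula $R_f=f^*\Delta_X+R_{\Delta_X}-\Delta_X$ and applies $\pi_*$, using $\pi_*R_f=R_g$ and $\pi_*f^*\Delta_X=g^*\pi_*\Delta_X$; your version of pushing the whole formula forward is equivalent, and your justification of $\pi_* f^* = g^* \pi_*$ (restrict to the locus where $\pi$ is an isomorphism, then use normality) is the right one.

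For part (2), your reduction is exactly the correct pivot, and is the same route as the paper (which cites \cite[Proposition~3.3]{M23} for this step): writing $F=K_X+\Delta_X-\pi^*(K_Y+\Delta_Y)$ and $G=R_{\Delta_X}-\pi^*R_{\Delta_Y}$, you correctly derive $G=F-f^*F$ and observe that claim~(2) is equivalent to $f^*F=qF$, that is, to $(q-f^*)F=0$ on the span of the $\pi$-exceptional prime divisors. However, you never actually prove $f^*F=qF$; the last paragraph is explicitly a sketch (``the hard part is making this intersection-theoretic argument clean''), and what you propose --- that $f^*$ acts on $\mathrm{N}^1(X/Y)$ as $q$ times a permutation matrix --- is not established, and is in fact delicate when $f$ permutes several exceptional divisors nontrivially. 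The clean way to close the gap (and what the cited reference does) is: first show $f^{-1}$ permutes the $\pi$-exceptional primes, using $\pi(f^{-1}(\operatorname{Exc}(\pi)))\subseteq g^{-1}(\pi(\operatorname{Exc}(\pi)))$, a codimension $\ge 2$ set, so $f^{-1}(\operatorname{Exc}(\pi))\subseteq\operatorname{Exc}(\pi)$, hence equals it by finiteness. Then for a $\pi$-exceptional prime $E$ with $f^{-1}(E)=E$ and $f^*E=rE$, the projection formula gives
\[
r\,q^{n-1}\,(E\cdot H^{n-1}) \;=\; (f^*E)\cdot(f^*H)^{n-1} \;=\; (\deg f)\,(E\cdot H^{n-1}) \;=\; q^n\,(E\cdot H^{n-1}),
\]
forcing $r=q$ (here $E\cdot H^{n-1}>0$ since $H$ is ample). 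In the divisorial-contraction case --- which is the only way the proposition is used in the paper's equivariant MMP --- there is exactly one exceptional prime, it is automatically $f^{-1}$-invariant, and $f^*F=qF$ follows immediately. In the general birational case one must either carry out the permutation bookkeeping or iterate $f$ (after which $(q-f^*)F=0$ must still be deduced from $(q^s-f^{s*})F=0$, which needs a further argument because $q-f^*$ need not be injective on the exceptional span). So the gap is genuine: the key final identity $f^*F=qF$ is asserted, not proven.
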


\begin{proof}
(1)$K_X+\Delta_X=f^*(K_X+\Delta_X)+R_{\Delta_X}$ is equivalent to
$R_f=f^*\Delta_X+R_{\Delta_X}-\Delta_X$.
Taking $\pi_*$, we get
$R_g=g^*\Delta_Y+R_{\Delta_Y}-\Delta_Y$,
which is equivalent to 
$K_Y+\Delta_Y=g^*(K_Y+\Delta_Y)+R_{\Delta_{Y}}$.

(2) The proof is the same as \cite[Proposition 3.3]{M23}
with $K_X$ and $K_Y$ replaced by $K_X+\Delta_X$ and $K_Y+\Delta_Y$, respectively.
\end{proof}

For a Fano contraction, we obtain a log Calabi Yau structure from a general fiber and the base. First, we slightly generalize the definition and results in \cite{Y21}.
\begin{definition}[{\cite[Definition 3.6]{Y21}}] \label{def:mori-fiber-canonical}
    A morphism \(\pi\colon (X,\Delta_X) \to (Y,\Delta_Y)\) of pairs is called a \textit{Mori fiber space of canonical bundle formula type} if
    \begin{enumerate}
        \item \(X\) is a \(\mathbb{Q}\)-factorial normal projective variety and \(\Delta_X\) is an effective Weil \(\mathbb{Q}\)-divisor such that \((X,\Delta_X)\) is lc,
        
        \item \(\pi\colon X \to Y\) is a \((K_{X} + \Delta_X)\)-Mori fiber space, and
        
        \item for any prime divisor \(E\) on \(Y\), \(\Delta_Y\) satisfies
        \[
            \operatorname{ord}_{E}(\Delta_Y) = \frac{m_{E} - 1 + \operatorname{ord}_{F}(\Delta_X)}{m_{E}},
        \]
    \end{enumerate}
        where \(F\) is a prime divisor on \(X\) satisfying \(\pi^{*}E = m_{E} F\) for some integer \(m_{E} > 0\).
    
\end{definition}

\begin{proposition}[{\cite[Proposition 3.8]{Y21}}]\label{prop:mori fiber klt}
    In the setting of Definition \ref{def:mori-fiber-canonical}, if $(X,\Delta)$ is lc (resp. klt), the pair $(Y,\Delta_Y)$ is lc (resp. klt).
\end{proposition}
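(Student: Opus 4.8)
\textbf{Plan of proof for Proposition \ref{prop:mori fiber klt}.}
The plan is to compare the discrepancy of an arbitrary prime divisor $E$ over $Y$ with discrepancies of divisors over $X$, using the Mori fiber structure and condition (3) of Definition \ref{def:mori-fiber-canonical}. Since $\pi$ is a $(K_X+\Delta_X)$-Mori fiber space, the relative Picard number is one and $-(K_X+\Delta_X)$ is $\pi$-ample; in particular $K_X + \Delta_X \sim_{\pi,\Q} 0$ is false in general, so I first record that $\pi$ is of Fano type and that the hypotheses of the canonical bundle formula in Definition \ref{def:canonical bundle formula} hold over the generic point of $Y$ (this is exactly why condition (3) is phrased the way it is: $\Delta_Y$ is the discriminant part $B_Y$ of a canonical bundle formula $K_X + \Delta_X \sim_{\Q} \pi^*(K_Y + \Delta_Y + M_Y)$, with $M_Y$ the moduli part). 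Then I would invoke Theorem \ref{thm:M b-nef}: the generalized pair $(Y, \Delta_Y + M_Y)$ is generalized lc if and only if $(X,\Delta_X)$ is lc, and since $M_Y$ is the moduli b-divisor of a klt-trivial (here Fano-type) fibration it is b-nef and, by the same theorem, pseudo-effective; hence $(Y,\Delta_Y)$ is lc because generalized lc forces the underlying pair to be lc (the moduli part being effective-up-to-$\Q$-linear-equivalence only helps).

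For the klt refinement I would argue as follows. Suppose $(X,\Delta_X)$ is klt. I want to show every discrepancy $a(E,Y,\Delta_Y)$ is strictly greater than $-1$. Take a log resolution of $(Y,\Delta_Y)$ realizing $E$, base change to $X$ and take a suitable model $X'$ dominating $X$; by condition (3) and the standard behaviour of the discriminant part of a canonical bundle formula under base change (as in \cite{A99}, used already in the proof of Theorem \ref{fiber sp adj}), a horizontal divisor $E'$ over $X$ lying above $E$ has discrepancy $a(E',X,\Delta_X)$ equal to $m_E\bigl(a(E,Y,\Delta_Y)+1\bigr)-1$ plus a contribution that is $\le 0$ coming from how $\Delta_X$ meets the fiber; since $(X,\Delta_X)$ is klt, $a(E',X,\Delta_X) > -1$, and unwinding the relation gives $a(E,Y,\Delta_Y) > -1$. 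Equivalently, and more cleanly, one uses that for a Fano-type (hence klt-trivial after a small perturbation) fibration the canonical bundle formula sends klt to generalized klt on the base, which by Lemma \ref{lemma:moduli part M=0} (once we know $M_Y$ is $\Q$-Cartier on a high model, or by Theorem \ref{thm:M b-nef} directly) descends to klt-ness of $(Y,\Delta_Y)$.

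Concretely, the cleanest route I would actually write is: perturb $\Delta_X$ to $\Delta_X' = \Delta_X - \epsilon H$ with $H$ a general $\pi$-ample divisor so that $K_X + \Delta_X' \sim_{\pi,\Q} 0$, apply the canonical bundle formula of Definition \ref{def:canonical bundle formula} to $(X,\Delta_X') \to Y$ to get $(Y, B_Y + M_Y)$ generalized pair, check that $B_Y \to \Delta_Y$ and $M_Y$ stays pseudo-effective as $\epsilon \to 0$ (using that the moduli part varies continuously and the fibers are Fano), invoke Theorem \ref{thm:M b-nef} to transfer lc/klt between $(X,\Delta_X)$ and $(Y,B_Y+M_Y)$, and finally use Lemma \ref{lemma:moduli part M=0} after noting $M_Y$ is $\Q$-Cartier (it is a pullback of an ample class on the base of a further fibration, or $0$) — this gives the equivalence with lc/klt of $(Y,\Delta_Y)$ itself. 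The main obstacle I anticipate is bookkeeping around condition (3) versus the genuine discriminant part $B_Y$: one must check that the $\Delta_Y$ prescribed by Definition \ref{def:mori-fiber-canonical}(3) really is the discriminant part of a canonical bundle formula (not just agrees with it on the vertical, ramified-over divisors), and that no extra vertical contributions are hidden in the moduli part; this is where I would lean on the description of the codimension-one part of adjunction already set up in Lemma \ref{codim1 adj} and the $\Q$-factoriality of $X$, and on \cite[Proposition 3.8]{Y21} itself, whose argument I would simply reproduce since it is stated to cover exactly this generalization.
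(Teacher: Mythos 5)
Your overall strategy — perturb $\Delta_X$ so the fibration becomes log Calabi--Yau over $Y$, apply the canonical bundle formula to obtain a generalized pair $(Y,\Delta_Y+M_Y)$, invoke Theorem \ref{thm:M b-nef}, and descend from generalized lc/klt to honest lc/klt — is exactly what the paper does by citing and slightly adapting \cite[Proposition 3.8]{Y21}. However, your attempt to spell out the details has a few genuine errors that would need fixing before the argument runs.

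First, the sign in your perturbation is wrong. Since $\pi$ is a $(K_X+\Delta_X)$-Mori fiber space, $-(K_X+\Delta_X)$ is $\pi$-ample; setting $\Delta_X' = \Delta_X - \epsilon H$ with $H$ $\pi$-ample makes $K_X+\Delta_X'$ \emph{more} $\pi$-anti-ample, so no choice of $\epsilon>0$ achieves $K_X+\Delta_X'\sim_{\pi,\Q}0$. You need to \emph{add} an effective divisor: choose a general effective $\Gamma\sim_{\pi,\Q}-(K_X+\Delta_X)$ (possible since $-(K_X+\Delta_X)$ is $\pi$-ample) so that $(X,\Delta_X+\Gamma)$ remains lc (resp.\ klt) and $K_X+\Delta_X+\Gamma\sim_{\pi,\Q}0$. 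Related to this, your first paragraph already writes a canonical bundle formula $K_X+\Delta_X\sim_{\Q}\pi^*(K_Y+\Delta_Y+M_Y)$ and cites Theorem \ref{thm:M b-nef} for the unperturbed pair, even though you correctly note that $K_X+\Delta_X$ is not $\pi$-trivial; Definition \ref{def:canonical bundle formula} requires $K_X+B\sim_{\pi,\Q}0$, so the formula simply does not apply there. Finally, the mechanism is not a continuity argument as $\epsilon\to 0$: once $\Gamma$ is chosen general, $\mu_F\Gamma=0$ for the divisor $F$ in Definition \ref{def:mori-fiber-canonical}(3), and the discriminant part $B_Y$ of the perturbed klt-trivial fibration \emph{equals} $\Delta_Y$ on the nose, by the standard codimension-one threshold computation; no limit is taken.

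Second, the reason $M_Y$ is $\Q$-Cartier is not that it is ``a pullback of an ample class on the base of a further fibration, or $0$.'' The correct reason is that $Y$, being the base of a $(K_X+\Delta_X)$-Mori fiber space from a $\Q$-factorial $X$, is itself $\Q$-factorial (\cite[Corollary 3.18]{KM98}), so every Weil $\Q$-divisor on $Y$ is $\Q$-Cartier. With this in hand, the observation recorded after Definition \ref{def:generalized-pair} (generalized lc/klt implies lc/klt of the underlying pair when $M$ is $\Q$-Cartier) gives what you want; pseudo-effectivity of $M_Y$ plays no role in that descent. With these corrections, your argument becomes a faithful unpacking of the Yoshikawa-plus-Theorem \ref{thm:M b-nef} route that the paper itself invokes.
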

\begin{proof}
     The lc part follows from replacing $(Y,B)$ with $(Y,B+M)$ (see Theorem \ref{thm:M b-nef}) in the proof of \cite[Proposition 3.8]{Y21}. The klt part is just \cite[Proposition 3.8]{Y21}.
\end{proof}

\begin{proposition}[{\cite[Proposition 3.9]{Y21}}]\label{prop:mori-fiber-gpair}
Let $\pi:(X,\Delta_X)\to (Y,\Delta_Y)$ be a Mori fiber space of canonical bundle type, and $f$ and $g$ $q$-polarized endomorphisms on $X$ and $Y$, respectively, such that $\pi\circ f=g\circ \pi$. Suppose $(X,\Delta_X)$ is an $f$-pair. Then:

\begin{enumerate}
    \item $(Y,\Delta_Y)$ is a $g$-pair, i.e., $R_{\Delta_Y} \ge 0$.
    \item $R_{\Delta_X}-\pi^*R_{\Delta_Y}$ is an effective divisor and it has no vertical components of $\pi$.
\end{enumerate}
\end{proposition}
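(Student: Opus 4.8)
The plan is to reduce both assertions to a single divisor-by-divisor comparison of the ramification formulas on $X$ and $Y$. Since $\pi$ is a $(K_X+\Delta_X)$-Mori fiber space, $\rho(X/Y)=1$ and for each prime divisor $E$ on $Y$ there is a \emph{unique} prime divisor $F=F_E$ on $X$ dominating $E$, with $\pi^*E=m_EF_E$. So the prime divisors of $X$ come in three kinds: horizontal (dominating $Y$), vertical–divisorial (the $F_E$), and deep vertical (image of codimension $\ge 2$ in $Y$), and $\pi^*$ of a divisor from $Y$ has only vertical–divisorial components. Since $\pi\circ f=g\circ\pi$ with $f,g$ finite, $f$ sends $F_E$ onto $F_{g(E)}$, and $g$ is $q$-polarized by Proposition \ref{prop:polarized basic property}.

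\textbf{The main computation.} Writing $E'=g(E)$ and pulling $E'$ back in two ways along $\pi\circ f=g\circ\pi$, I would compare the coefficient of $F_E$ in $f^*\pi^*E'=\pi^*g^*E'$ to get the ramification-index relation $r_{F_E}(f)\,m_{E'}=r_E(g)\,m_E$. Then, with $\delta=\mathrm{ord}_{F_E}(\Delta_X)$ and $\delta'=\mathrm{ord}_{F_{E'}}(\Delta_X)$, Definition \ref{def:mori-fiber-canonical}(3) gives $\mathrm{ord}_E(\Delta_Y)=(m_E-1+\delta)/m_E$ and the analogous formula for $E'$. Expanding $R_{\Delta_X}=\Delta_X+R_f-f^*\Delta_X$ along $F_E$ and $R_{\Delta_Y}=\Delta_Y+R_g-g^*\Delta_Y$ along $E$, and substituting the index relation, a short calculation should yield
\[
\mathrm{ord}_{F_E}\!\big(R_{\Delta_X}\big)\;=\;m_E\cdot\mathrm{ord}_E\!\big(R_{\Delta_Y}\big)\;=\;\delta-1+r_{F_E}(f)\,(1-\delta').
\]
In other words, the vertical–divisorial part of $R_{\Delta_X}$ is exactly $\pi^*R_{\Delta_Y}$. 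Since $(X,\Delta_X)$ is an $f$-pair, $R_{\Delta_X}\ge 0$, hence $\mathrm{ord}_E(R_{\Delta_Y})\ge 0$ for every $E$; so $R_{\Delta_Y}\ge 0$ and $(Y,\Delta_Y)$ is a $g$-pair, which is (1). For (2), the identity gives $R_{\Delta_X}-\pi^*R_{\Delta_Y}=(R_{\Delta_X})^{\mathrm{hor}}+(R_{\Delta_X})^{\mathrm{deep}}$, a sum of effective divisors, so it is effective and has no vertical–divisorial component.

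\textbf{Expected main obstacle.} The one point that is not bookkeeping will be ruling out deep vertical components of $R_{\Delta_X}$, i.e.\ showing $(R_{\Delta_X})^{\mathrm{deep}}=0$; without this, $R_{\Delta_X}-\pi^*R_{\Delta_Y}$ could still carry vertical components. I would attack it using $\rho(X/Y)=1$: if $V$ were a deep vertical prime divisor in $\Supp R_{\Delta_X}$, then $V$ is disjoint from a general fiber $F_0$ while $-(K_X+\Delta_X)|_{F_0}$ is ample, so $V\equiv_\pi 0$; as $\pi$ is the contraction of the $(K_X+\Delta_X)$-negative extremal ray and $X$ is $\Q$-factorial, this should force $V$ to be $\Q$-linearly equivalent to a divisor pulled back from $Y$, which has no deep vertical component — a contradiction. (Alternatively, this is exactly \cite[Proposition 3.9]{Y21}, the passage from klt to lc affecting only the singularity part, handled through Proposition \ref{prop:mori fiber klt}, and not the ramification identity above.) I do not expect the index relation or the coefficient identity to cause trouble once $\pi^*E=m_EF_E$ is in hand; the deep-vertical step is where the care is needed.
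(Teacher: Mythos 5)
The paper supplies no proof of this proposition---it simply cites \cite[Proposition 3.9]{Y21}---so there is no in-paper argument to compare with; I assess your proposal on its own. Your coefficient computation is correct and is the heart of the matter: the index relation $r_{F_E}(f)\,m_{E'}=r_E(g)\,m_E$ does follow by reading off the coefficient of $F_E$ in $f^*\pi^*E'=\pi^*g^*E'$, and substituting the defining relation of Definition \ref{def:mori-fiber-canonical}(3) into $R_{\Delta_Y}=\Delta_Y+R_g-g^*\Delta_Y$ and simplifying with the index relation gives exactly $m_E\,\mathrm{ord}_E(R_{\Delta_Y})=\delta-1+r_{F_E}(f)(1-\delta')=\mathrm{ord}_{F_E}(R_{\Delta_X})$. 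This proves $R_{\Delta_Y}\ge0$, i.e.\ (1), and shows the coefficient of every $F_E$ in $R_{\Delta_X}-\pi^*R_{\Delta_Y}$ vanishes.

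The gap is in your treatment of deep-vertical components, which you rightly flag as the delicate point but then resolve incorrectly. The chain ``$V\equiv_\pi 0\Rightarrow V\sim_{\Q}\pi^*D$ by the Cone Theorem, and $\pi^*D$ has no deep-vertical component, contradiction'' does not close: $V\sim_{\Q}\pi^*D$ is an equivalence of divisor classes, not an equality of divisors, so the support of $\pi^*D$ says nothing about the support of $V$. The missing ingredient is a pushforward argument. Choose $m>0$ with $mD$ Cartier and $mV\sim\pi^*(mD)$; the projection formula gives $\pi_*\SO_X(mV)\cong\SO_Y(mD)\otimes\pi_*\SO_X\cong\SO_Y(mD)$. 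Since $V$ is disjoint from $\pi^{-1}(Y\setminus\pi(V))$ and $\Codim_Y\pi(V)\ge 2$, the sheaf $\pi_*\SO_X(mV)$ agrees with $\SO_Y$ outside a closed subset of codimension $\ge 2$; both sides being reflexive of rank one on the normal $Y$, they agree everywhere, so $mD\sim 0$ and hence $mV\sim 0$. But a nonzero effective divisor on a projective variety cannot be linearly trivial (the realizing rational function would be regular, hence constant). Thus no deep-vertical prime divisor exists on a $\Q$-factorial Mori fiber space with $\rho(X/Y)=1$, and (2) follows. With this step added, your argument is a complete proof.
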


\begin{proposition}\label{prop:mori-fiber-lcy}
In the setting of Proposition \ref{prop:mori-fiber-gpair} with $\dim Y>0$,
suppose Conjecture \ref{conj Delta} holds for both $(Y,\Delta_Y)$ and $(F,\Delta_X|_F)$ with $F$ a general $f$-periodic fiber (which exists by Proposition \ref{prop:zd of f-periodic pt} and which is clearly an $f|_F$-pair when $F$ is $f$-invariant). Then Conjecture \ref{conj Delta} holds for $(X,\Delta_X)$ as well.
\end{proposition}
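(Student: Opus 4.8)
The plan is to combine the canonical bundle formula for the Mori fiber contraction $\pi$ with the two hypothesised instances of Conjecture \ref{conj Delta}, exactly as in the proof of Theorem \ref{thm 1} but now tracking the full boundary $\Delta_X+\frac{R_{\Delta_X}}{q-1}$ rather than just $T_f+D$. First I would replace $f$ by an iterate so that, by Proposition \ref{prop:mori-fiber-gpair}, $(Y,\Delta_Y)$ is a $g$-pair, and so that Conjecture \ref{conj Delta} (as an equivalence with log Calabi-Yau, via Remark \ref{rem:equiv conj}(3)) holds for $(Y,\Delta_Y)$ and for $(F,\Delta_X|_F)$ on a general $f$-periodic (indeed $f$-invariant after iteration) fiber $F$; note $f|_F$ is $q$-polarized by Proposition \ref{prop:polarized basic property}(1). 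By Theorem \ref{thm:Delta+R/q-1 leq 1} (applied to the $f$-pair $(X,\Delta_X)$, again after iteration) we may also assume $\Delta_X+\frac{R_{\Delta_X}}{q-1}\le 1$, hence the restriction to a general fiber $F$ has coefficients $\le 1$ and $(F,(\Delta_X+\frac{R_{\Delta_X}}{q-1})|_F)$ is log canonical near the generic point of $Y$.

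**Setting up the canonical bundle formula.** The key computation, labelled $(*)$ in the statement of Proposition \ref{prop:mori-fiber-lcy}, is
\[
f^*(K_X+\Delta_X)-q(K_X+\Delta_X)\sim_{\Q}(q-1)\Bigl(K_X+\Delta_X+\tfrac{R_{\Delta_X}}{q-1}\Bigr),
\]
which follows by rearranging $K_X+\Delta_X=f^*(K_X+\Delta_X)+R_{\Delta_X}$; I would record this first and the analogous identity on $Y$. Since Conjecture \ref{conj Delta} holds for $(F,\Delta_X|_F)$, by Remark \ref{rem:equiv conj}(3) the general fiber pair $(F,(\Delta_X+\frac{R_{\Delta_X}}{q-1})|_F)$ is log Calabi-Yau, so $(K_X+\Delta_X+\frac{R_{\Delta_X}}{q-1})|_F\sim_{\Q}0$; combined with $\Q$-factoriality of $X$ this gives $K_X+\Delta_X+\frac{R_{\Delta_X}}{q-1}\sim_{\Q}\pi^*L$ for a $\Q$-Cartier $\Q$-divisor $L$ on $Y$. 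Now run the canonical bundle formula (Definition \ref{def:canonical bundle formula}, Theorem \ref{thm:M b-nef}) for $\pi:(X,\Delta_X+\frac{R_{\Delta_X}}{q-1})\to Y$: write
\[
K_X+\Delta_X+\tfrac{R_{\Delta_X}}{q-1}\sim_{\Q}\pi^*\bigl(K_Y+B_Y+M_Y\bigr),
\]
where $(Y,B_Y+M_Y)$ is a generalized pair, $(X,\Delta_X+\frac{R_{\Delta_X}}{q-1})$ is lc iff $(Y,B_Y+M_Y)$ is generalized lc, and $M_Y$ is b-nef. Here the discriminant part $B_Y$ and the discrepancies match those of $(Y,\Delta_Y)$ up to the contribution of the $\pi$-vertical part of $\frac{R_{\Delta_X}}{q-1}$, which by Proposition \ref{prop:mori-fiber-gpair}(2) has no vertical components beyond $\pi^*\frac{R_{\Delta_Y}}{q-1}$; pushing down as in Proposition \ref{prop:birational-ramification-formula}-style bookkeeping (or directly via Definition \ref{def:mori-fiber-canonical}) I expect $B_Y=\Delta_Y+\frac{R_{\Delta_Y}}{q-1}$, so that $K_Y+B_Y+M_Y=K_Y+\Delta_Y+\frac{R_{\Delta_Y}}{q-1}+M_Y$.

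**Descending equivariance and concluding.** Next I would make the canonical bundle formula $g$-equivariant. Because $R_{\Delta_X}$ is $f$-equivariantly pulled back from $Y$ up to the vertical correction (Proposition \ref{prop:mori-fiber-gpair}(2)), Theorem \ref{fiber sp adj} applies to the square $\pi\circ f=g\circ\pi$ and gives $M_Y\sim_{\Q}g^*M_Y$ together with $K_Y+B_Y=g^*(K_Y+B_Y)+R_{\Delta_Y}$; since $g$ is $q$-polarized, every eigenvalue of $g^*$ on $\N^1(Y)$ has modulus $>1$ (by \cite[Theorem 1.1]{MZ20}), forcing $M_Y\equiv 0$, and then $M_Y\sim_{\Q}0$ by \cite[Lemma 2.3]{MZ18} since $M_Y$ is (the pushforward of) a nef — in particular pseudo-effective — divisor that is numerically trivial (alternatively invoke \cite[Proposition 3.3]{MZ22}). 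By Lemma \ref{lemma:moduli part M=0}, $(Y,B_Y+M_Y)$ is generalized lc iff $(Y,B_Y)=(Y,\Delta_Y+\frac{R_{\Delta_Y}}{q-1})$ is lc, which holds because Conjecture \ref{conj Delta} holds for $(Y,\Delta_Y)$. Hence $(X,\Delta_X+\frac{R_{\Delta_X}}{q-1})$ is lc by Theorem \ref{thm:M b-nef}. For the numerical triviality in Conjecture \ref{conj Delta}(1): $L=K_Y+\Delta_Y+\frac{R_{\Delta_Y}}{q-1}+M_Y\equiv 0$ since $M_Y\equiv 0$ and, by Conjecture \ref{conj Delta}(1) for $(Y,\Delta_Y)$, $K_Y+\Delta_Y+\frac{R_{\Delta_Y}}{q-1}$ is $\Q$-Cartier and numerically trivial; pulling back, $K_X+\Delta_X+\frac{R_{\Delta_X}}{q-1}\equiv 0$ and it is $\Q$-Cartier since the $F$-restriction is trivial and $X$ is $\Q$-factorial. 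Actually, once we know the pair is lc with $K_X+\Delta_X+\frac{R_{\Delta_X}}{q-1}\sim_{\Q}\pi^*L$ and $L\sim_{\Q}0$ (by log Calabi-Yau on $Y$ via Remark \ref{rem:equiv conj}(3)), both (1) and (2) of Conjecture \ref{conj Delta} follow at once.

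**Main obstacle.** The delicate point is the precise identification of the discriminant divisor $B_Y$ of the canonical bundle formula for $(X,\Delta_X+\frac{R_{\Delta_X}}{q-1})$ with $\Delta_Y+\frac{R_{\Delta_Y}}{q-1}$: one must check that the vertical part of $\frac{R_{\Delta_X}}{q-1}$ contributes exactly $\frac{R_{\Delta_Y}}{q-1}$ to $B_Y$ and nothing spurious, using Proposition \ref{prop:mori-fiber-gpair}(2) and the Mori-fiber adjunction normalization in Definition \ref{def:mori-fiber-canonical}, and that $R_{\Delta_Y}$ (defined via pushforward) agrees with the ramification-type divisor for $g$ on $Y$; this is exactly the kind of bookkeeping done in Proposition \ref{prop:birational-ramification-formula} for birational contractions, transported to the Fano-contraction setting. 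The rest is assembling Theorem \ref{thm:Delta+R/q-1 leq 1}, Theorem \ref{thm:M b-nef}, Theorem \ref{fiber sp adj}, Lemma \ref{lemma:moduli part M=0} and the eigenvalue bound of \cite{MZ20}.
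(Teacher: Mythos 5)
Your overall architecture matches the paper's: record the identity $(*)$, use the Fano contraction to get relative $\Q$-triviality, restrict to a general $f$-invariant fiber to see the pair $(X,\Delta_X+\frac{R_{\Delta_X}}{q-1})$ is lc over the generic point of $Y$, write the canonical bundle formula, compare the discriminant with $\Delta_Y+\frac{R_{\Delta_Y}}{q-1}$, kill the moduli part, and conclude via Lemma \ref{lemma:moduli part M=0} and Theorem \ref{thm:M b-nef}. But there are two genuine gaps, and the paper closes both with a single positivity argument that does not appear in your proposal.

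First, you assert $B_Y=\Delta_Y+\frac{R_{\Delta_Y}}{q-1}$ on the nose. What the lc-threshold computation (the paper's Claim \ref{claim 6.6}) actually yields is only the inequality $B_Y\geq\Delta_Y+\frac{R_{\Delta_Y}}{q-1}$: the bound
$t_P\leq\tfrac{1}{m_P}\bigl(1-\mu_{P'}\Delta_X-\tfrac{\mu_{P'}R_{\Delta_X}}{q-1}\bigr)$
is obtained from the coefficient of $P'$ alone, and higher-codimension non-klt centers over $P$ can push $t_P$ strictly lower, making $\mu_P B_Y$ strictly larger. You flag this as the delicate point but do not resolve it; the paper never proves equality by bookkeeping, it simply writes $B_1:=B_Y-(\Delta_Y+\frac{R_{\Delta_Y}}{q-1})\geq 0$ and passes $B_1$ to a global argument.

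Second, the application of Theorem \ref{fiber sp adj} to derive $M_Y\sim_{\Q}g^*M_Y$ is not available here. That theorem requires the relative ramification divisor to be a pullback from the base, whereas $R_{\Delta_X}$ visibly has horizontal components: Proposition \ref{prop:mori-fiber-gpair}(2) only says the vertical part equals $\pi^*R_{\Delta_Y}$ and the remainder $R_{\Delta_X}-\pi^*R_{\Delta_Y}$ is purely horizontal, not that it vanishes. Moreover $(X,\Delta_X+\frac{R_{\Delta_X}}{q-1})$ need not be an $f$-pair, since $qR_{\Delta_X}-f^*R_{\Delta_X}$ can fail to be effective, so you cannot treat the square as a quasi-\'etale situation for that boundary either.

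Both gaps are closed in the paper at once via intersection theory. Since $f^*H\sim qH$ with $H$ ample, the projection formula gives $f^*(K_X+\Delta_X).H^{n-1}=q(K_X+\Delta_X).H^{n-1}$, hence by $(*)$ one has $(K_X+\Delta_X+\frac{R_{\Delta_X}}{q-1}).H^{n-1}=0$. Combining with the canonical bundle formula and Claim \ref{claim 6.6}, $\pi^*(B_1+M).H^{n-1}=0$ with $B_1+M$ pseudo-effective (as $B_1\geq 0$ and $M$ is the trace of a nef b-divisor), so Lemma \ref{lemma:B.H^(n-1)=0} forces $B_1+M\equiv 0$, hence $B_1=0$ and $M\equiv 0$. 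That single step is what your outline is missing; once it is in place, your remaining reasoning (Lemma \ref{lemma:moduli part M=0}, Theorem \ref{thm:M b-nef}, and Gongyo's abundance) goes through as you describe.
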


\begin{proof}
Define $\Delta_Y$ as in Definition \ref{def:mori-fiber-canonical}. Then $(Y,\Delta_Y)$ is klt by Proposition \ref{prop:mori fiber klt} and $R_{\Delta_Y}\geq 0$ by Proposition \ref{prop:mori-fiber-gpair}.
Choose an ample divisor $H$ such that $f^*H\sim qH$. Since $\dim \N^1 (X)/\pi^*\N^1(Y)=1$, $H$ is the relatively generator. Then $q(K_X+\Delta_X)-f^*(K_X+\Delta_X)\equiv_{\pi} 0$ (thus $\sim_{\pi,\Q} 0$ by the Cone Theorem \cite[Theorem 3.7]{KM98}).
Note that
\begin{align*}
(*) \hskip 1pc q(K_X+\Delta_X)-f^*(K_X+\Delta_X)&=(q-1)(K_X+\Delta_X)+R_{\Delta_X} \\
                               &=(q-1)(K_X+\Delta_X+\frac{R_{\Delta_X}}{q-1}),
\end{align*} 
%Take an ample divisor $H$ on $X$ such that $f^*H\sim qH$. Suppose
%\[K_X+\Delta_X=aH+\pi^*D,\]
%where $a\in \Q_{< 0}$, and $D$ is a $\Q$-Cartier divisor on $Y$. Then
%\begin{align*}
%q(K_X+\Delta_X)-f^*(K_X+\Delta)&= q(aH+\pi^*D)-f^*(aH+\pi^*D) \\
%                               &\sim_{\Q}q\pi^*D-f^*\pi^*D 
%                                \sim_{\pi,\Q}0,
%\end{align*}
so $K_X+\Delta_X+\frac{R_{\Delta_X}}{q-1}\sim_{\pi,\Q} 0$.

Take a general $f$-periodic fiber $F$. We may assume that $F$ is $f$-invariant after iteration of $f$. Then
\[K_F+\Delta_X|_F=(f|_F)^*(K_F+\Delta_F)+R_{\Delta_X}|_F.\]
By our assumption, $(F,\Delta_X|_F+\frac{R_{\Delta_X}|_F}{q-1})$ is lc after iteration of $f$, so $(X,\Delta_X+\frac{R_{\Delta_X}}{q-1})$ is lc over the generic point of $Y$. Then we can write the canonical bundle formula
\[(**) \hskip 1pc K_X+\Delta_X+\frac{R_{\Delta_X}}{q-1}\sim_{\Q}\pi^*(K_Y+B+M).\]

\begin{claim}\label{claim 6.6}
    $B\geq \Delta_Y+\frac{R_{\Delta_Y}}{q-1}$.
\end{claim}

Assuming the claim and by the assumption,
%since $\dim Y<n$,
\[K_Y+B+M=K_Y+\Delta_Y+\frac{R_{\Delta_Y}}{q-1}+B_1+M\sim_{\Q}B_1+M,\]
where $B_1 = B - (\Delta_Y+\frac{R_{\Delta_Y}}{q-1}) \geq 0$. We get 
\[(***) \hskip 1pc K_X+\Delta_X+\frac{R_{\Delta_X}}{q-1}\sim_{\Q}\pi^*(B_1+M).\]
On the other hand, letting $n = \dim X$, we calculate:
\[f^*(K_X+\Delta_X).H^{n-1}=\frac{1}{q^{n-1}}f^*(K_X+\Delta_X).(f^*H)^{n-1}=q(K_X+\Delta_X).H^{n-1}.\]
This and the $(*)$ above imply
\[(K_X+\Delta_X+\frac{R_{\Delta_X}}{q-1}).H^{n-1}
=\frac{1}{q-1}(q(K_X+\Delta_X)-f^*(K_X+\Delta_X)).H^{n-1}=0,\]
which, by the $(***)$ above, is equivalent to
\[\pi^*(B_1+M).H^{n-1}=0.\]
Since $B_1+M$ is pseudo-effective and $H$ is ample, we have $B_1+M\equiv 0$ by Lemma \ref{lemma:B.H^(n-1)=0}, which implies $B_1=0$, $M\equiv 0$ and $K_X+\Delta_X+\frac{R_{\Delta_{X}}}{q-1}\equiv 0$ by the $(***)$ above. Then $(Y,B+M)$ is generalized lc if and only if $(Y,\Delta_Y+\frac{R_{\Delta_Y}}{q-1})$ is lc (cf. Lemma \ref{lemma:moduli part M=0}), which holds by assumption.
Hence $(X,\frac{R_{\Delta_X}}{q-1}+\Delta_X)$ is lc (cf. Theorem \ref{thm:M b-nef}), and $K_X+\Delta_X+\frac{R_{\Delta_X}}{q-1}\sim_{\Q}0$ by \cite[Theorem 1.2]{G13}.

\par \vskip 1pc
\begin{proof1}[Proof of Claim \ref{claim 6.6}]
%\[K_X+\Delta_X+\frac{R_{\Delta_X}}{q-1}\sim_{\Q}\pi^*(K_Y+B+M).\]
We use the $(**)$ above.
For any prime divisor $P$ on $Y$, suppose $\pi^*P=m_P P'$, let
\[t_P=\sup \{t \, | \, (X,\Delta_X+\frac{R_{\Delta_X}}{q-1}+t\pi^*P) \, \text{is lc over generic point of }P\}.\]
Then
\[\mu_{P'}\Delta_X+\frac{\mu_{P'}R_{\Delta_X}}{q-1}+t_Pm_P\leq 1,\]
so
\[t_P\leq\frac{1}{m_P}(1-\frac{\mu_{P'}R_{\Delta_X}}{q-1}-\mu_{P'}\Delta_X).\]
Thus
\[\mu_PB=1-t_P\geq 1-\frac{1}{m_P}(1-\frac{\mu_{P'}R_{\Delta_X}}{q-1}-\mu_{P'}\Delta_X).\]
By Proposition \ref{prop:mori-fiber-gpair}, $\mu_{P'}(R_{\Delta_X}-\pi^*R_{\Delta_Y})=0$. In particular,
\[\mu_{P'}R_{\Delta_X}=m_P\mu_PR_{\Delta_Y}.\]
So 
\[\mu_P(\Delta_Y+\frac{R_{\Delta_Y}}{q-1})=\mu_P\Delta_Y+\frac{\mu_PR_{\Delta_Y}}{q-1}=\frac{m_P-1+\mu_{P'}\Delta_X}{m_P}+\frac{\mu_{P'}R_{\Delta_X}}{m_P(q-1)}\leq \mu_PB.\]
We obtain $B\geq \Delta_Y+\frac{R_{\Delta_Y}}{q-1}$.
This proves the claim and also the proposition.
\end{proof1}
\end{proof}

\begin{remark}
    We see from the proof that under the assumption in Proposition \ref{prop:mori-fiber-lcy}, $K_X+\Delta_X+\frac{R_{\Delta_X}}{q-1}\sim_{\Q}\pi^*(K_Y+\Delta_Y+\frac{R_{\Delta_Y}}{q-1}+M)$ is the canonical bundle formula of $(X,\Delta_X+\frac{R_{\Delta_X}}{q-1})$ with respect to $\pi$, where the moduli part $M\sim_{\Q}0$ by \cite[Theorem 1.3]{F14}.
\end{remark}

%\begin{proposition} Assuming \ref{conj Delta} holds when $\rho(X)=1$, then \ref{conj Delta} holds for projective klt $f$-pair. \end{proposition}

\begin{proof1} [Proof of Theorem \ref{thm:klt to Picard 1}]
We prove that the pair in (2) is log Calabi-Yau, by induction on $n =\dim X$. When $n = 1$,  Conjecture \ref{conj Delta} follows from Theorem \ref{thm:Delta+R/q-1 leq 1}.
%The case $n=1$ follows from Theorem \ref{Delta leq 1} (2).

Suppose Conjecture \ref{conj Delta} holds for klt pairs with dimension $<n$, and $\dim X=n$. We may assume $X$ is $\Q$-factorial by Lemma \ref{lemma:quasi etale or birational lcy} and Theorem \ref{thm:Q-fac-1}. If $K_X+\Delta$ is pseudo-effective, $K_X+\Delta\sim_{\Q} 0 $ by Proposition \ref{prop:T_f} (4), which implies  $R_\Delta\sim_{\Q}0$ (thus $=0$). It follows that $(X,\Delta+\frac{R_{\Delta}}{q-1})=(X,\Delta)$ is log Calabi-Yau by Corollary \ref{cor:quasi-'etale lcy}. 

Suppose $K_X+\Delta$ is not pseudo-effective. By \cite[Corollary 1.3.3] {BCHM} and Theorem \ref{thm:polarized-emmp}, after iteration of $f$, an $f$-equivariant $(K_X+\Delta)$-MMP reaches a Mori fiber space:
\[X=X_0\dashrightarrow X_1\dashrightarrow\cdots\dashrightarrow X_k\to X_{k+1} = Y\]
where $\pi_i$ on $(X_{i-1},\Delta_{i-1})$ (still klt) is birational for $1\leq i\leq k$, $\pi_{k+1}:(X_{k},\Delta_{k})\to Y$ is a Fano contraction. We prove that Conjecture \ref{conj Delta} holds for $(X_i,\Delta_i)$ by descending induction on $i$. Since $(X, \Delta_X)$ is an $f$-pair, all $(X_j, \Delta_j)$ are $f$-pairs by Proposition \ref{prop:birational-ramification-formula} and noting the case of flip is clear since it is isomorphism in Codimension $1$.

For $i=k$, if $\dim Y=0$, then $\rho(X_k)=1$, the conclusion follows from the assumption. If $\dim Y>0$, then $(F,\Delta_k|_F)$ is klt for a general fiber, and we can define $(Y,\Delta_Y)$ as in Definition \ref{def:mori-fiber-canonical} which is klt and an $f|_Y$-pair by Propositions \ref{prop:mori fiber klt} and \ref{prop:mori-fiber-gpair}.  Then our conclusion follows from Proposition \ref{prop:mori-fiber-lcy} and the case of dimension less than $n$.

Suppose Conjecture \ref{conj Delta} holds for $(X_i,\Delta_i)$. 
If $\pi_i:X_{i-1}\to X_i$ is a divisorial contraction, then Conjecture \ref{conj Delta} also holds for $(X_{i-1},\Delta_{i-1})$ by Proposition \ref{prop:birational-ramification-formula}. 
If $(\pi_i:X_{i-1}\dashrightarrow X_i$ is a flip, then Conjecture \ref{conj Delta} also holds for $(X_{i-1},\Delta_{i-1})$ by Lemma \ref{lemma:quasi etale or birational lcy} (3).

Hence the descending induction works and Conjecture \ref{conj Delta} holds for $(X,\Delta)$.
%$(X_{i-1},\Delta_{i-1}+\frac{R_{\Delta_{i-1}}}{q-1})$ is also log Calabi-Yau by Lemma
% \ref{lemma:quasi etale or birational lcy}(3).
 %let $\varphi_i:X_{i-1}\to Y_i$ be the flipping contraction, $\varphi_i^+:X_i\to Y_i$ the flipped contraction, and
%\[D_i=\varphi_{i*}(\Delta_{i-1}+\frac{R_{\Delta_{i-1}}}{q-1})=\varphi^+_{i*}(\Delta_{i}+\frac{R_{\Delta_{i}}}{q-1}).\]
%Since $K_X+\Delta_{i}+\frac{R_{\Delta_{i}}}{q-1}\sim_{\Q}0$,  it follows that
%\[K_{X_i}+\Delta_{i}+\frac{R_{\Delta_{i}}}{q-1}=\varphi^{+*}_i(K_{Y_i}+D_i),\]
%and
%\[K_{X_{i-1}}+\Delta_{i-1}+\frac{R_{\Delta_{i-1}}}{q-1}=\varphi^{*}_i(K_{Y_i}+D_i).\]
%Thus $(X_{i-1},\Delta_{i-1}+\frac{R_{\Delta_{i-1}}}{q-1})$ is log Calabi-Yau.
\end{proof1}

\begin{proof1}[Proof of Corollary \ref{cor:numerically trivial induction}]
For (1), suppose Conjecture \ref{conj Delta} holds for klt pairs with dimension $\leq n-2$. 
    %We can assume Conjecture \ref{conj Delta} (1) holds when $\dim X\leq n-1$ by induction.    
    As in the proof of Theorem \ref{thm:klt to Picard 1}, we may assume $X$ is $\Q$-factorial, $K_X+\Delta$ is not pseudo-effective, and we have an $f$-equivariant $(K_X+\Delta)$-MMP: $X = X_0 \dashrightarrow X_1 \dashrightarrow \cdots \dashrightarrow X_{k+1} =Y$.
    
    If $\dim Y\leq1$, then $f^*=q\id$ on $\N^1 (X)$ as in proof of \cite[Theorem 1.8 (4)]{MZ18}. Hence $f^*(K_X+\Delta)\equiv q(K_X+\Delta)$. 
    If $\dim Y\geq 2$, then $\dim X_k-\dim Y\leq n-2$. Define the $f|_Y$-pair $(Y,\Delta_Y)$ as in Definition \ref{def:mori-fiber-canonical}. By assumption, Conjecture \ref{conj Delta} holds for $(F,\Delta_k|_F)$ on a general $f$-periodic fiber $F$ of $X_k \to Y$. Also, %since $\dim Y<\dim X=n$, 
    we may assume that Conjecture \ref{conj Delta} (1) holds for $(Y,\Delta_Y)$ by induction like that for $(X, \Delta)$ to reduce to Conjecture \ref{conj Delta} ($\dim \le n-2$).
    
   % We run log MMP for $Y$, do induction on $\dim Y$ and also the descending birational contractions as in the proof of Theorem \ref{thm:klt to Picard 1}, hence we may assume Conjecture \ref{conj Delta} (1) holds for $Y$.
%and Conjecture \ref{conj Delta}(1) holds for $(Y,\Delta_Y)$,
As in the proof of Proposition \ref{prop:mori-fiber-lcy},
we have
    \[0 \equiv K_{X_k}+\Delta_{k}+\frac{R_{\Delta_{k}}}{q-1}\sim_{\Q}\pi^*(K_Y+\Delta_Y+\frac{R_{\Delta_Y}}{q-1}+M)\]
    with $M\equiv 0$.
    %as in proof of Proposition \ref{prop:mori-fiber-lcy}. Hence $K_{X_k}+\Delta_{k}+\frac{R_{\Delta_{k}}}{q-1}\equiv0$. 
    Then the corollary follows from the same descending induction on $X_i$ ($1\leq i\leq k$) as in the proof of Theorem \ref{thm:klt to Picard 1}.
    
    For (2), by Lemma \ref{lemma:quasi etale or birational lcy} (4) and Theorem \ref{thm:Q-fac-1}, we may assume that $X$ is $\Q$-factorial and the $f$-pair $(X, \Delta)$ is lc
    (cf. Theorem \ref{thm:(X,Delta) lc}). Then the same induction and log MMP $X = X_0 \dashrightarrow X_1 \dashrightarrow \cdots \dashrightarrow X_{k+1} =Y$
    as in proof of (1) above or Theorem \ref{thm:klt to Picard 1}, works.
   Note that when $\dim Y\geq2$, the general fiber $F$ of $X_k \to Y$ is of dimension $\le 1$ (hence, being normal, is smooth), so Conjecture \ref{conj Delta} holds for $(F,\Delta_k|_F)$.
\end{proof1}

\section{Application to surfaces; Proof of Theorem \ref{thm: conj delta surface}}\label{section: surface appl}

In this section, we prove Theorem \ref{thm: conj delta surface}. We need two results.

\begin{lemma}\label{lemma:disturb Delta}
    Let $f: X \to X$ be a surjective endomorphism of a normal projective variety and $\Delta$ an effective Weil $\Q$-divisor such that $(X,\Delta)$ is an $f$-pair. Suppose $\Delta=\sum_{i=1}^{k}a_iP_i$ with $P_i$ being distinct prime divisors, and $f^{*}(P_j)=qP_j$ for $1\leq j \leq s$, $s\leq k$, and some $q \ge 2$. Let $\Delta'=\sum_{i=1}^{s}b_iP_i+\sum_{i=s+1}^{k}a_iP_i$. Then $(X,\Delta')$ is an $f$-pair whenever $0 \le b_i \le 1$. Further, 
%0\leq b_1,...,b_s\leq 1$, 
$\Delta'+\frac{R_{\Delta'}}{q-1}=\Delta+\frac{R_{\Delta}}{q-1}$.
\end{lemma}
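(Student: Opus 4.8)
The plan is to reduce everything to a direct computation of the coefficients of $R_{\Delta'}$ along each prime divisor, using that the $P_j$ with $1 \le j \le s$ are totally invariant in the strong sense $f^*P_j = qP_j$. First I would recall the defining identity of an $f$-pair: $(X,\Delta')$ is an $f$-pair iff $R_{\Delta'} := \Delta' + R_f - f^*\Delta' \ge 0$. Since $\Delta' - \Delta = \sum_{i=1}^s (b_i - a_i)P_i$, we have $R_{\Delta'} - R_{\Delta} = (\Delta' - \Delta) - f^*(\Delta' - \Delta) = \sum_{i=1}^s (b_i-a_i)\bigl(P_i - f^*P_i\bigr)$. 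The key point is that for $1 \le i \le s$, $f^*P_i = qP_i$, so $P_i - f^*P_i = -(q-1)P_i$, giving
\[
R_{\Delta'} = R_{\Delta} - (q-1)\sum_{i=1}^s (b_i - a_i) P_i = R_{\Delta} + (q-1)\sum_{i=1}^s (a_i - b_i)P_i .
\]

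Next I would check effectivity of $R_{\Delta'}$ under the hypothesis $0 \le b_i \le 1$. The only prime divisors whose coefficient changes are $P_1,\dots,P_s$; for any other prime divisor the coefficient of $R_{\Delta'}$ agrees with that of $R_{\Delta} \ge 0$. For $P_i$ with $1 \le i \le s$: by Theorem \ref{Delta leq 1}(1) applied to the $f$-pair $(X,\Delta)$ — or more directly, since $f^*P_i = qP_i$ forces $\mu_{P_i} R_{\Delta} = (q-1)(1 - a_i)$ by the same computation as in Case 1 of the proof of Theorem \ref{Delta leq 1} — one gets $\mu_{P_i} R_{\Delta'} = (q-1)(1-a_i) + (q-1)(a_i - b_i) = (q-1)(1 - b_i) \ge 0$ precisely because $b_i \le 1$ (and we need no lower bound here beyond $b_i \ge 0$ to keep $\Delta'$ effective). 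Hence $R_{\Delta'} \ge 0$ and $(X,\Delta')$ is an $f$-pair.

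Finally, for the equality $\Delta' + \frac{R_{\Delta'}}{q-1} = \Delta + \frac{R_{\Delta}}{q-1}$, I would simply add: $\Delta' + \frac{R_{\Delta'}}{q-1} - \Delta - \frac{R_{\Delta}}{q-1} = (\Delta' - \Delta) + \frac{1}{q-1}\bigl(R_{\Delta'} - R_{\Delta}\bigr) = \sum_{i=1}^s (b_i - a_i)P_i + \sum_{i=1}^s (a_i - b_i)P_i = 0$, using the boxed expression for $R_{\Delta'} - R_{\Delta}$ above. There is essentially no obstacle here; the only mild subtlety — worth flagging in the write-up — is that all pullbacks are taken in the sense of Weil divisors (Definition \ref{def:f-pair}), so one should note $f^*(\sum(b_i-a_i)P_i) = \sum(b_i-a_i)f^*P_i$ by linearity of the Weil-divisor pullback, and that $f^*P_i = qP_i$ is an honest equality of Weil divisors for $1 \le i \le s$ by hypothesis, which is what makes the cancellation exact rather than merely numerical.
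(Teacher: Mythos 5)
Your proof is correct and takes essentially the same approach as the paper, which simply records the coefficient computation $\mu_{P_j}R_{\Delta'} = (q-1)(1-b_j)$ from the defining identity $R_{\Delta'} = \Delta' + R_f - f^*\Delta'$; your write-up just spells out the intermediate steps (the expression for $R_{\Delta'} - R_{\Delta}$, effectivity away from the $P_j$, and the final cancellation) more explicitly.
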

\begin{proof}
    This is because $R_{\Delta}=\Delta+R_f-f^*\Delta$, and note that $\mu_{P_j}R_{\Delta'} = (q-1) (1 - b_j)$.
\end{proof}
 
\begin{theorem}[{cf.~\cite[Theorems 2.8 and 2.9]{W90}}]\label{thm:(X,Delta) lc surface}
    Let $f: X \to X$ be a non-isomorphic surjective endomorphism of a normal projective variety and $\Delta$ an effective Weil $\Q$-divisor such that $(X,\Delta)$ is an $f$-pair. Then $K_X+\Delta$ is $\Q$-Cartier and $(X,\Delta)$ is lc.
\end{theorem}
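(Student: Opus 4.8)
\emph{Overview.} I would treat the two assertions separately: the $\Q$-Cartier-ness of $K_X+\Delta$ via the structure theory of surfaces admitting a non-isomorphic surjective endomorphism, and the log canonicity via the non-lc-locus argument behind Theorem \ref{thm:(X,Delta) lc} (which follows \cite[Theorem 1.4]{BH14}), specialized to the fact that on a surface a non-lc center is a closed point.

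\emph{Step 1: $K_X+\Delta$ is $\Q$-Cartier.} By \cite[Theorem 2.8]{W90}, a normal projective surface carrying a non-isomorphic surjective endomorphism is $\Q$-factorial — hence every Weil divisor on it is $\Q$-Cartier — apart from a short explicit list of exceptional surfaces (cones over elliptic curves and the like); on those one checks directly that $K_X$ is $\Q$-Cartier and that $\Delta=0$ after iterating $f$ (there being no $f^{-1}$-periodic prime divisor, Theorem \ref{Delta leq 1} forces $\Delta\le 1$ and the $f$-pair identity then forces $\Delta=0$). In every case $K_X+\Delta$ is $\Q$-Cartier, and $X$ is of klt type outside those exceptions. (Alternatively one may bypass Step 1 and run Step 2 with the numerically non-lc locus $\Nlc_{\mathrm{num}}$ of Remark \ref{rmk: proof of num lc}(2), which is defined with no $\Q$-Cartier hypothesis; but the stated conclusion about $K_X+\Delta$ itself uses the surface input.)

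\emph{Step 2: $(X,\Delta)$ is log canonical.} After replacing $f$ by an iterate, Theorem \ref{Delta leq 1} gives $\Delta\le 1$, so $(X,\Delta)$ has no non-lc center in codimension one. If $(X,\Delta)$ were not log canonical, $\Nlc(X,\Delta)$ would therefore be a nonempty finite set of closed points; choose one, $z$. By Lemma \ref{Nlc}, $z$ is $f^{-1}$-periodic, so after a further iteration $f^{-1}(z)=z$. Then Proposition \ref{Nlc R_f} gives $z\notin\Supp R_\Delta$, and Theorem \ref{Nlc deg} yields $\deg(f|_{\{z\}})=\deg f$. But $f|_{\{z\}}$ is the identity map of a point, of degree $1$, whereas $\deg f>1$ since $f$ is non-isomorphic; this is a contradiction — equivalently, it contradicts \cite[Lemma 3.11]{M20}, exactly as in the proof of Theorem \ref{thm:(X,Delta) lc}. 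Hence $\Nlc(X,\Delta)=\emptyset$ and $(X,\Delta)$ is lc.

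\emph{Main obstacle.} Once the inputs are in place, Step 2 is bookkeeping; the substantive point is that those inputs — the normalization $\Delta\le 1$ from Theorem \ref{Delta leq 1} and the degree identity of Theorem \ref{Nlc deg} (resting on \cite[Lemma 3.11]{M20}) — are established for polarized, respectively int-amplified, endomorphisms, while here $f$ is only assumed non-isomorphic and surjective. Bridging this gap is exactly the role of \cite{W90}: its classification of surfaces with a non-isomorphic surjective endomorphism reduces us either to the int-amplified case already covered by Theorem \ref{thm:(X,Delta) lc}, or to the handful of remaining surface types (essentially certain ruled surfaces and products, and cones over elliptic curves), on which the $f$-pair structure, its log canonicity, and the $\Q$-Cartier property can all be read off explicitly after iteration. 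Setting up and dispatching this reduction — not the non-lc-locus mechanics — is where I expect the real work to lie.
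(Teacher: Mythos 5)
There is a genuine gap, and it stems from a misreading of what \cite[Theorems 2.8--2.9]{W90} actually say. Wahl's theorems are \emph{not} a classification of surfaces admitting a non-isomorphic surjective endomorphism, and they do \emph{not} assert $\Q$-factoriality up to a short list of exceptions. They prove log canonicity of the singularities of such surfaces by a degree/multiplicativity argument using a numerical characteristic of singularity links. So the bridge you hope Wahl provides in your ``Main obstacle'' paragraph --- reducing to the int-amplified case plus finitely many explicit surface types --- simply does not exist in \cite{W90}; that is a separate body of classification work and it is not what the paper invokes.

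This matters because your Step 2 machinery is not available in the required generality. Lemma \ref{Nlc}, Proposition \ref{Nlc R_f} and Theorem \ref{Nlc deg} are all stated for pairs with $K_X+\Delta$ already $\Q$-Cartier, and Theorem \ref{Delta leq 1}(2) (giving $\Delta \le 1$ after iteration) is only proved for \emph{polarized} $f$; for a general non-isomorphic surjective endomorphism of a surface, $\Delta \le 1$ and the $\Q$-Cartier property of $K_X+\Delta$ are precisely what need to be established, so the argument is circular. You flag the $\Nlc_{\mathrm{num}}$ alternative in a parenthesis, but that parenthetical remark is actually the paper's main route, not a footnote: the paper first reduces lc-ness and $\Q$-Cartier-ness to \emph{numerical} lc-ness via \cite[Proposition 3.5]{F12} (a surface-specific result), then shows $\Nlc_{\mathrm{num}}(X,\Delta)\cap\Supp R_\Delta=\emptyset$ using Remark \ref{rmk: proof of num lc}(2) (which is designed to avoid the $\Q$-Cartier hypothesis), and finally runs Wahl's numerical-invariant proof with $(X,0)$ replaced by $(X,\Delta)$ to kill the remaining candidate non-lc points. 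None of the three components of your Step 1 (Wahl gives $\Q$-factoriality; the exceptions are cones over elliptic curves; $\Delta=0$ there after iteration) is justified, and Step 2 cannot start without Step 1. To repair the proof you would need to replace the Wahl-as-classification step with either Wahl's actual local numerical argument or a genuine structure theorem for non-isomorphic surjective surface endomorphisms, which is a substantively different and larger piece of work.
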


\begin{proof}
    By \cite[Proposition 3.5]{F12}, it suffices to show that $(X,\Delta)$ is numerically lc. Let $\Nlc_{num}(X,\Delta)$ be the numerically non-lc locus of $(X,\Delta)$ (see \cite[section 20]{CMZ20}). By the proof of \cite[Lemma 10.1]{CMZ20} or Remark \ref{rmk: proof of num lc} (2), $\Nlc_{num}(X,\Delta)\bigcap \Supp R_{\Delta}=\varnothing$. Now the theorem follows from the proof of Wahl \cite[Theorems 2.8]{W90} with $(X,0)$ replaced by $(X,\Delta)$.
\end{proof} 

\begin{proof1}[Proof of Theorem \ref{thm: conj delta surface}]
    By Theorem \ref{thm:(X,Delta) lc surface}, the pair $(X, \Delta)$ is lc.
   % $K_X+\Delta$ is $\Q$-Cartier  
   If $K_X+\Delta$ is pseudo-effective, then $K_X+\Delta\sim_{\Q} 0$ by Proposition \ref{prop:T_f} (4), so the pair $(X,\Delta+\frac{R_{\Delta}}{q-1})=(X,\Delta)$ is log Calabi-Yau.
    Suppose $K_X+\Delta$ is not pseudo-effective. After iteration of $f$, we may assume $f^*=q\, \id^*$ on $\N^1(X)$ by \cite[Theorem 2.7]{Z10}. So $f^*(K_X+\Delta)\equiv q(K_X+\Delta)$, i.e., $K_X+\Delta+\frac{R_{\Delta}}{q-1}\equiv0$.
   Let $\pi:X\to Y$ be a step of $(K_X+\Delta)$-MMP, which we may assume to be $f$-equivariant, after iteration of $f$.

    \textit{Case (1):} $\dim Y=1$. Then $\pi$ is a Fano contraction. Define $(Y,\Delta_Y)$ as in Definition \ref{def:mori-fiber-canonical}.  
    Since Conjecture \ref{conj Delta} holds for curve pairs $(Y,\Delta_Y)$ and $(F,\Delta_F)$ with $F$ a general fiber of $\pi$ by Theorem \ref{thm:Delta+R/q-1 leq 1}, Conjecture \ref{conj Delta} holds for $(X,\Delta)$ by Proposition \ref{prop:mori-fiber-lcy}.

    \textit{Case (2):} $\dim Y=0$ or $2$. If $\dim Y=0$, then $\rho(X)=1$, so $T_f\neq 0$ from the assumption. If $\dim Y=2$, then $\pi:X\to Y$ contracts a curve, which is $f^{-1}$-periodic. So we may assume $T_f\neq 0$ in Case (2). For each prime divisor $P\subseteq \Supp T_f$, we may further assume $f^{-1}(P)=P$ after iteration of $f$, and  $\mu_P\Delta=1$ by Lemma \ref{lemma:disturb Delta}. Note that $(X,\Delta)$ remains lc by Theorem \ref{thm:(X,Delta) lc surface}. Since $P$ is an lc centre of the pair $(X,\Delta)$, applying Theorem \ref{thm:polarized-inversion-of-adjunction} and noting that Conjecture \ref{conj Delta} holds for curve pairs,
    %to $\widetilde{P}$ (the normalization of $P$), 
    $(X,\Delta + \frac{R_{\Delta}}{q-1})$ is lc near $P$. Thus, by
    the normality of $X$ and 
    Theorem \ref{Delta leq 1}, $\Nlc := \Nlc(X,\Delta+\frac{R_{\Delta}}{q-1})$ consists of several points
    $Q_j$ away from $\Supp T_f$, and $\Nklt: = \Nklt(X,\Delta+\frac{R_{\Delta}}{q-1})$ consists of $\Supp T_f$, $Q_j$ and several other points. If such $Q_j$ exists, then $\Nklt$ is not connected
    which contradicts \cite[Theorem 1.2 (2)]{B24}.
    Thus $\Nlc = \emptyset$. This proves the theorem.
    %$\Supp T_f$ is the union of dimension one components of $\Nklt(X,\Delta+\frac{R_{\Delta}}{q-1})$. If $\Nlc(X,\Delta+\frac{R_{\Delta}}{q-1})$ has an isolated point $Q$ then it is away from $T_f$, so $\Nklt(X,\Delta+\frac{R_{\Delta}}{q-1})$. So $\Nlc(X,\Delta+\frac{R_{\Delta}}{q-1})\bigcap \Supp T_f=\varnothing$. Suppose $\Nlc(X,\Delta+\frac{R_{\Delta}}{q-1})\neq \varnothing$. Then $\dim\Nlc(X,\Delta+\frac{R_{\Delta}}{q-1})=0$ by Theorem \ref{thm:Delta+R/q-1 leq 1}. Thus $\Nklt(X,\Delta+\frac{R_{\Delta}}{q-1})$ is not connect, which contradicts \cite[Theorem 1.2]{B24}.
\end{proof1}
\appendix
\renewcommand\thethm{A\arabic{section}.\arabic{thm}}
\setcounter{thm}{0}

\section{Equivariant $\Q$-factorial models} \label{appen:equi-Q-fac-model}

Let $X$ be a normal variety. We say $\pi:\widetilde{X}\rightarrow X$ is a small $\Q$-factorial model of $X$, if $\widetilde{X}$ is $\Q$-factorial, $\pi$ is projective and isomorphic in codimension 1. The following result of Moraga-Y\'a\~nez-Yeong \cite{MYY24}
says that an endomorphism can be lifted to some small $\Q$-factorial model when $X$ is of klt type.

\begin{thm}[{cf. \cite[Theorem 1.2]{MYY24}}]\label{thm:Q-fac-1}
Let $(X,\Delta)$ be an klt projective pair, and $f:X\to X$ a surjective endomorphism. Then there exists a small $\Q$-factorial model $\pi: \widetilde{X}\rightarrow X$, and an endomorphism $\widetilde{f}$ on $\widetilde{X}$ such that $\pi\circ \widetilde{f}=f^s\circ \pi$ for some $s>0$.
\end{thm}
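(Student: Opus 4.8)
\textit{Overall strategy.} The proof proceeds by letting $f$ act on the finite set of small $\Q$-factorial models of $X$, finding a model preserved by an iterate of $f$ via pigeonhole, and checking that the induced lift is a genuine endomorphism. First, since $(X,\Delta)$ is klt, $X$ admits a small $\Q$-factorialization by \cite{BCHM}; if $X$ is already $\Q$-factorial we simply take $\widetilde X=X$, $\widetilde f=f$, $s=1$, so assume $X$ is not $\Q$-factorial. It is standard (geography of models, \cite{BCHM}) that $X$ has only finitely many small $\Q$-factorial models up to isomorphism over $X$; let $\mathcal S$ denote this finite nonempty set, each represented by a small birational $\pi\colon Y\to X$ with $Y$ $\Q$-factorial (and $\pi$ not an isomorphism).

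\textit{A self-map of $\mathcal S$.} Given $(\pi\colon Y\to X)\in\mathcal S$, let $W$ be the unique irreducible component of $Y\times_{X,f}X$ that dominates $X$ (equivalently, dominates $Y$), and let $\bar Y\to W$ be its normalization, with the induced finite surjective $p\colon\bar Y\to Y$ and proper birational $\bar\pi\colon\bar Y\to X$ satisfying $\pi\circ p=f\circ\bar\pi$. Over the isomorphism locus $U\subseteq X$ of $\pi$, the base change is isomorphic to $f^{-1}(U)$, which is a big open subset of $X$ because $f$ is finite; hence $\bar\pi$ is birational, and it is moreover \emph{small}: since $p$ is finite and $\pi$ has no exceptional divisors, no prime divisor of $\bar Y$ can be $\bar\pi$-contracted. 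Choosing (again by \cite{BCHM}, applied to $\bar Y$) a small $\Q$-factorialization $q^{\dagger}\colon Y^{\dagger}\to\bar Y$, the composite $\bar\pi\circ q^{\dagger}\colon Y^{\dagger}\to X$ is small birational with $Y^{\dagger}$ $\Q$-factorial, so $Y^{\dagger}$ represents a well-defined element $\sigma(Y)\in\mathcal S$. This defines a function $\sigma\colon\mathcal S\to\mathcal S$.

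\textit{Periodic model and the lift.} Since $\mathcal S$ is finite, there is $Y\in\mathcal S$ lying on a cycle of $\sigma$, say $\sigma^{s}(Y)=Y$ for some $s\ge1$. Iterating the construction above $s$ times (equivalently, taking the normalized main component of $Y\times_{X,f^{s}}X$) yields a finite surjective $p\colon\bar Y\to Y$, a small birational $\bar\pi\colon\bar Y\to X$ with $\pi\circ p=f^{s}\circ\bar\pi$, and — using an isomorphism over $X$ realizing $\sigma^{s}(Y)=Y$ — a projective birational morphism $q\colon Y\to\bar Y$ with $\bar\pi\circ q=\pi$. Set $\widetilde f:=p\circ q\colon Y\to Y$. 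Being a composition of morphisms, $\widetilde f$ is a morphism, and it is surjective, and
\[
\pi\circ\widetilde f \;=\; \pi\circ p\circ q \;=\; (f^{s}\circ\bar\pi)\circ q \;=\; f^{s}\circ(\bar\pi\circ q) \;=\; f^{s}\circ\pi .
\]
Thus $\widetilde X:=Y$ together with $\widetilde f$ is the required small $\Q$-factorial model.

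\textit{Main obstacle.} The delicate point is the passage from a rational lift to an honest morphism: a priori $f$ lifts only to the rational map $\pi^{-1}\circ f\circ\pi$, whose indeterminacy sits over the codimension $\ge2$ non-isomorphism locus of $\pi$. The periodicity $\sigma^{s}(Y)=Y$ is exactly what forces the normalized base-change diagram to close up, so that the lift factors as (small birational morphism)$\,\circ\,$(finite morphism) and therefore extends. Supporting care is needed to verify that normalized base change along $f$ indeed sends small $\Q$-factorial models of $X$ to small (in general non-$\Q$-factorial) birational models of $X$ — for which one uses the finiteness of $f$ and the absence of $\pi$-exceptional divisors — and that iterating $\sigma$ is compatible with base change along $f^{s}$ up to the intermediate small $\Q$-factorializations.
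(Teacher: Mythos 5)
Your proof is correct and follows essentially the same route as the paper: replace $Y$ by a small $\Q$-factorialization of the normalized main component of the base change along $f$, use finiteness of small $\Q$-factorial models up to isomorphism over $X$ (the paper cites \cite[Lemma 2.8]{FHS25}) to find a periodic model, and compose along the resulting chain to obtain $\widetilde f$. The only slight detour is your final factorization $\widetilde f = p\circ q$ with $p$ coming from $Y\times_{X,f^{s}}X$ directly; the paper avoids the need to reconcile the iterated chain with the one-step base change along $f^{s}$ by simply defining $\widetilde f$ as the composition $f_{i+1}\circ\cdots\circ f_{j}\circ\phi$ of the morphisms already built, which is automatically a (surjective, hence finite) morphism satisfying $\pi\circ\widetilde f=f^{s}\circ\pi$.
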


\begin{proof}
We may assume $X$ is not $\Q$-factorial, otherwise, there is nothing to prove. Then we define a sequence of $Y_i$ inductively in the following way.

\begin{tikzcd}
Y_i \arrow[r, ]  &
Z_{i} \arrow[d, ] \arrow[r, ] & 
Y_{i-1} \arrow[d, "\pi_{i-1}"] \\
                &
X \arrow[r, "f"] & 
X
\end{tikzcd}
Let $Y_0$ be a small $\Q$-factorial model of $X$ \cite[Corollary 1.4.3]{BCHM}. Suppose $\pi_{i-1}:Y_{i-1}\rightarrow X$ is already defined, which is a small $\Q$-factorial model. Let $Z_{i}$ be the normalization of $Y_{i-1}\times_X X$, with respect to $\pi_{i-1}:Y_{i-1}\rightarrow X$ and $f: X \rightarrow X$. Then $Z_{i} \rightarrow X$ (the domain of $f$) is isomorphic in codimension one. Thus $Z_{i}$ is of klt type. Choose a small $\Q$-factorial model $Y_{i}\rightarrow Z_{i}$, which is also a small $\Q$-factorial model of $X$. Let $f_i$ denote the composition $Y_i \to Z_i \to Y_{i-1}$,  and $\pi_i$ denote $Y_i\to X$ (the domain of $f$).

By \cite[Lemma 2.8]{FHS25}, small $\Q$-factorial model of $X$ is finite up to isomorphism over $X$, so we must have $\phi:Y_i\cong Y_j$ over $X$ for some $i<j$. Then we can take $\widetilde{X}=Y_i$, $\widetilde{f}=f_{i+1}\circ \cdots\circ f_{j-1} \circ f_{j}\circ \phi$, and $s=j-i$.
\end{proof}

In \cite[Theorem 1.2]{MYY24}, the $\Q$-factorial model depends on $f$. Actually, $f$ can be lifted to any small $\Q$-factorial model. For this, we need the following, which says the ``anti-Stein" factorization of a generically finite morphism into ``finite'' followed by ``birational'' is unique up to isomorphism.

\begin{lemma}\label{lemma:gen-finite-morphism}
    Let $Y\to X$ be a generically finite surjective morphism between normal projective varieties. Suppose that $g_i:Y\to Z_i$, and $h_i:Z_i\to X$, $i=1,2$, satisfy $f=h_i\circ g_i$, where the $g_i$ are finite surjective and $h_i$ are projective birational. Then an isomorphism $\varphi:Z_1\to Z_2$ exists, such that $g_2=\varphi\circ g_1$ and $h_1=h_2\circ \varphi$.
\end{lemma}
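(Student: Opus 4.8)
The plan is to exploit the universal property of normalization together with the uniqueness of the Stein factorization. The key observation is that although $g_i \colon Y \to Z_i$ is \emph{finite} rather than having connected fibers, the composite $f = h_i \circ g_i$ has a canonical factorization through the normalization of $X$ inside the function field $k(Y)$ (or more precisely, through the integral closure), and this is what pins down the $Z_i$. First I would pass to function fields: $k(X) \subseteq k(Z_i) \subseteq k(Y)$, with $k(Z_i)/k(X)$ finite (since $h_i$ is birational, in fact $k(Z_i) = k(X)$!). Wait — since $h_i$ is birational, $k(Z_i) = k(X)$, so $g_i$ and $f$ induce the same field extension $k(X) \hookrightarrow k(Y)$. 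Thus both $Z_1$ and $Z_2$ are models of the \emph{same} function field $k(X)$ lying ``between'' $Y$ and $X$.

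The cleanest route: $Z_i$ is normal (as a normal variety by hypothesis — or we reduce to that case), $h_i \colon Z_i \to X$ is projective birational, and $g_i \colon Y \to Z_i$ is finite surjective. I claim $Z_i$ is forced to be the normalization of $X$ in the following sense: consider $g_{i*}\mathcal{O}_Y$, a sheaf of $\mathcal{O}_{Z_i}$-algebras, finite over $\mathcal{O}_{Z_i}$; pushing forward by $h_i$, we get $f_*\mathcal{O}_Y = h_{i*}g_{i*}\mathcal{O}_Y$ as a sheaf of $\mathcal{O}_X$-algebras. Since $h_i$ is birational and $Z_i$ is normal, $h_{i*}\mathcal{O}_{Z_i} = \mathcal{O}_X$ when $X$ is normal and $h_i$ has no exceptional... no, that's false in general. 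Better: take $\mathcal{A} := f_*\mathcal{O}_Y$, a coherent sheaf of $\mathcal{O}_X$-algebras. Then the integral closure $\overline{\mathcal{O}_X}$ of $\mathcal{O}_X$ inside $\mathcal{A}$ gives a well-defined scheme $\mathbf{Spec}_X \overline{\mathcal{O}_X}$, but this equals $X$ itself since $X$ is normal. So that doesn't directly produce $Z_i$ either. The correct invariant is: $Z_i = \mathbf{Proj}$ or rather the \emph{relative} construction is not canonical; instead I would argue directly with valuation-theoretic / codimension-one data. The main obstacle, and the crux of the argument, is precisely this: \emph{why is $Z_i$ unique given only that it sits as a finite-then-birational factorization?}

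Here is the argument I would actually write. Since $g_i$ is finite and $h_i$ is birational projective with $X$, $Z_i$ normal, the map $h_i$ is an isomorphism in codimension one (a projective birational morphism between normal varieties with the target normal need not be small, but here $g_i$ finite surjective forces it: if $h_i$ contracted a divisor $E \subseteq Z_i$, then since $g_i$ is finite and surjective, $g_i^{-1}(E)$ would be a divisor in $Y$ mapping by $f = h_i \circ g_i$ to $h_i(E)$, which has codimension $\geq 2$; this is not in itself a contradiction, so one must instead use that $h_{i*}\mathcal{O}_{Z_i}$ is a coherent $\mathcal{O}_X$-algebra, torsion-free of rank one, hence a fractional ideal, hence equal to $\mathcal{O}_X$ by normality — thus $h_i$ is an isomorphism, $Z_i = X$!). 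Actually this last point resolves everything: $h_{i*}\mathcal{O}_{Z_i}$ is a coherent sheaf of $\mathcal{O}_X$-algebras which is birational to $\mathcal{O}_X$, hence generically equal to $k(X)$, hence a coherent subsheaf of the constant sheaf $k(X)$ containing $\mathcal{O}_X$; being a subsheaf of $\mathcal{O}_X$-\emph{algebras} it is integral over $\mathcal{O}_X$, so by normality $h_{i*}\mathcal{O}_{Z_i} = \mathcal{O}_X$. Then $h_i$ is finite (as $Z_i \to X$ is proper with $\mathcal{O}_X = h_{i*}\mathcal{O}_{Z_i}$, giving $Z_i = \mathbf{Spec}_X \mathcal{O}_X = X$ by Stein factorization / Zariski's main theorem). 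Therefore $Z_1 \cong X \cong Z_2$ canonically, $h_i$ is the identity, and $g_i = f$; taking $\varphi = \mathrm{id}_X$ finishes it. I would double-check whether the intended hypotheses really force $h_i$ birational-onto-$X$-with-$X$-normal (they do, as $X$ is normal projective by the standing assumption of the section), in which case the lemma is essentially the statement that a normal variety has no nontrivial birational proper modifications dominated finitely — and the one genuinely delicate point to get right in writing is the coherence/finiteness step invoking Zariski's main theorem, which I would cite from \cite{KM98} or standard references rather than reprove.
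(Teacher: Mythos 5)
There is a genuine gap, and it is fatal: you conclude that $h_{i*}\mathcal{O}_{Z_i}=\mathcal{O}_X$ forces $h_i$ to be finite, hence an isomorphism, so $Z_i=X$. That inference is false. For \emph{any} proper birational morphism $h\colon Z\to X$ between normal varieties one has $h_*\mathcal{O}_Z=\mathcal{O}_X$ (Zariski's connectedness theorem / the main theorem), yet $h$ need not be an isomorphism --- a blow-up of a smooth point is the standard example. Having $h_*\mathcal{O}_Z=\mathcal{O}_X$ means precisely that $h$ has connected fibers, i.e.\ that $h$ \emph{is its own} Stein factorization with a trivial finite part; it does not say that $h$ \emph{is} the finite part. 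You have conflated ``finite morphism with $h_*\mathcal{O}_Z=\mathcal{O}_X$'' (which would indeed be an isomorphism) with ``proper morphism with $h_*\mathcal{O}_Z=\mathcal{O}_X$'' (which is just a contraction). In fact, in the very application of this lemma in the paper's appendix, the $Z_i$ are small $\Q$-factorial models of a non-$\Q$-factorial $X$, so they are emphatically not equal to $X$; your argument would prove that small $\Q$-factorial models never exist.

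The correct argument (as the paper gives it) must compare the two intermediate models $Z_1$ and $Z_2$ directly, not collapse them to $X$. The paper passes to a common birational model $Z$ dominating both $Z_i$ via $\tau_i\colon Z\to Z_i$, and to the normalization $W$ of $S\times_X Z$ where $S$ is the Stein factorization of $Y\to X$. Because $g_i\colon Y\to Z_i$ and $p_Z\colon W\to Z$ are finite, a curve $p_Z(C)$ is contracted by $\tau_i$ if and only if $C$ is contracted by $\eta\colon W\to Y$; this criterion is independent of $i$, so $\tau_1$ and $\tau_2$ contract the same curves, and the rigidity lemma then produces the desired isomorphism $Z_1\cong Z_2$ over $X$. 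Your opening observations (that $k(Z_i)=k(X)$, that $Y$ is the normalization of $Z_i$ in $k(Y)$, and that the contraction of a divisor by $h_i$ is \emph{not} in itself a contradiction) are all correct, but after that the proof takes a wrong turn; the comparison of contracted curves via the finite covers is the missing idea.
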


\begin{proof}
\mbox{}
\begin{tikzcd}
W\arrow[ d, "p_Z" ]  \arrow[r, "\eta"]  \arrow[ rr, bend left ,  "p_S"]&
Y\arrow[ d, "g_i" ]  \arrow[r, "\alpha" ]     &
S\arrow[ d, "\beta" ]    \\
Z\arrow[ r, "\tau_i"]        &
Z_i\arrow[r, "h_i"]      &
X
\end{tikzcd}
Let $\alpha: Y\to S$, $\beta: S\to X$ be the Stein factorization of $Y\to X$. Then $(*)$:  $Y$ is isomorphic to the normalization of $Z_i$ in $k(Y) = k(S)$.
    %S\times_X Z_i$ ($i=1,2$). 
    Let $Z$ be a projective birational model which dominates $Z_i$ via $\tau_i:Z\to Z_i$, 
    %$W_i$ the normalization of $Y\times_{Z_i}Z$
    %(also the normalization of $S$ in the function
    %field $k(Y)$), 
    and 
    $W$ the normalization of $S\times _X Z$ (also the normalization of $Z$ in the function field $k(S) = k(Y)$). Let $p_S: W \to S$ and $p_Z: W \to Z$ be the projections.
    By the $(*)$ above, there is a birational morphism $\eta: W \to Y$ such that the composition $\alpha \circ \eta: W \to Y \to S$ equals $p_S$, and $(**)$: the composition $g_i \circ \eta: W \to Y \to Z_i$ is just $\tau_i \circ p_Z$. 
    %and both the compositions $h_i \circ \tau_i \circ p_Z: W \to Z \to Z_i \to X$ equal the composition $\beta \circ p_S: W \to S \to X$.
    %then $W_1\cong W\cong W_2$. 
    Since $g_i: Y\to Z_i$ and $p_Z: W\to Z$ are finite, a curve $p_Z(C)$ is contracted by $\tau_i$ if and only if 
    $C$ is contracted by $\eta$.
    Thus both $\tau_i: Z\to Z_i$ ($i=1,2$) contract the same curves, which implies an isomorphism $\varphi:Z_1\cong Z_2$ by the rigidity lemma (cf.~\cite[Lemma 1.15]{D01}),
    such that $\varphi\circ \tau_1=\tau_2$. By the $(**)$, $\varphi$ satisfies the condition.
\end{proof}

\begin{thm}\label{thm:Q-fac-2}
Let $(X,\Delta)$ be an klt projective pair, and $f:X\to X$ a surjective endomorphism. Then for any  small $\Q$-factorial model $\pi: \widetilde{X}\rightarrow X$, an endomorphism $\widetilde{f}$ on $\widetilde{X}$ exists such that $\pi\circ \widetilde{f}=f^s\circ \pi$ for some $s>0$.
\end{thm}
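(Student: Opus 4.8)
# Proof Proposal for Theorem A3.2

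The plan is to leverage Theorem~\ref{thm:Q-fac-1}, which already produces \emph{some} small $\Q$-factorial model $\pi_0 : Y_0 \to X$ together with a lifted endomorphism, and then to transport the lift to an arbitrary small $\Q$-factorial model $\pi : \widetilde{X} \to X$ using the uniqueness result Lemma~\ref{lemma:gen-finite-morphism}. The key point is that $X$ of klt type has only finitely many small $\Q$-factorial models up to isomorphism over $X$ (by \cite[Lemma 2.8]{FHS25}, as used in the proof of Theorem~\ref{thm:Q-fac-1}), so the pull-back-and-resolve construction must cycle, and the ``anti-Stein'' factorization rigidity lets us identify the relevant models canonically.

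First I would fix an arbitrary small $\Q$-factorial model $\pi : \widetilde{X} \to X$. Following the inductive construction in the proof of Theorem~\ref{thm:Q-fac-1}, but \emph{starting} from $Y_0 := \widetilde{X}$ instead of an arbitrary initial model: form $Z_1$ as the normalization of $\widetilde{X} \times_X X$ with respect to $\pi$ and $f$, note $Z_1 \to X$ (the target copy) is isomorphic in codimension one hence of klt type, choose a small $\Q$-factorial model $Y_1 \to Z_1$, and iterate to get a sequence $Y_i \to X$ of small $\Q$-factorial models with morphisms $f_i : Y_i \to Y_{i-1}$. By finiteness of small $\Q$-factorial models, there exist $i < j$ with $Y_i \cong Y_j$ over $X$.

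The remaining issue—and the main obstacle—is that Theorem~\ref{thm:Q-fac-1} only yields a periodic cycle among the $Y_i$, not necessarily one containing $Y_0 = \widetilde{X}$ itself; a priori the cycle could start at $Y_1$ or later. This is exactly where Lemma~\ref{lemma:gen-finite-morphism} enters. Each $Y_i$ sits in a diagram $Y_i \xrightarrow{g_i} Z_i \xrightarrow{h_i} X$ with $g_i$ finite surjective (the composition of the small $\Q$-factorializations along the tower, which are isomorphisms in codimension one, followed by a degree-$q^{\dim X}$ base change) and $h_i$ projective birational; moreover the composite $Y_i \to X$ has a well-defined Stein factorization independent of the choices. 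Since each $Y_i$ is, up to isomorphism over $X$, determined as the normalization of $Z_i$ in the fixed function field extension, and each $Z_i$ is determined by rigidity, the isomorphism $\phi : Y_i \cong Y_j$ over $X$ is compatible with all the structure maps. Thus I can run the argument with index shifted so that $i = 0$: concretely, because $\widetilde{X} = Y_0$ and $Y_j \cong Y_0$ over $X$ (after replacing the cycle length appropriately, using that the finite set of models is permuted by the construction and every model lies on a cycle since the map $Y \mapsto$ (its successor model) is a map of a finite set into itself whose image, by reversibility of the codimension-one modifications, is the whole set), we obtain $\phi : Y_0 \xrightarrow{\sim} Y_s$ over $X$ for some $s > 0$, and then
\[
\widetilde{f} := (f_1 \circ f_2 \circ \cdots \circ f_s) \circ \phi : \widetilde{X} \longrightarrow \widetilde{X}
\]
is an endomorphism satisfying $\pi \circ \widetilde{f} = f^s \circ \pi$, since $\pi \circ f_1 \circ \cdots \circ f_s = f^s \circ \pi_s$ by construction of the tower and $\pi_s \circ \phi = \pi_0 = \pi$ by the compatibility of $\phi$ with the structure maps to $X$. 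Verifying that last compatibility—that $\phi$ really is an isomorphism \emph{over} $X$ and intertwines the $g_i$—is the step I expect to require the most care, and it is precisely the content one extracts from Lemma~\ref{lemma:gen-finite-morphism} applied to the two factorizations of $Y_0 \to X$ (resp.\ $Y_s \to X$) through finite-then-birational maps.
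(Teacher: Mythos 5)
Your overall strategy matches the paper's: run the Theorem~\ref{thm:Q-fac-1} construction starting from $Y_0 = \widetilde{X}$, use finiteness of small $\Q$-factorial models to force a collision $Y_i \cong Y_{j}$, and then use Lemma~\ref{lemma:gen-finite-morphism} to ``walk the cycle back'' to $Y_0 \cong Y_s$. That is exactly the right idea. However, there is a genuine gap in your argument.

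You assert that the structure maps in the tower (``$g_i$ finite surjective'') are finite, describing them as compositions of small $\Q$-factorializations followed by a finite base change. But a small birational morphism, while an isomorphism in codimension one, can still contract curves (of codimension $\geq 2$ in the target), so a priori each $f_j\colon Y_j \to Y_{j-1}$ is only generically finite, not finite. Finiteness is precisely what Lemma~\ref{lemma:gen-finite-morphism} needs ($g_1,g_2$ finite), and it does not come for free from the construction. The paper closes this gap with a Picard-number count: all $Y_k$ are small $\Q$-factorial models of $X$, so by a common resolution $\rho(Y_k) = \rho(Y_0)$; since $(f_1 \circ \cdots \circ f_k)^*\colon N^1(Y_0) \to N^1(Y_k)$ is then an isomorphism, no curve can be contracted, hence $f_1\circ\cdots\circ f_k$ (and therefore each $f_j$) is finite. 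Without this step your application of Lemma~\ref{lemma:gen-finite-morphism} is not licensed.

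A secondary imprecision: you say Lemma~\ref{lemma:gen-finite-morphism} is ``applied to the two factorizations of $Y_0 \to X$ (resp.\ $Y_s \to X$).'' Those maps are birational, so the lemma does not apply to them directly. It should be applied, for each $k$, to the generically finite map $Y_{k+1} \cong Y_{s+k+1} \to X$ (of degree $q^{\dim X}$ after identification), with the two finite-then-birational factorizations through $Y_k$ and through $Y_{s+k}$; this yields $Y_k \cong Y_{s+k}$ over $X$, and the desired $Y_0 \cong Y_s$ follows by descending induction from the initial collision. The vaguer ``reversibility of the codimension-one modifications'' phrasing for why the successor map is a permutation is not a proof; the rigidity lemma, applied as above, is what makes that precise.
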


\begin{proof}
    Assume $X$ is not $\Q$-factorial. As in Theorem \ref{thm:Q-fac-1}, there is a sequence of commutative diagrams:
%\begin{center} \begin{tikzcd} Y_{i} \arrow[d, "\pi_i" ] \arrow[r, "f_i" ] & Y_{i-1} \arrow[d, "\pi_{i-1}"] \\ X \arrow[r, "f"] & X \end{tikzcd} \end{center}
 $\pi_{i-1} \circ f_i : Y_i \to Y_{i-1} \to X$ equals $f \circ \pi_i: Y_i \to X \to X$,
and $Y_i\cong Y_{i+s}$ over $X$ for some $i, s \ge 1$. For any $k$, by taking a common resolution $Y$ of $Y_k$ and $Y_{0}$, we see that $\rho(Y_k)=\rho(Y_{0})=\rho(Y)-m$, with $m$ the number of exceptional prime divisors of $Y$ over $X$. Then the composition $f_{1}\circ \cdots \circ f_{k}$ (thus each $f_j$) is finite.
If we identify $Y_{k+1} = Y_{s+k+1}$ over $X$, 
%$k\in \Z_{\geq 0}$, 
we have two ``anti-Stein" factorizations $\pi_{s+k} \circ f_{s+k+1}: Y_{s+k+1} \to Y_{s+k}\to X$ and $\pi_{k} \circ f_{k+1}: Y_{k+1} \to Y_k\to X$. The uniqueness in Lemma \ref{lemma:gen-finite-morphism} implies $Y_k\cong Y_{s+k}$ over $X$ and inductively, $Y_0\cong Y_s$ over $X$. Thus $f^s$ lifts to $Y_0$.
\end{proof}

\end{document}